\newtheorem{theorem}{Theorem}[section]
\newtheorem{lemma}{Lemma}[section]
\newtheorem{proposition}{Proposition}[section]
\newtheorem{corollary}{Corollary}[section]
\newtheorem{claim}{Claim}[section]
\newcommand{\pr}[1]{\mathbb{P}\left[#1\right]}
\newcommand{\E}[1]{\mathbb{E}\left[#1\right]}
\newcommand{\qed}{\hfill \ensuremath{\Box}}
\newenvironment{proof}
      {\medskip\noindent{\bf Proof:}\hspace{1mm}}
      {\hfill$\Box$\medskip}
\def\Ddots{\mathinner{\mkern1mu\raise\p@
\vbox{\kern7\p@\hbox{.}}\mkern2mu
\raise4\p@\hbox{.}\mkern2mu\raise7\p@\hbox{.}\mkern1mu}}
\title{\vspace{-0.7cm} Bounds for graph regularity and removal lemmas}
\author{David Conlon\thanks{St John's College, Cambridge CB2 1TP, United
Kingdom.
E-mail: {\tt
d.conlon@dpmms.cam.ac.uk}. Research supported by a Royal Society University
Research Fellowship.} \and Jacob Fox\thanks{Department of Mathematics, MIT,
Cambridge, MA 02139-4307. E-mail: {\tt fox@math.mit.edu}. Research
supported by a Simons Fellowship and NSF grant DMS-1069197.}}
\date{}
\begin{document}
\maketitle

\begin{abstract}
We show, for any positive integer $k$, that there exists a graph in which any
equitable partition of its vertices into $k$ parts has at least $ck^2/\log^* k$
pairs of parts which are not $\epsilon$-regular, where $c,\epsilon>0$ are
absolute constants. This bound is tight up to the constant $c$ and addresses a
question of Gowers on the number of irregular pairs in Szemer\'edi's regularity
lemma.

In order to gain some control over irregular pairs, another regularity lemma,
known as the strong regularity lemma, was developed by Alon, Fischer,
Krivelevich, and Szegedy. For this lemma, we prove a lower bound of
wowzer-type, which is one level higher in the Ackermann hierarchy than the
tower function, on the number of parts in the strong regularity lemma,
essentially matching the upper bound. On the other hand, for the induced graph
removal lemma, the standard application of the strong regularity lemma, we find
a different proof which yields a tower-type bound.

We also discuss bounds on several related regularity lemmas, including the weak
regularity lemma of Frieze and Kannan and the recently established regular
approximation theorem. In particular, we show that a weak partition with
approximation parameter $\epsilon$ may require as many as
$2^{\Omega(\epsilon^{-2})}$ parts. This is tight up to the implied constant and
solves a problem studied by Lov\'asz and Szegedy.
\end{abstract}

\section{Introduction}

Originally developed by Szemer\'edi as part of his proof of the celebrated
Erd\H{o}s-Tur\'an conjecture on long arithmetic progressions in dense subsets
of the integers \cite{Sz1}, Szemer\'edi's regularity lemma \cite{Sz} has become a central tool in extremal combinatorics. Roughly speaking, the lemma says that the vertex
set of any graph may be partitioned into a small
number of parts such that the bipartite subgraph between almost every pair of
parts behaves in a random-like fashion.

Given two subsets $X$ and $Y$ of a graph $G$, we write $d(X,Y)$ for the density
of edges between $X$ and $Y$. The pair $(X, Y)$ is said to be {\it
$(\epsilon,\delta)$-regular} if for some $\alpha$ and all $X' \subset X$ and
$Y' \subset Y$ with $|X'| \geq \delta |X|$ and $|Y'| \geq \delta |Y|$, we have
$\alpha<d(X',Y')<\alpha+\epsilon$. In the case where $\delta = \epsilon$, we
say that the pair $(X, Y)$ is {\it $\epsilon$-regular}. By saying that a pair
of parts is random-like, we mean that they are $(\epsilon, \delta)$-regular
with $\epsilon$ and $\delta$ small, a property which is easily seen to be
satisfied with high probability by a random bipartite graph. We will also ask that the
different parts be of comparable size, that is, that the partition $V(G) = V_1
\cup \ldots \cup V_k$ be {\it equitable}, that is, $||V_i|-|V_j|| \leq 1$ for
all $i$ and $j$.

The {\it regularity lemma} now states that for each $\epsilon,\delta,\eta > 0$,
there is a positive integer $M=M(\epsilon,\delta,\eta)$ such that the vertices
of any graph $G$ can be equitably partitioned $V(G) = V_1 \cup \ldots \cup V_M$
into $M$ parts where all but at most an $\eta$ fraction of the pairs
$(V_i,V_j)$ are $(\epsilon,\delta)$-regular. We shall say that such a partition
is $(\epsilon, \delta, \eta)$-regular and simply $\epsilon$-regular in the case
$\epsilon=\delta=\eta$. For more background on the regularity lemma, see the
excellent surveys by Koml\'os and Simonovits \cite{KoSi} and R\"odl and Schacht
\cite{RoSc}.

Use of the regularity lemma is now widespread throughout graph theory. However,
one of the earliest applications, the triangle removal lemma of Ruzsa and
Szemer\'edi \cite{RuSz}, remains the standard example. It states that for any
$\epsilon > 0$ there exists $\delta > 0$ such that any graph on $n$ vertices
with at most $\delta n^3$ triangles can be made triangle-free by removing
$\epsilon n^2$ edges. It easily implies Roth's theorem \cite{Ro} on 3-term
arithmetic progressions in dense sets of integers, and Solymosi \cite{So}
showed that it further implies the stronger corners theorem of Ajtai and
Szemer\'edi \cite{AjSz}, which states that any dense subset of the integer grid
contains the vertices of an axis-aligned isosceles triangle. This result was extended to all graphs in \cite{EFR,ADLRY}. The extension, known as the graph removal lemma, says that given a graph $H$ on $h$ vertices and $\epsilon > 0$ there exists $\delta > 0$ such that any graph on $n$ vertices with at most $\delta n^h$ copies of $H$ can be made $H$-free by removing $\epsilon n^2$ edges.

One disadvantage of applying the regularity lemma to prove this theorem is the bounds that it gives for the size of $\delta$ in terms of $\epsilon$. The proof of the regularity lemma yields a bound of tower-type for the number of pieces in the partition. When this is applied to graph removal, it gives a bound for $\delta^{-1}$ which is a tower
of twos of height polynomial in $\epsilon^{-1}$. Surprisingly, any hope that a
better bound for the regularity lemma might be found was put to rest by Gowers
\cite{Go}, who showed that there are graphs for which a tower-type number of
parts are required in order to obtain a regular partition.

To be more precise, the proof of the regularity lemma shows that
$M(\epsilon,\delta,\eta)$ can be taken to be a tower of twos of height
proportional to $\epsilon^{-2}\delta^{-2}\eta^{-1}$. Gowers' result, described
in \cite{Bo} as a tour de force, is a lower bound, with $c = 1/16$, on
$M(1-\delta^c,\delta,1-\delta^c)$ which is a tower of twos of height
proportional to $\delta^{-c}$. As Gowers notes, it is an easy exercise to
translate lower bounds for small $\delta$ and large $\epsilon$ into lower
bounds for large $\delta$ and small $\epsilon$ which are also of tower-type.
However, the natural question, discussed by Szemer\'edi \cite{Sz}, Koml\'os and Simonovits \cite{KoSi}, and Gowers \cite{Go}, of determining the dependency of $M(\epsilon,
\delta, \eta)$ on $\eta$, which measures the fraction of allowed irregular pairs, has remained open. This is the first problem we will address here, showing that the dependence is again of tower-type.

This does not mean that better bounds cannot be proved for the graph removal
lemma. Recently, an alternative proof was found by the second author \cite{Fo},
allowing one to show that $\delta^{-1}$ may be taken to be a tower of twos of
height $O(\log \epsilon^{-1})$, better than one could possibly do using
regularity. Though this remains quite far from the lower bound of $\epsilon^{-
O(\log \epsilon^{-1})}$, it clears a significant hurdle.

The second major theme of this paper is a proof of the induced graph removal
lemma which similarly bypasses a natural obstacle. Let $H$ be a graph on $h$
vertices. The induced graph removal lemma, proved by Alon, Fischer,
Krivelevich, and Szegedy \cite{AFKS}, states that for any $\epsilon > 0$ there
exists $\delta > 0$ such that any graph on $n$ vertices with at most $\delta
n^h$ induced copies of $H$ may be made induced $H$-free by adding or deleting
at most $\epsilon n^2$ edges.

This result, which easily implies the graph removal lemma, does not readily
follow from the same technique used to prove the graph removal lemma, mainly
because of the possibility of irregular pairs in the regularity partition. To
overcome this issue, Alon, Fischer, Krivelevich, and Szegedy \cite{AFKS}
developed a strenghthening of Szemer\'edi's regularity lemma. Roughly, it gives
an equitable partition $\mathcal{A}$ and an equitable refinement $\mathcal{B}$
of $\mathcal{A}$ such that $\mathcal{A}$ and $\mathcal{B}$ are both regular, with the guaranteed regularity of $\mathcal{B}$ allowed to depend on the size of $\mathcal{A}$, and the edge density between almost all pairs of parts in $\mathcal{B}$ close to the edge
density between the pair of parts in $\mathcal{A}$ that they lie in.

The proof of the strong regularity lemma involves iterative applications of
Szemer\'edi's regularity lemma. This causes the upper bound on the number of
parts in $\mathcal{B}$ to grow as a wowzer function, which is one level higher
in the Ackermann hierarchy than the tower function. In order to get an improved
bound for its various applications, one may hope that an improved bound of
tower-type could be established. We show that no such bound exists. In fact, we
will show that a seemingly weaker statement requires wowzer-type bounds. On the
other hand, we give an alternative proof of the induced graph removal lemma,
allowing one to show that $\delta^{-1}$ may be taken to be a tower of twos of
height polynomial in $\epsilon^{-1}$, better than one could possibly achieve
using the strong regularity lemma.

We also make progress on determining bounds for various related regularity
lemmas, including the Frieze-Kannan weak regularity lemma \cite{FrKa, FrKa1}
and the regular approximation theorem, due independently to Lov\'asz and
Szegedy \cite{LS} and to R\"odl and Schacht \cite{RS07}. We discuss all these
contributions in more detail in the sections below.

\subsection{The number of irregular pairs}\label{irregularpairssection}

The role of $\eta$ in the regularity lemma is to measure how many pairs of
subsets in the partition are regular. If a partition into $k$ pieces is
$(\epsilon, \delta, \eta)$-regular, then there will be at most $\eta
\binom{k}{2}$ irregular pairs in the partition. Szemer\'edi \cite{Sz} wrote
that it would be interesting to determine if the assertion of the regularity
lemma holds when we do not allow any irregular pairs. This question remained
unanswered for a long time until it was observed by Lov\'asz, Seymour, Trotter,
and Alon, Duke, Lefmann, R\"odl, and Yuster \cite{ADLRY} that irregular pairs
can be necessary. The simple example of the half-graph shows that this is
indeed the case. The half-graph is a bipartite graph with vertex sets
$A=\{a_1,\ldots,a_n\}$ and $B=\{b_1,\ldots,b_n\}$ in which $(a_i,b_j)$ is an
edge if and only if $i \leq j$. Any partition of this graph into $M$ parts will
have $\Omega(M)$ irregular pairs. In other words, $M(\epsilon,\delta,\eta)$
must grow at least linearly in $\eta^{-1}$.

However, the number of irregular pairs, or, in other words, the dependence of
$M(\epsilon,\delta,\eta)$ on $\eta^{-1}$ with $\epsilon$ and $\delta$ fixed,
has not been well understood despite being asked several times, including by
Koml\'os and Simonovits \cite{KoSi} and, more explicitly, by Gowers \cite{Go}.
This problem and related problems have continued to attract interest (see,
e.g., \cite{KoRo}, \cite{MSh}). The linear bound obtained from the half-graph
appears to be the only bound in the literature for this problem.

For fixed constants $\epsilon$ and $\delta$ and each $M$, we give a
construction in which any partition into $M$ parts has at least $cM^2/\log^* M$
irregular parts, where $c>0$ is an absolute constant, and this is tight apart
from the constant $c$. The iterated logarithm $\log^* n$ is the number of times
the logarithm function needs to be applied to get a number which is at most
$1$. That is, $\log^* x = 0$ if $x \leq 1$ and otherwise $\log^* x =1+\log^*
(\log x)$ denotes the iterated logarithm. In other words, the dependence on
$\eta$ in $M(\epsilon,\delta,\eta)$ is indeed a tower of twos of height
proportional to $\Theta(\eta^{-1})$.

\begin{theorem}\label{exceptionalpairs}
There are absolute constants $c,\epsilon,\delta>0$ such that for every $k$
there is a graph in which every equitable partition of the graph into $k$ parts
has at least $ck^2/\log^* k$ pairs of parts which are not
$(\epsilon,\delta)$-regular. In other words, $M(\epsilon,\delta,\eta)$ is at
least a tower of twos of height $c\eta^{-1}$.
\end{theorem}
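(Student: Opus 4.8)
The plan is to build the graph as a bounded-depth "tower of blow-ups" in which every coarse level forces a genuinely new, finer partition, so that any $k$-part equitable partition is essentially trapped at some level $\ell \approx \log^* k$, and at that level the structure of the construction guarantees $\Omega(k^2/\ell)$ irregular pairs. I would start from Gowers' construction (as described in the excerpt) producing, for each height $t$, a graph $G_t$ on which every $\epsilon$-regular equitable partition needs at least $T(t)$ parts, where $T$ is a tower of twos of height proportional to $t$. The key idea is to iterate this construction: take a vertex set of size $n$, split it into $m$ blocks, place inside each block a scaled copy of $G_{t-1}$ (on $n/m$ vertices), and on top of that superimpose a "level-$t$ gadget" — a carefully chosen bipartite pattern between blocks, of the Gowers type — so that the combined graph $G_t^{(m)}$ has the property that any equitable partition into $k \le T(t)$ parts either (i) fails to resolve the top-level gadget, in which case a constant fraction of the $\binom{k}{2}$ pairs straddling the gadget are irregular, or (ii) resolves it, and is thereby forced to subdivide each block into $\ge T(t-1)$ sub-pieces, which is impossible for $k$ too small. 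Unwinding the recursion, a partition into $k$ parts can only "reach" level $\ell$ where the iterated tower first exceeds $k$, i.e. $\ell = \Theta(\log^* k)$, and at that stopping level one extracts the quadratic lower bound on irregular pairs, with a $1/\ell$ loss because the $\Omega(k^2)$ worth of irregular pairs is distributed across the $\Theta(\ell)$ levels and we can only guarantee the bound at one of them by averaging.

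Concretely, I would set up an index-function argument analogous to Gowers': with each equitable partition $\mathcal{P}$ of $G$ into $k$ parts and each level $i \le \log^* k$, associate a mean-square-density index $q_i(\mathcal{P}) \in [0,1]$ measuring how well $\mathcal{P}$ approximates the densities forced by the level-$i$ gadgets; one shows that if $\mathcal{P}$ has fewer than $ck^2/\log^* k$ irregular pairs in total, then it has fewer than $ck^2$ irregular pairs at \emph{every} level (up to relabelling via the averaging), and hence — by the defect form of Cauchy–Schwarz that underlies all regularity-lemma lower bounds — the index must strictly increase, $q_{i+1} \ge q_i + c'$, from level to level. Since $q_i \le 1$ always, this caps the number of levels at $1/c' = O(1)$... which is not yet $\log^*k$; the fix, and this is the real content of the iteration, is that the level-$i$ gadget is itself built on a scale $n_i$ that shrinks tower-exponentially, $n_i \approx n / T(i)$, so that a $k$-part partition has genuinely fresh "resolution budget" at each level only until $k$ runs out against the tower, giving exactly $\log^* k$ usable levels rather than $O(1)$. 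Choosing the gadgets so that any partition failing to increase its index at level $i$ must have $\Omega(k^2)$ irregular pairs involving level-$i$ blocks then closes the argument: either the index climbs through all $\sim\log^* k$ levels (impossible, $q_i \le 1$ forces a stop) or it stalls at some level $\le \log^* k$, and a stall produces $\Omega(k^2)$ irregular pairs, hence $\ge c k^2/\log^* k$ after the averaging over levels.

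To translate this into the stated form, I would fix $\epsilon,\delta$ as small absolute constants (inherited from Gowers' bipartite gadget, e.g. gadgets that are $(1-\delta^c,\delta)$-irregular between the relevant sub-blocks), take $G = G_\ell^{(m)}$ with $\ell$ and the block counts tuned so that $T^{(\ell)}(2) > k \ge T^{(\ell-1)}(2)$ — i.e. $\ell = \Theta(\log^* k)$ — and verify the two defining features needed above: (a) the \emph{resolution lemma}, that any $k$-part equitable partition that is $(\epsilon,\delta)$-regular on all pairs must, within $1-o(1)$ of each top-level block, induce a regular refinement of the embedded $G_{\ell-1}^{(m)}$, which by induction needs $\ge T(\ell-1) > k$ parts — contradiction unless there are many irregular pairs; and (b) the \emph{quadratic count}, that the irregular pairs produced at the stalling level number $\Omega(k^2)$ because the gadget at that level is spread across a constant fraction of all block-pairs and each block-pair contributes $\Omega((k/m_i)^2)$ irregular part-pairs. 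The main obstacle — and where I expect the bulk of the technical work to go — is getting the iteration to compose cleanly: one must ensure that the regularity (or irregularity) of the outer gadget is not destroyed or obscured by the inner structure when a partition cuts \emph{across} block boundaries (the typical case for an equitable partition not aligned with the construction), which requires the gadgets at different levels to be "independent" in a density sense and calls for a careful choice of the bipartite patterns (random-like within each level, but with tower-scaled parameters across levels) together with a union-bound/averaging argument controlling the interaction terms. This is exactly the point where Gowers' single-level argument does not immediately generalize, and it is the heart of the proof.
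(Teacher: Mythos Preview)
Your proposal has the right architectural intuition --- a hierarchy of refining partitions with tower-growing sizes, and the recognition that cross-level interference is the crux --- but it is missing the one idea that actually makes the count come out to $ck^2/\log^* k$, and your account of where the $1/\log^* k$ factor comes from is not the right mechanism.

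In Gowers' construction, a gadget is placed between \emph{every} pair of blocks at each level. This is why his argument only reaches height $\epsilon^{-1/16}$: each level contributes a constant to the total density budget, so the number of levels is $O(1)$ in terms of $\eta$. Your proposal inherits this limitation; you never explain how to get $\Theta(\eta^{-1}) = \Theta(\log^* k)$ levels without the densities saturating. The paper's key innovation, explicitly flagged as ``not present in the constructions of Gowers,'' is to place gadgets only between a \emph{sparse random} set of block-pairs at each level: one takes a random graph $G_i$ on the level-$i$ blocks with edge probability $p_i = \Theta(\eta)$, and installs the bipartite gadget only across edges of $G_i$. Now $\sum_i p_i$ stays bounded, which is exactly what permits $s = \Theta(\eta^{-1})$ levels, and simultaneously controls the cross-level interference you correctly worry about (most block-pairs carry a gadget from at most one level).

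The $1/\log^* k$ factor then does \emph{not} come from averaging over levels, as you suggest. It comes directly from the sparseness: if a part $Z_j$ is ``stuck'' at level $i$ (ripe or shattered there, in the paper's language), the irregular partners $Z_\ell$ one can exhibit for it live inside the $G_i$-neighborhood of the block containing $Z_j$, and that neighborhood has size $\Theta(p_i m_i)$, yielding only $\Theta(p_i k) = \Theta(\eta k)$ irregular partners rather than $\Theta(k)$. One then shows a constant fraction $\theta k$ of the $Z_j$ are stuck at some level, giving $\theta k \cdot \theta^{-1}\eta k = \eta k^2$ irregular pairs. There is no energy-increment iteration in the lower-bound argument itself; instead the paper proves directly (its Theorem~\ref{maingen}) that any $(\epsilon,\delta,\eta)$-regular equitable partition must be a $(\beta,\upsilon)$-approximate refinement of the deepest partition $P_{s-1}$, and hence have at least $\tfrac12|P_{s-1}|$ parts. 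Your index-increment sketch and the subsequent ``fix'' are heading in a different and, as written, circular direction: you cannot both cap the number of levels at $O(1)$ via $q_i \le 1$ and then claim $\log^* k$ usable levels from a separate resolution argument without specifying what replaces the density budget --- and the answer is precisely the sparse $p_i$.
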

We prove Theorem \ref{exceptionalpairs} with $\epsilon=\frac{1}{2}$,
$\delta=2^{-500}$, and $c=2^{-700}$, and we make no attempt to optimize
constants. The proof of Theorem \ref{exceptionalpairs} can be easily modified
to obtain the same result with $\epsilon$ tending to $1$ at the expense of
having $\delta$ and $c$ tending to $0$.

In the important special case where $\epsilon = \delta = \eta$, we let
$M(\epsilon)= M(\epsilon, \delta, \eta)$. Gowers \cite{Go} gave two different
constructions giving lower bounds on $M(\epsilon)$. The first construction is
simpler, but the lower bound it gives is a tower of twos of height only
logarithmic in $\epsilon^{-1}$. The second construction gives a lower bound
which is a tower of twos of height $\epsilon^{-1/16}$, but is more complicated.
Theorem \ref{exceptionalpairs} also gives a lower bound on $M(\epsilon)$ which
is a tower of twos of height polynomial in $\epsilon^{-1}$, in fact linear in
$\epsilon^{-1}$, and the construction is a bit simpler. Unfortunately, the
proof that it works, which builds upon Gowers' simpler first proof, is still
rather complicated and delicate.

We give a rough idea of how the graph $G$ used to prove Theorem
\ref{exceptionalpairs} is constructed. The graph $G$ has a sequence of vertex
partitions $P_1,\ldots,P_s$, with $P_{i+1}$ a refinement of $P_i$ for $1 \leq i
\leq s-1$, and the number of parts of $P_{i+1}$ is roughly exponential in the
number of parts of $P_i$. For each $i$, $1 \leq i \leq s-1$, we pick a random
graph $G_i$ with vertex set $P_i$, where each edge is picked independently with probability $p_i$. For every two vertex subsets $X,Y \in P_i$ of
$G$ which are adjacent in $G_i$, we take random vertex partitions $X=X_Y^1 \cup
X_Y^2$ and $Y=Y_X^1 \cup Y_X^2$ into parts of equal size, with each of these
parts the union of parts of $P_{i+1}$. Then, for $d=1,2$, we add the edges to
$G$ between $X_Y^d$ and $Y_X^d$. We will show that with positive probability
the graph $G$ constructed above has the desired properties for Theorem
\ref{exceptionalpairs}. In fact, in Theorem \ref{maingen}, we will show that it
has the stronger property that any $(\epsilon,\delta,\eta)$-regular vertex
partition of $G$ is close to being a refinement of $P_s$.

A novelty of our construction, not present in the constructions of Gowers, is
the use of the random graphs $G_i$, which allow us to control the number of
irregular pairs. Instead, for every pair of parts $X,Y$ in $P_i$, Gowers
\cite{Go} introduces or deletes some edges between them so as to make the pair
of parts far from regular. To prove the desired result, we first establish
several lemmas on the edge distribution in $G$. The construction is general
enough and Theorem \ref{maingen} strong enough that we also use it to establish
a wowzer-type lower bound for the strong regularity lemma, as described in the
next subsection.

\subsection{The strong regularity lemma}\label{strongsubsection}

Before stating the strong regularity lemma, we next define a notion of
closeness between an equitable partition and an equitable refinement of this
partition.  For an equitable partition $\mathcal{A} = \{V_i|1 \leq i \leq k\}$
of $V(G)$ and an equitable refinement $\mathcal{B} = \{V_{i,j}|1 \leq i \leq
k,1 \leq j \leq \ell\}$ of $\mathcal{A}$, we say that $\mathcal{B}$ is {\it
$\epsilon$-close} to $\mathcal{A}$ if the following is satisfied. All $1\leq i
\leq i' \leq k$ but at most $\epsilon k^2$ of them are such that for all $1\leq
j,j' \leq \ell$ but at most $\epsilon \ell^2$ of them
$|d(V_i,V_{i'})-d(V_{i,j},V_{i',j'})|<\epsilon$ holds. This notion roughly says
that $\mathcal{B}$ is an approximation of $\mathcal{A}$. We are now ready to state the strong regularity lemma of  Alon, Fischer, Krivelevich, and Szegedy \cite{AFKS}.

\begin{lemma}\label{strongreg} {\bf (Strong regularity lemma)} For every
function $f:\mathbb{N} \rightarrow (0,1)$ there exists a number $S = S(f)$ with
the following property. For every graph $G=(V,E)$, there is an equitable
partition $\mathcal{A}$ of the vertex set $V$ and an equitable refinement
$\mathcal{B}$ of $\mathcal{A}$ with $|\mathcal{B}| \leq S$ such that the
partition $\mathcal{A}$ is $f(1)$-regular, the partition $\mathcal{B}$ is
$f(|\mathcal{A}|)$-regular, and $\mathcal{B}$ is $f(1)$-close to $\mathcal{A}$.
\end{lemma}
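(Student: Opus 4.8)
The plan is to deduce the strong regularity lemma from Szemer\'edi's regularity lemma by the energy-increment method of \cite{AFKS}, with the \emph{mean-square density} (index)
\[
q(\mathcal{P}) \;=\; \sum_{V_i,V_j\in\mathcal{P}}\frac{|V_i|\,|V_j|}{|V|^2}\,d(V_i,V_j)^2 \;\in\;[0,1]
\]
of a vertex partition $\mathcal{P}$ playing the role of a monotone potential function. I will use two standard facts: (i) $q$ is non-decreasing when a partition is refined; and (ii) the refinement form of Szemer\'edi's regularity lemma, namely that for every $\epsilon>0$ any equitable partition of $V(G)$ into $k$ parts has an $\epsilon$-regular equitable refinement into at most $M(\epsilon,k)$ parts, where $M(\epsilon,k)$ is bounded by a tower of twos of height polynomial in $\epsilon^{-1}$ sitting above $k$, and where each original part is split into a common number of pieces.

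The only extra ingredient is an increment lemma converting a failure of closeness into a definite gain in $q$: if $\mathcal{B}=\{V_{i,j}\}$ is an equitable refinement of the equitable partition $\mathcal{A}=\{V_i\}$ into $k$ parts (with $|V|$ large relative to $k$), and $\mathcal{B}$ is \emph{not} $\epsilon$-close to $\mathcal{A}$, then $q(\mathcal{B})\ge q(\mathcal{A})+\tfrac12\epsilon^{4}$. This follows from the exact identity
\[
q(\mathcal{B})-q(\mathcal{A}) \;=\; \sum_{V_i,V_{i'}\in\mathcal{A}}\frac{|V_i|\,|V_{i'}|}{|V|^2}\sum_{j,j'}\frac{|V_{i,j}|\,|V_{i',j'}|}{|V_i|\,|V_{i'}|}\bigl(d(V_{i,j},V_{i',j'})-d(V_i,V_{i'})\bigr)^2,
\]
whose right-hand side is manifestly non-negative (this gives fact (i)): for each of the more than $\epsilon k^{2}$ pairs $(i,i')$ witnessing non-closeness, at least $\epsilon\ell^{2}$ of the inner weights, each about $\ell^{-2}$ by equitability, multiply a square that is at least $\epsilon^{2}$, so the inner sum is at least $(1-o(1))\epsilon^{3}$; since each such outer term carries weight about $k^{-2}$, the total is at least $(1-o(1))\epsilon^{4}$.

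With these in hand, I would iterate as follows, producing partitions $\mathcal{A}_1,\mathcal{B}_1,\mathcal{A}_2,\mathcal{B}_2,\dots$, each a refinement of the previous. Set $\mathcal{B}_0$ to be the trivial partition; given $\mathcal{B}_{i-1}$, apply the regularity lemma to obtain an $f(1)$-regular equitable refinement $\mathcal{A}_i$ of $\mathcal{B}_{i-1}$, and then apply it again to obtain an $f(|\mathcal{A}_i|)$-regular equitable refinement $\mathcal{B}_i$ of $\mathcal{A}_i$. If $\mathcal{B}_i$ is $f(1)$-close to $\mathcal{A}_i$, stop and return the pair $(\mathcal{A}_i,\mathcal{B}_i)$, which satisfies every clause of the lemma by construction. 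Otherwise continue; since $\mathcal{A}_{i+1}$ refines $\mathcal{B}_i$ and $\mathcal{B}_i$ refines $\mathcal{A}_i$, fact (i) and the increment lemma give $q(\mathcal{A}_{i+1})\ge q(\mathcal{B}_i)\ge q(\mathcal{A}_i)+\tfrac12 f(1)^{4}$, so $q(\mathcal{A}_i)$ increases by a fixed amount each round. As $q\le 1$, the process halts after at most $1+2f(1)^{-4}$ rounds. Tracking the number of parts through this bounded number of applications of the regularity lemma---each one a tower above the current count, with inner parameter as small as $f$ evaluated at a gigantic number---yields a finite (wowzer-type) value $S(f)$ bounding $|\mathcal{B}|$, as required.

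Conceptually this is just the energy-increment paradigm already behind Szemer\'edi's lemma; the one new point is that the stopping condition is closeness rather than regularity, which the increment lemma handles. The remaining work is routine bookkeeping: carrying the $\pm1$ equitability slack through the increment computation (harmless once $|V|$ is large, and if $|V|$ is small one may simply take $\mathcal{B}=\mathcal{A}$ to be the partition into singletons, which is trivially $f(1)$-regular, $f(|\mathcal{A}|)$-regular, and $f(1)$-close to itself), arranging via the refinement form of the regularity lemma that each $\mathcal{A}_{i+1}$ genuinely refines $\mathcal{B}_i$ (this monotonicity is exactly what forces termination), and writing out $S(f)$ explicitly.
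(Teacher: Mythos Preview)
Your proposal is correct and follows essentially the same approach the paper describes (and attributes to \cite{AFKS}): iterate Szemer\'edi's regularity lemma, using the mean-square density $q$ as a potential, and invoke the increment lemma that failure of $\epsilon$-closeness forces $q(\mathcal{B})\ge q(\mathcal{A})+\Omega(\epsilon^4)$, so the process terminates after $O(f(1)^{-4})$ rounds with a wowzer-type bound on $|\mathcal{B}|$. The paper records exactly this equivalence between $\epsilon$-closeness and $q(\mathcal{B})\le q(\mathcal{A})+O(\epsilon)$ (up to a fourth-power change in $\epsilon$) and the resulting iteration count $j=64\epsilon^{-4}$, so there is no substantive difference.
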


The upper bound on $S$, the number of parts of $\mathcal{B}$, that the proof
gives is of wowzer-type, which is one level  higher in the Ackermann hierarchy
than the tower function. The {\it tower function} is defined by $T(1)=2$, and
$T(n)=2^{T(n-1)}$ for $n \geq 2$. The {\it wowzer function} $W(n)$ is defined
by $W(1)=2$ and $W(n)=T(W(n-1))$. For reasonable choices of the function $f$,
which is the case for all known applications, such as those for which $1/f$ is
an increasing function which is at least a constant number of iterations of the
logarithm function, the upper bound on $S(f)$ is at least wowzer in a power of
$\epsilon=f(1)$. Recall that $M(\epsilon)$, the number of parts required for
Szemer\'edi's regularity lemma, grows as a tower of height a power of
$\epsilon^{-1}$. The precise upper bound on the number of parts in the strong
regularity lemma is defined as follows. Let $W_1=M(\epsilon)$ and
$W_{i+1}=M(2f(W_i)/W_i^2)$.
The proof of the strong regularity lemma \cite{AFKS} shows that
$S(f)=512\epsilon^{-4}W_j$ with $j= 64\epsilon^{-4}$ satisfies the required
property.

For a partition $\mathcal{P}:V(G)=V_1 \cup \ldots \cup V_k$ of the vertex set
of a graph $G$, the {\it mean square density} of $\mathcal{P}$ is defined by
$$q(\mathcal{P})=\sum_{i,j}d^2(V_i,V_j)p_ip_j,$$ where $p_i=|V_i|/|V(G)|$. This
function plays an important role in the proof of Szemer\'edi's regularity lemma
and its variants.

The strong regularity lemma gives a regular partition $\mathcal{A}$, and a
refinement $\mathcal{B}$ which is much more regular and is close to
$\mathcal{A}$. For equitable partitions $\mathcal{A}$ and $\mathcal{B}$ with
$\mathcal{B}$ a refinement of $\mathcal{A}$, the condition $\mathcal{B}$ is
$\epsilon$-close to $\mathcal{A}$ is equivalent,  up to a polynomial change in
$\epsilon$, to $q(\mathcal{B}) \leq q(\mathcal{A})+\epsilon$. Indeed, if $\mathcal{B}$ is $\epsilon$-close to $\mathcal{A}$, then $q(\mathcal{B}) \leq q(\mathcal{A}) + O(\epsilon)$, while if $q(\mathcal{B}) \leq q(\mathcal{A})+\epsilon$, then $\mathcal{B}$ is $O(\epsilon^{1/4})$-close to $\mathcal{A}$. A version of this statement is present in Lemma 3.7 of \cite{AFKS}. As it is suffiicent and more convenient to work with mean square density instead of $\epsilon$-closeness, we do so from now on. 

Note that in the strong regularity lemma, without loss of generality we may
assume $f$ is a (monotonically) decreasing function. Indeed, this can be shown
by considering the decreasing function $f'(k):=\min_{1 \leq i \leq k} f(k)$.
From the above discussion, it is easy to see that the strong regularity lemma
has the following simple corollary, with a similar upper bound.

\begin{corollary} \label{strongcor}
Let $\epsilon>0$ and $f:\mathbb{N} \rightarrow (0,1)$ be a decreasing function.
Then there exists a number $S=S(f,\epsilon)$ such that for every graph $G$
there are equitable partitions $\mathcal{A},\mathcal{B}$ of the vertex set of
$G$ with $|\mathcal{B}| \leq S$, $q(\mathcal{B}) \leq q(\mathcal{A})+\epsilon$,
and $\mathcal{B}$ is $f(|\mathcal{A}|)$-regular.
\end{corollary}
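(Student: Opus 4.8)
The plan is to deduce Corollary~\ref{strongcor} from the strong regularity lemma (Lemma~\ref{strongreg}) by applying it to a truncated version of $f$. The strong regularity lemma already hands us an equitable partition $\mathcal{A}$, an equitable refinement $\mathcal{B}$ of $\mathcal{A}$ that is much more regular, and the guarantee that $\mathcal{B}$ is $f(1)$-close to $\mathcal{A}$; all that is missing is to (i) ensure the regularity of $\mathcal{B}$ is at least as strong as the asked-for $f(|\mathcal{A}|)$-regularity, and (ii) ensure $f(1)$ is small enough in terms of $\epsilon$ that the closeness of $\mathcal{B}$ to $\mathcal{A}$ upgrades to the mean-square-density bound $q(\mathcal{B})\le q(\mathcal{A})+\epsilon$.

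To set this up, I would first recall from the discussion preceding the corollary that there is an absolute constant $C$ such that, whenever $\mathcal{B}$ is an equitable refinement of an equitable partition $\mathcal{A}$ and $\mathcal{B}$ is $\alpha$-close to $\mathcal{A}$, one has $q(\mathcal{B})\le q(\mathcal{A})+C\alpha$; this is the easy direction of the equivalence between $\epsilon$-closeness and bounded mean-square-density increment, and in particular does not incur the fourth-power loss of the reverse direction. Given $\epsilon>0$ and the decreasing function $f$, I would then define $g:\mathbb{N}\to(0,1)$ by $g(k):=\min\{f(k),\,\epsilon/(C+1)\}$. This $g$ is again a decreasing function taking values in $(0,1)$, it satisfies $g\le f$ pointwise, and $g(1)\le \epsilon/(C+1)$. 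Set $S:=S(g)$, the constant furnished by Lemma~\ref{strongreg} for $g$, and declare $S(f,\epsilon):=S$.

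Now, given any graph $G$, apply Lemma~\ref{strongreg} with the function $g$ to obtain an equitable partition $\mathcal{A}$ and an equitable refinement $\mathcal{B}$ of $\mathcal{A}$ with $|\mathcal{B}|\le S$, where $\mathcal{B}$ is $g(|\mathcal{A}|)$-regular and $\mathcal{B}$ is $g(1)$-close to $\mathcal{A}$. Since $g(|\mathcal{A}|)\le f(|\mathcal{A}|)$ and every $\beta$-regular partition is $\beta'$-regular for all $\beta'\ge\beta$, the partition $\mathcal{B}$ is $f(|\mathcal{A}|)$-regular. Since $\mathcal{B}$ is $g(1)$-close to $\mathcal{A}$, the inequality recalled above yields $q(\mathcal{B})\le q(\mathcal{A})+Cg(1)\le q(\mathcal{A})+\tfrac{C}{C+1}\epsilon\le q(\mathcal{A})+\epsilon$. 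Together with $|\mathcal{B}|\le S$, this gives all three conclusions. Finally, since the known upper bound on $S(g)$ is of the same wowzer-type as for the strong regularity lemma, so is the resulting bound on $S(f,\epsilon)$.

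I do not expect a genuine obstacle here; the whole argument is bookkeeping around Lemma~\ref{strongreg}. The only points to watch are using the correct (cheap) direction of the closeness/mean-square-density equivalence, and truncating $f$ at a level proportional to $\epsilon$, rather than at $\epsilon$ exactly, so as to absorb the constant $C$. Verifying that $g$ remains a decreasing function into $(0,1)$ — so that it is legitimate to feed it to Lemma~\ref{strongreg}, and the remark following that lemma applies — is immediate.
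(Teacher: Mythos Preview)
Your proposal is correct and follows the same route the paper sketches: apply Lemma~\ref{strongreg} to a truncation of $f$ so that the $f(1)$-closeness conclusion yields the desired mean-square-density bound via the easy direction of the equivalence $q(\mathcal{B})\le q(\mathcal{A})+O(\epsilon)$, and use $g\le f$ to pass from $g(|\mathcal{A}|)$-regularity to $f(|\mathcal{A}|)$-regularity. The paper does not spell out the details beyond ``from the above discussion, it is easy to see,'' so your write-up is in fact more explicit than the original.
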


We prove a lower bound for the strong regularity lemma of wowzer-type, which
essentially matches the upper bound. Maybe surprisingly, our construction
further shows that much less than what is required from the strong regularity
lemma already gives wowzer-type bounds. In particular, even for Corollary
\ref{strongcor}, which appears considerably weaker than the strong regularity
lemma, we get a wowzer-type lower bound. Note that in Corollary
\ref{strongcor}, $\mathcal{B}$ is not required to be a refinement of
$\mathcal{A}$. In this case we could have $q(\mathcal{B})$ being close to
$q(\mathcal{A})$ but the edge densities between the parts in these partitions
are quite different from each other, i.e., these partitions are not close to
each other.

\begin{theorem} \label{stronglow}
Let $0<\epsilon<2^{-100}$ and $f:\mathbb{N} \rightarrow (0,1)$ be a decreasing function with $f(1) \leq 2^{-100}\epsilon^{6}$. Define
$W_{\ell}$ recursively by $W_1=1$, $W_{\ell+1} =
T\left(2^{-70}\epsilon^5/f(W_{\ell})\right)$, where $T$ is the tower function.
Let $W=W_{t-1}$ with $t=2^{-20}\epsilon^{-1}$. Then there is a graph $G$ such
that if equitable partitions $\mathcal{A},\mathcal{B}$ of the vertex set of $G$
satisfy $q(\mathcal{B}) \leq q(\mathcal{A})+\epsilon$ and $\mathcal{B}$ is
$f(|\mathcal{A}|)$-regular, then $|\mathcal{A}|,|\mathcal{B}| \geq W$.
\end{theorem}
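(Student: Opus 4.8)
\textbf{Proof plan for Theorem \ref{stronglow}.}
The plan is to use the random graph $G$ from the construction sketched in Section \ref{irregularpairssection}, i.e.\ the graph underlying Theorem \ref{maingen}, with its nested sequence of partitions $P_1,\ldots,P_s$ where $|P_{i+1}|$ is roughly exponential in $|P_i|$. The key structural fact I would extract from Theorem \ref{maingen} is that any equitable partition $\mathcal{Q}$ of $G$ which is not close to a refinement of $P_i$ must have a noticeably large number of irregular pairs — equivalently, I want a quantitative statement saying that if $\mathcal{Q}$ has few irregular pairs (or, more usefully here, if $q(\mathcal{Q})$ is only slightly larger than the mean square density forced by $P_{i-1}$), then $\mathcal{Q}$ essentially refines $P_i$. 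The main point is that the levels $P_1,\ldots,P_s$ are laid out so that the mean square density increases by a definite amount, something like a power of $\epsilon$, at each level: moving from ``resolving $P_i$'' to ``resolving $P_{i+1}$'' costs an extra additive chunk $\Theta(\epsilon)$ in $q$. This is the engine that converts the $q(\mathcal{B})\le q(\mathcal{A})+\epsilon$ hypothesis into a bound on how many levels $\mathcal{B}$ can climb past $\mathcal{A}$.

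First I would set up the index bookkeeping. Suppose $\mathcal{A}$ is $f(|\mathcal{A}|)$-... wait, $\mathcal{A}$ need only be whatever it is; the hypothesis is on $\mathcal{B}$. So suppose for contradiction that $|\mathcal{A}| < W = W_{t-1}$. I would first argue that a partition with fewer than $W_{t-1}$ parts cannot resolve $P_{t-1}$ (this is essentially by the rapid growth of $|P_i|$ and an appropriate choice identifying $W_\ell$ with a quantity controlling $|P_\ell|$), so $\mathcal{A}$ fails to be close to a refinement of some level $P_m$ with $m \le t-1$; let $m$ be the least such level. Next, $\mathcal{B}$ is required to be $f(|\mathcal{A}|)$-regular; since $f$ is decreasing and $|\mathcal{A}|<W$, the regularity parameter $f(|\mathcal{A}|) \ge f(W)$ is small enough (this is where $f(1)\le 2^{-100}\epsilon^6$ and the recursive definition $W_{\ell+1}=T(2^{-70}\epsilon^5/f(W_\ell))$ are calibrated) that $\mathcal{B}$ being that regular \emph{forces}, via Theorem \ref{maingen}, that $\mathcal{B}$ is close to a refinement of $P_{m}$ — indeed of several more levels beyond $m$. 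Quantitatively: a very regular partition must resolve many levels of the construction, and resolving each additional level bumps up the mean square density. Then I would turn the crank: $q(\mathcal{B}) \ge q(P_{\text{resolved by }\mathcal{B}})$ roughly, and $q(\mathcal{A}) \le q(P_{m-1}) + (\text{small error})$ since $\mathcal{A}$ does not genuinely resolve $P_m$, so $q(\mathcal{B}) - q(\mathcal{A}) \ge (\text{number of extra levels resolved})\cdot \Theta(\epsilon) > \epsilon$ once $\mathcal{B}$ is forced past even one full level, contradicting $q(\mathcal{B}) \le q(\mathcal{A})+\epsilon$. The factor $t = 2^{-20}\epsilon^{-1}$ and the $2^{-70}\epsilon^5$ inside the tower are exactly the slack needed so that ``one level'' costs more than $\epsilon$ while leaving enough levels ($\approx \epsilon^{-1}$ of them, scaled) to push $W$ up to $W_{t-1}$.

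For the bound on $|\mathcal{B}|$: once we know $\mathcal{B}$ is close to a refinement of $P_{t-1}$ (forced by its high regularity, as above), and $|P_{t-1}|$ is at least $W_{t-1} = W$ by the growth estimates in the construction, we get $|\mathcal{B}| \ge W$ directly; the $|\mathcal{A}| \ge W$ bound then follows since otherwise the argument of the previous paragraph produces the contradiction. So really the two conclusions are linked: either $|\mathcal{A}|\ge W$ and we are done, or $|\mathcal{A}|<W$, in which case $\mathcal{B}$ is forced to resolve enough levels to both make $|\mathcal{B}|\ge W$ and violate the mean-square-density gap — so that branch is vacuous, and in the surviving branch $|\mathcal{B}|\ge|\mathcal{A}|\ge W$ anyway since $\mathcal{B}$ refines... no, $\mathcal{B}$ need not refine $\mathcal{A}$, but $|\mathcal{B}|\ge W$ follows from $\mathcal{B}$ being $f(W)$-regular hence resolving $P_{t-1}$ regardless.

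\textbf{Main obstacle.} The hard part will be the quantitative version of Theorem \ref{maingen} needed here: I need not just ``regular $\Rightarrow$ close to a refinement of $P_s$'' but a \emph{per-level} statement with explicit tracking of how the regularity parameter at which $\mathcal{B}$ is forced to resolve $P_i$ degrades with $i$, matched against how $q$ increases per level, and with the errors from $\epsilon$-closeness (as opposed to exact refinement) controlled — since $\mathcal{B}$ is only \emph{close} to a refinement, the mean-square-density increments are only approximately additive, and I must ensure the accumulated slack over $\approx t$ levels stays below the $\epsilon$ budget. Handling the fact that $\mathcal{A}$ and $\mathcal{B}$ are decoupled (Corollary \ref{strongcor} does not ask $\mathcal{B}\supseteq\mathcal{A}$) is a secondary subtlety: I cannot compare $q(\mathcal{B})$ and $q(\mathcal{A})$ via the usual refinement monotonicity, so I must instead bound $q(\mathcal{A})$ from above by the $q$-value of the coarsest level it fails to resolve (plus error) and $q(\mathcal{B})$ from below by the $q$-value of the finest level its regularity forces it to resolve, and argue these two ``anchor'' levels are far apart in index.
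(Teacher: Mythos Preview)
Your high-level architecture matches the paper's: build the Theorem~\ref{maingen} graph, pick a level $P'$ by the size of $\mathcal{A}$, use the regularity of $\mathcal{B}$ and Theorem~\ref{maingen} to force $\mathcal{B}$ to be an approximate refinement of a deeper level $P$, then sandwich $q(\mathcal{A})\lesssim q(P')<q(P)-2\epsilon\lesssim q(\mathcal{B})$ to contradict $q(\mathcal{B})\le q(\mathcal{A})+\epsilon$. Two things in your plan would not go through as written.

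First, the construction is not a flat sequence $P_1,\ldots,P_s$ with a uniform $\Theta(\epsilon)$ increment per level. The paper uses a two-tier hierarchy $P_{\ell,j}$ with $1\le\ell\le t$ and $1\le j\le h_\ell=2^{-70}\epsilon^5/f(M_\ell)$: the inner steps within a block carry tiny $p_{\ell,j}$ (of order $f(M_\ell)$ or $m_i^{-1/10}$), and a single boosted step $p_{\ell,h_\ell-1}\ge 2^{10}\epsilon$ produces the whole $2\epsilon$ jump between $P'=P_{r,h_r-2}$ and $P=P_{r,h_r}$. The many cheap inner steps are there because Theorem~\ref{maingen} needs $p_i\gtrsim\eta$ at \emph{every} level up to the target; with $\eta=f(|\mathcal{A}|)\le f(M_r)$ this lets you push $\mathcal{B}$ to the end of block $r$, while the $t$ outer blocks iterate this to wowzer growth. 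Your picture of two anchors ``far apart in index'' with many $\Theta(\epsilon)$ increments between them is not what happens---the anchors are two inner steps apart, with one concentrated jump.

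Second, and this is the real gap: your upper bound ``$q(\mathcal{A})\le q(P_{m-1})+{}$small, since $\mathcal{A}$ fails to resolve $P_m$'' is the wrong mechanism. A partition can fail to align with the canonical $P_m$ and still have large mean square density if it detects other structure. What the paper actually proves (Subsection~\ref{qmsd}) is that \emph{no} partition into at most $|P'|$ parts has $q$ exceeding $q(P')+\epsilon/2$, and this comes from showing the graph is quasirandom beyond $P'$: one counts closed $4$-walks in the auxiliary layers $H_i$, bounds the second eigenvalue, and applies the expander mixing lemma (Lemmas~\ref{p1p2}--\ref{xyab}). This quasirandomness argument requires the $p_i$ at the $P'$ level to be small, which is precisely why the big-$p$ jump and the size anchor for $\mathcal{A}$ must sit at different indices---hence the two-tier structure above. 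You correctly flagged the $q(\mathcal{A})$ upper bound as the main obstacle, but the tool you need is a spectral/quasirandomness argument tied to the size of $\mathcal{A}$, not anything about which levels $\mathcal{A}$ ``resolves''.
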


We have the following corollary (by replacing $\epsilon$ by $\epsilon^{1/7}$),
which is a simple to state lower bound of wowzer-type.

\begin{corollary} \label{stroncor}
For $0<\epsilon<2^{-700}$, there is a graph $G$ such that if equitable
partitions $\mathcal{A},\mathcal{B}$
of the vertex set of $G$ satisfy $|\mathcal{B}| \geq |\mathcal{A}|$,
$q(\mathcal{B}) \leq q(\mathcal{A})+\epsilon$ and $\mathcal{B}$ is $\epsilon/|A|$-regular,
then $|\mathcal{B}|, |\mathcal{A}|$ are bounded below by a function which is
wowzer in $\Omega(\epsilon^{-1/7})$.
\end{corollary}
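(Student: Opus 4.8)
The plan is to obtain Corollary~\ref{stroncor} as a direct instance of Theorem~\ref{stronglow}, using exactly the substitution flagged in its statement: apply Theorem~\ref{stronglow} with the parameter $\epsilon$ there replaced by $\epsilon^{1/7}$ and with the decreasing function $f(k)=\epsilon/k$. The first step is to record that this choice is admissible. The map $k\mapsto \epsilon/k$ is decreasing and takes values in $(0,1)$ because $0<\epsilon<1$; the hypothesis $0<\epsilon^{1/7}<2^{-100}$ is immediate from $0<\epsilon<2^{-700}$; and the requirement $f(1)\le 2^{-100}(\epsilon^{1/7})^{6}=2^{-100}\epsilon^{6/7}$ reads $\epsilon\le 2^{-100}\epsilon^{6/7}$, i.e.\ $\epsilon^{1/7}\le 2^{-100}$, which is precisely $\epsilon\le 2^{-700}$. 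This is exactly why the corollary is stated for $\epsilon<2^{-700}$.

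The second step is to match up hypotheses and conclusions. The conditions imposed on $\mathcal{A},\mathcal{B}$ in the corollary imply those of Theorem~\ref{stronglow} for the above choices: since $\epsilon<1$ we have $\epsilon<\epsilon^{1/7}$, so $q(\mathcal{B})\le q(\mathcal{A})+\epsilon$ yields $q(\mathcal{B})\le q(\mathcal{A})+\epsilon^{1/7}$, and ``$\mathcal{B}$ is $\epsilon/|\mathcal{A}|$-regular'' is literally ``$\mathcal{B}$ is $f(|\mathcal{A}|)$-regular.'' (The extra assumption $|\mathcal{B}|\ge|\mathcal{A}|$ in the corollary is not actually needed, since Theorem~\ref{stronglow} bounds $|\mathcal{A}|$ and $|\mathcal{B}|$ from below separately; it is kept only to mirror the setting of Corollary~\ref{strongcor}.) Thus Theorem~\ref{stronglow} supplies a graph $G$ for which any such $\mathcal{A},\mathcal{B}$ satisfy $|\mathcal{A}|,|\mathcal{B}|\ge W_{t-1}$, where $t=2^{-20}\epsilon^{-1/7}$ and, substituting $f(W_\ell)=\epsilon/W_\ell$, the recursion becomes $W_1=1$ and $W_{\ell+1}=T(2^{-70}(\epsilon^{1/7})^{5}/f(W_\ell))=T(2^{-70}\epsilon^{-2/7}W_\ell)$.

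The final step is to see that $W_{t-1}$ is wowzer in $\Omega(\epsilon^{-1/7})$. Because $\epsilon<2^{-700}$ the multiplier satisfies $2^{-70}\epsilon^{-2/7}\ge 2^{130}>1$, hence $W_{\ell+1}\ge T(W_\ell)$ for every $\ell$, and already $W_2\ge T(2^{130})$, which dwarfs $W(3)$. Comparing with the defining recursion $W(1)=2$, $W(\ell+1)=T(W(\ell))$ of the wowzer function, a one-line induction gives $W_\ell\ge W(\ell+1)$ for all $\ell\ge 2$, so $W_{t-1}\ge W(t)=W(2^{-20}\epsilon^{-1/7})$, which is wowzer in $\Omega(\epsilon^{-1/7})$, as desired. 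I do not expect a genuine obstacle here: all the substance lies in Theorem~\ref{stronglow}, and what remains is a parameter check plus a monotonicity comparison of two recursions. The one point that deserves care is verifying that the apparently weaker hypotheses of the corollary --- where $\mathcal{B}$ need not refine $\mathcal{A}$, and only $\epsilon/|\mathcal{A}|$-regularity of $\mathcal{B}$ is demanded --- really do fall within the scope of Theorem~\ref{stronglow}.
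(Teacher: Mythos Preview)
Your proof is correct and follows exactly the approach the paper indicates: it states the corollary follows ``by replacing $\epsilon$ by $\epsilon^{1/7}$'' in Theorem~\ref{stronglow}, and you have correctly filled in the parameter checks and the comparison with the wowzer recursion that the paper leaves implicit.
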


\subsection{Induced graph removal} \label{removalsubsection}

Let $H$ be a fixed graph on $h$ vertices and let $G$ be a graph with $o(n^h)$ copies of $H$. To prove the graph removal lemma, we need to prove that all copies of $H$ can be removed from $G$ by deleting $o(n^2)$ edges. The standard approach is to apply the regularity lemma to the graph $G$ to obtain an $\epsilon$-regular vertex partition (with an appropriate $\epsilon$) into a constant number of parts $M(\epsilon)$. Then delete edges between pairs of parts
$(V_i,V_j)$, including $i=j$, if the pair is not $\epsilon$-regular or the
density between the pair is small. It is easy to see that there are few deleted
edges. Furthermore, if there is a copy of $H$ in the remaining subgraph, then
the edges go between pairs of parts which are $\epsilon$-regular and not of
small density. A counting lemma then shows that in such a case the number of
copies of $H$ is $\Omega(n^h)$ in the remaining subgraph, and hence in $G$ as
well. But this would contradict the assumption that $G$ has $o(n^h)$ copies of $H$, so all copies of $H$ must already have been removed.

Recall that the induced graph removal lemma \cite{AFKS} is the analogous
statement for induced subgraphs, and it is stronger than the graph removal
lemma. It states that for any graph $H$ on $h$ vertices and $\epsilon>0$ there
is $\delta=\delta(\epsilon,H)>0$ such that if a graph $G$ on $n$ vertices has
at most $\delta n^h$ induced copies of $H$, then we can add or delete $\epsilon
n^2$ edges of $G$ to obtain an induced $H$-free graph.

One well-known application of the induced graph removal lemma is in property
testing. This is an active area of computer science where one wishes to quickly
distinguish between objects that satisfy a property from objects that are far
from satisfying that property. The study of this notion was initiated by
Rubinfield and Sudan \cite{RuSu}, and subsequently Goldreich, Goldwasser, and
Ron \cite{GGR} started the investigation of property testers for combinatorial
objects. One simple consequence of the induced graph removal lemma is a
constant time algorithm for induced subgraph testing with one-sided error (see
[2] and its references). A graph on $n$ vertices is $\epsilon$-far from being
induced $H$-free if at least $\epsilon n^2$ edges need to be added or removed
to make it induced $H$-free. The induced graph removal lemma implies that there
is an algorithm which runs in time $O_{\epsilon}(1)$ which accepts all induced
$H$-free graphs, and rejects any graph which is $\epsilon$-far from being
induced $H$-free with probability at least 2/3. The algorithm samples $t =
2\delta^{-1}$ $h$-tuples of vertices uniformly at random, where $\delta$ is
picked according to the induced graph removal lemma, and accepts if none of
them form an induced copy of $H$, and otherwise rejects. Any induced $H$-free
graph is clearly accepted. If a graph is $\epsilon$-far from being induced
$H$-free, then it contains at least $\delta n^h$ copies of $H$, and the
probability that none of the sampled $h$-tuples forms an induced copy of $H$ is
at most $(1 -\delta)^t < 1/3$. Notice that the running time as a function of
$\epsilon$ depends on the bound in the induced graph removal lemma, and the
proof using the strong regularity lemma gives a wowzer-type dependence.

It is tempting to try the same approach using Szemer\'edi's regularity lemma to
obtain the induced graph removal lemma.  However, there is a significant
problem with this approach, which is handling the pairs between irregular
pairs. To get around this issue,  Alon, Fischer, Krivelevich, and Szegedy
\cite{AFKS} developed the strong regularity lemma.

Because of its applications, including those in graph property testing, it has
remained an intriguing problem to improve the bound in the induced graph
removal lemma. This problem has been discussed in several papers by Alon and
his collaborators \cite{A02}, \cite{AFN}, \cite{AlSh06}. The main result
discussed in this subsection addresses this problem, improving the bound on the
number of parts in the induced graph removal lemma from wowzer-type to
tower-type. The {\it tower function} $t_i(x)$ is defined by $t_0(x)=x$ and
$t_{i+1}(x)=2^{t_i(x)}$. We say that $t_i(x)$ is a tower in $x$ of height $i$. 

\begin{theorem}\label{inducedtower}
For any graph $H$ on $h$ vertices and $0 < \epsilon< 1/2$ there is $\delta>0$ with $\delta^{-1}$ a tower in $h$ of height polynomial in $\epsilon^{-1}$ such that if a graph $G$ on $n$ vertices has
at most $\delta n^h$ induced copies of $H$, then we can add or delete $\epsilon
n^2$ edges of $G$ to obtain an induced $H$-free graph.
\end{theorem}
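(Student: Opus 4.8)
The plan is to bypass the strong regularity lemma entirely and instead mimic the tower-type proof of the (non-induced) graph removal lemma, adapting it to the induced setting. The key conceptual tool is to pass from the coloured/induced problem to an ordinary embedding problem via a suitable \emph{cleaning} of a single Szemer\'edi partition, combined with a careful choice of which pairs to modify. Concretely, first apply Szemer\'edi's regularity lemma to $G$ with a regularity parameter $\epsilon_0$ to be chosen, obtaining an equitable $\epsilon_0$-regular partition $V(G)=V_1\cup\cdots\cup V_k$ with $k\le M(\epsilon_0)$, a tower in $\epsilon_0^{-1}$. Now for each ordered pair $(i,j)$ (including $i=j$) the density $d(V_i,V_j)$ lies in some interval; the idea is to \emph{round} the picture: between regular pairs of density close to $1$ we complete to the complete bipartite graph, between regular pairs of density close to $0$ we delete all edges, and between all irregular pairs (and those of intermediate density that we cannot safely classify) we also delete all edges. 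This uses at most $O(\epsilon_0 n^2)$ edge changes provided $\epsilon_0$ is small compared to $\epsilon$ and provided the fraction of ``intermediate'' pairs is controlled — which can be arranged by first refining so that no pair has density in a small band, or more simply by an averaging argument over the threshold. Let $G'$ be the resulting graph and let $R$ be the ``reduced graph'' on vertex set $[k]$ recording, for each pair, whether we placed a clique or an empty set.

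Second, I would show that if $G'$ still contains an induced copy of $H$ then so does $G$ with multiplicity $\Omega(n^h)$, contradicting the hypothesis once $\delta$ is small enough. Here is where the induced setting differs from the ordinary one: an induced copy of $H$ in $G'$ corresponds to a homomorphism $\phi:V(H)\to[k]$ that is ``faithful'' in the sense that edges of $H$ map to clique-pairs of $R$ and non-edges of $H$ map to empty-pairs of $R$; crucially, because we \emph{rounded} all densities to $0$ or $1$, the embedding problem in $G'$ is now essentially a non-induced embedding problem into a blow-up, and the standard counting/embedding lemma for graphs (the version used in \cite{Fo}, with its tower-of-height-$O(\log\epsilon^{-1})$ dependence) applies to count copies of $H$ in $G$ \emph{at the corresponding vertex subsets} $V_{\phi(1)},\ldots,V_{\phi(h)}$. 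The point is that in $G$ the pair $(V_{\phi(a)},V_{\phi(b)})$ for an edge $ab$ of $H$ has density close to $1$ and is $\epsilon_0$-regular, hence contains many edges in a regular-enough fashion to run the embedding lemma, while for a non-edge $ab$ the pair has density close to $0$, so a random vertex tuple avoids those edges with probability bounded below. One has to be slightly careful because the surviving pairs of $G'$ need not all be regular as pairs of $G$ (we only deleted, we did not re-regularize), but deletions only decrease density and a dense enough regular pair stays regular after we complete it to a clique — and empty pairs are trivially regular — so $G'$ restricted to the $V_i$'s is a ``perfectly structured'' object on which counting is easy.

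Third, to finish, observe that if $G'$ is induced $H$-free we are done immediately: we have exhibited $\le\epsilon n^2$ edge modifications making $G$ induced $H$-free. So the only case to rule out is $G'$ containing an induced $H$, which by the previous paragraph forces $\Omega_\epsilon(n^h)$ induced copies of $H$ in $G$, contradicting $\delta n^h$ for $\delta$ smaller than this implied constant. Tracking the quantitative dependence: $\epsilon_0$ is chosen polynomially in $\epsilon$ (and depending on $h$); $k\le M(\epsilon_0)$ is a tower in $\mathrm{poly}(\epsilon^{-1})$; the embedding lemma contributes only a further $t_{O(\log\epsilon_0^{-1})}$-type loss, which is absorbed; and $\delta^{-1}$ ends up a tower in $h$ of height $\mathrm{poly}(\epsilon^{-1})$, as claimed.

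The main obstacle I anticipate is the faithfulness of the induced embedding: it is not enough that the pairs look like cliques and empty sets between the \emph{parts}; we must be sure that a \emph{random} $h$-tuple of vertices, one from each relevant part, induces exactly $H$ with probability bounded below. For edges this is the usual counting-lemma content. For non-edges the density of the pair in $G$ is only close to $0$, not exactly $0$, so a positive but potentially large fraction of tuples could still have a spurious edge; one must choose the low-density threshold small relative to $h$ (say $\ll h^{-2}$) so that a union bound over the $\binom h2$ non-edges of $H$ still leaves a constant fraction of tuples clean, and then intersect with the embedding-lemma guarantee for the edges. Handling the interaction of ``regular-but-not-extremal-density'' pairs — the intermediate band that we chose to delete — requires the averaging-over-threshold trick to keep the number of such pairs, and hence the number of deleted edges, below $\epsilon k^2$; this is routine but is the part where constants must be managed carefully, and it is the only place the precise value $\epsilon<1/2$ enters.
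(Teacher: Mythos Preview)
Your proposal has a genuine gap, and it is precisely the obstacle the paper itself names in Subsection~\ref{removalsubsection}: irregular pairs. In the non-induced removal lemma, deleting all edges across irregular pairs is harmless because any surviving copy of $H$ then has all its \emph{edges} in regular pairs, which is all the counting lemma needs. For induced removal you must also control the \emph{non-edges} of $H$. If you empty an irregular pair $(V_i,V_j)$, an induced copy of $H$ in $G'$ may perfectly well place a non-edge of $H$ across it; you then cannot lift back to $\Omega(n^h)$ induced copies in $G$, because in $G$ the pair $(V_i,V_j)$ is irregular and no counting lemma applies there. Completing irregular pairs fails symmetrically for edges of $H$. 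No fixed rounding of irregular pairs works simultaneously for every placement $\phi:V(H)\to[k]$, and this is exactly why Alon--Fischer--Krivelevich--Szegedy were led to the strong regularity lemma in the first place.

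There is a second, independent problem in your handling of intermediate-density pairs. You propose to delete all their edges and bound their number by ``averaging over the threshold'', but the densities $d(V_i,V_j)$ are fixed once the partition is; if many lie near $1/2$ (which is consistent with $G$ having no induced $H$ at all --- take $H=C_5$ and let $G$ be two cliques of size $n/2$ joined by a random bipartite graph of density $1/2$), no choice of threshold avoids them, and emptying them costs $\Theta(n^2)$ edges. This issue, unlike the first, has a clean repair you are missing: use \emph{overlapping} thresholds, declaring a pair usable for an edge of $H$ whenever its density is $\ge\eta$ and usable for a non-edge whenever it is $\le 1-\eta$. Then no pair is ``intermediate'', pairs in $[\eta,1-\eta]$ need not be touched, and the induced counting lemma still applies. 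But even with that fix the irregular-pair obstruction remains. The paper's route around it is not a single Szemer\'edi application: it proves a \emph{strong cylinder regularity lemma} (Lemma~\ref{scrl}) by iterating the Duke--Lefmann--R\"odl cylinder lemma together with a new ingredient (Lemma~\ref{oneepsilonregularsubset}) guaranteeing a large vertex subset that is $\epsilon$-regular with itself, all at constant tower height. This yields, with only tower-type total cost, subsets $W_i\subset V_i$ for which \emph{every} pair $(W_i,W_j)$, including $i=j$, is regular (Lemma~\ref{strongeasycor}); the overlapping-threshold cleaning then goes through exactly as sketched in the paragraph following that lemma.
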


The following lemma is an easy corollary of the strong regularity lemma which
was used in \cite{AFKS} to establish the induced graph removal lemma.

\begin{lemma}\label{strongeasycor}
For each $0 < \epsilon < 1/3$ and decreasing function $f:\mathbb{N}\rightarrow
(0,1/3)$ there is $\delta'=\delta'(\epsilon,f)$ such that every graph $G=(V,E)$
with $|V| \geq \delta'^{-1}$ has an equitable partition $V=V_1 \cup \ldots \cup V_k$ and vertex subsets $W_i
\subset V_i$ such that $|W_i| \geq \delta' |V|$, each pair $(W_i,W_j)$ with
$1 \leq i \leq j \leq k$ is $f(k)$-regular, and all but at most $\epsilon k^2$
pairs $1 \leq i \leq j \leq k$ satisfy $|d(V_i,V_j)-d(W_i,W_j)| \leq \epsilon$.
\end{lemma}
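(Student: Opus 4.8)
The plan is to derive Lemma~\ref{strongeasycor} directly from the strong regularity lemma (Lemma~\ref{strongreg}), or equivalently from Corollary~\ref{strongcor}, by a fairly standard averaging argument. Apply Lemma~\ref{strongreg} with the function $f$ suitably modified: replace $f$ by the decreasing function $g$ defined so that $g(1)$ is a small power of $\epsilon$ (say $g(1)=c\epsilon^4$ for an absolute constant $c$) and $g(m)=f(m)$ for $m\ge 2$. This produces an equitable partition $\mathcal{A}=\{V_1,\dots,V_k\}$ and an equitable refinement $\mathcal{B}=\{V_{i,j}: 1\le i\le k,\ 1\le j\le \ell\}$ with $|\mathcal{B}|\le S(g)$, such that $\mathcal{A}$ is $g(1)$-regular, $\mathcal{B}$ is $g(k)=f(k)$-regular, and $\mathcal{B}$ is $g(1)$-close to $\mathcal{A}$. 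The desired $\delta'$ will then be taken on the order of $1/\big(\ell\cdot S(g)\big)$ (so that a single part $V_{i,j}$ of $\mathcal{B}$ has at least $\delta'|V|$ vertices, possibly after passing to $|V|\ge \delta'^{-1}$ to handle rounding).

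The core step is to choose, for each $i$, a single index $j(i)\in\{1,\dots,\ell\}$ and set $W_i:=V_{i,j(i)}$, in such a way that (a) every pair $(W_i,W_{i'})$ with $i\le i'$ is $f(k)$-regular and (b) all but at most $\epsilon k^2$ pairs satisfy $|d(V_i,V_{i'})-d(W_i,W_{i'})|\le\epsilon$. For (a): since $\mathcal{B}$ is $f(k)$-regular, at most $f(k)\binom{|\mathcal B|}{2}$ of the pairs of parts of $\mathcal{B}$ fail to be $f(k)$-regular; because we will want $f(k)$-regularity of $(V_{i,j(i)},V_{i,j(i)})$ as well as cross pairs, it is cleanest to first pass to a further equitable refinement splitting each $V_{i,j}$ in two (or simply observe regularity of a part with itself is automatic when parts are large and densities inside are controlled — but the clean route is the refinement trick used in the usual proof of the induced removal lemma). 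A union/averaging bound over the choices $j(i)$ shows that a random choice of the vector $(j(1),\dots,j(k))$ has, in expectation, at most $f(k)k^2\cdot(\text{small})$ bad pairs of type (a), so some choice has none. For (b): $g(1)$-closeness says that for all but $g(1)k^2$ pairs $(i,i')$, all but a $g(1)$-fraction of pairs $(j,j')$ have $|d(V_i,V_{i'})-d(V_{i,j},V_{i',j'})|<g(1)\le\epsilon$; averaging over $j(i),j(i')$ again yields that a random choice makes the expected number of bad type-(b) pairs at most, say, $\sqrt{g(1)}\,k^2\le\epsilon k^2/2$ once $g(1)$ is chosen as a sufficiently small power of $\epsilon$. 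Combining, a probabilistic deletion argument (or the Lov\'asz Local Lemma is not even needed — plain first-moment on the sum of the two bad-pair counts) gives a single choice of representatives $W_i$ satisfying both (a) with zero exceptions and (b) with at most $\epsilon k^2$ exceptions.

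The main obstacle is bookkeeping rather than depth: one must be careful that the regularity parameter guaranteed for $\mathcal{B}$ is $f$ evaluated at $|\mathcal{A}|=k$, not at $|\mathcal{B}|$, and that passing to the subsets $W_i$ (each of relative size $\approx 1/\ell$ within $V_i$, hence relative size $\ge\delta'$ in $V$) preserves $f(k)$-regularity with the \emph{same} parameter — this is where one uses that a subpair of an $\epsilon$-regular pair of the refinement is exactly a pair of parts of $\mathcal{B}$, so no loss occurs, provided we set things up so that each $W_i$ is literally one part $V_{i,j(i)}$. The other delicate point is handling the diagonal pairs $i=i'$ and guaranteeing $(W_i,W_i)$ is $f(k)$-regular; the standard fix, as in \cite{AFKS}, is to apply the regularity lemma to an auxiliary refinement in which each part is split, so that ``$(W_i,W_i)$-regularity'' becomes cross-pair regularity between two halves — I would incorporate that from the start by asking Lemma~\ref{strongreg} for a partition $\mathcal{B}$ that refines a pre-split version of $\mathcal{A}$. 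Finally, $\delta'^{-1}$ comes out as a polynomial in $S(g)=S(f,\mathrm{poly}(\epsilon))$, which is why, when this lemma is fed into the induced removal argument in the usual way, it produces the wowzer-type bound — the improvement to tower-type in Theorem~\ref{inducedtower} must come from \emph{not} going through Lemma~\ref{strongeasycor}, a point consistent with the surrounding text.
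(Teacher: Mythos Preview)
Your approach is essentially the original argument from \cite{AFKS} that the paper explicitly sets out to replace. The paper does \emph{not} derive Lemma~\ref{strongeasycor} from the strong regularity lemma; instead it first proves a new ``strong cylinder regularity lemma'' (Lemma~\ref{scrl}) by iterating the Duke--Lefmann--R\"odl cylinder lemma together with a separate lemma (Lemma~\ref{oneepsilonregularsubset}) that locates a large $\epsilon$-regular subset inside any graph. Lemma~\ref{strongeasycor} then falls out by picking a random cylinder $K$ from the cylinder partition and setting $W_i=V_i(K)$; the diagonal regularity $(W_i,W_i)$ comes for free because each part of every cylinder was already forced to be regular with itself via Lemma~\ref{epsdeltaone}. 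The payoff is quantitative: the paper's route gives $\delta'^{-1}$ of tower type, whereas your route, exactly as you note, inherits the wowzer bound from $S(f)$.

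Your last sentence is therefore backwards: the improvement to tower-type in Theorem~\ref{inducedtower} comes precisely \emph{by} going through Lemma~\ref{strongeasycor}, but with the new proof. The paper says this explicitly in the paragraph after Lemma~\ref{strongeasycor}.

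There is also a genuine gap in your sketch for (a). From ``the expected number of irregular pairs among the $W_i$ is at most $f(k)k^2\cdot(\text{small})$'' you cannot conclude ``some choice has none'' unless that expectation is below $1$; with $g(m)=f(m)$ for $m\ge 2$ the expectation is only $O(f(k)k^2)$, which need not be small. The fix is to feed a stronger function into Lemma~\ref{strongreg}, say $g(m)\le f(m)/m^2$, so that $g(k)$-regularity of $\mathcal{B}$ forces the expected number of irregular representative pairs below $1$; $g(k)$-regular pairs are then in particular $f(k)$-regular. Your handling of the diagonal pairs by pre-splitting is workable but would need to be carried through (and note the paper avoids this entirely by building self-regularity into the cylinder lemma).
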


In fact, Lemma \ref{strongeasycor} is a little bit stronger than the original
version in \cite{AFKS} in that each set $W_i$ is $f(k)$-regular with itself.
The original version follows from the strong regularity lemma by taking the
partition $V=V_1 \cup \ldots \cup V_k$ to be the partiton $\mathcal{A}$ in the
strong regularity lemma, and the subset $W_i$ to be a
random part $V_{ij} \subset V_i$ of the refinement $\mathcal{B}$ of
$\mathcal{A}$ in the strong regularity lemma.

From this slightly stronger version, the proof of the induced graph removal
lemma is a bit simpler and shorter.  Indeed, with $f(k)=\frac{1}{4h}\epsilon^h$, which
does not depend on $k$, if there is a mapping
$\phi:V(H) \rightarrow \{1,\ldots,k\}$ such that for all adjacent vertices
$v,w$ of $H$, the edge density between $W_{\phi(v)}$ and $W_{\phi(w)}$ is at
least $\epsilon$, and for all distinct nonadjacent vertices $v,w$ of $H$, the
edge density between $W_{\phi(v)}$ and $W_{\phi(w)}$ is at most $1-\epsilon$,
then a standard counting lemma shows that $G$ contains at least $\delta n^h$
induced copies of $H$, where $\delta=(\epsilon/4)^{{h \choose 2}}\delta'^h$. Hence, we may
assume that there is no such mapping $\phi$.
We then delete edges between $V_i$ and $V_j$ if the edge density between $W_i$
and $W_j$ is less than $\epsilon$, and one adds the edges between $V_i$ and
$V_j$ if the density between $W_i$ and $W_j$ is more than $1-\epsilon$. The
total number of edges added or removed is at most $5\epsilon n^2$, and no
induced copy of $H$ remains. Replacing $\epsilon$ by $\epsilon/8$ in the above argument gives the induced graph removal lemma. 

We find another proof of Lemma \ref{strongeasycor} with a better tower-type
bound. This in turn implies, by the argument sketched above, the tower-type bound for the induced graph removal lemma stated in Theorem \ref{inducedtower}. 

The starting point for our approach to Lemma \ref{strongeasycor} is a weak regularity lemma due to Duke, Lefmann and
R\"odl \cite{DLR}. This lemma says that for a $k$-partite graph, between sets
$V_1, V_2, \dots, V_k$, there is an $\epsilon$-regular partition of the
cylinder $V_1 \times \dots \times V_k$ into a relatively small number of cylinders $K = W_1 \times \dots
\times W_k$, with $W_i \subset V_i$ for $1 \leq i \leq k$. The definition of an
{\it $\epsilon$-regular} partition here is that all but an $\epsilon$-fraction
of the $k$-tuples $(v_1, \dots, v_k) \in V_1 \times \dots \times V_k$ are in
{\it $\epsilon$-regular} cylinders, where a cylinder $W_1 \times \dots \times
W_k$ is $\epsilon$-regular if all $\binom{k}{2}$ pairs $(W_i, W_j)$, $1 \leq i
< j \leq k$, are $\epsilon$-regular in the usual sense.

In the same way that one derives the strong regularity lemma from the ordinary
regularity lemma, we show how to derive a strong version of this lemma. We will
refer to this strengthening, of which Lemma \ref{strongeasycor} is a
straightforward consequence, as the  {\it strong cylinder regularity lemma}. It
will also be convenient if, in this lemma, we make the requirement that a
cylinder be regular slightly stronger, by asking that each $W_i$ be regular
with itself. That is, we say that a cylinder $W_1 \times \dots \times W_k$ is
{\it strongly $\epsilon$-regular} if all pairs  $(W_i,W_j)$ with $1 \leq i,j
\leq k$ are $\epsilon$-regular. A partition $\mathcal{K}$ of the cylinder $V_1
\times \dots \times V_k$ into cylinders $K = W_1 \times \dots \times W_k$, with
$W_i \subset V_i$ for $1 \leq i \leq k$, is then said to {\it strongly
$\epsilon$-regular} if all but an $\epsilon$-fraction of the $k$-tuples
$(v_1,\ldots,v_k) \in V_1 \times \cdots \times V_k$ are in strongly
$\epsilon$-regular cylinders.

Let $P : V = V_1 \cup \dots \cup V_k$ be a partition of the vertex set of a
graph and $\mathcal{K}$ be a partition of the cylinder $V_1 \times \dots \times
V_k$ into cylinders. For each $K = W_1 \times \dots \times W_k$, with $W_i
\subset V_i$ for $1 \leq i \leq k$, we let $V_i(K) = W_i$. We then define the
partition  $Q(\mathcal{K})$ of $V$ to be the refinement of $P$ which is the
common refinement of all the parts $V_i(K)$ with $i \in [k]$ and $K \in
\mathcal{K}$. The strong cylinder regularity lemma is now as
follows.

\begin{lemma}\label{scrl} For $0<\epsilon<1/3$, positive integer $s$, and decreasing function $f:\mathbb{N} \rightarrow
(0,\epsilon]$, there is $S=S(\epsilon,s,f)$ such that the following holds. For every
graph $G$, there is an integer $s \leq k \leq S$, an equitable  partition $P:V=V_1 \cup
\ldots \cup V_k$ and a strongly $f(k)$-regular partition $\mathcal{K}$ of the
cylinder $V_1 \times \cdots \times V_k$ into cylinders satisfying that the
partition $Q=Q(\mathcal{K})$ of $V$ has at most $S$ parts and $q(Q) \leq
q(P)+\epsilon$. Furthermore, there is an absolute constant $c$ such that
letting $s_1=s$ and $s_{i+1}=t_4\left(\left(s_i/f(s_i)\right)^c\right)$, we may
take $S=s_{\ell}$ with $\ell=2\epsilon^{-1}+1$.

\end{lemma}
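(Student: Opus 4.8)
The plan is to derive Lemma~\ref{scrl} from the Duke--Lefmann--R\"odl weak cylinder regularity lemma in exactly the same way that the strong regularity lemma of Alon--Fischer--Krivelevich--Szegedy is derived from Szemer\'edi's regularity lemma, namely by an iterated-refinement argument driven by the monotonicity of the mean square density $q$. First I would record the single-step input: given a partition $P:V=V_1\cup\dots\cup V_k$ of $G$ and a parameter $\eta$, the Duke--Lefmann--R\"odl lemma applied to the $k$-partite graph on $(V_1,\dots,V_k)$ yields an $\eta$-regular partition $\mathcal K$ of the cylinder $V_1\times\dots\times V_k$ into at most $N(k,\eta)$ cylinders, where $N(k,\eta)$ is a tower in $(k/\eta)^{O(1)}$ of bounded (in fact height $4$) — this is where the $t_4$ in the statement comes from. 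Upgrading from $\epsilon$-regular to \emph{strongly} $\epsilon$-regular cylinders costs only a harmless extra refinement (splitting each $V_i$ further so that each $W_i$ is regular with itself, e.g.\ by intersecting with a fresh Duke--Lefmann--R\"odl partition or by a direct splitting argument), so that $Q(\mathcal K)$ has at most $t_4((k/f(k))^c)$ parts for an absolute constant $c$, and we also equitize $Q(\mathcal K)$ at negligible cost.

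Next I would set up the iteration. Start with $P^{(1)}$ an arbitrary equitable partition into $s$ parts. Having constructed an equitable partition $P^{(i)}$ with $k_i$ parts, $s\le k_i$, apply the single-step procedure above with regularity parameter $f(k_i)$ to obtain a strongly $f(k_i)$-regular cylinder partition $\mathcal K^{(i)}$ and let $P^{(i+1)}$ be an equitable refinement of $Q(\mathcal K^{(i)})$, so $k_{i+1}\le t_4((k_i/f(k_i))^c)=s_{i+1}$ when $k_i\le s_i$ (and by induction $k_i\le s_i$ for all $i$). Since $P^{(i+1)}$ refines $P^{(i)}$, the mean square density is non-decreasing, $q(P^{(1)})\le q(P^{(2)})\le\dots\le 1$. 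If for some $i$ we have $q(P^{(i+1)})\le q(P^{(i)})+\epsilon$, then we are done: take $P=P^{(i)}$, $k=k_i$, and $\mathcal K=\mathcal K^{(i)}$, which is strongly $f(k)$-regular, and $Q=Q(\mathcal K)$ has at most $t_4((k/f(k))^c)\le S$ parts with $q(Q)\le q(P^{(i+1)})\le q(P)+\epsilon$. (Here I need the elementary fact that $q(Q(\mathcal K))\le q(P^{(i+1)})$ since $P^{(i+1)}$ refines $Q(\mathcal K^{(i)})$, or simply carry $Q=P^{(i+1)}$ itself.) Because $q$ is bounded by $1$ and each bad step increases it by more than $\epsilon$, there can be at most $\epsilon^{-1}$ bad steps, so within $\ell=2\epsilon^{-1}+1$ steps a good $i$ must occur; hence $S=s_\ell$ suffices, and $s\le k\le S$.

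The routine parts are: verifying that equitizing a partition changes $q$ and the number of parts only negligibly (standard, cf.\ the proof of Szemer\'edi's lemma), checking the density-increment inequality $q(\text{refinement})\ge q(\text{original})$ (Cauchy--Schwarz), and bookkeeping the constants so that the claimed recursion $s_{i+1}=t_4((s_i/f(s_i))^c)$ with $\ell=2\epsilon^{-1}+1$ comes out — for this one absorbs the equitization loss and the strongly-regular upgrade into the absolute constant $c$ in the exponent, which is legitimate since both operations blow up the number of parts only by a factor that is polynomial in the current number of parts. I expect the main obstacle to be the strongly-$\epsilon$-regular upgrade done quantitatively: one must produce, from an $\epsilon$-regular cylinder partition, a refinement in which additionally every $W_i$ is $f(k)$-regular with itself, while keeping the number of cylinders within a $t_4$-type bound and not destroying the already-achieved pairwise regularity (passing to subsets preserves one-sided but one must re-run the argument carefully, or invoke that a random sub-partition works with high probability). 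Controlling this without an extra exponential blow-up — i.e.\ keeping it inside the $t_4$ and the absolute constant $c$ rather than pushing to $t_5$ — is the delicate point; everything else is the by-now-standard "$q$ increases by at least $\epsilon$ at most $\epsilon^{-1}$ times" energy-increment scheme.
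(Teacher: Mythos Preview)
Your iteration scheme is exactly the paper's, and you correctly isolate the strongly-regular upgrade as the only non-routine ingredient. However, none of the resolutions you propose for it actually works, and the paper supplies a specific construction you have not anticipated.

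The Duke--Lefmann--R\"odl lemma controls only pairs $(W_i,W_j)$ with $i\ne j$, so ``a fresh DLR partition'' yields no information about $(W_i,W_i)$. A random sub-partition does not help either: self-regularity is not a generic property (take $V_i$ to be a disjoint union of two cliques). What is actually needed is the statement that every graph has a vertex subset of size at least $\delta|V|$ which is $\epsilon$-regular with itself, with $\delta^{-1}$ only a bounded-height tower in $1/\epsilon$. The paper remarks that this natural fact was not previously in the literature, and proves it (Lemma~\ref{oneepsilonregularsubset}) via a Ramsey argument: apply DLR to obtain an $\alpha$-regular $k$-cylinder with $k$ Ramsey-large, extract $s\approx\alpha^{-1}$ of the parts whose pairwise densities all lie in a common interval of length $\alpha$, and show (Lemma~\ref{sunioncyl}) that the \emph{union} of these $s$ equal-sized parts is $O(\alpha^{1/2})$-regular with itself. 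Repeatedly pulling out such subsets and redistributing the residue (Lemma~\ref{partfoura}) gives a partition of any set into large self-regular pieces (Lemma~\ref{epsdeltaone}), all within a height-$3$ tower; this is what keeps the total at $t_4$.

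The order of operations in the paper is also the reverse of yours: one \emph{first} partitions each $V_i$ into $\gamma$-self-regular pieces $V_{i\ell}$ with $\gamma=f(k)\beta$, and \emph{then} runs DLR with parameter $f(k)$ on each $k$-tuple $(V_{1\ell_1},\dots,V_{k\ell_k})$. Each resulting $W_i=V_i(K)$ is then a subset of relative size $\ge\beta$ inside some $\gamma$-self-regular $V_{i\ell_i}$, and so inherits $f(k)$-self-regularity directly; nothing needs to be re-run and no pairwise regularity is disturbed. Your proposed order (DLR first, then split for self-regularity) forces the initial DLR parameter to absorb the subsequent triple-exponentially-small $\delta$ from Lemma~\ref{epsdeltaone}, which is at best awkward to make quantitative.
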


In order to prove this lemma, we need, in addition to the Duke-Lefmann-R\"odl
regularity lemma, a lemma showing that for each $\epsilon > 0$ there is $\delta
> 0$ such that every graph $G = (V, E)$ contains a vertex subset $U$ with $|U|
\geq \delta |V|$ which is $\epsilon$-regular with itself, where, crucially,
$\delta^{-1}$ is bounded above by a tower function of $\epsilon^{-1}$ of
absolute constant height. While seemingly standard, we do not know of such a result in the literature. 

Lemma \ref{strongeasycor} follows from Lemma \ref{scrl} by considering a random cylinder $K$ in the cylinder partition $\mathcal{K}$, with each cylinder picked with probability proportional to its size, and letting $W_i=V_i(K)$. 

\subsection{Frieze-Kannan weak regularity lemma}

Frieze and Kannan \cite{FrKa}, \cite{FrKa1} developed a weaker notion of
regularity which is sufficient for certain applications and for which the
dependence on the approximation $\epsilon$ is much better. It states the
existence of a vertex partition into a small number of parts for which the
number of edges across any two vertex subsets is within $\epsilon n^2$ of what
is expected based on the edge densities between the parts of the partition and
the intersection sizes of the vertex subsets with these parts.

\begin{lemma}{\bf (Frieze-Kannan weak regularity lemma)}\label{FKWRL}
For each $\epsilon>0$ there is a positive integer $k(\epsilon)$ such that every
graph $G=(V,E)$ has an equitable vertex partition $V=V_1 \cup \ldots \cup V_k$ with $k
\leq k(\epsilon)$ satisfying that for all subsets $A,B \in V$, we have $$\big
|e(A,B)-\sum_{1 \leq i, j \leq k}
d(V_i,V_j)|A \cap V_i||B \cap V_j|\big | \leq \epsilon |V|^2.$$
\end{lemma}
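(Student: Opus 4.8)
The plan is to run a mean-square-density (energy) increment argument, completely analogous to the proof of Szemer\'edi's regularity lemma but with a far cheaper increment, so that the iteration terminates after only $O(\epsilon^{-2})$ rounds rather than a tower's worth. Identify $G$ with its adjacency kernel $g:V\times V\to\{0,1\}$, and for a partition $\mathcal{P}=\{V_1,\dots,V_k\}$ let $f_{\mathcal{P}}$ be the step function equal to $d(V_i,V_j)$ on $V_i\times V_j$. With the normalized inner product $\langle h_1,h_2\rangle=n^{-2}\sum_{x,y}h_1(x,y)h_2(x,y)$, where $n=|V|$, this $f_{\mathcal{P}}$ is precisely the orthogonal projection of $g$ onto the subspace $U_{\mathcal{P}}$ of functions constant on each cell $V_i\times V_j$, and $q(\mathcal{P})=\|f_{\mathcal{P}}\|_2^2$. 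The key observation is that, after dividing by $|V|^2$, the conclusion of the lemma for $\mathcal{P}$ says exactly that $|\langle g-f_{\mathcal{P}},\,\mathbf{1}_A\otimes\mathbf{1}_B\rangle|\le\epsilon$ for all $A,B\subseteq V$, i.e.\ that the error kernel $g-f_{\mathcal{P}}$ is small in the cut norm.

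First I would prove the increment lemma: if $\mathcal{P}$ fails this condition, witnessed by sets $A,B$, then the common refinement $\mathcal{P}'$ of $\mathcal{P}$ with $\{A,V\setminus A\}$ and $\{B,V\setminus B\}$, which has at most $4|\mathcal{P}|$ parts, satisfies $q(\mathcal{P}')\ge q(\mathcal{P})+\epsilon^2$. The point is that $\mathbf{1}_A\otimes\mathbf{1}_B\in U_{\mathcal{P}'}$, so $\langle g,\mathbf{1}_A\otimes\mathbf{1}_B\rangle=\langle f_{\mathcal{P}'},\mathbf{1}_A\otimes\mathbf{1}_B\rangle$, and hence $\epsilon<|\langle g-f_{\mathcal{P}},\mathbf{1}_A\otimes\mathbf{1}_B\rangle|=|\langle f_{\mathcal{P}'}-f_{\mathcal{P}},\mathbf{1}_A\otimes\mathbf{1}_B\rangle|\le\|f_{\mathcal{P}'}-f_{\mathcal{P}}\|_2$ by Cauchy--Schwarz, since $\|\mathbf{1}_A\otimes\mathbf{1}_B\|_2\le1$; as $f_{\mathcal{P}}$ is the projection of $f_{\mathcal{P}'}$ onto $U_{\mathcal{P}}\subseteq U_{\mathcal{P}'}$, Pythagoras yields $q(\mathcal{P}')-q(\mathcal{P})=\|f_{\mathcal{P}'}-f_{\mathcal{P}}\|_2^2>\epsilon^2$. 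Starting from the trivial partition and iterating, $q$ strictly increases by more than $\epsilon^2$ per step while staying in $[0,1]$ (it equals $\|P_{U_{\mathcal{P}}}g\|_2^2\le\|g\|_2^2\le1$), so after at most $\epsilon^{-2}$ steps one reaches a partition with the desired cut-norm bound, into at most $4^{\epsilon^{-2}}=2^{O(\epsilon^{-2})}$ parts.

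It remains to make the partition equitable, which is handled by interleaving a standard equitablization step into the iteration: after forming $\mathcal{P}'$, chop each of its parts into pieces of a common size $\lfloor n/m\rfloor$ for a suitable $m=\Theta(|\mathcal{P}'|/\epsilon^2)$ and redistribute the $O(|\mathcal{P}'|)$ leftover vertices to obtain an equitable partition with $m$ parts. Refining never decreases $q$, and redistributing that many vertices perturbs $q$ by an amount that is negligible once $n$ is large compared to the number of parts; so provided $n$ exceeds a threshold depending only on $\epsilon$, this costs at most $\epsilon^2/2$ of energy and the increment argument still terminates in $O(\epsilon^{-2})$ rounds (and if $n$ is below that threshold, the partition of $V$ into singletons already works, since then $f_{\mathcal{P}}=g$ off the diagonal). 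This produces an equitable regular partition into $2^{O(\epsilon^{-2}\log\epsilon^{-1})}$ parts, which suffices; a more careful equitablization recovers the essentially optimal $2^{O(\epsilon^{-2})}$.

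The main obstacle --- really the only substantive point --- is the increment lemma, and in particular the realization that intersecting with just the \emph{single} pair of witnessing sets $A,B$, a bounded-factor refinement, already extracts the full cut-norm defect because $\mathbf{1}_A\otimes\mathbf{1}_B$ lies in the span of the refined cells. This is exactly why the weak regularity lemma needs only a constant multiplicative blow-up per round, and hence only a singly-exponential number of parts, in contrast with the uncontrolled refinement forced in Szemer\'edi's lemma. The equitability accounting is routine but must be carried out with an eye on the relative sizes of $n$ and the number of parts.
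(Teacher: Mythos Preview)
Your argument is correct and is essentially the standard energy-increment proof of the Frieze--Kannan weak regularity lemma. Note, however, that the paper does not give its own proof of this lemma: it is quoted as a known result of Frieze and Kannan \cite{FrKa,FrKa1}, with the remark that their proof yields $k(\epsilon)=2^{O(\epsilon^{-2})}$ and that the resulting partition is an overlay of $O(\epsilon^{-2})$ sets --- precisely the output of your iteration. So there is no discrepancy to report; you have reconstructed the cited argument.
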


The weak regularity lemma has a number of algorithmic applications. Frieze and
Kannan \cite{FrKa1} used the weak regularity lemma to give constant-time
approximation algorithms for some general problems in dense graphs, a special
case being the Max-Cut of a graph. Recently, Bansal and Williams \cite{BaWi}
used the weak regularity lemma to obtain a faster combinatorial algorithm for
Boolean matrix multiplication. The importance of the weak regularity lemma is
further discussed in the citation of the recent Knuth Prize to Kannan.

As there are several applications of the weak regularity lemma to fundamental
algorithmic problems, we would like to know the correct bounds on the number of
parts for the weak regularity lemma. The proof of the weak regularity lemma
\cite{FrKa1} shows that we may take $k(\epsilon)=2^{O(\epsilon^{-2})}$. If this
upper bound could be improved, it would lead to faster
algorithms for several problems of interest. Lov\'asz and Szegedy
\cite{LS} studied the problem of estimating the minimum number of parts
$k(\epsilon)$ required for the weak regularity lemma, proving a lower bound on
$k(\epsilon)$ of the form $2^{\Omega(\epsilon^{-1})}$. Here we close the gap by
proving a new lower bound which matches the upper bound.

\begin{theorem}\label{lbweakmain}
For each $\epsilon>0$, there are graphs for which the minimum number of parts
in a weak regular partition with approximation $\epsilon$ is
$2^{\Omega(\epsilon^{-2})}$.
\end{theorem}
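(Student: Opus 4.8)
The plan is to work throughout with the cut norm. For a partition $\mathcal{P}$ of $V(G)$ into $k$ parts, let $W_{\mathcal{P}}$ denote the step function recording the densities $d(V_i,V_j)$; unwinding Lemma~\ref{FKWRL}, a partition $\mathcal{P}$ is a weak regular partition with approximation $\epsilon$ exactly when $\|W_G-W_{\mathcal{P}}\|_{\Box}\le\epsilon$, where $\|M\|_{\Box}=|V|^{-2}\max_{A,B}\big|\sum_{x\in A,\,y\in B}M(x,y)\big|$ is the normalized cut norm. Hence it suffices to construct, for each small $\epsilon>0$, a graph $G$ with $\|W_G-g\|_{\Box}>\epsilon$ for \emph{every} partition $\mathcal{P}$ into $k<2^{c\epsilon^{-2}}$ parts and every $\mathcal{P}$-measurable function $g$; bounding the infimum over all such $g$, rather than only over $W_{\mathcal{P}}$, is both stronger and more convenient.

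The graph will be random, assembled from $d=\Theta(\epsilon^{-2})$ independent ``layers.'' Fix $n$ large, pick independent uniformly random orderings $\pi_1,\dots,\pi_d$ of $V=[n]$, and set
\[
w(x,y)=\mathrm{clip}_{[0,1]}\Big(\tfrac12+a\sum_{i=1}^{d}\big(\mathbf{1}[\pi_i(x)\le\pi_i(y)]-\tfrac12\big)\Big),
\]
where $a$ is a large absolute constant times $\epsilon$ and $d$ a small absolute constant times $\epsilon^{-2}$; let $G$ be the random graph in which $xy$ is an edge independently with probability $w(x,y)$, so that $W_G$ is within $o(1)$ of $w$ in cut norm and we may argue with $w$. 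Two constraints, both forced by the matching upper bound $2^{O(\epsilon^{-2})}$, pin down the parameters. First, by independence the inner sum has order $a\sqrt{d}=\Theta(1)$ at a typical pair, and a Chernoff bound shows that with the constants chosen correctly the truncation alters only an exponentially small fraction of pairs, so it perturbs $w$ by $o(1)$ in cut norm and may be ignored. Second, the amplitude $a=\Theta(\epsilon)$ is large enough that a single layer left ``unresolved'' by $\mathcal{P}$ already forces a weak-regularity violation, yet small enough that ``resolving'' a layer demands refining the order $\pi_i$ into only a bounded (rather than $\Theta(\epsilon^{-1})$) number of intervals; with $\Theta(\epsilon^{-2})$ layers this yields precisely the exponent $\epsilon^{-2}$.

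The lower bound now splits into two steps. \emph{(i) A structural step:} if $\|w-g\|_{\Box}\le\epsilon$ for some $\mathcal{P}$-measurable $g$, then for every $i$ the partition $\mathcal{P}$ must approximately refine the partition $\mathcal{Q}_i$ of $V$ into the $r$ blocks of consecutive $\pi_i$-ranks, for an absolute constant $r$. Indeed, were layer $i$ under-resolved, then testing $w-g$ against the sets $A,B$ given by appropriate $\pi_i$-intervals --- the ``bottom against top'' cut of the layer-$i$ half-graph, together with a bounded number of its recursive halvings --- would exhibit $\big|\sum_{A\times B}(w-g)\big|\ge c_1\,a\,|V|^2=\Theta(\epsilon)|V|^2$ with $c_1>0$ absolute; choosing $a$ with $c_1a>\epsilon$ contradicts the hypothesis. \emph{(ii) A counting step:} since $\pi_1,\dots,\pi_d$ are independent and uniform, the common refinement of $\mathcal{Q}_1,\dots,\mathcal{Q}_d$ consists of $r^d$ parts each of size $(1+o(1))|V|/r^d$, so any partition approximately refining all of them has at least $2^{\Omega(d)}=2^{\Omega(\epsilon^{-2})}$ parts. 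Combining (i) and (ii), every weak regular partition of the constructed $G$ with approximation $\epsilon$ has $2^{\Omega(\epsilon^{-2})}$ parts.

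The main obstacle is step (i). When the $\pi_i$-cut is used to read off the layer-$i$ contribution to $w-g$, one must show that the contributions of the remaining $d-1=\Theta(\epsilon^{-2})$ layers --- adversarially correlated with $\mathcal{P}$ through $g$ --- cannot cancel it. A termwise estimate bounds each foreign layer's contribution only by $\Theta(\epsilon)|V|^2$, which is worthless when summed over $\Theta(\epsilon^{-2})$ layers; the point is that, because each $\pi_j$ with $j\ne i$ is independent of $\pi_i$, the corresponding contribution is mean zero and tightly concentrated, so their total is $o(|V|^2)$. Making this uniform over all candidate partitions is the delicate part and cannot be obtained from a crude union bound over partitions; as in the related constructions of Gowers~\cite{Go}, it should instead be handled through careful estimates on the edge distribution of $G$. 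A first-moment argument finally shows that a single random $G$ enjoys all the required properties simultaneously.
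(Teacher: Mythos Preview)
Your construction is essentially the paper's: $d=\Theta(\epsilon^{-2})$ independent random layers, each contributing a signed $\pm\Theta(\epsilon)$ term to a $[0,1]$-valued weight, then sampled to a graph. (The paper works bipartitely and uses random \emph{bipartitions} of each side rather than random orderings; this avoids the asymmetry of your layer $\mathbf 1[\pi_i(x)\le\pi_i(y)]-\tfrac12$, which as written does not produce an undirected graph.) The clipping analysis and the parameter balance $a=\Theta(\epsilon)$, $d=\Theta(\epsilon^{-2})$ are right.

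The real gap is in step~(i), and you have misdiagnosed the obstacle. On a fixed $\pi_i$-interval cut $(A,B)$ the foreign-layer sums $\sum_{A\times B}a\,w_j$ for $j\neq i$ are \emph{not} the problem: they are indeed $o(|V|^2)$ by independence, and this needs no uniformity over partitions since $(A,B)$ does not depend on $\mathcal P$. What is adversarial is the term $\sum_{A\times B}(1/2-g)$: since $g$ is chosen after your finite list of $\pi_i$-interval cuts is fixed, nothing prevents the adversary from setting $\sum_{A\times B}g=\sum_{A\times B}w$ on every one of them. Your suggested fix, ``careful estimates on the edge distribution of $G$,'' cannot touch this, because the offending term involves $g$ rather than the graph.

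The paper resolves this by a different mechanism: it makes the $g$-term vanish \emph{identically} by choosing the test sets to depend on $\mathcal P$. Given a layer $i$ that splits many parts $U_t$, it selects inside each such part equal-size subsets $U_{t,0}\subset U_t\cap U_0^i$ and $U_{t,1}\subset U_t\cap U_1^i$, and sets $A_0=\bigcup_t U_{t,0}$, $A_1=\bigcup_t U_{t,1}$. Since $|A_0\cap U_t|=|A_1\cap U_t|$ for every $t$, one has $f_{P_1,P_2}(A_0,T)-f_{P_1,P_2}(A_1,T)=e(A_0,T)-e(A_1,T)$ for \emph{every} $T$ and \emph{every} $\mathcal P$-measurable $g$; the adversary is out of the picture. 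It then suffices to lower-bound the purely graph-theoretic quantity $e(A_0,V_0^i)-e(A_1,V_0^i)$, and here is where concentration over the other layers is actually used: a per-vertex ``very nice'' property (for $u\in U_j^i$, $d(u,V_j^i)-d(u,V_{1-j}^i)\ge\alpha/2$) follows from Hoeffding applied to the random bipartition $V=V_0^i\cup V_1^i$. The existence of a suitable layer $i$ comes from a pigeonhole on pairs (any two vertices lie in different halves for $\ge r/4$ layers), which also replaces your step~(ii) and avoids having to analyze the common refinement of the $\mathcal Q_i$.
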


A careful analysis of the proof of Theorem \ref{lbweakmain} shows that the
number of parts required in the weak regularity lemma with approximation
$\epsilon$ is at least $2^{-2^{-60}\epsilon^{-2}}$ for $0<\epsilon \leq 2^{-50}$. In fact, the theorem yields a stronger result, since we do not here require that the partition be equitable.

While the number of parts in the weak regularity lemma is
$2^{\Theta(\epsilon^{-2})}$, the proof obtains the partition as an overlay of
only $O(\epsilon^{-2})$ sets. As discussed in \cite{LS}, in some applications,
such as in \cite{AFKK}, this can be treated as if there were only about
$O(\epsilon^{-2})$ classes, which makes the weak regularity lemma quite
efficient. It was shown in \cite{AFKK}, and is also implied by Theorem \ref{lbweakmain}, that the partition cannot be the overlay of fewer sets.

\subsection{The regular approximation lemma}\label{LSsubsection}

Another strengthening of Szemer\'edi's regularity lemma came from the study of
graph limits by Lov\'asz and Szegedy \cite{LS}, and also from work on the
hypergraph generalization of the regularity lemma by R\"odl and Schacht
\cite{RS07}. This regularity lemma, known as the regular approximation lemma
\cite{RoSc}, provides an arbitrary precision for the regularity as a function
of the number of parts of the partition if an $\epsilon$-fraction of the edges
are allowed to be added or removed.

For a function $g:\mathbb{N} \rightarrow (0,1)$, a partition of the vertex set
into $k$ parts is {\it $g$-regular} if all pairs of distinct parts in the
partition are $g(k)$-regular.

\begin{lemma}\label{LSlem} ({\bf Regular approximation lemma})
For every $\epsilon > 0$, positive integer $s$ and decreasing function $g : \mathbb{N} \rightarrow
(0,1)$, there is an integer $T = T(g,\epsilon,s)$ so that given a graph $G$ with
$n$ vertices, one can add-to/remove-from $G$ at most $\epsilon n^2$ edges and
thus get a graph $G'$ that has a $g$-regular equitable partition of order $k$ for some $s \leq k
\leq T$.
\end{lemma}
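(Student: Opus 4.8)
The plan is to derive the regular approximation lemma from the strong regularity lemma (Lemma \ref{strongreg}) by a local surgery on the edges of $G$. I would apply Lemma \ref{strongreg} --- rather than Corollary \ref{strongcor}, since I need $\mathcal{B}$ to actually refine $\mathcal{A}$ --- with a function $f$ for which $f(1)$ is a sufficiently small multiple of $\epsilon^2$ and, for $r \geq 2$, $f(r)$ is a sufficiently small power of $g(r)$; starting the underlying iteration from a partition of order $s$ one may moreover assume the output has $|\mathcal{A}| \geq s$. This produces an equitable partition $\mathcal{A} = \{V_i : i\in[k]\}$ with $s \leq k$, an equitable refinement $\mathcal{B} = \{V_{i,j}\}$ with $|\mathcal{B}| \leq T := S(f)$, the refinement $\mathcal{B}$ being $f(k)$-regular, and $\mathcal{B}$ being $f(1)$-close to $\mathcal{A}$, so that $q(\mathcal{B}) \leq q(\mathcal{A}) + O(f(1))$. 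Take $T$ to be this $S(f)$, enlarged if necessary past a threshold $n_0 = n_0(g,\epsilon,s)$; for $n < n_0$ one just uses the partition into singletons, which is trivially $g$-regular.

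Now modify only edges between distinct parts $V_i, V_{i'}$ of $\mathcal{A}$, proceeding block by block along the refinement. For fixed $i \neq i'$ the pairs $(V_{i,j}, V_{i',j'})$ tile the bipartite graph between $V_i$ and $V_{i'}$, and on each such block I install a random-like bipartite graph of density exactly $\sigma := d(V_i,V_{i'})$. On a block $(W,W')$ that is $f(k)$-regular I would not overwrite it --- that would change far too many edges --- but instead adjust its density from $\rho := d(W,W')$ to $\sigma$ by adding, among the non-edges, an independent random sparse bipartite graph of density $(\sigma-\rho)/(1-\rho)$ (or symmetrically deleting a random subset of the edges when $\sigma < \rho$). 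A Chernoff estimate, union-bounded over all sub-pairs $(X,Y)$ with $|X| \geq (g(k)/C)|W|$ and $|Y| \geq (g(k)/C)|W'|$, shows that with probability $1 - e^{-\Omega((n/|\mathcal{B}|)^2)}$ the outcome is $g(k)/C$-regular of density within $g(k)/C$ of $\sigma$, while changing only $(|\rho - \sigma| + o(1))|W||W'|$ edges. The at most $f(k)\binom{|\mathcal{B}|}{2}$ blocks that fail to be $f(k)$-regular I simply overwrite with a fresh random bipartite graph of density $\sigma$; these blocks contain at most $f(k)n^2/2 \leq \epsilon n^2/4$ edges in total, so this is affordable, and a fresh random graph is $g(k)/C$-regular with high probability as well.

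Call $G'$ the resulting graph. For the edge budget, the number of changed edges is at most
\[
\sum_{i\neq i'}\sum_{j,j'} |d(V_{i,j},V_{i',j'}) - d(V_i,V_{i'})|\,|V_{i,j}||V_{i',j'}| \;+\; \tfrac{\epsilon}{4}n^2 + o(n^2),
\]
and Cauchy--Schwarz together with the identity $\sum_{i,i'}\sum_{j,j'} p_{i,j}p_{i',j'}\big(d(V_{i,j},V_{i',j'}) - d(V_i,V_{i'})\big)^2 = q(\mathcal{B}) - q(\mathcal{A})$ bounds the first sum by $n^2\sqrt{q(\mathcal{B})-q(\mathcal{A})} = O(\sqrt{f(1)})\,n^2 \leq \tfrac{\epsilon}{2}n^2$; Markov's inequality (or an Azuma-type concentration bound) then turns this expected bound into an actual outcome with total change below $\epsilon n^2$. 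For regularity, on the high-probability event every block of the $V_i \times V_{i'}$ grid is $g(k)/C$-regular with density within $g(k)/C$ of the common value $\sigma$, and a routine argument --- controlling the few sub-blocks $X \cap V_{i,j}$, $Y \cap V_{i',j'}$ too small to inherit block regularity --- shows that such a grid is $g(k)$-regular for a suitable absolute constant $C$. Intersecting the budget event with these regularity events, we conclude that $\mathcal{A}$ is a $g$-regular equitable partition of $G'$ of order $k$ with $s \leq k \leq T$.

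The main obstacle, though it is delicate rather than deep, is the block-level step: performing the density correction inside a single $f(k)$-regular pair so that the outcome is provably $g(k)$-regular while essentially only $|\rho-\sigma|\,|W||W'|$ edges are touched, with a failure probability small enough to absorb the union bound over all blocks and all witnessing sub-pairs --- this is exactly what forces $n \geq n_0(g,\epsilon,s)$. One must also watch the order of quantifiers in the choice of $f$: $f(1)$ is chosen in terms of $\epsilon$ alone, since it controls the edge budget through $q(\mathcal{B}) - q(\mathcal{A})$, whereas $f(r)$ for $r \geq 2$ must be chosen small in terms of $g(r)$, so that the guaranteed regularity $f(k)$ of $\mathcal{B}$ is fine enough to be cleaned up to $g(k)$.
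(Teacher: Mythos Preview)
Your argument is correct and is essentially the classical derivation of the regular approximation lemma from the strong regularity lemma (this is the route the paper attributes to \cite{AlShSt} and \cite{RoSc} in Subsection~\ref{LSsubsection}). The paper's own proof in Section~\ref{RALsection}, however, takes a genuinely different route: instead of the strong regularity lemma it invokes Tao's regularity lemma (Lemma~\ref{Taoregularity}), which produces an equitable partition $P$ and an equitable refinement $Q$ that is only \emph{weak} $\delta(t)$-regular with $q(Q)\le q(P)+\epsilon_0$. The block-level surgery is then carried out not block by block along a Szemer\'edi-regular refinement, but on the whole pair $(A,B)$ of parts of $P$ at once, using the key Lemma~\ref{keyforral}: a weak $\delta$-regular partition of a bipartite pair $(A,B)$ is already enough to randomly correct the pair to $2\delta^{1/3}$-regularity while changing at most $\big(\delta+\sqrt{q(\mathcal A,\mathcal B)-q(A,B)}\big)|A||B|$ edges. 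The Cauchy--Schwarz step controlling the total edge budget via $\sqrt{q(Q)-q(P)}$ is common to both arguments.

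What this buys is quantitative: your proof inherits the wowzer-type bound $S(f)$ from Lemma~\ref{strongreg}, whereas the paper's route through Tao's lemma yields the tower-type bound of Theorem~\ref{lsnewbound}. For the bare existence statement of Lemma~\ref{LSlem} both proofs are valid; the paper's point is precisely that weak regularity of the refinement suffices, so the iterated applications of Szemer\'edi's regularity lemma hidden in Lemma~\ref{strongreg} are avoidable here.
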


Lov\'asz and Szegedy \cite{LS} state that the regular approximation lemma is
equivalent to the strong regularity lemma, Lemma \ref{strongreg}. It is not
difficult to deduce Lemma \ref{LSlem} from the strong regularity lemma, see
\cite{AlShSt} or \cite{RoSc} for details. Unlike the original graph limit
approach, this proof of the regular approximation lemma gives explicit bounds
and yields a polynomial time algorithm for finding the partition and the
necessary edge modifications. In the other direction, by applying Lemma
\ref{LSlem} with $1/g$ a tower in the $1/f$ from Lemma \ref{strongreg}, letting
$\mathcal{A}$ be the $g$-regular partition of $G'$, and then using
Szemer\'edi's regularity lemma to get a refinement $\mathcal{B}$ of
$\mathcal{A}$ which is an $f(\mathcal{A})$-regular partition of $G$, it is easy to deduce
the strong regularity lemma.

The major caveat here is the additional use of Szemer\'edi's regularity lemma
in deducing the strong regularity lemma from the regular approximation lemma.
Due to the additional use of Szemer\'edi's regularity lemma, it does not rule
out the possibility that the wowzer-type upper bound on $T$ in the regular
approximation lemma can be improved to tower-type. Maybe surprisingly, we
indeed make such an improvement.

\begin{theorem}\label{lsnewbound}
For $\epsilon>0$, positive integer $s$ and a decreasing function $g:\mathbb{N} \rightarrow (0,1)$, let $\delta(t)=\min(\frac{g(t)^3}{32t^2},\epsilon/2)$. Let $t_1=s$ and for
$i\geq 1$ let $t_{i+1}=t_ik(\delta(t_i))$, where $k$ is as in the weak
regularity lemma, so $k(\alpha)=2^{O(\alpha^{-2})}$. Let $T_0=t_j$ with
$j=4\epsilon^{-2}$. Then the regular approximation lemma holds with
$T=16T_0/\delta(T_0)^2$.  In other words, the regular approximation lemma holds
with a tower-type bound.
\end{theorem}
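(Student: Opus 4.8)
The plan is to prove Theorem \ref{lsnewbound} by imitating, at the level of the number-of-parts iteration, the standard derivation of the strong regularity lemma from Szemer\'edi's regularity lemma, but replacing each application of Szemer\'edi's regularity lemma with the Frieze--Kannan weak regularity lemma (Lemma \ref{FKWRL}). The key point driving the improvement is that the weak regularity lemma costs only an exponential (rather than a tower) in the number of parts, so an iteration of length $O(\epsilon^{-2})$ produces a tower of height $O(\epsilon^{-2})$ rather than a wowzer function.

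\medskip\noindent\textbf{Step 1: from weak regularity to a mean-square-density increment.} First I would record the standard fact that if a partition $P$ is not "good enough", then it has a refinement with noticeably larger mean square density $q$. Concretely: suppose $P:V=V_1\cup\dots\cup V_k$ is such that for more than a $\delta(k)$-fraction of pairs $(i,j)$ the pair $(V_i,V_j)$ fails to be $g(k)$-regular. For each such bad pair, irregularity gives subsets witnessing a density deviation, and overlaying all of these (together with an equitable subdivision to restore equitability) yields a refinement $P'$ with $q(P')\ge q(P)+c\,\delta(k)g(k)^2$ for an absolute constant $c$; the choice $\delta(t)=\min(g(t)^3/(32t^2),\epsilon/2)$ is calibrated so that this increment is at least, say, $\delta(k)^2$ or so — this is exactly the bookkeeping reflected in the statement. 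I would also invoke Lemma \ref{FKWRL} in the contrapositive form: the weak-regular partition it produces, with approximation parameter $\delta(t_i)$ applied to the current partition of order $t_i$, has order at most $t_i\cdot k(\delta(t_i))=t_{i+1}$ — the factor $t_i$ coming from taking a common refinement with the previous partition so that the new partition refines the old one.

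\medskip\noindent\textbf{Step 2: the iteration and its termination.} Starting from the trivial partition (or any equitable partition of order $s=t_1$), I iterate: given the current equitable partition of order at most $t_i$, apply the weak regularity lemma with parameter $\delta(t_i)$ and take a common equitable refinement to get a partition of order at most $t_{i+1}$. Now split into two cases. If at some step $i\le j-1$ the current partition $\mathcal{A}$ already has at most a $\delta(|\mathcal{A}|)$-fraction of irregular pairs, I stop — but since $\delta(|\mathcal{A}|)\le g(|\mathcal{A}|)^3/(32|\mathcal{A}|^2)$ is tiny, I can afford to fix the few bad pairs by adding/removing edges: there are at most $\delta(k)k^2\le g(k)^3/32\le \epsilon k^2/2$ irregular pairs (after also discarding pairs of density below $g(k)$ if needed, absorbing at most another $\epsilon n^2/2$ edges), and modifying all edges inside them costs at most $\epsilon n^2$ total, producing $G'$ with a $g$-regular partition. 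Otherwise, at every step the mean square density strictly increases by at least (a function calibrated to) $\epsilon^2/4$ or more; since $q\le 1$ always, this can happen at most $4\epsilon^{-2}=j$ times, contradiction — so the stopping case must occur by step $j$. Hence we always terminate with $k\le t_j=T_0$, and after the edge modification and a final equitable subdivision (costing the factor $16/\delta(T_0)^2$ in the bound, exactly as in the constant in the theorem) we get $T=16T_0/\delta(T_0)^2$.

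\medskip\noindent\textbf{Main obstacle.} The genuinely delicate part is Step 1 — verifying that one application of the Frieze--Kannan weak regularity lemma really does certify a usable mean-square-density increment for a \emph{refinement}, and in particular that a weak-regular partition whose density deviations are all at most $\delta$ forces the original partition to have few Szemer\'edi-irregular pairs. This requires carefully tracking the quantitative loss when one passes between the "cut-norm" / FK notion of regularity and the subset-density notion in the definition of $(\epsilon,\delta)$-regularity, and it is where the somewhat awkward exponents in $\delta(t)=g(t)^3/(32t^2)$ come from (the cube: one factor for the density gap $\epsilon\to\epsilon$, and the others for converting a single bad pair's contribution, weighted by part sizes $1/t^2$, into a $q$-increment). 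I would handle this by first proving a clean lemma: if $P$ has a refinement $P'$ that is weakly $\delta$-regular (in the FK sense) with respect to $P$ and $P$ itself has more than a $\lambda$-fraction of $g(k)$-irregular pairs, then $q(P')\ge q(P)+\Omega(\lambda g(k)^2)$, and then choosing $\lambda=\delta(k)$ and $\delta=\delta(k)$ to make everything consistent. Once this lemma is in hand, the rest of the argument is the routine energy-increment/iteration bookkeeping sketched above.
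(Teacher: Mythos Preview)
Your approach has a genuine quantitative gap in Step 2. You claim that if the current partition $P$ (with $k$ parts) has more than a $\delta(k)$-fraction of pairs that are not $g(k)$-regular, then the weak-regular refinement increases $q$ by at least $\epsilon^2/4$. But your own ``clean lemma'' gives an increment of only $\Omega(\lambda\, g(k)^2)$ with $\lambda=\delta(k)=g(k)^3/(32k^2)$, i.e.\ an increment of order $g(k)^5/k^2$. This depends on $k$ and on the (arbitrary, possibly rapidly decreasing) function $g$, not just on $\epsilon$, so there is no reason the iteration terminates in $4\epsilon^{-2}$ steps. In effect you are trying to obtain a Szemer\'edi-$g(k)$-regular partition via $O(\epsilon^{-2})$ rounds of weak regularity; that would need roughly $g(k)^{-O(1)}$ rounds, and since $k$ grows along the iteration the argument is circular.

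The paper avoids this entirely by \emph{not} seeking a partition that is already mostly $g$-regular. Instead it runs Tao's iteration to obtain two partitions $P$ (with $t$ parts) and a weak $\delta(t)$-regular refinement $Q$ with $q(Q)\le q(P)+(\epsilon/2)^2$; this termination really does cost only $O(\epsilon^{-2})$ rounds because the stopping criterion is on the $q$-gap alone. The new idea is the edge-modification lemma (Lemma~\ref{keyforral}): for each pair $(A,B)$ of parts of $P$, the restriction of $Q$ gives a weak $4t^2\delta(t)$-regular partition of the bipartite graph on $(A,B)$; one then randomly adds/deletes edges between the $Q$-cells so that all cell-densities become $d(A,B)$, and Hoeffding--Azuma shows the resulting pair is $2(4t^2\delta(t))^{1/3}\le g(t)$-regular. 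Crucially, the number of edges changed between $A$ and $B$ is at most $\bigl(\delta(t)+(q(\mathcal A,\mathcal B)-d(A,B)^2)^{1/2}\bigr)|A||B|$, and summing over all pairs with Jensen and $q(Q)-q(P)\le(\epsilon/2)^2$ bounds the total by $\epsilon n^2$. So edges are modified across \emph{every} pair of $P$, not just ``bad'' ones, and the cost is governed by the global $q$-gap rather than by any count of irregular pairs. This is the step your outline is missing.
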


It is usually the case that $1/g(t)$ in the regular approximation lemma is at
most a tower of constant height in $\epsilon^{-1}$ and $t$, and in this case
the upper bound $T$ on the number of parts is only a tower of height polynomial
in $\epsilon^{-1}$. Only in the unusual case of $1/g$ being of tower-type
growth does the number of parts needed in the regular approximation lemma grow
as wowzer-type.

Alon, Shapira, and Stav \cite{AlShSt} give a proof of the regular approximation
lemma which yields a polynomial time algorithm for finding the partition and
the necessary edge modifications. Similarly, our new proof
can be made algorithmic with  a polynomial time algorithm for finding the
partition and the necessary edge modifications. Making the proof algorithmic is
essentially the same as done in \cite{AlShSt}, so we do not include the
details.

A partition of a graph satisfying the weak regularity lemma, Lemma \ref{FKWRL},
is called a weak $\epsilon$-regular partition. Tao showed \cite{Ta1} (see also
\cite{RoSc}), by iterating the weak regularity lemma, that one obtains the
following regularity lemma which easily implies Szemer\'edi's regularity lemma
with the usual tower-type bounds.

\begin{lemma}\label{Taoregularity}
For all $\epsilon>0$, positive integers $s$ and functions $\delta: \mathbb{N} \rightarrow (0,1)$,
there is a $T_0$ such that every graph has an equitable vertex partition $P$
into $t \geq s$ parts which is weak $\epsilon$-regular, an equitable vertex refinement
$Q$ into at most $T_0$ parts which is weak $\delta(t)$-regular, and $q(Q) \leq
q(P)+\epsilon$.
\end{lemma}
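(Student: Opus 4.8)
The plan is to obtain both $P$ and $Q$ by iterating the Frieze–Kannan weak regularity lemma, using the mean square density $q$ as an energy function to control the number of iterations, exactly in the spirit of the classical proof of Szemerédi's lemma but with the weak regularity lemma playing the role of the regularity-defect lemma. First I would fix the target precision: on the outer level we want weak $\epsilon$-regularity, and on the inner level weak $\delta(t)$-regularity where $t = |P|$. I would start by applying Lemma \ref{FKWRL} to obtain an equitable partition $P_1$ with at least $s$ parts that is weak $\epsilon$-regular (one can always refine trivially to force at least $s$ parts, at the cost of a bounded blow-up of $k(\epsilon)$, and an equitable version is obtained by the standard trick of cutting parts into pieces of a common size and absorbing the remainders). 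Set $t = |P_1|$ and $\delta = \delta(t)$, now a fixed positive constant.

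Next I would build $Q$ by a refinement iteration. Start with $Q_0 = P_1$. Given $Q_i$ (equitable, refining $P_1$), apply Lemma \ref{FKWRL} \emph{within each part's induced structure} — more precisely, apply the weak regularity lemma with parameter $\delta$ to $G$ but insisting the resulting partition refine $Q_i$; this is the usual move, and it produces an equitable common refinement $Q_{i+1}$ of $Q_i$ and the FK-partition, with $|Q_{i+1}| \le |Q_i| \cdot k(\delta)$. The key quantitative point is the energy increment: if $Q_i$ is \emph{not} already weak $\delta$-regular, then the FK-partition it fails to be witnesses subsets $A,B$ on which the edge count deviates by more than $\delta |V|^2$, and a Cauchy–Schwarz computation (identical to the defect form used in the proof of Szemerédi's lemma, and to the proof of Lemma \ref{FKWRL} itself) shows $q(Q_{i+1}) \ge q(Q_i) + \delta^2$. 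Since $0 \le q \le 1$ always, this can happen at most $\delta^{-2}$ times, so within $j \le \delta^{-2} = \delta(t)^{-2}$ steps we reach a $Q = Q_j$ that is weak $\delta(t)$-regular, with $|Q| \le t \cdot k(\delta)^{\delta^{-2}}$, a bound depending only on $\epsilon, s, \delta$. Finally, $Q$ refines $P := P_1$, so $q(Q) \ge q(P)$; and the requirement $q(Q) \le q(P) + \epsilon$ is automatic from $q \le 1$ if $\epsilon \ge 1$, but in general I would instead \emph{also} enforce it by noting that we may stop the refinement as soon as $q$ stops increasing by $\epsilon$ — more cleanly, just observe that the whole construction can be reorganized so that the outer partition $P$ is itself chosen (by the same energy-increment argument applied at the coarse scale with parameter $\epsilon$) to be weak $\epsilon$-regular \emph{and} to satisfy that no further refinement gains more than $\epsilon$ in $q$, which forces $q(Q) \le q(P) + \epsilon$ for the refinement $Q$ produced above. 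Collecting the bounds, $T_0$ is a tower in $s$ of constant height depending on how $\delta(t)$ and $k(\cdot)$ grow, and in particular is finite, which is all that is claimed.

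The main obstacle is the bookkeeping that simultaneously keeps all three properties — $P$ weak $\epsilon$-regular with $|P| = t \ge s$, $Q$ weak $\delta(t)$-regular, and $q(Q) \le q(P) + \epsilon$ — compatible, since $\delta(t)$ depends on $t = |P|$, so $P$ must be fixed before the inner parameter is known, yet the $q(Q) \le q(P)+\epsilon$ constraint couples the two scales. The clean way to resolve this, which I would adopt, is the two-phase argument above: first run the coarse energy-increment process with parameter $\epsilon$ to produce $P$ (a partition such that $P$ is weak $\epsilon$-regular and refining $P$ by any FK-partition gains at most $\epsilon$ in $q$ — this terminates in $\le \epsilon^{-2}$ steps), \emph{then} with $t = |P|$ now fixed run the fine process with parameter $\delta(t)$ to produce $Q \supseteq P$; the stopping criterion of phase one guarantees $q(Q) \le q(P) + \epsilon$ en route, since each phase-two refinement step is in particular an FK-refinement. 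The equitability is handled throughout by the routine device of chopping into equal-size blocks and distributing leftovers, which changes the bounds only by absolute constants, and the constraint $t \ge s$ is met by refining the phase-one output if necessary. Everything else is a direct transcription of the standard Cauchy–Schwarz energy-increment argument, so no genuinely new idea is needed beyond using Lemma \ref{FKWRL} as the engine in place of the classical regularity partition refinement step.
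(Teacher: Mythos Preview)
There is a genuine gap in the two-phase scheme, precisely at the point you flag as the main obstacle. Your phase-one stopping rule is that ``refining $P$ by any FK-partition gains at most $\epsilon$ in $q$,'' but this is not a condition the weak regularity lemma hands you. What the energy argument gives is: if $P$ is not weak $\epsilon$-regular then some refinement gains at least $\epsilon^2$. Once $P$ \emph{is} weak $\epsilon$-regular, the argument says nothing about how much $q$ can still grow under further refinement, and in general it can grow by a quantity close to $1$ (this is exactly why Szemer\'edi partitions have much larger $q$ than Frieze--Kannan partitions). In particular, your phase-two process, iterated $\delta(t)^{-2}$ times with increments of size $\delta(t)^2$, can push $q(Q)$ arbitrarily far above $q(P)$. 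The sentence ``each phase-two refinement step is in particular an FK-refinement'' does not rescue this: phase one controls a single FK step at scale $\epsilon$, while phase two performs many FK steps at the finer scale $\delta(t)$.

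The fix, which is Tao's argument and what the paper invokes, is to abandon the two-phase separation entirely. Run a \emph{single} sequence of partitions $P_1, P_2, \ldots$ where $P_{i+1}$ is obtained from $P_i$ by one application of Lemma~\ref{FKWRL} with parameter $\delta(|P_i|)$ (so $P_{i+1}$ is already weak $\delta(|P_i|)$-regular, with $|P_{i+1}| \le |P_i|\,k(\delta(|P_i|))$; no inner iteration is needed). Since $0 \le q \le 1$, there is some $i \le \epsilon^{-1}$ with $q(P_{i+1}) \le q(P_i) + \epsilon$; take $P = P_i$ and $Q = P_{i+1}$. This gives the bound $T_0 = t_j$ with $t_1 = s$, $t_{i+1} = t_i k(\delta(t_i))$, $j = \epsilon^{-1}$, as stated after the lemma. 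The weak $\epsilon$-regularity of $P$ is arranged by running the iteration with parameter $\min(\epsilon,\delta(t_i))$ rather than $\delta(t_i)$. The point you were missing is that $P$ is not fixed in advance; it is the penultimate term of the sequence, discovered only when the energy increment stalls.
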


Let $t_1=s$, and for $i \geq 1$, let $t_{i+1}=t_ik(\delta(t_i))$, where $k$ is
as in the weak regularity lemma. Recall $k(\epsilon)$ is exponential in
$\epsilon^{-2}$. Then $T_0$ in Lemma \ref{Taoregularity} is given by $T_0=t_j$
with $j=\epsilon^{-1}$. In particular, if $\delta^{-1}$ is bounded above by a
tower of constant height, then $T_0$ in Tao's regularity lemma grows as a tower
of height linear in $\epsilon^{-1}$.

Our proof of Theorem \ref{lsnewbound} shows that the regular approximation
lemma is equivalent to Tao's regularity lemma with similar bounds. In
fact, we show that $T$ in the regular approximation lemma can be taken to be
$T=16T_0/\delta(T_0)^2$, where $T_0=T_0(\delta,\epsilon_0,s)$ is the bound on the
number of parts in Tao's regularity lemma,
$\delta(t)=\min(\frac{g(t)^3}{32t^2},\epsilon/2)$, and
$\epsilon_0=(\epsilon/2)^2$. As Tao's regularity lemma is a simple consequence
of the regular approximation lemma and an application of the weak regularity
lemma, it suffices to show how to deduce the regular approximation lemma from
Tao's regularity lemma.

The proof starts by applying Tao's regularity lemma with $\delta$ and
$\epsilon_0$ as above. For each pair $(X,Y)$ of parts in $Q$, where $X \subset
A$ and $Y \subset B$ with $A,B$ parts of $P$, we randomly add/delete edges
between $X,Y$ with a certain probability so that the density between $X$ and
$Y$ is about the same as the density between $A$ and $B$. We show that in
doing this we have made every pair $(A,B)$ of parts of $P$ $g(t)$-regular with $t=|P|$.
Since $q(Q) \leq q(P)+\epsilon_0$, the edge density $d(X,Y)$ between most pairs
$(X,Y)$ of parts of $Q$ is close to the edge density $d(A,B)$ between $A$ and
$B$, and few edges are changed to obtain a graph $G'$ for which the partition
$P$ is $g$-regular.

We next briefly discuss lower bounds for the regular approximation lemma. In
the case $g$ is a (small) constant function, a tower-type lower bound follows
from Theorem \ref{exceptionalpairs}. If $g$ is at least a tower function, we
get a lower bound of wowzer-type from Theorem \ref{stronglow} and the fact that the
strong regularity lemma follows from the regularity approximation lemma with an
additional application of Szemer\'edi's regularity lemma as discussed earlier.
One could likely come up with a construction giving a general lower bound
essentially matching Theorem \ref{lsnewbound}, but as the already mentioned
interesting cases discussed above are handled by Theorems \ref{exceptionalpairs}
and \ref{stronglow}, we do not include such a construction.

\vspace{0.1cm}
{\bf Organization}

In the next section, we prove some useful tools for establishing lower bounds
for Szemer\'edi's regularity lemma and the strong regularity lemma. In Section
\ref{IrregularSection}, we give a general construction and use it to prove
Theorem \ref{exceptionalpairs} which addresses questions of Szemer\'edi and
Gowers on the number of irregular pairs in Szemer\'edi's regularity lemma. In
Section \ref{strongsect}, we use the general construction to prove Theorem
\ref{stronglow}, which gives a wowzer-type lower bound on the number of parts
of the two partitions in the strong regularity lemma. In Section
\ref{indremovalsection}, we prove the strong cylinder regularity lemma and use
it to prove a tower-type upper bound on the induced graph removal lemma. In
Section \ref{RALsection}, we prove a tower-type upper bound on the number of
parts in the regular approximation lemma. In Section \ref{weakregsection}, we
prove a tight lower bound on the number of parts in the weak regularity lemma.
These later sections, Sections \ref{indremovalsection}, \ref{RALsection} and \ref{weakregsection}, are largely independent of earlier sections and of each other. The interested reader may therefore skip forward without fear of losing the thread.

We finish with some concluding remarks. This includes a discussion showing that
in the regularity lemma, the condition that the parts in the partition are of
equal size does not affect the bounds by much. We also discuss an early version
of Szemer\'edi's regularity lemma, and a recent result of Malliaris and Shelah
which shows an interesting connection between irregular pairs in the regularity
lemma and the appearance of half-graphs. 

Throughout the paper, we systematically omit floor and ceiling signs whenever they are not crucial for the sake of clarity of presentation. We also do not make any serious attempt to
optimize absolute constants in our statements and proofs.

\section{Tools}

Suppose $S=S_1+\cdots+S_n$ is the sum of $n$ mutually independent random
variables, where for each $i$, $\textrm{Pr}[S_i=1]=p$ and
$\textrm{Pr}[S_i=0]=1-p$. The sum $S$ has a binomial distribution with
parameters $p$ and $n$, and has expected value $pn$. A Chernoff-type estimate
(see Theorem A.1.4 in \cite{AlSp}) implies that for $a>0$,
\begin{equation}\label{chernoffest} \textrm{Pr}[S-pn>a] < e^{-2a^2/n}
\end{equation}
By symmetry, we also have $\textrm{Pr}[S-pn<-a]<e^{-2a^2/n}$ and hence
$\textrm{Pr}[|S-pn|>a] < 2e^{-2a^2/n}$.

We start by proving a couple of lemmas on the edge distribution of random
bipartite graphs with different part sizes. Consider the random bipartite graph
$B=B(m,M)$ with parts $[m]$ and $[M]$ formed by each vertex
$i \in [m]$ having exactly $M/2$ neighbors (we assume $M$ is even) in $[M]$
picked uniformly at random and independently of the choices of the
neighborhoods for the other vertices in $[m]$.

The following lemma shows that, with high probability, certain simple estimates on the number of common neighbors or nonneighbors of any two vertices in $B(m, M)$ hold.

\begin{lemma}\label{firstlemma1}
Let $M \geq m$ be positive integers with $M \geq 2^{20}$ even, and $0<\mu<1/2$ be such
that $m \geq 2\mu^{-2}\log M$. Then, with probability at least $1-M^{-2}$, the
random bipartite graph $B=B(m,M)$ has the following properties:
\begin{itemize}
\item for any distinct $j,j'
\in [M]$, the number of $i$ for which $j$ and $j'$ are either both neighbors of
$i$ or both nonneighbors of $i$ is less than $(\frac{1}{2}+\mu)m$.
\item for any distinct $i,i' \in [m]$, the number of common neighbors of $i$
and $i'$ and the number of common nonneighbors of $i$ and $i'$ in $[M]$ are
both less than $\left(\frac{1}{4}+M^{-1/4}\right)M$.
\end{itemize}
\end{lemma}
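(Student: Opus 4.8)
The plan is to bound, for each fixed bad event, the probability that it occurs, and then take a union bound over all $O(M^2)$ choices of the relevant pair. Fix the model: each $i \in [m]$ independently chooses a uniformly random subset $N_i \subset [M]$ of size exactly $M/2$.

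First I would handle the first bullet. Fix distinct $j, j' \in [M]$. For each $i \in [m]$, let $Z_i$ be the indicator of the event that $i$ "agrees" on $j, j'$, i.e.\ $\{j,j'\} \subset N_i$ or $\{j,j'\} \cap N_i = \emptyset$. The $Z_i$ are mutually independent over $i$ (since the $N_i$ are), and each is a Bernoulli random variable. A direct computation with the hypergeometric distribution of $|N_i \cap \{j,j'\}|$ gives $\Pr[Z_i=1] = \frac{(M/2)(M/2-1)}{M(M-1)} + \frac{(M/2)(M/2-1)}{M(M-1)} = \frac{M/2-1}{M-1} < \tfrac12$. Thus $S := \sum_i Z_i$ is stochastically dominated by $\bin{m,1/2}$, so $\E{S} \le m/2$ and by the Chernoff bound (\ref{chernoffest}) with $a = \mu m$ we get $\Pr[S \ge (\tfrac12+\mu)m] \le \Pr[S - \E{S} \ge \mu m] < e^{-2\mu^2 m}$. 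The hypothesis $m \ge 2\mu^{-2}\log M$ makes this at most $e^{-4\log M} = M^{-4}$. Union-bounding over the fewer than $M^2$ pairs $\{j,j'\}$ gives failure probability at most $M^{-2}$ for the first bullet.

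Next I would handle the second bullet. Fix distinct $i, i' \in [m]$; note $N_i$ and $N_{i'}$ are independent, each uniform of size $M/2$. Condition on $N_i$; then $|N_i \cap N_{i'}|$ is hypergeometric with parameters $(M, M/2, M/2)$, with mean $M/4$. The same Chernoff-type bound for the hypergeometric distribution — or, more cheaply, realizing $N_{i'}$ by sampling $M/2$ elements without replacement and applying a concentration bound for sampling without replacement (also covered by the estimates in \cite{AlSp}) — gives $\Pr\big[\,|N_i \cap N_{i'}| \ge (\tfrac14 + M^{-1/4})M\,\big] \le e^{-2(M^{3/4})^2/M} = e^{-2M^{1/2}}$, and symmetrically for the common non-neighborhood $|\,[M]\setminus(N_i\cup N_{i'})\,|$, which has the same distribution by complementation. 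Since $M \ge 2^{20}$, $e^{-2M^{1/2}} \ll M^{-4}$, so union-bounding over the fewer than $m^2 \le M^2$ pairs $\{i,i'\}$ and the two events per pair keeps the total failure probability well under $M^{-2}$.

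Finally I would combine: the total probability that either bullet fails is at most $M^{-2}/2 + M^{-2}/2 = M^{-2}$ (or simply note each part fails with probability at most $M^{-2}$ and absorb constants, since we are not optimizing), so with probability at least $1 - M^{-2}$ both hold. The only mildly delicate point is getting a clean Chernoff-type tail for the hypergeometric distribution rather than for a sum of i.i.d.\ Bernoullis; I expect this to be the main (minor) obstacle, resolved either by the standard fact that sampling without replacement is at least as concentrated as sampling with replacement (so (\ref{chernoffest}) applies verbatim), or by invoking the hypergeometric Chernoff bound directly from the reference. Everything else is a routine first-moment computation plus a union bound.
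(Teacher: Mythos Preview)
Your proposal is correct and follows essentially the same approach as the paper: for the first bullet you use independence over $i$, compute $\Pr[Z_i=1]=\frac{M/2-1}{M-1}<\tfrac12$, apply the Chernoff bound (\ref{chernoffest}), and union-bound; for the second bullet you identify the hypergeometric distribution of $|N_i\cap N_{i'}|$, invoke the fact that it is at least as concentrated as the corresponding binomial (the paper cites Hoeffding \cite{Ho} for this), and again union-bound. The arithmetic and the combination of the two halves match the paper's proof.
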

\begin{proof}
Fix distinct $j,j' \in [M]$. For each $i \in [m]$, the probability that $i$ is
adjacent to both $j,j'$ or nonadjacent to both $j,j'$ is
$(\frac{M}{2}-1)/(M-1)<\frac{1}{2}$, and these events are independent of each
other. Therefore,
by (\ref{chernoffest}), the probability that the number of $i$ for which $j$
and $j'$ are either both neighbors of $i$ or both nonneighbors of $i$ is at
least $(\frac{1}{2}+\mu)m$ is at most $e^{-2(\mu m)^2/m}=e^{-2\mu^2 m} \leq
M^{-4}$. As there are $\binom{M}{2}$ choices for $j,j'$, and $\frac{1}{2}
M^{-2} \geq
M^{-4}\binom{M}{2}$, by the union bound we have that $B$ has the first desired
property
with probability at least $1-\frac{1}{2}M^{-2}$.

As the hypergeometric distribution is at least as concentrated as the corresponding binomial
distribution (for a proof, see Section 6 of \cite{Ho}), we can apply
(\ref{chernoffest}) to conclude that
for each fixed pair $i,i' \in [m]$ of distinct vertices the probability that
the number of common neighbors of  $i$ and $i'$  is at least
$\left(\frac{1}{4}+M^{-1/4}\right)M$ is at most
$e^{-2(M^{-1/4}M)^2/M}=e^{-2M^{1/2}}$. Similarly, for each fixed pair $i,i'
\in [m]$ of distinct vertices the probability that the number of common
nonneighbors of  $i$ and $i'$ in $[M]$ is at least
$\left(\frac{1}{4}+M^{-1/4}\right)M$ is at most $e^{-2M^{1/2}}$.

As there are ${m \choose 2}$ choices for $i,i'$ and $\frac{1}{2}M^{-2} \geq
2e^{-2M^{1/2}}{m \choose 2}$, by the union bound we have that $B$ has the
second desired property with probability at least $1-\frac{1}{2}M^{-2}$. Hence,
with probability at least $1-M^{-2}$, $B$ has both desired properties, which
completes the proof.
\end{proof}

The next lemma shows that the edges in $B(m,M)$ are almost surely uniformly
distributed between large vertex subsets.

\begin{lemma}\label{firstlemma2}
Let $M$ and $m$ be positive integers with $M$ even. With probability at least
$1-M^{-1}$, for any $U_1 \subset [m]$ and $U_2 \subset [M]$ with $|U_1|=u_1$
and $|U_2|=u_2$,
we have
\begin{equation}\label{discrep}|e_B(U_1,U_2)-\frac{1}{2}u_1u_2|\leq\sqrt{f},\end{equation}
where $$f=f(u_1,u_2)=u_1u_2\left(u_1\ln \frac{em}{u_1}+u_2\ln \frac{eM}{u_2}
\right). $$
 \end{lemma}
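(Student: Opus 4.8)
The plan is a routine first-moment bound: estimate the probability that (\ref{discrep}) fails for some pair $(U_1,U_2)$ by a union bound over all such pairs, organised by their sizes $(u_1,u_2)$. The cases $u_1=0$ or $u_2=0$ are vacuous, so fix $1\le u_1\le m$, $1\le u_2\le M$, and fix sets $U_1\subset[m]$, $U_2\subset[M]$ of these sizes. The key observation is that $e_B(U_1,U_2)=\sum_{i\in U_1}|N(i)\cap U_2|$ is a sum of \emph{independent} random variables, since the neighbourhoods $N(i)$ are chosen independently, and each summand $|N(i)\cap U_2|$ is hypergeometric (pick $M/2$ of the $M$ elements, count how many lie in $U_2$), with mean $\tfrac12 u_2$, so $\mathbb{E}\,e_B(U_1,U_2)=\tfrac12 u_1u_2$. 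Since the hypergeometric distribution is at least as concentrated as the corresponding binomial one — the same moment-generating-function comparison already used in the proof of Lemma~\ref{firstlemma1} — the sum $e_B(U_1,U_2)$ is dominated, for the purpose of Chernoff estimates, by $\mathrm{Bin}(u_1u_2,\tfrac12)$, and (\ref{chernoffest}) yields
\[
\mathbb{P}\!\left[\,\bigl|e_B(U_1,U_2)-\tfrac12 u_1u_2\bigr|>a\,\right]<2e^{-2a^2/(u_1u_2)}\qquad\text{for all }a>0.
\]

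The next step is to take $a=\sqrt{f(u_1,u_2)}$, which is exactly the value that makes $2a^2/(u_1u_2)=2\bigl(u_1\ln\tfrac{em}{u_1}+u_2\ln\tfrac{eM}{u_2}\bigr)$. Since the number of pairs of sizes $(u_1,u_2)$ is $\binom{m}{u_1}\binom{M}{u_2}\le\bigl(\tfrac{em}{u_1}\bigr)^{u_1}\bigl(\tfrac{eM}{u_2}\bigr)^{u_2}=\exp\bigl(u_1\ln\tfrac{em}{u_1}+u_2\ln\tfrac{eM}{u_2}\bigr)$, a union bound over these pairs shows that (\ref{discrep}) fails for some pair of sizes $(u_1,u_2)$ with probability at most $2\exp\bigl(-u_1\ln\tfrac{em}{u_1}-u_2\ln\tfrac{eM}{u_2}\bigr)$: half of the exponent in the Chernoff bound is consumed by the union bound and the other half survives. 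It is precisely the factor of $2$ in the exponent of (\ref{chernoffest}) that makes the stated $f$ work.

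It then remains to sum over all sizes. The double sum over $1\le u_1\le m$ and $1\le u_2\le M$ factors as $2\bigl(\sum_{u_1}(u_1/(em))^{u_1}\bigr)\bigl(\sum_{u_2}(u_2/(eM))^{u_2}\bigr)$, and in a series of the form $\sum_u(u/(eN))^u$ consecutive terms decrease by a factor less than $(u+1)/N$; hence (with $N=m$) the first factor is an absolute constant and (with $N=M$) the second is $O(1/M)$, dominated by its first term $\tfrac1{eM}$. A short computation, treating the few small even values of $M$ by hand, then shows the product is at most $M^{-1}$, which is the asserted bound.

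The argument is essentially bookkeeping, so I do not expect a genuine obstacle; the one thing that must be gotten right is that the summand $|N(i)\cap U_2|$ has to be treated as the genuinely concentrated hypergeometric variable rather than merely as a variable taking values in $[0,u_2]$. Using only boundedness would cost an extra factor of $u_2$ in the Chernoff exponent, and then the $f$ in the statement would be too small for the union bound to close — this is exactly why one invokes the hypergeometric-versus-binomial comparison. The only other mildly delicate point is keeping the absolute constants consistent while checking that the sum over sizes converges to at most $M^{-1}$, which rests on the elementary fact that $\sum_u(u/(eN))^u$ is front-loaded.
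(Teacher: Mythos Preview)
Your proposal is correct and follows essentially the same approach as the paper: decompose $e_B(U_1,U_2)$ as a sum of independent hypergeometric variables, invoke Hoeffding's comparison with the binomial to apply the Chernoff estimate (\ref{chernoffest}), and then close a union bound over $\binom{m}{u_1}\binom{M}{u_2}$ pairs using $\binom{N}{u}\le (eN/u)^u$. The paper's computation is line-for-line the same, arriving at the identical summand $2(em/u_1)^{-u_1}(eM/u_2)^{-u_2}$ and simply asserting the final sum is at most $M^{-1}$; your additional remarks about the series being front-loaded just make that last step a bit more explicit.
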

\begin{proof}
For fixed subsets $U_1 \subset [m]$ and $U_2 \subset [M]$, the random variable
$e_B(U_1,U_2)$, which has mean $\frac{1}{2}|U_1||U_2|$, despite not satisfying a binomial distribution, still satisfies the estimate (\ref{chernoffest}) for the corresponding binomial distribution with parameters $1/2$ and $|U_1||U_2|$. Indeed, note that $e_B(U_1,U_2)$ is the sum of the degrees of the vertices of $U_1$ in $U_2$, and these $|U_1|$ degrees are identical independent random variables, each satisfying a hypergeometric distribution. By Theorem 4 in Section 6 of \cite{Ho}, the expected value of the exponential of a random variable with a hypergeometric distribution is at most the expected value of the exponential of the random variable with the corresponding binomial distribution. Substituting this estimate into the proof of (\ref{chernoffest}) shows that the Chernoff estimate also holds for $e_B(U_1,U_2)$. Hence, the probability
(\ref{discrep}) doesn't hold for a particular pair $U_1,U_2$ is less than
$2e^{-2f/(u_1u_2)}$. By the union bound, the probability that there is a pair
of subsets $U_1 \subset [m]$ and $U_2 \subset [M]$ not satisfying
(\ref{discrep})
is at most
\begin{eqnarray*}\sum_{u_1=1}^m\sum_{u_2=1}^M {m \choose u_1}{M \choose
u_2}2e^{-2f/(u_1u_2)} & \leq &
\sum_{u_1=1}^m\sum_{u_2=1}^M
\left(\frac{em}{u_1}\right)^{u_1}\left(\frac{eM}{u_2}\right)^{u_2}2e^{-2f/(u_1u_2)}
\\ & = & \sum_{u_1=1}^m\sum_{u_2=1}^M 2\left(
\frac{em}{u_1}\right)^{-u_1}\left( \frac{eM}{u_2}\right)^{-u_2} \leq M^{-1}.
\end{eqnarray*}
\end{proof}

From the bipartite graph $B$, we construct equitable partitions
$(A_i,B_i)_{i=1}^m$
of $[M]$, by letting $A_i$ denote the set of neighbors of vertex $i \in [m]$ in
graph $B$. From Lemmas \ref{firstlemma1} and \ref{firstlemma2}, we have the
following corollary.

\begin{corollary}\label{firstcor} Suppose $M \geq m$ are positive integers
with $M \geq 2^{20}$ even, and $0<\mu<1/2$ is such that $m \geq 2\mu^{-2}\log M$. There is
a bipartite graph $B$ with parts $[m]$ and $[M]$, with each vertex in $[m]$ of
degree $M/2$ with the following properties. The estimate  (\ref{discrep}) holds
for all $U_1 \subset [m]$ and $U_2 \subset [M]$ with $|U_1|=u_1$ and
$|U_2|=u_2$, and $B$ satisfies the two properties in the conclusion of Lemma
\ref{firstlemma1}.
\end{corollary}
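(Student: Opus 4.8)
The plan is to combine Lemmas~\ref{firstlemma1} and~\ref{firstlemma2} by a simple union bound. Each of the two lemmas asserts that a certain collection of properties of the random bipartite graph $B=B(m,M)$ holds with probability at least $1-M^{-2}$ (Lemma~\ref{firstlemma1}) and at least $1-M^{-1}$ (Lemma~\ref{firstlemma2}), respectively. The hypotheses of Corollary~\ref{firstcor} --- namely $M\geq m$, $M\geq 2^{20}$ even, $0<\mu<1/2$ with $m\geq 2\mu^{-2}\log M$ --- are exactly the hypotheses of Lemma~\ref{firstlemma1}, and Lemma~\ref{firstlemma2} requires only that $M$ be even, which is subsumed. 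So both lemmas apply to the same random object $B$.

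The key step is then: the probability that $B$ fails to satisfy the conclusion of Lemma~\ref{firstlemma1} \emph{or} fails to satisfy the discrepancy estimate~(\ref{discrep}) for all $U_1\subset[m]$, $U_2\subset[M]$ is at most $M^{-2}+M^{-1}$ by the union bound. Since $M\geq 2^{20}>1$, this is strictly less than $1$, so with positive probability a single graph $B$ satisfies all of the desired properties simultaneously. (One could even note it holds with probability at least $1-M^{-2}-M^{-1}\geq 1-2M^{-1}>0$.) Fixing one such $B$ gives the graph claimed in the corollary.

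I expect there to be no real obstacle here; the statement is a bookkeeping consequence of the two preceding lemmas, and the only thing to check is that the hypotheses align, which they do verbatim. The one minor point worth a sentence is that the two lemmas are applied to the \emph{same} probability space --- the same random choice of neighborhoods for the vertices of $[m]$ --- so that ``with positive probability both hold'' is meaningful; this is immediate since $B(m,M)$ is a single well-defined random graph model and both lemmas are statements about it. I would write the proof in two or three lines: invoke the two lemmas, apply the union bound with the bound $M^{-2}+M^{-1}<1$, and conclude that a graph $B$ with all the stated properties exists.

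\begin{proof}
By Lemma~\ref{firstlemma1}, with probability at least $1-M^{-2}$ the random bipartite graph $B=B(m,M)$ satisfies the two properties in the conclusion of that lemma, and by Lemma~\ref{firstlemma2}, with probability at least $1-M^{-1}$ it satisfies the discrepancy estimate~(\ref{discrep}) for all $U_1 \subset [m]$ and $U_2 \subset [M]$. By the union bound, the probability that $B$ fails at least one of these is at most $M^{-2}+M^{-1}<1$, using $M \geq 2^{20}$. Hence with positive probability $B$ has all of the stated properties, so such a bipartite graph $B$ exists. Recall that every vertex in $[m]$ has degree exactly $M/2$ in $B$ by construction.
\end{proof}
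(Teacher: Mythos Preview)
Your proof is correct and is exactly the argument the paper intends: the corollary is stated as an immediate consequence of Lemmas~\ref{firstlemma1} and~\ref{firstlemma2} via the union bound, and the paper does not even write out a separate proof.
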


The next lemma is a useful consequence of the equitable partitions
$(A_i,B_i)_{i=1}^{m}$ behaving randomly. Given a vector $\lambda \in \mathbb{R}^M$ and $1 \leq q < \infty$, write $||\lambda||_q$ for $\left(\sum_{i=1}^M |\lambda_i|^q\right)^{1/q}$ and $||\lambda||_{\infty}$ for $\max_{1 \leq i \leq M} |\lambda_i|$.

\begin{lemma}\label{quiteuseful}
Let $M$ be a positive even integer, $0<\mu<1/2$, and $(A_i,B_i)_{i=1}^m$ be a
sequence of partitions satisfying the conclusion of Corollary \ref{firstcor}.
Suppose
that $0<\sigma,\tau,\alpha$ are such that $\sigma,\tau<1$, $\alpha<1/2$, and
$$(\frac{1}{2}-\mu)(1-\sigma^2)>\frac{\tau}{2}+2(1-\tau)\alpha(1-\alpha).$$ Then
for every sequence $\lambda=(\lambda_1,\ldots,\lambda_M)$ of nonnegative real
numbers which are not all zero with $||\lambda||_2=\sigma||\lambda||_1$, there
are at least
$\tau m$ values of $i$ for which
$\min(a_i,b_i)>\alpha||\lambda||_1$, where $a_i=\sum_{j \in A_i}\lambda_j$ and
$b_i=\sum_{j
\in B_i}\lambda_j$.
\end{lemma}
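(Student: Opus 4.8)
The plan is to count pairs $i$ that are "bad" for $A_i$ or "bad" for $B_i$, and show this count is less than $(1-\tau)m$, so at least $\tau m$ values of $i$ are good for both. Call $i$ \emph{$A$-bad} if $a_i \le \alpha\|\lambda\|_1$ and \emph{$B$-bad} if $b_i \le \alpha\|\lambda\|_1$; we want to bound the number of $i$ that are $A$-bad or $B$-bad. The key identity is that for each $i$, since $(A_i,B_i)$ partitions $[M]$, we have $a_i + b_i = \|\lambda\|_1 =: L$, so $i$ cannot be both $A$-bad and $B$-bad (as $\alpha < 1/2$ forces $a_i + b_i \le 2\alpha L < L$). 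Thus the bad set is a disjoint union, and the real work is to bound the $A$-bad set (the $B$-bad case being symmetric), using only the quasirandomness of the partitions.

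The main tool will be a second-moment computation: I will estimate $\sum_i a_i$ and $\sum_i a_i^2$. Since each $j$ lies in $A_i$ for exactly $m/2$ values of $i$ (vertex $i\in[m]$ has degree $M/2$... wait --- rather, by the first property of Corollary \ref{firstcor}: each $j \in [M]$ is a neighbor of roughly half the $i$'s; more precisely the number of $i$ for which $j,j'$ lie on the same side is $<(\tfrac12+\mu)m$), one gets $\sum_i a_i^2 = \sum_{j,j'} \lambda_j\lambda_{j'} \cdot \#\{i : j,j' \in A_i\}$. The diagonal terms contribute $\approx \tfrac12\sum_j\lambda_j^2 \cdot m = \tfrac12 m\|\lambda\|_2^2$ and the off-diagonal terms, by the first bullet of Lemma \ref{firstlemma1} applied to bound $\#\{i : j,j'\in A_i\} \le \#\{i: j,j' \text{ on same side}\} < (\tfrac12+\mu)m$, contribute at most $(\tfrac12+\mu)m(\|\lambda\|_1^2 - \|\lambda\|_2^2)$. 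Hence $\sum_i a_i^2 \le (\tfrac12+\mu)m L^2 - \mu m\|\lambda\|_2^2 = m L^2\big((\tfrac12+\mu) - \mu\sigma^2\big)$ using $\|\lambda\|_2 = \sigma L$; also $\sum_i a_i \approx \tfrac12 m L$ (each coordinate appearing in $\approx m/2$ of the $A_i$, which one can extract similarly, or observe $\sum_i(a_i+b_i) = mL$ and symmetry). Actually I need a lower bound on $\sum_i a_i$; since $a_i \le L$ always, $\sum a_i^2 \le L \sum a_i$ gives nothing useful, so instead I will use $\sum_i a_i = \sum_j \lambda_j \deg_B(j)$ where $\deg_B(j) = \#\{i: j \in A_i\}$, and the same-side count at $j'=j$ is vacuous, so I'll bound $\deg_B(j)$ via a separate Chernoff estimate or simply note the construction makes these close to $m/2$ --- this is a point to check carefully.

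The finish is Paley--Zygmund / one-sided Chebyshev. Write $X = a_i$ for a uniformly random $i \in [m]$; then $\mathbb{E}[X] \ge (\tfrac12 - \mu')L$ for a small error $\mu'$ and $\mathbb{E}[X^2] \le L^2\big((\tfrac12+\mu)-\mu\sigma^2\big)$. The probability that $X \le \alpha L$ is at most the probability $X \le \alpha L$; by the Paley--Zygmund-type bound, $\Pr[X > \alpha L] \ge \frac{(\mathbb{E}[X] - \alpha L)^2}{\mathbb{E}[X^2]}$ when $\mathbb{E}[X] > \alpha L$. Plugging in, $\Pr[A\text{-bad}] \le 1 - \frac{((\tfrac12-\mu')-\alpha)^2 L^2}{L^2((\tfrac12+\mu)-\mu\sigma^2)}$, and the hypothesis $(\tfrac12-\mu)(1-\sigma^2) > \tfrac\tau2 + 2(1-\tau)\alpha(1-\alpha)$ is exactly engineered so that this, combined with the symmetric $B$-bad bound and the disjointness above, yields $\#\{A\text{-bad}\} + \#\{B\text{-bad}\} < (1-\tau)m$. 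I expect the main obstacle to be the bookkeeping in matching my error terms to the precise form of the hypothesis --- in particular controlling $\sum_i a_i$ from below cleanly, and verifying that the inequality $(\tfrac12-\mu)(1-\sigma^2) > \tfrac\tau2 + 2(1-\tau)\alpha(1-\alpha)$ delivers exactly the threshold $\tau m$ rather than something slightly weaker; some care with which quantity ($a_i$ vs $b_i$ vs their symmetry) carries the $(1-\alpha)$ factor will be needed.
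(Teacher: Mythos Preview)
Your approach has a real obstacle. You need a lower bound on $\sum_i a_i = \sum_j \lambda_j \cdot \#\{i : j \in A_i\}$, and for the diagonal part of $\sum_i a_i^2$ you need the individual counts $\#\{i : j \in A_i\}$. But the conclusion of Corollary~\ref{firstcor} gives you nothing about these single-vertex degrees on the $[M]$ side: the first bullet of Lemma~\ref{firstlemma1} only controls the ``same side'' count for \emph{pairs} $j,j'$, and the estimate~(\ref{discrep}) applied with $|U_2|=1$ has error of order $m$, which is useless. So ``bound $\deg_B(j)$ via a separate Chernoff estimate'' would mean strengthening the hypothesis of the lemma, not proving it as stated. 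Even granting that, the Paley--Zygmund computation would not cleanly produce the specific inequality in the hypothesis; you already flagged this, and it is a real mismatch, not just bookkeeping.

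The paper sidesteps both problems with one move: instead of $\sum_i a_i^2$, look at $\sum_i a_i b_i$. The identity
\[
2\sum_i a_i b_i \;=\; \sum_{\substack{j\neq j',\,i:\\ j,j' \text{ on opposite sides of }(A_i,B_i)}} \lambda_j\lambda_{j'}
\]
uses only the \emph{opposite-side} count for pairs, which is $\ge (\tfrac12-\mu)m$ by the hypothesis, giving (after normalising $\|\lambda\|_1=1$) the lower bound $\sum_i 2a_ib_i \ge (\tfrac12-\mu)m(1-\sigma^2)$. For the upper bound, since $a_i+b_i=1$ one has $a_ib_i\le 1/4$ always, and $a_ib_i\le \alpha(1-\alpha)$ whenever $\min(a_i,b_i)\le\alpha$; so if fewer than $\tau m$ indices satisfy $\min(a_i,b_i)>\alpha$, then $\sum_i 2a_ib_i < \tau m\cdot\tfrac12 + (1-\tau)m\cdot 2\alpha(1-\alpha)$, contradicting the displayed hypothesis. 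This is the whole proof --- the product $a_ib_i$ is exactly the right statistic because it simultaneously avoids the uncontrolled diagonal, encodes $\min(a_i,b_i)$ directly, and makes the $(1-\alpha)$ factor fall out with no effort.
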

\begin{proof}
Note that by multiplying each $\lambda_j$ by $1/||\lambda||_1$, we may assume
without loss of generality that $||\lambda||_1=1$.
For distinct $j,j' \in [M]$, let $(j,j')_i$ denote that $j$ and $j'$ lie in
different sets in the partition $(A_i,B_i)$.
Since for any distinct $j,j' \in [M]$, the number of $i$ for which $(j,j')_i$
holds is at least $(\frac{1}{2} - \mu) m$, we have
\begin{equation}\label{estim}\sum_{(j,j')_i}\lambda_j\lambda_{j'} \geq
(\frac{1}{2}-\mu) m\sum_j \lambda_j(1-\lambda_j) =(\frac{1}{2}-\mu)
m(||\lambda||_1-||\lambda||_2^2)=(\frac{1}{2}-\mu) m (1-\sigma^2),\end{equation}
where the sum is over all {\it ordered} triples $(j,j',i)$ with $j,j'$ distinct
and $j$ and $j'$ lie in different sets in the partition $(A_i,B_i)$.
We have the identity
$$\sum_{(j,j')_i}\lambda_j\lambda_{j'}=2\sum_i a_ib_i.$$
Since $a_i+b_i=1$, we have $a_ib_i \leq 1/4$ and if $\min(a_i,b_i) \leq
\alpha$, then $a_ib_i \leq \alpha(1-\alpha)$.
So if $\min(a_i,b_i) \leq \alpha$ for all but less than $\tau m$ values of $i$,
then
$$\sum_{(j,j')_i}\lambda_j\lambda_{j'} < \frac{\tau}{2} m
+2(1-\tau)m\alpha(1-\alpha).$$
Comparing with (\ref{estim}) and dividing by $m$, this contradicts the
supposition, and completes the proof.
\end{proof}

As usual, $G(n,p)$ denotes the random graph on $n$ vertices chosen by picking
each pair of vertices as an edge randomly and independently with probability
$p$. We finish this section with a few standard lemmas on the edge distribution
in $G(n,p)$.

\begin{lemma}\label{fdis}
In $G(n,p)$, with probability at least $1 - n^{-2}$, every pair of disjoint
vertex subsets $U_1$ and $U_2$ satisfy \begin{equation}\label{discrep123}
|e(U_1,U_2)-pu_1u_2| \leq
\sqrt{g},\end{equation}
where $u_1 = |U_1|$, $u_2 = |U_2|$ and, for $u_1 \leq u_2$,
$g=g(u_1,u_2)=2u_1u_2^2\ln \frac{ne}{u_2}$.
\end{lemma}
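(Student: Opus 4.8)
The plan is to establish the discrepancy estimate (\ref{discrep123}) for a fixed pair of disjoint subsets $U_1, U_2$ via the Chernoff bound (\ref{chernoffest}), and then take a union bound over all choices of such pairs. First I would fix disjoint $U_1, U_2$ with $|U_1| = u_1 \leq u_2 = |U_2|$. The number of edges $e(U_1, U_2)$ between them is a sum of $u_1 u_2$ mutually independent indicator random variables (one per pair $(x,y) \in U_1 \times U_2$, which are genuinely independent since $U_1$ and $U_2$ are disjoint), each equal to $1$ with probability $p$. Thus $e(U_1,U_2)$ has a binomial distribution with parameters $p$ and $u_1 u_2$ and mean $p u_1 u_2$, so (\ref{chernoffest}) and its symmetric counterpart give
\[
\pr{|e(U_1,U_2) - p u_1 u_2| > \sqrt{g}} < 2 e^{-2g/(u_1 u_2)} = 2 e^{-4 u_2 \ln(ne/u_2)} = 2\left(\frac{u_2}{ne}\right)^{4u_2},
\]
using $g = 2 u_1 u_2^2 \ln\frac{ne}{u_2}$.

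Next I would sum this failure probability over all pairs $(U_1, U_2)$. For fixed sizes $u_1 \leq u_2$ there are at most $\binom{n}{u_1}\binom{n}{u_2} \leq \left(\frac{en}{u_1}\right)^{u_1}\left(\frac{en}{u_2}\right)^{u_2} \leq \left(\frac{en}{u_2}\right)^{u_1 + u_2} \leq \left(\frac{en}{u_2}\right)^{2u_2}$ choices (here using $u_1 \leq u_2$ so $\frac{en}{u_1} \geq \frac{en}{u_2}$ — wait, that goes the wrong way, so instead bound $\binom{n}{u_1} \leq n^{u_1} \leq n^{u_2} \leq \left(\frac{en}{u_2}\right)^{u_2}$ crudely, giving the same $\left(\frac{en}{u_2}\right)^{2u_2}$ bound). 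Multiplying by the failure probability $2\left(\frac{u_2}{ne}\right)^{4u_2}$ yields at most $2\left(\frac{u_2}{ne}\right)^{2u_2} \leq 2 n^{-2u_2} \leq 2n^{-2}$ for each pair of sizes, crudely, and a slightly more careful accounting (summing the geometric-type series over $u_2 \geq 1$ and over the at most $u_2$ values of $u_1$) keeps the total below $n^{-2}$ for $n$ large; one absorbs the polynomial factor $n^2$ coming from the number of size-pairs into the exponential gap $\left(\frac{u_2}{ne}\right)^{2u_2}$, which is at most $n^{-2u_2} \cdot e^{-2u_2} \leq n^{-2}e^{-2}$, leaving ample room.

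I expect the only mildly delicate point to be verifying that the union bound sum genuinely comes out below $n^{-2}$ rather than merely $O(n^{-2})$ — this requires being a little careful with the constant $2$ in the exponent of $g$ and with the $\binom{n}{u_i} \leq (en/u_i)^{u_i}$ estimates, but there is a large margin since the dominant term decays like $n^{-2u_2}$ geometrically in $u_2$ while only $O(n^2)$ size-pairs and, for each, $\left(\frac{en}{u_2}\right)^{2u_2}$ subset-pairs need to be accounted for, and these are overwhelmed. Everything else is routine: the independence structure that makes $e(U_1,U_2)$ exactly binomial is the key structural input, and it holds precisely because $U_1 \cap U_2 = \emptyset$, which is why the hypothesis of disjointness appears in the statement. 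No counting lemma or concentration result beyond (\ref{chernoffest}) is needed.
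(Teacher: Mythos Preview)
Your approach is essentially identical to the paper's: Chernoff for a fixed pair, then a union bound using $\binom{n}{u} \leq (en/u)^u$. One correction: your ``crude fix'' $n^{u_2} \leq (en/u_2)^{u_2}$ fails for $u_2 \geq 3$, but your original bound $(en/u_1)^{u_1} \leq (en/u_2)^{u_2}$ was in fact correct, since the function $x \mapsto (en/x)^x$ is increasing on $[1,n]$ (its log has derivative $\ln(n/x) > 0$), so the larger exponent dominates despite the smaller base; this is exactly the estimate the paper uses.
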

\begin{proof}
For fixed sets $U_1$ and $U_2$, the quantity $e(U_1,U_2)$ is a binomial
distributed random variable with parameters $u_1u_2$ and $p$. By
(\ref{chernoffest}), we have that the probability (\ref{discrep123}) does not
hold is
less than $2e^{-2g/(u_1u_2)}$.
By the union bound, the probability that there are disjoint sets $U_1$ and
$U_2$ for which (\ref{discrep123}) does not hold is at most \begin{eqnarray*}
\sum_{u_2=1}^{n}\sum_{u_1=1}^{u_2} {n \choose u_2}{n-u_2 \choose
u_1}2e^{-2g/(u_1u_2)}
& \leq & \sum_{u_2=1}^{n}\sum_{u_1=1}^{u_2}
\left(\frac{ne}{u_2}\right)^{u_2}\left(\frac{ne}{u_1}\right)^{u_1}2e^{-2g/(u_1u_2)}
\\ & \leq & \sum_{u_2=1}^{n}\sum_{u_1=1}^{u_2}
2\left(\frac{ne}{u_2}\right)^{-2u_2}
\leq n^{-2}.\end{eqnarray*}
The result follows.
\end{proof}

\begin{lemma}\label{sdis}
In $G(n,p)$, with probability at least $1 -n^{-2}$, every vertex subset $U$
satisfies \begin{equation}\label{discrep1234}|e(U)-p{u \choose 2}|
\leq \sqrt{g},\end{equation}
where $u = |U|$ and $g=g(u)=\frac{1}{2}u^{3}\ln \frac{ne}{u}$.
\end{lemma}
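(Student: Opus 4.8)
The plan is to follow the template of the proof of Lemma \ref{fdis}, now applied to the single-set quantity $e(U)$ rather than to $e(U_1,U_2)$ for disjoint $U_1,U_2$.

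First I would fix a vertex subset $U$ with $|U|=u$. In $G(n,p)$ the quantity $e(U)$ counts the present edges among the $\binom{u}{2}$ pairs inside $U$, each present independently with probability $p$, so $e(U)$ is a binomially distributed random variable with parameters $\binom{u}{2}$ and $p$ and mean $p\binom{u}{2}$. Applying the two-sided Chernoff estimate (\ref{chernoffest}) with $a=\sqrt{g}$ shows that the probability that (\ref{discrep1234}) fails for this particular $U$ is less than $2e^{-2g/\binom{u}{2}}$. (For $u\le 1$ we have $\binom{u}{2}=0$ and $e(U)=p\binom{u}{2}=0$, so (\ref{discrep1234}) is automatic; thus we may restrict to $u\ge 2$.)

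Next I would substitute $g=\frac{1}{2}u^{3}\ln\frac{ne}{u}$ and use $\binom{u}{2}=\frac{u(u-1)}{2}$, which gives $2g/\binom{u}{2}=\frac{2u^{2}}{u-1}\ln\frac{ne}{u}\ge 2(u+1)\ln\frac{ne}{u}$ for $u\ge 2$, so the per-set failure probability is at most $2\left(\frac{ne}{u}\right)^{-2(u+1)}$. Taking a union bound over the at most $\binom{n}{u}\le\left(\frac{ne}{u}\right)^{u}$ sets of each size $u$ and summing over $u$,
$$\sum_{u=2}^{n}\binom{n}{u}\,2e^{-2g/\binom{u}{2}}\;\le\;\sum_{u=2}^{n}\left(\frac{ne}{u}\right)^{u}2\left(\frac{ne}{u}\right)^{-2(u+1)}\;=\;\sum_{u=2}^{n}2\left(\frac{ne}{u}\right)^{-u-2}\;\le\;n^{-2},$$
the last inequality holding because the summands decay superexponentially in $u$ and are dominated by the $u=2$ term, which is $O(n^{-4})$. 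Hence with probability at least $1-n^{-2}$ the bound (\ref{discrep1234}) holds simultaneously for all $U$, as claimed.

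I do not expect any genuine obstacle: this is a routine variant of Lemma \ref{fdis}. The only points needing a little care are that $e(U)$ has $\binom{u}{2}$ rather than $u^{2}$ underlying Bernoulli trials — which is exactly what one needs for the Chernoff exponent to come out right and is reflected in the constant $\frac{1}{2}$ in the definition of $g$ — and the bookkeeping in the final sum, where one should keep $\binom{u}{2}=u(u-1)/2$ exactly (or else observe that the cases $u\le 2$ are themselves vacuous, since then $|e(U)-p\binom{u}{2}|\le 1\le\sqrt{g}$) so that the union bound comes out comfortably below $n^{-2}$.
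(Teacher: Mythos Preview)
Your proof is correct and follows essentially the same approach as the paper: fix $U$, note $e(U)$ is binomial with parameters $\binom{u}{2}$ and $p$, apply the Chernoff bound (\ref{chernoffest}), and then take a union bound using $\binom{n}{u}\le (ne/u)^u$. Your intermediate inequality $2g/\binom{u}{2}\ge 2(u+1)\ln(ne/u)$ is in fact slightly sharper than what the paper implicitly uses (namely $2g/\binom{u}{2}\ge (2u+1)\ln(ne/u)$), yielding the exponent $-(u+2)$ rather than the paper's $-(u+1)$, but the argument and final estimate are otherwise identical.
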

\begin{proof}
For fixed $U$, the quantity $e(U)$ is a binomially distributed random variable
with parameters ${u \choose 2}$ and $p$. By (\ref{chernoffest}), we have that
the
probability (\ref{discrep1234}) does not hold is less than $2e^{-2g/{u \choose
2}}$.
By the union bound, the probability that there is a vertex subset $U$ for which
(\ref{discrep1234}) does not hold is at most $$\sum_{u=2}^{n}{n \choose
u}2e^{-2g/{u
\choose 2}}
\leq \sum_{u=2}^{n}\left(\frac{ne}{u}\right)^{u}2e^{-2g/{u \choose 2}}
\leq 2\sum_{u=2}^{n}\left(\frac{ne}{u}\right)^{-(u+1)} \leq n^{-2}.$$
\end{proof}

Combining the estimates from the previous two lemmas, we can bound the
probability in $G(n,p)$ that there are two not necessarily disjoint subsets
with large edge discrepancy between them.

\begin{lemma}\label{gnpedge}
In $G(n,p)$, the probability that there are integers $u_1$ and $u_2$ with $u_1
\leq u_2$ and  not necessarily disjoint vertex subsets $U_1$ and $U_2$ with
$|U_1|=u_1$ and $|U_2|=u_2$ such that
\begin{equation}\label{discrep12} |e(U_1,U_2)-pu_1u_2|
> 5\sqrt{h},\end{equation}
where  $h=h(u_1,u_2)=u_1u_2^2\ln \frac{ne}{u_2}$, is at most $2 n^{-2}$.
\end{lemma}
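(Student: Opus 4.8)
The plan is to deduce Lemma~\ref{gnpedge} from Lemmas~\ref{fdis} and \ref{sdis} by a simple inclusion--exclusion argument, reducing the case of not-necessarily-disjoint subsets to the already-established cases of disjoint pairs and single vertex sets. First I would condition on the high-probability event, call it $\mathcal{E}$, that both the conclusion of Lemma~\ref{fdis} (the estimate \eqref{discrep123} for all disjoint $U_1,U_2$) and the conclusion of Lemma~\ref{sdis} (the estimate \eqref{discrep1234} for all $U$) hold simultaneously; by the union bound $\mathcal{E}$ fails with probability at most $2n^{-2}$, which matches the claimed bound, so it suffices to show that on $\mathcal{E}$ the discrepancy bound \eqref{discrep12} holds for every pair $U_1, U_2$.

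The key step is the decomposition. Given arbitrary $U_1, U_2$ with $|U_1|=u_1 \leq u_2 = |U_2|$, write $W = U_1 \cap U_2$, $U_1' = U_1 \setminus W$, $U_2' = U_2 \setminus W$. Then
\[
e(U_1,U_2) = e(U_1', U_2') + e(U_1', W) + e(W, U_2') + 2e(W),
\]
where the factor $2$ on $e(W)$ accounts for the fact that each edge inside $W$ is counted twice in $e(U_1,U_2)$ when $U_1$ and $U_2$ overlap on $W$ (consistent with the paper's convention, as in the Frieze--Kannan statement, that the ``expected'' count is $p u_1 u_2 = p\sum$ over ordered pairs). The three cross terms involve pairwise disjoint sets, so Lemma~\ref{fdis} applies to each; the term $e(W)$ is controlled by Lemma~\ref{sdis}. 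Subtracting the corresponding expectations and using the identity $u_1 u_2 = |U_1'||U_2'| + |U_1'||W| + |W||U_2'| + 2\binom{|W|}{2} + |W|$ (the leftover $|W|$ coming from $|W|^2$ versus $2\binom{|W|}{2}$, which is a negligible lower-order error absorbed into the constant), one gets that $|e(U_1,U_2) - p u_1 u_2|$ is at most the sum of the four individual discrepancies plus $O(|W|)$.

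It then remains to check the arithmetic: each of the four discrepancies is bounded by $\sqrt{g(\cdot,\cdot)}$ or $\sqrt{g(\cdot)}$ for the relevant subset sizes, and since all subset sizes involved are at most $u_2 \leq n$ and at least $1$, each such term is at most $\sqrt{h(u_1,u_2)} = \sqrt{u_1 u_2^2 \ln(ne/u_2)}$ up to a small constant factor (e.g.\ $g(a,b) = 2ab^2\ln(ne/b) \leq 2h(u_1,u_2)$ when $a \leq u_1$, $b \leq u_2$, using monotonicity of $x^2\ln(ne/x)$ on the relevant range; the single-set term $g(|W|) = \frac12|W|^3\ln(ne/|W|) \leq \frac12 u_2^3\ln(ne/u_2)$, and since $u_1 \geq 1$ this is at most $\frac12 h$ only after noting $u_2 \leq u_1 u_2$, wait—here one uses $u_2^3 \le u_1 u_2^2 \cdot u_2 / u_1$; more carefully $|W| \le u_1$ so $|W|^3 \le u_1^2 u_2 \le u_1 u_2^2$). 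Adding four such terms and the $O(|W|) = O(u_2)$ error, everything fits under $5\sqrt{h}$ with room to spare, which is exactly why the constant $5$ appears in \eqref{discrep12}. The main obstacle, such as it is, is purely bookkeeping: being careful about the double-counting of edges inside the intersection $W$ and verifying that the single-set bound $\sqrt{g(|W|)}$ and the lower-order $|W|$ term are genuinely dominated by $\sqrt{h(u_1,u_2)}$ for all admissible sizes, including the extreme case $u_1 = u_2$ with $W$ large; there is no conceptual difficulty, since all the probabilistic work has already been done in Lemmas~\ref{fdis} and \ref{sdis}.
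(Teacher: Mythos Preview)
Your approach is essentially the same as the paper's: condition on Lemmas~\ref{fdis} and~\ref{sdis} holding simultaneously (failure probability at most $2n^{-2}$), then decompose $e(U_1,U_2)$ through the intersection and apply the triangle inequality. The only difference is that the paper uses the slightly leaner three-term decomposition $e(U_1,U_2)=e(U_1\setminus U_2,\,U_2)+2e(U_1\cap U_2)+e(U_1\cap U_2,\,U_2\setminus U_1)$, which yields $3\sqrt{2}\sqrt{h}+pu_1\le 5\sqrt{h}$ cleanly; your four-term split is equally valid in principle but the naive bookkeeping pushes the constant toward $4\sqrt{2}\approx 5.66$, so ``with room to spare'' is a bit optimistic unless you regroup terms.
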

\begin{proof}
For sets $U_1$ and $U_2$, letting $U_1'=U_1 \setminus U_2$, $U_2'=U_2 \setminus
U_1$, and
$U=U_1 \cap U_2$, we have $$e(U_1,U_2)=e(U_1',U_2)+2e(U)+e(U,U_2').$$
We have that the bounds in Lemmas \ref{fdis} and \ref{sdis} hold with
probability at
least $1 - 2n^{-2}$.
Hence, using the triangle inequality, and $|U_1'| \leq u_1$, $|U| \leq u_1$,
 $|U_2'| \leq u_2$, we have
$$|e(U_1,U_2)-pu_1u_2| \leq 2\sqrt{g(u_1,u_2)}+2\sqrt{g(u_1)} +pu_1\leq
5\sqrt{h}$$
with probability at least $1 - 2n^{-2}$. Here the extra $p u_1$ factor comes
from the fact that degenerate edges are not counted in $e(U)$.
\end{proof}

\section{A general graph construction} \label{IrregularSection}

In this section, we will define a nonuniform random graph $G=(V,E)$ which,
assuming certain estimates, has the property that any sufficiently regular
partition of its vertex set is close to being a refinement of a particular
partition of $G$ into many parts. As this particular partition has many parts,
this will imply that any sufficiently regular partition will have many parts.
After defining $G$, we will prove that certain useful estimates on the edge
distribution of $G$ hold with positive probability. We will use these estimates
to show that $G$ has the desired property.

\subsection{Defining graph $G$}

Following Gowers \cite{Go}, we attempt to reverse engineer the proof of Szemer\'edi's regularity lemma to show that the upper bound is essentially best possible. The proof of
the regularity lemma follows a sequence of refinements of the vertex set of the
graph until we arrive at a regular partition, with the number of parts in each
partition exponentially larger than in the previous partition. We build a
sequence of partitions of the vertex set, and then describe how the edges of
$G$ are distributed between the various parts of the partition.  To show that
{\it any} (sufficiently) regular partition $\mathcal{Z}$ of $V(G)$ requires
many parts, we show that $\mathcal{Z}$ is roughly a refinement of the
partitions we constructed in defining $G$.

Let $m_1 \geq 2^{200}$ be a positive integer and $\rho=2^{-20}$. For $2 \leq i
\leq s$, let $m_i=m_{i-1}a_{i-1}$, where $a_{i-1}=2^{\lfloor \rho
m_{i-1}^{9/10}\rfloor}$. Suppose $p_i \geq m_i^{-1/10}$ for $1 \leq i \leq
s-1$.

The vertex set $V$ has a sequence of equitable partitions $P_1,\ldots,P_s$,
where $P_j$ is a refinement of $P_i$ for $j>i$ defined as follows. The number
of parts of $P_i$ is $m_i$. For each set $X$ in partition $P_i$, we pick an
equitable partition of $X$ into $a_i$ parts, and let $P_{i+1}$ be the partition
of $V$ with $m_{i+1}=m_{i}a_i$ parts consisting of the union of these
partitions of parts of $P_i$.

For $1 \leq i \leq s-1$, let $G_i$ be a uniform random graph on $P_i$ with edge
probability $p_i$. That is, the vertices of the graph are the $m_i$ pieces of
the partition and we place edges independently with probability $p_i$. In
practice, we will make certain specific assumptions about the edge distribution
of $G_i$ but these will hold with high probability in a random graph. For
example, we shall assume that every vertex in $G_i$ has degree at least $p_i
m_i/2$.

For each $X,Y \in P_i$ with $(X,Y)$ an edge of $G_i$, we have an equitable
partition
$Q_{XY}:X=X^1_Y \cup X^2_Y$ into two parts, where $X^j_Y$ is a union of some of
the parts in
$P_{i+1}$ for $j=1,2$. For each $X \in P_i$, we shall choose the partitions
$Q_{XY}$ with $Y$
adjacent to $X$ in $G_i$ to satisfy the properties of Corollary \ref{firstcor}
with $\mu = 2\rho^{1/2}=2^{-9}$. Note that this is possible since we are taking
$M = a_{i}$ and $m \geq p_i m_i/2 \geq m_i^{9/10}/2$, so $m \geq 2 \mu^{-2}
\log M$, as required.

We finish the construction of $G$ by defining which pairs of vertices are
adjacent. Vertices $u,v \in V$ are adjacent in $G$ if there is $i$, $1 \leq i
\leq s-1$, an edge $(X,Y)$ of $G_i$, and $j \in \{1,2\}$ with $u \in X^j_Y, v
\in Y^j_X$.

An equivalent way of defining the graph $G$ is as follows. For $1 \leq j < i$,
let $G_{j,i}$ denote the graph with vertex set $P_i$, where $X,Y \in P_i$ is an
edge of $G_{j,i}$
if there are $X',Y' \in P_j$ that are adjacent in $G_j$, and $d \in \{1,2\}$
with $X \subset X'^d_{Y'}$ and $Y \subset Y'^d_{X'}$. For $1<i \leq s$, let
$G^i$ denote the graph on $P_{i}$ whose edge set is the union of the edge sets
of $G_{1,i},\ldots,G_{i-1,i}$. Finally, two vertices $u,v \in V$ are adjacent
in $G$ if there is an edge $(X,Y)$ of $G^s$ with
$u \in X$ and $v \in Y$. Note that $G^1$ is simply the empty graph on $P_1$. 

We say that a subset $Z$ {\it $\beta$-overlaps} another set $X$ if $|X \cap Z|
\geq \beta|Z|$, that is, if a $\beta$-fraction of $Z$ is in $X$. A set $Z$
is {\it $\beta$-contained} in a partition $P$ of $V$ if there is a set $X \in
P$ such that $Z$ $\beta$-overlaps $X$.

An equitable partition $\mathcal{Z}$ of $V$ is a {\it
$(\beta,\upsilon)$-refinement} of a partition $P$ of $V$ if, for at least
$(1-\upsilon)|\mathcal{Z}|$ sets $Z \in \mathcal{Z}$, the set $Z$ is
$(1-\beta)$-contained in $P$. In particular, when $\beta=\upsilon=0$, this
notion agrees with the standard notion of refinement. That is, $\mathcal{Z}$ is
a refinement of $P$ is equivalent to $\mathcal{Z}$ being a $(0,0)$-refinement
of $P$.

Our main result, from which Theorem \ref{exceptionalpairs} easily follows, now says that for an appropriate choice of $p_i$, every regular partition of $G$ must be close to a refinement of $P_{s-1}$. In the proof of Theorem \ref{exceptionalpairs}, Theorem \ref{maingen} will be used only in the case $a = s-1$. However, for the lower bound on the strong regularity lemma in Theorem \ref{stronglow}, we will need to apply Theorem \ref{maingen} for various values of $a$. This is why the parameter $a$ is introduced.

\begin{theorem}\label{maingen} Let $\nu=3\sum_{i=1}^{s-1}p_i$, and suppose
$p_i>2^{10}\eta m_1^2$ for $1 \leq i \leq a$, $1-2^7\nu>\epsilon$,
 $\beta=20m_1^{-3/2}$, $\delta<\beta/4$, and $\upsilon=5m_1^{-1/2}$. With
positive probability, the random graph $G$ has the following property. Every
$(\epsilon,\delta,\eta)$-regular equitable partition of $G$ is a
$(\beta,\upsilon)$-refinement of $P_a$.
\end{theorem}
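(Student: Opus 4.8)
The plan is to argue by contradiction: I fix a realization of $G$ in which all the "good" estimates hold (the $G_i$-degree lower bounds, the Corollary~\ref{firstcor} properties for every family $Q_{XY}$, and the $G(n,p)$-type discrepancy estimates of Lemmas~\ref{fdis}--\ref{gnpedge} applied to the relevant pieces), which happens with positive probability by a union bound over the finitely many layers and pairs. Then I take an $(\epsilon,\delta,\eta)$-regular equitable partition $\mathcal{Z}$ of $G$ and suppose it is \emph{not} a $(\beta,\upsilon)$-refinement of $P_a$, i.e.\ at least $\upsilon|\mathcal{Z}|$ parts $Z$ fail to be $(1-\beta)$-contained in $P_a$. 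The goal is to produce at least $\eta\binom{|\mathcal{Z}|}{2}$ pairs $(Z,Z')$ that are not $(\epsilon,\delta)$-regular, contradicting regularity; or, more in the spirit of the proof, to show directly that one of the offending parts $Z$ can be paired with many others to give irregular pairs. The key device is that a part $Z$ which is not $(1-\beta)$-contained in $P_a$ must be "spread out" across the parts of $P_a$, hence across the parts of some $P_i$ with $i\le a$ where it first becomes spread; the random splittings $Q_{XY}$ then guarantee that the edge-sets $X^1_Y$ versus $X^2_Y$ carve $Z$ into two subsets between which the $G$-edge density differs substantially from the density involving some reference set, witnessing irregularity.

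**Key steps, in order.** (1) Fix a good realization of $G$; record the deterministic consequences we will use, in particular: every $G_i$ has minimum degree $\ge p_i m_i/2$; each family $(X^1_Y,X^2_Y)$ satisfies the discrepancy bound \eqref{discrep} and the two bullet conclusions of Lemma~\ref{firstlemma1}; and the edge densities of $G$ between unions of $P_s$-parts are controlled via Lemma~\ref{gnpedge}. (2) Prove a "localization" lemma: if $Z$ is not $(1-\beta)$-contained in $P_a$, then there is a smallest index $i\le a$ such that $Z$ is not $(1-\beta')$-contained in $P_i$ for a slightly larger $\beta'$, and at that scale $Z$ meets at least two parts $X,Y\in P_i$ each in a non-negligible fraction of $Z$; moreover by the degree bound we can choose $X,Y$ (or $X$ and a typical neighbor) to be \emph{adjacent} in $G_i$. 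This uses the quantitative relation $m_{i+1}=m_i a_i$ with $a_i$ huge, so that a small failure at level $a$ forces a comparable failure at some earlier level. (3) Use the random split $Q_{XY}$ at level $i$: the part $Z\cap X$ is split by $X=X^1_Y\cup X^2_Y$ into two pieces, and by the second bullet of Lemma~\ref{firstlemma1} (few common neighbors/nonneighbors) applied across \emph{all} the splits, together with Lemma~\ref{quiteuseful} in the form already set up, the vector of masses $(|Z\cap W|)_{W\in P_{i+1}}$ cannot be too concentrated, so a positive fraction of the splits put mass $\ge\alpha$ on both sides. Pick one such split. (4) Now the two halves $Z'=Z\cap X^1_Y$ and $Z''=Z\cap X^2_Y$ are both of size $\ge\alpha\beta'|Z|\ge\delta|Z|$ (here is where $\delta<\beta/4$ and the numeric choices of $\mu,\rho$ enter), and by construction of $G$ the neighborhoods of $Z'$ and $Z''$ inside a suitable reference set $Y^1_X$ (resp.\ $Y^2_X$) differ: edges from $X^1_Y$ go to $Y^1_X$, edges from $X^2_Y$ go to $Y^2_X$, so $d(Z',Y^1_X)$ is of order $p_i$ while $d(Z'',Y^1_X)$ is essentially the "background" density, and the gap is $\gtrsim p_i > 2^{10}\eta m_1^2 \ge$ (something) $>\epsilon$-scale — this is exactly why the hypothesis $p_i>2^{10}\eta m_1^2$ and $1-2^7\nu>\epsilon$ are imposed. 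Hence $(Z,\,\text{union of parts of }\mathcal{Z}\text{ covering }Y^1_X)$, or better a single pair $(Z,Z^\ast)$, is not $(\epsilon,\delta)$-regular. (5) Count: each of the $\ge\upsilon|\mathcal{Z}|$ bad parts $Z$ yields (after pigeonholing the reference sets $Y^1_X$ among the parts of $\mathcal{Z}$, using equitability and $\upsilon=5m_1^{-1/2}$) a positive fraction of the parts $Z^\ast$ forming an irregular pair with it, giving in total $\gg\eta|\mathcal{Z}|^2$ irregular pairs, the contradiction.

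**Main obstacle.** The delicate point — and the one I expect to absorb most of the work — is Step~2 together with Step~3: turning a global failure "$Z$ is not $(1-\beta)$-contained in $P_a$" into a usable \emph{local} statement at a single well-chosen level $i$ and a single well-chosen adjacent pair $(X,Y)$ in $G_i$, with \emph{both} halves of the split receiving mass $\ge\delta|Z|$. One has to simultaneously (a) track how the "spread" of $Z$ evolves as we pass from $P_i$ to the exponentially finer $P_{i+1}$, ensuring the telescoping doesn't lose too much, (b) invoke Lemma~\ref{quiteuseful} with parameters $\sigma,\tau,\alpha$ tuned so that $(\tfrac12-\mu)(1-\sigma^2)>\tfrac{\tau}{2}+2(1-\tau)\alpha(1-\alpha)$ holds for the $\mu=2^{-9}$ fixed in the construction, which forces very specific inequalities among $\beta$, $\delta$, $\alpha$, and (c) keep the reference density gap $\gtrsim p_i$ clean despite $Z'$ and $Z''$ also meeting parts of $P_i$ \emph{other} than $X$ — this last is handled by the $G(n,p)$-discrepancy Lemmas~\ref{fdis}--\ref{gnpedge}, which bound the spurious edge contributions by $O(\sqrt{\text{something}})\ll p_i |Z'|\,|Y^1_X|$, but assembling all these error terms and checking they are dominated, uniformly over the $\binom{|\mathcal{Z}|}{2}$ pairs and the $s$ levels, is where the bookkeeping is heaviest. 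The parameter $a$ (rather than just $s-1$) costs nothing extra here since the argument only ever looks at levels $i\le a$.
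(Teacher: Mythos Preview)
Your overall architecture --- fix a good realization, assume $\mathcal{Z}$ is not a $(\beta,\upsilon)$-refinement of $P_a$, and manufacture $\eta k^2$ irregular pairs --- matches the paper's. But Step~4 contains a structural misunderstanding that breaks the argument, and Steps~2--5 are missing the machinery the paper actually needs.

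\textbf{The density gap is not of order $p_i$.} You write that $d(Z',Y^1_X)$ is ``of order $p_i$'' while $d(Z'',Y^1_X)$ is ``background'', giving a gap $\gtrsim p_i$. This is wrong: in the construction, when $(X,Y)$ is an edge of $G_i$, the bipartite graph between $X^d_Y$ and $Y^d_X$ is \emph{complete}, so $d(X^d_Y,Y^d_X)=1$, while $d(X^{3-d}_Y,Y^d_X)\le\nu$ (this is one of the good-realization properties). The gap is therefore $1-O(\nu)$, and the hypothesis $1-2^7\nu>\epsilon$ is exactly what makes it exceed $\epsilon$. The $p_i$ are tiny (down to $m_i^{-1/10}$), far below $\epsilon$, so a gap of order $p_i$ would be useless. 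The hypothesis $p_i>2^{10}\eta m_1^2$ plays a completely different role: it controls the \emph{number} of neighbors $Y\in N(X)$, hence the number of candidate $Z_\ell$ one can pair with a given bad $Z_j$ (the final count is roughly $\beta p_i k\ge \theta^{-1}\eta k$ with $\theta=m_1^{-1/2}$).

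\textbf{The second coordinate of an irregular pair does not come from where $Z$ has mass.} In Step~2 you want $Z$ to meet two $G_i$-adjacent parts $X,Y$. The paper does something different: $Z_j$ is localized to a single $X\in P_i$ (either $\beta$-overlapping it, the ``ripe'' case, or $(1-\beta)$-overlapping it but spread thinly over $P_{i+1}$, the ``shattered'' case), and $Y$ is then chosen among the $G_i$-neighbors of $X$ --- not because $Z_j$ meets $Y$, but because many \emph{other} parts $Z_\ell$ of $\mathcal{Z}$ overlap $Y^d_X$. The irregular pair is $(Z_j,Z_\ell)$ with witnesses $Z_j\cap X^d_Y$, $Z_j\cap X^{3-d}_Y$ (or $Z_j\setminus X$) on one side and $Z_\ell\cap Y^d_X$ on the other. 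To make this work one must know that $Y$ ``$(\beta,\gamma)$-supports'' $\mathcal{Z}$, i.e.\ a large fraction of $Y$ lies in parts $Z_\ell$ that $\beta$-overlap $Y$. The paper tracks the set $S_i\subset P_i$ of supporting parts and the exceptional set $W_i$ of $X$ with few supporting neighbors, proves $|S_i|/|P_i|\ge 1/2$ inductively up to the critical level, and bounds $|W_i|$ via the $G_i$-discrepancy estimate. None of this scaffolding appears in your plan, and without it Step~5 has no way to convert ``one bad $Z_j$'' into ``$\theta^{-1}\eta k$ irregular pairs''.

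\textbf{The ripe/shattered dichotomy is not optional.} A part $Z_j$ that fails to be $(1-\beta)$-contained in $P_a$ can do so for two genuinely different reasons at the critical level $i$: either it is already spread across parts of $P_i$ (ripe), or it sits inside one $X\in P_i$ but is spread across the $P_{i+1}$-subparts of $X$ (shattered). The two cases use Lemma~\ref{quiteuseful} on different weight vectors and produce the two density witnesses $U_d,U_{3-d}$ (or $U_Y,U'$) differently; the paper handles them in separate lemmas. Your single ``localization'' step conflates them.
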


\subsection{Edge distribution in $G$}

Having defined the (random) graph $G$, we now show that with positive
probability $G$ satisfies certain properties (see Lemma \ref{importantstep})
concerning its edge distribution which we will use to prove Theorem
\ref{exceptionalpairs}. Note that $G$ is determined by the $G_i$. For some of
the desired properties, it will be enough to show that the edges in each $G_i$
are sufficiently uniform. For other properties, we will need to consider how
the edge distribution between the various $G_i$ interact with each other. In
bounding the probabilities of certain events, we will often consider the
probability of the event given $G_i$ is picked at random conditioned on the event that
$G_j$ with $j<i$ are already chosen.

In the random graph $G(n,p)$ on $n$ vertices with each
edge taken with probability $p$ independently of the other edges, the expected
degree of each vertex is $p(n-1)$, and the following simple lemma shows that
with high probability no vertex will have degree which deviates much from this
quantity. We will assume throughout this subsection that $n \geq m_1 \geq
2^{200}$.

\begin{lemma}\label{commonneighbors}
The probability that in the random graph $G(n,p)$ there is a
vertex $v$ whose degree satisfies $|\textrm{deg}(v)-pn| > n^{3/4}$ is at most
$e^{-n^{1/2}}$.
\end{lemma}
\begin{proof}
For a fixed vertex $v$, its degree $\textrm{deg}(v)$ follows a binomial
distribution with parameters $n-1$ and $p$. Note that if $|\textrm{deg}(v)-pn|
> n^{3/4}$ then also $|\textrm{deg}(v)-p(n-1)| > n^{3/4}-1$.
From the Chernoff-type estimate (\ref{chernoffest}), we get that the
probability $|\textrm{deg}(v)-pn| > n^{3/4}$ is at most
$2e^{-2(n^{3/4}-1)^2/(n-1)}
\leq \frac{1}{n}e^{-n^{1/2}}$. As there are $n$ vertices, from the union bound,
we get the
probability that there is a vertex $v$ with $|\textrm{deg}(v)-pn| > n^{3/4}$ is at
most $e^{-n^{1/2}}$.
\end{proof}

For $X \in P_i$, we will use $N(X)$ to denote the neighborhood of $X$ in graph
$G_i$, that is, the set of $Y \in P_i$ such that $(X,Y)$ is an edge of $G_i$.
We have the following corollary of Lemma \ref{commonneighbors}.

\begin{corollary}\label{commonneighborscor}
Let $E_1$ be the event that there is $i$, $1 \leq i \leq s-1$, such that $G_i$
has a vertex $X$ with degree $|N(X)|$ satisfying $||N(X)|-p_im_i|>m_i^{3/4}$.
The probability of event $E_1$ is at most $\pi_1:=\sum_{i=1}^{s-1}
e^{-m_i^{1/2}}$.
\end{corollary}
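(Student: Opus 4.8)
The statement to prove is Corollary~\ref{commonneighborscor}, which is an immediate consequence of Lemma~\ref{commonneighbors} via a union bound over the $s-1$ graphs $G_i$.

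\medskip

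\textbf{Proof proposal.} The plan is to apply Lemma~\ref{commonneighbors} separately to each random graph $G_i$, $1 \leq i \leq s-1$, and then take a union bound. Recall that $G_i$ is a random graph on the vertex set $P_i$, which has $m_i$ vertices, with edge probability $p_i$; in the notation of Lemma~\ref{commonneighbors} this is $G(n,p)$ with $n = m_i$ and $p = p_i$. Since $m_i \geq m_1 \geq 2^{200}$ for all $i$ (the partitions only get finer, so $m_i$ is nondecreasing in $i$), the hypothesis $n \geq m_1 \geq 2^{200}$ of Lemma~\ref{commonneighbors} is satisfied for each $G_i$. Applying the lemma with $n = m_i$, the probability that $G_i$ contains a vertex $X$ with $\big||N(X)| - p_i m_i\big| > m_i^{3/4}$ is at most $e^{-m_i^{1/2}}$, since $|N(X)|$ is exactly the degree of $X$ in $G_i$.

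\medskip

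The event $E_1$ is, by definition, the union over $i = 1, \dots, s-1$ of the events ``$G_i$ has a vertex $X$ with $\big||N(X)| - p_i m_i\big| > m_i^{3/4}$.'' By the union bound over these $s-1$ events,
\[
\pr{E_1} \leq \sum_{i=1}^{s-1} e^{-m_i^{1/2}} = \pi_1,
\]
which is the claimed bound. This completes the proof; there is no real obstacle here, as the only inputs are Lemma~\ref{commonneighbors} and the monotonicity $m_1 \leq m_2 \leq \cdots$, which is immediate from the construction (each $m_{i+1} = m_i a_i$ with $a_i \geq 1$).

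\qed
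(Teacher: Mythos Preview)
Your proof is correct and takes exactly the approach the paper intends: the paper states this as an immediate corollary of Lemma~\ref{commonneighbors} without writing out a proof, and your argument---apply Lemma~\ref{commonneighbors} with $n=m_i$, $p=p_i$ to each $G_i$ and take a union bound over $i$---is precisely what is meant.
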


\begin{lemma}\label{newedges}
Suppose $\nu= 3\sum_{i=1}^{s-1} p_i \leq 1/2$. For $2 \leq i \leq s-1$, let
$E_{2i}$ be the event that  $G_i$ has less than $\frac{1}{4}p_im_i^2$ edges
which are not edges of $G^i$. Let $E_2$ be the event that none of the events
$E_{2i}$, $2 \leq i \leq s-1$, occurs. The probability $\pi_2$ of event $E_2$
is at most $\pi_1+\sum_{i=2}^{s-1}e^{-p_i^2m_i^2/24}$, where $\pi_1$ is defined
in Corollary \ref{commonneighborscor}.
\end{lemma}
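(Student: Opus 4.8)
The plan is to fix $i$ with $2\le i\le s-1$, condition on the lower‑level graphs $G_1,\dots,G_{i-1}$, and reduce the event $E_{2i}$ to a single Chernoff estimate on the fresh randomness of $G_i$. Once $G_1,\dots,G_{i-1}$ are fixed, so are the bipartitions $Q_{X'Y'}$ at levels below $i$ (they are determined by the $G_j$), hence so is the graph $G^i=G_{1,i}\cup\dots\cup G_{i-1,i}$ on $P_i$; meanwhile $G_i=G(m_i,p_i)$ is independent of everything so far, so conditionally the number of edges of $G_i$ that are not edges of $G^i$ is exactly $\bin{N_i,p_i}$, where $N_i=\binom{m_i}{2}-|E(G^i)|$ is the number of pairs of parts of $P_i$ missed by $G^i$. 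Thus the whole point is to show $N_i$ is large, i.e. that $G^i$ has few edges.

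To bound $|E(G^i)|$, first note that a single edge $\{X',Y'\}$ of $G_j$ $(j<i)$ produces exactly $\frac{m_i^2}{2m_j^2}$ edges of $G_{j,i}$: the equitable halves $X'^d_{Y'}$ and $Y'^d_{X'}$ are each a union of $\frac{m_i}{2m_j}$ parts of $P_i$, and $d$ ranges over $\{1,2\}$. Hence $|E(G_{j,i})|\le\frac{m_i^2}{2m_j^2}\,e(G_j)$, and the key input is that $e(G_j)$ is small, which I would extract from Corollary~\ref{commonneighborscor}. Writing $E_1=\bigcup_j D_j$, where $D_j$ is the event that $G_j$ has a vertex of degree outside $p_jm_j\pm m_j^{3/4}$, on the complement of $E_1$ — indeed, already if no $D_j$ with $j<i$ occurs — every vertex of $G_j$ has degree at most $p_jm_j+m_j^{3/4}\le\frac32 p_jm_j$ (using $m_j^{3/4}\le\frac12 p_jm_j$, valid since $p_j\ge m_j^{-1/10}$ and $m_j\ge m_1\ge 2^{200}$), so $e(G_j)\le\frac34 p_jm_j^2$ and $|E(G_{j,i})|\le\frac38 p_jm_i^2$. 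Summing over $j<i$ and using $\nu\le 1/2$,
\[
|E(G^i)|\ \le\ \frac38 m_i^2\sum_{j=1}^{i-1}p_j\ \le\ \frac38 m_i^2\sum_{j=1}^{s-1}p_j\ =\ \frac{\nu}{8}m_i^2\ \le\ \frac1{16}m_i^2 ,
\]
so $N_i\ge\binom{m_i}{2}-\frac1{16}m_i^2\ge\frac38 m_i^2$ (again using $m_i\ge 2^{200}$), while trivially $N_i\le\frac12 m_i^2$.

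The Chernoff step is then immediate: $\bin{N_i,p_i}$ has mean $p_iN_i\ge\frac38 p_im_i^2$, so for the number of edges of $G_i$ not in $G^i$ to drop below $\frac14 p_im_i^2$ it must fall at least $\frac18 p_im_i^2$ below its mean, and the lower‑tail form of (\ref{chernoffest}) (with $n=N_i\le\frac12 m_i^2$ and $a=\frac18 p_im_i^2$) bounds this by $e^{-2a^2/N_i}\le e^{-p_i^2m_i^2/16}\le e^{-p_i^2m_i^2/24}$. Since this holds conditional on any realization of $G_1,\dots,G_{i-1}$ in which no $D_j$ with $j<i$ occurs, we get $\pr{E_{2i}\cap E_1^c}\le e^{-p_i^2m_i^2/24}$. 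A union bound over $2\le i\le s-1$ now bounds the probability that $E_1$ occurs or that $E_{2i}$ occurs for some $i$ by $\pr{E_1}+\sum_{i=2}^{s-1}\pr{E_{2i}\cap E_1^c}\le\pi_1+\sum_{i=2}^{s-1}e^{-p_i^2m_i^2/24}$, which is the claimed bound on $\pi_2$.

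The one mildly delicate point is the fan‑out count $|E(G_{j,i})|\le\frac{m_i^2}{2m_j^2}e(G_j)$ together with the bookkeeping of the conditioning: the edge count of $G^i$ has to be charged to the already‑budgeted degree failures in $E_1$ (which is why $\pi_1$ appears), and the randomness feeding the Chernoff estimate must be exactly the part of $G_i$ that is independent of everything used to pin down $G^i$. After that, all the inequalities are routine.
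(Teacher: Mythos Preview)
Your proof is correct and follows essentially the same approach as the paper: bound $|E(G^i)|$ on the complement of $E_1$ via the degree bounds, then apply Chernoff to the fresh binomial coming from $G_i$, and finish with a union bound. Your intermediate constants are slightly sharper (you get $|E(G^i)|\le m_i^2/16$ and $N_i\ge 3m_i^2/8$, where the paper gets $m_i^2/8$ and $m_i^2/3$), but the structure of the argument is identical.
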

\begin{proof}
If event $E_1$ does not occur, given  $\nu \leq 1/2$, then the number of edges
of $G^i$ is at most $$\left(\sum_{j=1}^{i-1} p_j+m_j^{-1/4}\right)m_i^2/2 \leq
\frac{\nu}{4}m_i^2 \leq m_i^2/8.$$
Each of the remaining at least ${m_i \choose 2}-\frac{1}{8}m_i^2 \geq m_i^2/3$
unordered pairs of parts of  $P_i$ has probabilty $p_i$ of being an edge of
$G_i$, independently of each other. The expected number of edges of $G_i$ which
are not edges of $G^i$ is therefore at least
$\frac{p_im_i^2}{3}=\frac{p_im_i^2}{4}+\frac{p_im_i^2}{12}$. By
(\ref{chernoffest}),  the probability of event $E_{2i}$ given the number of
edges of $G^i$ is at most $m_i^2/8$ is at most
$e^{-2(p_im_i^2/12)^2/(m_i^2/3)}=e^{-p_i^2m_i^2/24}$. Summing over all $i$, the
probability of event $E_2$ given $E_1$ does not occur is at most
$\sum_{i=2}^{s-1}e^{-p_i^2m_i^2/24}$. We thus have that the probability of
$E_2$ is at most $\pi_2$.
\end{proof}

In a graph $G$ with vertex subsets $U,W$, we let $d_G(U,W)$ denote the fraction
of pairs in $U \times W$ which are edges of $G$. If $U=\{u\}$ consists of a
single vertex $u$, we let $d_G(u,W)=d_G(U,W)$. If the underlying graph $G$ is
clear, we will sometimes write $d(U,W)$ for $d_G(U,W)$. The following lemma
shows that there is a low probability that the density between a vertex and
certain vertex subsets is large.

\begin{lemma}\label{E2}
Let $E_3$ be the event that there is $i$, $1 \leq i \leq s-2$, and distinct
$X,Y \in P_i$, $d \in \{1,2\}$, and $v \in X^{3-d}_Y$, such that $(X,Y)$ is an
edge of $G_i$ but not an edge of $G^i$, and  $d_{G}(v,Y^d_X) > \nu$. The
probability of event $E_3$ is at most
$\pi_3:=\sum_{i=1}^{s-2}\sum_{j=i+1}^{s-1}m_im_je^{-4p_j^2m_j/m_i}$.
\end{lemma}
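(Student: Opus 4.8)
The plan is to bound, for each fixed $i$ and each fixed pair $(X,Y)\in P_i$ that is an edge of $G_i$ but not of $G^i$, the probability that there exists some later level $j>i$ and a vertex $v\in X^{3-d}_Y$ with $d_G(v,Y^d_X)>\nu$. The key observation is that, since $(X,Y)$ is an edge of $G_i$, the edges of $G$ placed at level $i$ run only between $X^1_Y$ and $Y^1_X$, and between $X^2_Y$ and $Y^2_X$; in particular a vertex $v\in X^{3-d}_Y$ receives \emph{no} edges into $Y^d_X$ from level $i$. Hence $d_G(v,Y^d_X)$ is entirely accounted for by edges placed at levels $j\ne i$, and the dangerous contributions come from levels $j$: the edges between $v$ and $Y^d_X$ coming from level $j$ are governed by whether certain parts of $P_{j+1}$ containing $v$ are joined, via $G_j$ and the random bipartitions $Q$, to parts of $P_{j+1}$ inside $Y^d_X$. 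The heart of the estimate is that, conditioned on the structure up to level $j-1$, the number of such level-$j$ edges from $v$ into $Y^d_X$ is dominated by a binomial-type random variable with mean roughly $p_j|Y^d_X|$, and summing over all levels $j>i$ gives expected density at most $\sum_{j>i}p_j<\nu/3$, so exceeding $\nu$ is a large-deviation event.

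First I would fix $i$, fix an edge $(X,Y)$ of $G_i\setminus G^i$, fix $d\in\{1,2\}$, fix $j\in\{i+1,\dots,s-1\}$, and fix a vertex $v\in X^{3-d}_Y$. Conditioning on all $G_\ell$ for $\ell<j$, I would express the number of edges from $v$ into $Y^d_X$ that are created at level $j$ as a sum over the parts $W$ of $P_{j}$ inside $Y^d_X$ (more precisely over parts of $P_{j+1}$, but it is cleanest to go one level at a time) of indicator variables recording whether the $P_j$-part containing $v$ is adjacent in $G_j$ to $W$ \emph{and} $v$ falls on the matching side of the random bipartition. Since $G_j$ is an independent random graph with edge probability $p_j$ and the bipartitions $Q_{\cdot\cdot}$ at level $j$ are independent of $G_j$, each such indicator is $1$ with probability at most $p_j$, and these indicators (across different $W$) are independent. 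Thus the count is stochastically dominated by $\mathrm{Bin}[|Y^d_X|,p_j]$; applying the Chernoff estimate (\ref{chernoffest}) with deviation on the order of $p_j|Y^d_X|$ — using $p_j\ge m_j^{-1/10}$ and $|Y^d_X|=m_i/(2$ parts$)$ counted in terms of $P_j$-blocks, i.e. of size $m_j/m_i$ up to the factor $2$ — yields a bound of the shape $e^{-cp_j^2 m_j/m_i}$ for a single triple $(v,j)$ with $(X,Y,d,i)$ fixed; tuning constants gives the exponent $4p_j^2 m_j/m_i$ claimed.

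Next I would take the union bound. For a fixed $i$ and a fixed later level $j$, there are at most $m_i^2$ choices of the pair $(X,Y)$, at most $2$ choices of $d$, and at most $|X^{3-d}_Y|$ choices of $v$; bounding the number of vertices $v$ crudely by the number of $P_j$-parts they lie in, namely $m_j/m_i$ per side, the total number of $(X,Y,d,v)$ with $(i,j)$ fixed is at most $m_i^2\cdot 2\cdot m_j/m_i\lesssim m_i m_j$ (absorbing the factor $2$ into the constant, which is harmless since the exponential decay dominates). Multiplying the per-instance bound $e^{-4p_j^2 m_j/m_i}$ by $m_i m_j$ and summing over $1\le i\le s-2$ and $i+1\le j\le s-1$ gives exactly
\[
\pi_3=\sum_{i=1}^{s-2}\sum_{j=i+1}^{s-1}m_i m_j\, e^{-4p_j^2 m_j/m_i},
\]
so $\pr{E_3}\le \pi_3$, as desired. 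One must also observe that edges of $G$ created at level $i$ itself contribute nothing to $d_G(v,Y^d_X)$ (because $v$ and $Y^d_X$ lie on opposite sides of the bipartitions $Q_{XY},Q_{YX}$), and that levels $\ell<i$ contribute nothing new here since the hypothesis that $(X,Y)$ is \emph{not} an edge of $G^i$ means no earlier level joined any part of $X$ to any part of $Y$ along a common index $d'$; any edges from $v$ into $Y^d_X$ created at levels below $i$ would require $(X,Y)$, or rather parts thereof, to be joined in some $G_{\ell,i}$, contradicting $(X,Y)\notin E(G^i)$.

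The main obstacle I anticipate is the bookkeeping for the conditional independence and stochastic domination at level $j$: one must be careful that, after conditioning on $G_1,\dots,G_{j-1}$ (which determines the partitions $P_1,\dots,P_j$ and the graph $G^j$), the event ``$v$ is joined to a given $P_j$-part $W\subset Y^d_X$ at level $j$'' genuinely has probability at most $p_j$ and that these events are independent over $W$ — this uses that $G_j$'s edges are independent of everything so far and of the bipartitions $Q$ at level $j$, and that distinct $W$ correspond to distinct potential $G_j$-edges. The deviation computation itself is routine once domination is in place; matching the exact constant $4$ in the exponent is just a matter of not being wasteful in (\ref{chernoffest}), and since the paper does not optimize constants any fixed positive constant in place of $4$ would do after adjusting $\pi_3$.
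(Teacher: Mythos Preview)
Your proposal is correct and follows essentially the same approach as the paper: reduce $E_3$ to the event that for some $j>i$ and some $P_j$-part $X'\subset X^{3-d}_Y$ one has $d_{G_j}(X',Y^*)>3p_j$ (using that levels $\le i$ contribute no edges between $X^{3-d}_Y$ and $Y^d_X$, and pigeonholing $\nu=3\sum p_j$), bound this by Chernoff as $e^{-4p_j^2m_j/m_i}$ since $|Y^*|=m_j/(2m_i)$, and union-bound over the $2m_i^2\cdot \tfrac{m_j}{2m_i}=m_im_j$ choices of $(X,Y,d,X')$. The paper's write-up is slightly cleaner in that it works directly with the $P_j$-part $X'$ containing $v$ from the outset rather than first fixing a vertex $v$ and then noting, as you do, that only its $P_j$-class matters.
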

\begin{proof}
If $(X,Y)$ is an edge of $G_i$ but not an edge of $G^i$, then none of the edges
of $G$ between $X^{3-d}_Y$ and $Y^d_X$ come from the edges of any $G_j$ with $j
\leq i$. So for event $E_3$ to occur, there must be $1 \leq i < j \leq s-1$,
$X, Y \in P_i$ with $(X,Y)$ an edge of $G_i$, and $X' \in P_j$ with $X' \subset
X_Y^{3-d}$, such that $d_{G_j}(X',Y^*)>3p_j$, where $Y^*$ denotes the set of
$Y' \in P_j$ with $Y' \subset Y_X^{d}$.

Fix for now $i$, $1 \leq i \leq s-2$, and $j$ with $i+1 \leq j \leq s-1$. Fix
also an edge $(X,Y)$  of $G_i$ which is not an edge of $G^i$ and $d \in
\{1,2\}$. Fix a set $X' \in P_j$ with $X' \subset X^{3-d}_Y$ and as before let
$Y^*$
denote the set of all $Y' \in P_j$ with $Y' \subset Y^d_X$. The probability
that $d_{G_j}(X',Y^*) > 3p_j$ is by (\ref{chernoffest}) at most
$e^{-2(2p_j|Y^*|)^2/|Y^*|}=e^{-4p_j^2 m_j/m_i}$, since $|Y^*| =
\frac{m_j}{2m_i}$. Summing over all possible
choices of $i$, $j$, $(X,Y)$, $d$, and $X' \in P_j$ with $X' \subset
X^{3-d}_Y$, by the union bound we have the probability of event $E_3$ is at
most
$$\sum_{i=1}^{s-2}\sum_{j=i+1}^{s-1}2m_i^2 \cdot \frac{m_j}{2m_i}e^{-4p_j^2
m_j/m_i} = \pi_3.$$
\end{proof}

Note that the condition that $(X,Y)$ is an edge of $G_i$ but not of $G^i$ is
necessary, since it guarantees that none of the edges in $G_j$ with $j \leq i$
contributes to the edges between $X^{3-d}_Y$ and $Y^d_X$ in $G$. If $(X,Y)$ was
an edge of $G^i$, then we would have a complete bipartite graph between $X$ and
$Y$ and hence $d_G(v, Y_X^d) = 1$.

The codegree $\textrm{codeg}(u,v)$ of two vertices $u$ and $v$ is the number of
vertices $w$ which are connected to both $u$ and $v$. A second useful fact
about $G(n,p)$ is that with high probability the codegree of any two vertices
$u$ and $v$ is roughly $p^2 n$.

\begin{lemma} \label{commonneighbors2}
The probability that in the random graph $G(n,p)$ there are
distinct vertices $u$ and $v$ with $|\textrm{codeg}(u,v) - p^2 n| > n^{3/4}$ is
at most $e^{-n^{1/2}}$.
\end{lemma}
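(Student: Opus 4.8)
The plan is to follow the proof of Lemma~\ref{commonneighbors} almost verbatim, replacing the degree of a single vertex by the codegree of a pair. First I would fix two distinct vertices $u,v$ of $G(n,p)$ and note that, for each of the remaining $n-2$ vertices $w$, the events $\{uw \in E\}$ and $\{vw \in E\}$ are independent, so $w$ is a common neighbour of $u$ and $v$ with probability $p^2$; moreover these $n-2$ events, over the different choices of $w$, are mutually independent. Consequently $\textrm{codeg}(u,v)$ has a binomial distribution with parameters $n-2$ and $p^2$, with mean $p^2(n-2)$.

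Next I would relate the nominal target $p^2 n$ to the true mean. Since $|p^2 n - p^2(n-2)| = 2p^2 \le 2$, the event $|\textrm{codeg}(u,v)-p^2 n| > n^{3/4}$ implies $|\textrm{codeg}(u,v)-p^2(n-2)| > n^{3/4}-2$, so by the Chernoff-type estimate~(\ref{chernoffest}) — applied with the binomial parameters $n-2$ and $p^2$ and with $a = n^{3/4}-2$ — the probability of this event, for this fixed pair, is at most $2e^{-2(n^{3/4}-2)^2/(n-2)}$. Using the standing assumption $n \ge m_1 \ge 2^{200}$, one has $2(n^{3/4}-2)^2/(n-2) \ge \tfrac{3}{2}\, n^{1/2}$, so this probability is at most $2e^{-\frac{3}{2}n^{1/2}}$.

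Finally I would take a union bound over the at most $\binom{n}{2} \le n^2$ pairs $\{u,v\}$, which bounds the overall failure probability by $2n^2 e^{-\frac{3}{2}n^{1/2}} \le e^{-n^{1/2}}$; the last inequality holds since for $n \ge 2^{200}$ we have $\ln(2n^2) \le \tfrac12 n^{1/2}$ with enormous room to spare. There is no genuine obstacle here: the argument is entirely routine, and the only mild care needed is in shifting the centering from $p^2(n-2)$ to $p^2 n$ and in checking that the constant in the exponent is comfortably large enough to absorb the $\binom{n}{2}$-fold union bound, exactly as in the proof of Lemma~\ref{commonneighbors}.
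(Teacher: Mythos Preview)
Your proof is correct and follows essentially the same approach as the paper: fix a pair, observe that $\textrm{codeg}(u,v)$ is $\mathrm{Bin}(n-2,p^2)$, shift the centering from $p^2(n-2)$ to $p^2 n$ at a cost of $2$, apply the Chernoff bound~(\ref{chernoffest}), and union bound over $\binom{n}{2}$ pairs. The only cosmetic difference is that the paper writes the per-pair bound as $n^{-2}e^{-n^{1/2}}$ rather than your $2e^{-\frac{3}{2}n^{1/2}}$, but both are valid.
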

\begin{proof}
For fixed distinct vertices $u$ and $v$, the codegree codeg$(u,v)$ is
binomially distributed with parameters $n-2$ and $p^2$. Note that if
$|\textrm{codeg}(u,v) - p^2 n| > n^{3/4}$ then $|\textrm{codeg}(u,v) - p^2
(n-2)| > n^{3/4}-2$. By Chernoff's inequality (\ref{chernoffest}), the
probability that $|\textrm{codeg}(u,v) - p^2 n| > n^{3/4}$ is at most
$2e^{-2(n^{3/4}-2)^2/(n-2)} \leq n^{-2}e^{-n^{1/2}}$. Using the union bound
over all ${n \choose 2}$ choices of $u$ and $v$ yields the result.
\end{proof}

We have the following corollary of Lemma \ref{commonneighbors2}.

\begin{corollary}\label{commonneighbors2cor}
Let $E_4$ be the event that there is $i$, $1 \leq i \leq s-1$, such that $G_i$
has vertices $X,Y \in P_i$ with codegree satisfying
$|\textrm{codeg}(X,Y)-p_i^2m_i|>m_i^{3/4}$.
The probability of event $E_4$ is at most $\pi_4=\sum_{i=1}^{s-1}
e^{-m_i^{1/2}}$.
\end{corollary}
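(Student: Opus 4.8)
The plan is to obtain Corollary~\ref{commonneighbors2cor} as an immediate consequence of Lemma~\ref{commonneighbors2} together with a union bound over the graphs $G_1,\dots,G_{s-1}$, in exact parallel with how Corollary~\ref{commonneighborscor} was deduced from Lemma~\ref{commonneighbors}.

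First I would fix $i$ with $1 \leq i \leq s-1$ and note that, by construction, $G_i$ is a uniform random graph on the $m_i$ parts of $P_i$ with edge probability $p_i$; that is, $G_i$ has the distribution of $G(m_i,p_i)$. Since $m_i = m_{i-1}a_{i-1}$ with $a_{i-1} = 2^{\lfloor \rho m_{i-1}^{9/10} \rfloor} \geq 1$, the sequence $(m_i)$ is nondecreasing, so $m_i \geq m_1 \geq 2^{200}$ and the standing hypothesis $n \geq m_1 \geq 2^{200}$ needed for Lemma~\ref{commonneighbors2} is met. Applying that lemma with $n = m_i$ and $p = p_i$ shows that the probability that $G_i$ contains two distinct vertices $X,Y \in P_i$ with $|\textrm{codeg}(X,Y) - p_i^2 m_i| > m_i^{3/4}$ is at most $e^{-m_i^{1/2}}$.

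Then I would simply take the union bound over $i = 1,\dots,s-1$: the event $E_4$ is the union of these $s-1$ individual bad events, one for each $G_i$, so its probability is at most $\sum_{i=1}^{s-1} e^{-m_i^{1/2}} = \pi_4$, as claimed. (Only subadditivity of probability is used; independence of the $G_i$ plays no role.) There is no genuine obstacle here — the only points to verify are that each $G_i$ is really distributed as $G(m_i,p_i)$ and that $m_i \geq 2^{200}$, both of which are immediate from the definition of the construction, so the proof is a one-line invocation of Lemma~\ref{commonneighbors2} plus the union bound.
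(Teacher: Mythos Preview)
Your proposal is correct and matches the paper's approach exactly: the paper simply states this as a corollary of Lemma~\ref{commonneighbors2}, and the intended argument is precisely the union bound over $i=1,\dots,s-1$ that you wrote out.
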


For $X \in P_i$, let $U(X)=\bigcup_{Y \in N(X)} Y$. The following three lemmas
will be used to prove Lemma \ref{puttog}, which bounds the probability that
there is $i$, $1 \leq i \leq s-1$, $X \in P_i$, and a vertex $v \not \in X$
such
that $d_G(v,U(X)) > \nu$. The proof, which puts together the next three lemmas,
makes
sure that it is unlikely that any $G_{j}$  contributes too much to the density
between $v$ and $U(X)$.

\begin{lemma}\label{puttog3}
Fix $1 \leq i < s$. The probability that there is a pair of distinct sets $X,Y
\in P_i$ which satisfy $d_{G_i}(Y,N(X)) > 2p_i$ is at most
$\pi_{5i}:=2e^{-m_i^{1/2}}$.
\end{lemma}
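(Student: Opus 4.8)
The plan is to reduce the statement to concentration of degrees and codegrees in $G_i$, which is distributed exactly as $G(m_i,p_i)$ on the $m_i$ parts of $P_i$. First I would observe that for distinct $X,Y\in P_i$ the set of neighbours of $Y$ lying in $N(X)$ is precisely the set of common neighbours of $X$ and $Y$ (a common neighbour $Z$ automatically satisfies $Z\ne X$ and $Z\ne Y$ since $G_i$ has no loops, and if $Y\in N(X)$ the pair $(Y,Y)$ contributes nothing to the count), so that
\[
d_{G_i}(Y,N(X)) \;=\; \frac{\mathrm{codeg}(X,Y)}{|N(X)|}.
\]
Thus it suffices to have an upper bound on codegrees together with a lower bound on degrees that force this ratio to be at most $2p_i$.

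Next I would apply Lemma \ref{commonneighbors} with $n=m_i$, $p=p_i$, which gives that with probability at least $1-e^{-m_i^{1/2}}$ every $X\in P_i$ satisfies $\big|\,|N(X)|-p_im_i\,\big|\le m_i^{3/4}$, and Lemma \ref{commonneighbors2} with the same parameters, which gives that with probability at least $1-e^{-m_i^{1/2}}$ every pair of distinct $X,Y\in P_i$ satisfies $\big|\,\mathrm{codeg}(X,Y)-p_i^2m_i\,\big|\le m_i^{3/4}$. A union bound gives that both events hold simultaneously with probability at least $1-2e^{-m_i^{1/2}}=1-\pi_{5i}$.

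On this good event, for every pair of distinct $X,Y$ we have
\[
d_{G_i}(Y,N(X))\;\le\;\frac{p_i^2m_i+m_i^{3/4}}{p_im_i-m_i^{3/4}},
\]
the denominator being positive since $p_im_i\ge m_i^{9/10}>m_i^{3/4}$. Finally I would verify the elementary inequality that this ratio is at most $2p_i$: from $p_i\ge m_i^{-1/10}$ we get $p_i^2m_i\ge m_i^{4/5}$, while $m_i^{3/4}(1+2p_i)\le 3m_i^{3/4}\le m_i^{4/5}$ because $m_i\ge m_1\ge 2^{200}$ forces $m_i^{1/20}\ge 2^{10}>3$; rearranging $m_i^{3/4}(1+2p_i)\le p_i^2m_i$ then yields $p_i^2m_i+m_i^{3/4}\le 2p_i(p_im_i-m_i^{3/4})$, which is exactly what is needed. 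Hence on the good event no pair of distinct $X,Y\in P_i$ has $d_{G_i}(Y,N(X))>2p_i$, so the failure probability is at most $\pi_{5i}$.

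The main ``obstacle'' here is purely bookkeeping: one must check that the deviation terms $m_i^{3/4}$ supplied by the concentration lemmas are genuinely negligible against $p_im_i$ and $p_i^2m_i$, which uses both the lower bound $p_i\ge m_i^{-1/10}$ built into the construction and the lower bound $m_i\ge 2^{200}$; no further idea is required, and in particular the conditioning on the previously chosen graphs $G_j$ with $j<i$ plays no role since the estimate is internal to $G_i$.
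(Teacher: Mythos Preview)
Your proof is correct and follows essentially the same approach as the paper: identify $d_{G_i}(Y,N(X))=\mathrm{codeg}(X,Y)/|N(X)|$, apply Lemmas~\ref{commonneighbors} and~\ref{commonneighbors2} to control numerator and denominator, and take a union bound. The only cosmetic difference is that the paper first rounds the bounds to $|N(X)|\ge 3p_im_i/4$ and $\mathrm{codeg}(X,Y)\le 3p_i^2m_i/2$ before dividing, whereas you verify the inequality $\frac{p_i^2m_i+m_i^{3/4}}{p_im_i-m_i^{3/4}}\le 2p_i$ directly.
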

\begin{proof}
From Lemma \ref{commonneighbors}, we know that $|N(X)| \geq p_im_i-m_i^{3/4}
\geq 3 p_i m_i/4$ for all $X \in P_i$ with probability at least $1
-e^{-m_i^{1/2}}$. Also, by Lemma \ref{commonneighbors2}, we have
$\textrm{codeg}(X,Y) \leq p_i^2m_i+m_i^{3/4} \leq 3 p_i^2 m_i/2$ for all
distinct $X, Y \in P_i$ in graph $G_i$ with probability at least $1
-e^{-m_i^{1/2}}$. The number of edges between $Y$ and $N(X)$ in $G_i$ is just
the codegree $\textrm{codeg}(X,Y)$ of $X$ and $Y$ in $G_i$. Therefore,
$d_{G_i}(Y,N(X)) = \textrm{codeg}(X,Y)/|N(X)|$. Hence, with probability at
least $1 - 2e^{-m_i^{1/2}}$, we have
\[d_{G_i}(Y,N(X)) = \frac{\textrm{codeg}(X,Y)}{|N(X)|} \leq \frac{3p_i^2
m_i/2}{3p_i m_i/4} = 2 p_i.\]
\end{proof}

\begin{lemma}\label{puttog1}
Fix $1 < i < s$. Suppose every vertex of $G^i$ has degree at most $\nu_im_i/2$,
where $\nu_i=3\sum_{j<i}p_j$. The probability that there is a pair of distinct
sets $X,Y \in P_i$ which satisfy $d_{G^i}(Y,N(X)) > \nu_i$ is at most
$\pi'_{5i}:=e^{-m_i^{1/2}} + m_i^2 e^{- \nu_i p_i^2 m_i/4}$.
\end{lemma}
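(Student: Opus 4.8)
The plan is to split the bad event into its two natural sources. Either some neighbourhood $N(X)$ in the random graph $G_i$ is abnormally small, which is controlled by the degree concentration of Lemma \ref{commonneighbors}; or, for some ordered pair $(X,Y)$, the set $N_{G^i}(Y)\cap N(X)$ is abnormally large relative to the typical size $p_im_i$ of $N(X)$, which is controlled by a direct Chernoff estimate once we observe that, conditional on $G_1,\dots,G_{i-1}$, the set $N_{G^i}(Y)$ is \emph{fixed} and, by hypothesis, has size at most $\nu_im_i/2$.

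First I would apply Lemma \ref{commonneighbors} to the random graph $G_i$ on $m_i$ vertices with edge probability $p_i$: with probability at least $1-e^{-m_i^{1/2}}$ every $X\in P_i$ satisfies $|N(X)|\ge p_im_i-m_i^{3/4}$. Since $p_i\ge m_i^{-1/10}$ and $m_i\ge m_1\ge 2^{200}$, a one-line check gives $m_i^{3/4}\le\tfrac14 p_im_i$, so off an event of probability at most $e^{-m_i^{1/2}}$ we have $|N(X)|\ge\tfrac34 p_im_i$ for every $X$.

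Next, condition on an arbitrary outcome of $G_1,\dots,G_{i-1}$; this determines $G^i$, and for each $Y\in P_i$ the set $D:=N_{G^i}(Y)$ has $|D|\le\nu_im_i/2$ by the hypothesis of the lemma. For distinct $X,Y$, since $X\notin N(X)$ the quantity $|D\cap N(X)|$ equals $\sum_{Z\in D\setminus\{X\}}\mathbf 1[Z\in N(X)]$, a sum of $|D\setminus\{X\}|\le\nu_im_i/2$ independent Bernoulli$(p_i)$ variables, hence binomial with mean at most $\tfrac12\nu_ip_im_i$. Applying the Chernoff estimate (\ref{chernoffest}) with deviation $a=\tfrac34\nu_ip_im_i-p_i|D\setminus\{X\}|\ge\tfrac14\nu_ip_im_i$ and using $|D\setminus\{X\}|\le\tfrac12\nu_im_i$ yields $\Pr[\,|D\cap N(X)|>\tfrac34\nu_ip_im_i\,]<e^{-2a^2/|D\setminus\{X\}|}\le e^{-\nu_ip_i^2m_i/4}$. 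A union bound over the fewer than $m_i^2$ pairs $(X,Y)$ then bounds by $m_i^2e^{-\nu_ip_i^2m_i/4}$ the probability that some pair satisfies $|N_{G^i}(Y)\cap N(X)|>\tfrac34\nu_ip_im_i$.

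Finally I would assemble the two estimates. If some distinct $X,Y$ satisfy $d_{G^i}(Y,N(X))>\nu_i$, then writing $|N_{G^i}(Y)\cap N(X)|=d_{G^i}(Y,N(X))\,|N(X)|$ we see that either the event from the second paragraph fails (so $|N(X)|<\tfrac34 p_im_i$ for some $X$), or $|N_{G^i}(Y)\cap N(X)|>\nu_i\cdot\tfrac34 p_im_i$ for that particular pair. Summing the two probabilities gives exactly $\pi'_{5i}=e^{-m_i^{1/2}}+m_i^2e^{-\nu_ip_i^2m_i/4}$. I do not expect a genuine obstacle; the only subtle point is that the lower tail of $|N(X)|$ and the upper tail of $|N_{G^i}(Y)\cap N(X)|$ involve the \emph{same} random set, so one must argue via this "either/or" decomposition rather than by multiplying independent probabilities, and one must verify the elementary inequality $m_i^{3/4}\le\tfrac14 p_im_i$ that legitimises the threshold $\tfrac34 p_im_i$.
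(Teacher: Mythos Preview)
Your proof is correct and follows essentially the same approach as the paper: use Lemma~\ref{commonneighbors} to ensure $|N(X)|\ge\tfrac34 p_im_i$ for all $X$ (cost $e^{-m_i^{1/2}}$), then for each ordered pair $(X,Y)$ apply Chernoff to the binomial $|N_{G^i}(Y)\cap N(X)|$ with the fixed set $N_{G^i}(Y)$ of size at most $\nu_im_i/2$, and take a union bound over pairs. Your explicit handling of the case $X\in N_{G^i}(Y)$ and your remark about the ``either/or'' decomposition (rather than independence) are both correct and slightly more careful than the paper's write-up, but the argument is the same.
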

\begin{proof}
Note that $G^i$ is determined by $G_1,\ldots,G_{i-1}$. We show that,
conditioning on $G^i$ has maximum degree at most $\nu_i m_i/2$, the random
graph $G_i$ is such that the probability that there are distinct sets $X,Y \in
P_i$ which satisfy $d_{G^i}(Y,N(X))>\nu_i$ is at most $e^{-m_i^{1/2}} + m_i^2 e^{- \nu_i p_i^2 m_i/4}$.

Fix for now $X,Y \in P_i$. Let $U$ be the neighborhood of $Y$ in $G^i$. By
assumption $|U| \leq
\nu_im_i/2$. The expectation of $|N(X) \cap U|$ is at most $p_i \nu_i m_i/2$.
The probability that $|N(X) \cap U| > 3 p_i \nu_i m_i/4$ is by
(\ref{chernoffest}) at most $$e^{-2(p_i \nu_i  m_i/4)^2/|U|} \leq e^{-2(\nu_i
p_i m_i/4)^2/(\nu_i m_i/2)} = e^{-\nu_i p_i^2 m_i/4}.$$ By Lemma
\ref{commonneighbors}, we know that $|N(X)| \geq 3p_i m_i/4$ for all $X \in
P_i$ with probability at least $1 - e^{-m_i^{1/2}}$. Therefore, using
the union bound, with probability at least $$1 - e^{-m_i^{1/2}} -
m_i^2 e^{-\nu_i p_i^2 m_i/4},$$ we
have
\[d_{G^i} (Y, N(X)) = \frac{|N(X) \cap U|}{|N(X)|} \leq \frac{(3 \nu_i p_i
m_i/4)}{(3 p_i m_i/4)} = \nu_i,\]
for all $X, Y \in P_i$.
\end{proof}

\begin{lemma}\label{puttog2}
Fix $1 \leq i < j < s$. Suppose every vertex of $G_i$ has degree at least $p_i
m_i/2$. Let $E$ be the event that there is a set $X \in P_i$ and a set $Y
\in P_j$ with $Y \not \subset X$ with more than $2p_j|N(X)|\frac{m_j}{m_i}$
neighbors $Y'$ in $G_j$ with $Y' \subset U(X)$. The probability of event $E$ is
at most $\pi_{5ij}:=m_im_je^{-p_j^2p_im_j}$.
\end{lemma}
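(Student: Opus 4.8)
The plan is to condition on $G_i$, use the independence of $G_j$ from $G_i$ to reduce the statement to a large deviation bound for the random graph $G_j \sim G(m_j,p_j)$, and then take a union bound over the pairs $(X,Y)$.

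The observation enabling the conditioning is that, in the construction, the only randomness lies in the mutually independent graphs $G_1,\ldots,G_{s-1}$, and for $\ell>i$ both the partition $P_\ell$ and the splittings $Q_{XY}$ with $X,Y\in P_i$ are functions of $G_i$ alone. Hence for each $X\in P_i$ the set $U(X)=\bigcup_{Y'\in N(X)}Y'$ depends only on $G_i$, and so does the collection $Y^\ast(X):=\{Y'\in P_j : Y'\subset U(X)\}$ of parts of $P_j$ contained in it. Fixing an outcome of $G_i$ for which every vertex has degree at least $p_im_i/2$ (the hypothesis of the lemma), and conditioning on this event, $G_j$ still has the distribution of $G(m_j,p_j)$ with its edges chosen independently.

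Now fix $X\in P_i$. Since $P_j$ refines $P_i$ and each part of $P_i$ is split into $m_j/m_i$ parts of $P_j$, we have $|Y^\ast(X)|=|N(X)|\cdot m_j/m_i$, which by the minimum-degree hypothesis is at least $\tfrac12 p_i m_j$. Fix also $Y\in P_j$ with $Y\not\subset X$. The number of neighbors $Y'$ of $Y$ in $G_j$ with $Y'\subset U(X)$ is $|N_{G_j}(Y)\cap Y^\ast(X)|$, which, since $Y$ is not joined to itself, is a sum of at most $|Y^\ast(X)|$ independent Bernoulli$(p_j)$ indicators and hence is dominated by a $\bin{|Y^\ast(X)|,p_j}$ random variable of mean $p_j|Y^\ast(X)|$. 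The threshold $2p_j|N(X)|\cdot m_j/m_i$ appearing in the definition of $E$ equals exactly $2p_j|Y^\ast(X)|$, i.e.\ twice this mean, so the Chernoff estimate (\ref{chernoffest}), applied with $a=p_j|Y^\ast(X)|$ and $n=|Y^\ast(X)|$, bounds the probability that the count exceeds the threshold by $e^{-2(p_j|Y^\ast(X)|)^2/|Y^\ast(X)|}=e^{-2p_j^2|Y^\ast(X)|}\le e^{-p_j^2p_im_j}$, using $|Y^\ast(X)|\ge\tfrac12 p_i m_j$. A union bound over the at most $m_i$ choices of $X\in P_i$ and at most $m_j$ choices of $Y\in P_j$ then yields $\pr{E}\le m_im_je^{-p_j^2p_im_j}=\pi_{5ij}$.

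The only point that requires any care is the first step: one must verify that $U(X)$, the set $Y^\ast(X)$, and the minimum-degree event are all measurable with respect to $G_i$, so that after conditioning $G_j$ remains a genuine independent copy of $G(m_j,p_j)$. Granting this, the remainder is a routine Chernoff-plus-union-bound computation and presents no substantial obstacle.
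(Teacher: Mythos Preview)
Your proof is correct and follows essentially the same approach as the paper's own proof: condition on $G_i$ so that $U(X)$ and hence $Y^\ast(X)$ are determined, apply the Chernoff bound (\ref{chernoffest}) to the binomial count of neighbors of $Y$ in $Y^\ast(X)$, and take a union bound over the at most $m_im_j$ pairs $(X,Y)$. The paper's proof is slightly more terse but the computation is identical; one minor remark is that the partitions $P_\ell$ are in fact deterministic in the construction (not functions of $G_i$), though this only strengthens your measurability claim.
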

\begin{proof}
The number of $Y' \in P_j$ with $Y' \subset U(X)$ is $|N(X)|\frac{m_j}{m_i}$.
The probability that a given $Y$ has at least $2p_j|N(X)|\frac{m_j}{m_i}$
neighbors $Y'$ in $G_j$ with $Y' \subset U(X)$ is, by (\ref{chernoffest}), at
most
$$e^{-2\left(p_j|N(X)|\frac{m_j}{m_i}\right)^2/\left(|N(X)|\frac{m_j}{m_i}\right)}
=
e^{-2p_j^2|N(X)|\frac{m_j}{m_i}}.$$ As there are at most $m_jm_i$ such pairs
$X,Y$, we have by the union bound, the probability of event $E$ is at most
$$m_im_je^{-2p_j^2|N(X)|\frac{m_j}{m_i}} \leq m_im_je^{-p_j^2p_im_j}.$$
\end{proof}

From the previous three lemmas, we get the next lemma.

\begin{lemma}\label{puttog}
Consider the event $E_5$ that there
is $i$, $1 \leq i \leq s - 1$, $X \in P_i$, and vertex $v \not \in X$ with
$d_G(v,U(X)) > \nu$. The probability of event $E_5$ is at most
$\pi_5:=\pi_1+\sum_{i=1}^{s-1}\pi_{5i}+\sum_{i=2}^{s-1}\pi'_{5i}+\sum_{1 \leq i
< j < s}\pi_{5ij}$.
\end{lemma}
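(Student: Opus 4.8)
The plan is: for each fixed level $i$ with $1\le i\le s-1$, part $X\in P_i$, and vertex $v\notin X$, I bound the density $d_G(v,U(X))$ by partitioning the neighbours of $v$ lying in $U(X)$ according to which graph $G_j$ produced the corresponding edges of $G$, and then control the total contribution of the levels $j<i$, the level $j=i$, and the levels $j>i$ separately using Lemmas~\ref{puttog3}, \ref{puttog1}, and~\ref{puttog2}. Let $V_j\in P_j$ be the part containing $v$; since $v\notin X$ we have $V_i\ne X$, and for $j>i$ also $V_j\not\subset X$. Recall that $U(X)=\bigcup_{Y\in N(X)}Y$ is a union of parts of $P_i$ of total size $|N(X)|\,|V|/m_i$.

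First I would set up the auxiliary hypotheses. Assuming event $E_1$ of Corollary~\ref{commonneighborscor} does not occur (cost $\pi_1$), every $G_i$ has minimum degree at least $p_im_i-m_i^{3/4}\ge p_im_i/2$ and maximum degree at most $p_im_i+m_i^{3/4}$; a routine computation using the latter then shows that the degree of any vertex of $G^i$ in $G_{j,i}$ is at most $\tfrac12(p_j+m_j^{-1/4})m_i$, so each vertex of $G^i$ has degree at most $\tfrac12\sum_{j<i}(p_j+m_j^{-1/4})m_i\le\tfrac12\nu_im_i$, where we used $m_j^{-1/4}\le m_j^{-1/10}\le p_j$. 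These are precisely the hypotheses of Lemmas~\ref{puttog1} and~\ref{puttog2}. Assuming in addition that, for every relevant $i$ and every pair $i<j$, none of the events of Lemmas~\ref{puttog3}, \ref{puttog1}, \ref{puttog2} occur, which by the union bound costs an extra $\sum_i\pi_{5i}+\sum_i\pi'_{5i}+\sum_{i<j}\pi_{5ij}$, we may assume that for all distinct $X,Y\in P_i$ we have $d_{G_i}(Y,N(X))\le 2p_i$ and $d_{G^i}(Y,N(X))\le\nu_i$, and that for all $X\in P_i$ and all $Y\in P_j$ with $Y\not\subset X$ the number of $G_j$-neighbours $Y'$ of $Y$ with $Y'\subset U(X)$ is at most $2p_j|N(X)|m_j/m_i$.

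The heart of the argument is the level-by-level bound on $d_G(v,U(X))$, valid under the above assumptions. Any $G_i$-edge joining $v$ to a vertex of $U(X)$ must arise from an edge $(V_i,B)$ of $G_i$ with $B\in N(X)$, and then $v$ is joined to exactly half of $B$; so level $i$ contributes at most $\tfrac12 d_{G_i}(V_i,N(X))\le p_i$. For the levels $j<i$ pooled together: unwinding the alternative description of $G$ via the graphs $G_{j,i}$ shows that $v$ is joined to \emph{every} vertex of a part $Y\in N(X)$ by edges of $G_j$ when $V_i$ is adjacent to $Y$ in $G_{j,i}$, and to \emph{no} vertex of $Y$ by $G_j$ otherwise; hence the combined contribution of all $j<i$ is at most $d_{G^i}(V_i,N(X))\le\nu_i$. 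For a level $j>i$: a $G_j$-edge joining $v$ to $U(X)$ must come from an edge $(V_j,B)$ of $G_j$ with $B\subset U(X)$, each such $B$ contributing $|B|/2=|V|/(2m_j)$ vertices; since there are at most $2p_j|N(X)|m_j/m_i$ such $B$, level $j$ contributes at most $p_j$ after dividing by $|U(X)|$. Summing, $d_G(v,U(X))\le\nu_i+p_i+\sum_{j>i}p_j\le 3\sum_{j=1}^{s-1}p_j=\nu$, contradicting $d_G(v,U(X))>\nu$. A union bound over all the events discarded above yields $\Pr[E_5]\le\pi_5$.

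I expect the main difficulty to be the bookkeeping in this level decomposition: pinning down, for each $j$, exactly which edges of $G_j$ can join $v$ to $U(X)$, and matching the relevant set sizes to the quantities bounded in Lemmas~\ref{puttog3}--\ref{puttog2} so that each level's contribution is at most $p_j$ (respectively $\nu_i$ for the pooled low levels). The structural fact that makes this clean is that every part of $P_{j'}$ with $j'>j$ lies entirely on one side of each cut $Q_{AB}$ made at level $j$, so parts are never split by lower-level cuts; the step requiring the most care is the identification of the $j<i$ contribution with adjacency in $G^i$ via the graphs $G_{j,i}$.
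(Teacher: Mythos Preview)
Your proposal is correct and follows essentially the same approach as the paper's proof: decompose the edges from $v$ into $U(X)$ by the level $j$ at which they were created, handling $j<i$ via Lemma~\ref{puttog1}, $j=i$ via Lemma~\ref{puttog3}, and $j>i$ via Lemma~\ref{puttog2}, with $E_1$ supplying the degree hypotheses. The paper's proof is terser --- it merely lists the three bad events and asserts that one of them must occur if $E_5$ does --- whereas you spell out the density computation showing $d_G(v,U(X))\le \nu_i+p_i+\sum_{j>i}p_j\le\nu$ when none of them occur; this is exactly the calculation the paper leaves to the reader.
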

\begin{proof}
We look at edges in $G$ between $U(X)$ and $Y$ for $X \in P_i$ and $Y \in P_j$,
distinguishing three different cases, namely $j = i$, $j<i$ and $j>i$. For
event $E_5$ to occur at least one of the following events occurs:
\begin{itemize}
\item There is $1 \leq i < s$ and distinct sets $X,Y \in P_i$ with
$d_{G_i}(Y,N(X))>3p_i$.
\item There is $1 < i < s$ and distinct sets $X,Y \in P_i$ which satisfy
$d_{G^i}(Y,N(X))>\nu_i=\sum_{j<i}3p_j$.
\item There is $1 \leq i < j < s$ and sets $X \in P_i$ and $Y \in P_j$ with $Y
\not \subset X$ with
$d_{G_j}(Y,U(X))>3p_j$.
\end{itemize}

The first case is covered by Lemma \ref{puttog3}, the second by Lemma
\ref{puttog1} and the third by Lemma \ref{puttog2}. For Lemmas \ref{puttog1}
and \ref{puttog2} to be applicable, it is enough to know also that for any $i$
and any $X \in P_i$, $|N(X) - p_i m_i| \leq m_i^{3/4} \leq p_i m_i/4$. But this
is just the event that $E_1$ does not occur. From Corollary
\ref{commonneighborscor}, we know this holds with probability at least $1 -
\pi_1$.

Therefore, putting everything together, the probability of event $E_5$ is at
most
$$\pi_1+\sum_{i=1}^{s-1}\pi_{5i}+\sum_{i=2}^{s-1}\pi'_{5i}+\sum_{1 \leq i < j <
s}\pi_{5ij}.$$
\end{proof}

\begin{lemma}\label{E4}
Fix $1 < i \leq s-1$. Let $E_{6,i}$ be the event that there
is $X \in P_i$ such that $X$ has more than  $\nu|N(X)|$ neighbors $Y$ in $G^i$
with $Y \in N(X)$. Let $E_6$ be the event that at least
one of the events $E_{6,i}$ occurs for $1 < i \leq s-1$. Then, the probability
of event $E_6$ is at most $\pi_6 := \pi_1 + \sum_{i=2}^{s-1} m_ie^{-3\nu^2 p_i m_i/8}$.
\end{lemma}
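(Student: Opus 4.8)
The plan is to condition on the earlier random graphs $G_1,\ldots,G_{i-1}$, which determine $G^i$, and then to use the fact that $G^i$ has small maximum degree in order to control, over the fresh randomness of $G_i$, how many of the $G_i$-neighbours of a given $X\in P_i$ can fall inside the (small) $G^i$-neighbourhood of $X$. Concretely I would first split
$$\mathbb{P}(E_6)\ \le\ \mathbb{P}(E_1)\ +\ \sum_{i=2}^{s-1}\mathbb{P}\bigl(E_{6,i}\cap E_1^{\,c}\bigr),$$
bounding $\mathbb{P}(E_1)\le\pi_1$ by Corollary~\ref{commonneighborscor}. It then suffices to show $\mathbb{P}(E_{6,i}\cap E_1^{\,c})\le m_i e^{-3\nu^2 p_i m_i/8}$ for each $i$.

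The first ingredient is the deterministic estimate that, on $E_1^{\,c}$, every $X\in P_i$ satisfies $|N_{G^i}(X)|\le\tfrac{\nu}{3}m_i$. Using the description of $G^i$ as the union of $G_{1,i},\ldots,G_{i-1,i}$, and writing $X^{(j)}$ for the part of $P_j$ containing $X$, one checks that $\deg_{G_{j,i}}(X)=|N_{G_j}(X^{(j)})|\cdot\frac{m_i}{2m_j}$: each $G_j$-neighbour $Y'$ of $X^{(j)}$ contributes exactly $\frac{m_i}{2m_j}$ parts of $P_i$ (one of the two halves of $Y'$) to $N_{G_{j,i}}(X)$, and these sets are disjoint for distinct $Y'$. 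On $E_1^{\,c}$ we have $|N_{G_j}(X^{(j)})|\le p_j m_j+m_j^{3/4}$, so $|N_{G^i}(X)|\le\sum_{j<i}\deg_{G_{j,i}}(X)\le\frac{m_i}{2}\sum_{j<i}(p_j+m_j^{-1/4})\le\frac{\nu}{3}m_i$, where we used $\sum_j p_j=\nu/3$ together with $\sum_j m_j^{-1/4}\le 2m_1^{-1/4}\le\nu/3$ (which follows from $p_1\ge m_1^{-1/10}$, $m_1\ge 2^{200}$, and the super-fast growth of the $m_j$).

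Now I would condition on $G_1,\ldots,G_{i-1}$. If this conditioning already forces $E_1$ there is nothing to prove; otherwise $G^i$ is a fixed graph obeying the above degree bound and $G_i$ is an independent uniform random graph on $P_i$ with edge probability $p_i$. For $X\in P_i$ the variable $A_X:=|N_{G_i}(X)\cap N_{G^i}(X)|$ is then $\mathrm{Bin}(|N_{G^i}(X)|,p_i)$-distributed, hence stochastically dominated by a binomial with $\lceil\nu m_i/3\rceil$ trials and mean at most $\tfrac{1}{3}\nu p_i m_i$. On $E_1^{\,c}$ we also have $|N_{G_i}(X)|\ge p_i m_i-m_i^{3/4}\ge\tfrac{9}{10}p_i m_i$ for every $X$, so the occurrence of $E_{6,i}$ on $E_1^{\,c}$ forces some $X$ with $A_X>\nu|N_{G_i}(X)|\ge\tfrac{9}{10}\nu p_i m_i$, i.e. $A_X$ exceeds roughly $\tfrac{27}{10}$ times the mean of the dominating binomial. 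A Chernoff large-deviation bound for the binomial then gives $\mathbb{P}(A_X\ge\tfrac{9}{10}\nu p_i m_i)\le e^{-3\nu^2 p_i m_i/8}$, where the (lossy) passage to the exponent $\tfrac{3}{8}\nu^2$ uses $\nu\le\tfrac12$. A union bound over the $m_i$ choices of $X$, then summation over $2\le i\le s-1$ together with the $\pi_1$ term, yields $\mathbb{P}(E_6)\le\pi_6$.

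I expect the main obstacle to lie in the final two steps. First, one must extract a constant strictly below $\nu$ — here $\nu/3$ — as the bound on $|N_{G^i}(X)|/m_i$; this slack is exactly what turns the target event into a genuine large deviation for $A_X$, since with only the trivial bound $|N_{G^i}(X)|\le\nu m_i$ the threshold $\nu|N_{G_i}(X)|\approx\nu p_i m_i$ would essentially coincide with the mean of $A_X$ and the probability would not be small. Second, to reach the exponent $\tfrac{3}{8}\nu^2 p_i m_i$ one should use the multiplicative form of the Chernoff bound rather than the additive estimate (\ref{chernoffest}): the latter only yields an exponent of order $\nu p_i^2 m_i$, which is weaker than the claimed bound whenever $\nu$ is much larger than $p_i$ (as it can be when $s$ is large). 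The remaining points — the degree identity for $G_{j,i}$, the lower bound on $|N_{G_i}(X)|$, and the union bounds — are routine.
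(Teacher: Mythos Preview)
Your argument is correct and follows the same outline as the paper's proof: split off $E_1$, bound $\deg_{G^i}(X)$ deterministically on $E_1^{\,c}$, concentrate over the fresh randomness of $G_i$, and union bound over $X$ and $i$.

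The one noteworthy difference is in the concentration step, and it bears directly on the obstacle you flagged. Rather than viewing $A_X$ as $\mathrm{Bin}(|N_{G^i}(X)|,p_i)$, the paper conditions on $|N_{G_i}(X)|$ and treats $A_X=|N_{G_i}(X)\cap N_{G^i}(X)|$ as a hypergeometric variable with $|N_{G_i}(X)|$ draws from a population in which at most a $\nu/2$ fraction lies in $N_{G^i}(X)$. The additive estimate (\ref{chernoffest}) then applies with $n=|N_{G_i}(X)|$ and $a=\tfrac{\nu}{2}|N_{G_i}(X)|$, giving exponent $-\nu^2|N_{G_i}(X)|/2\le-\tfrac{3}{8}\nu^2 p_i m_i$ directly, with no need for a multiplicative Chernoff bound or the slack $\nu\le\tfrac12$. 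In your parametrization the number of trials is the \emph{small} quantity $|N_{G^i}(X)|\le\tfrac{\nu}{3}m_i$, so, as you correctly observe, (\ref{chernoffest}) would only yield an exponent of order $\nu p_i^2 m_i$ and a multiplicative bound is genuinely required. Both routes work; the paper's has the minor advantage of reusing the single concentration inequality already set up, while yours makes the conditioning on $G_1,\dots,G_{i-1}$ a bit cleaner.
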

\begin{proof} Let us assume that event $E_1$ does not occur. Then every vertex in $G_i$ has degree at least $\frac{3}{4}p_i m_i$. Moreover, for every $X \in P_i$, its number of neighbors in $G^i$ is at most
\[\left(\sum_{j=1}^{i-1} p_j + m_j^{-1/4}\right)m_i \leq \frac{5}{12} \nu m_i \leq \frac{1}{2} \nu (m_i - 1).\]
Furthermore, the graphs $G_i$ and $G^i$ are still independently chosen given the degree conditions imposed on them by $E_1$ not occuring. Fix for now $X \in P_i$. Then, the expected fraction of elements of $N(X)$
which are neighbors of $X$ in $G^i$ is at most $\nu/2$. Therefore, given
$|N(X)|$, the probability that the number of neighbors of $X$ in $G^i$ is more
than $\nu |N(X)|$ is, by (\ref{chernoffest}) and the fact that a hypergeometric
distribution is at least as concentrated as the corresponding binomial
distribution, at most
$e^{-2\left(\nu|N(X)|/2\right)^2/|N(X)|} = e^{-\nu^2|N(X)|/2}$. 
By the union bound and the assumption that the degree of $X$ in $G_i$ is at least $\frac{3}{4} p_i m_i$, the probability of event $E_{6,i}$ given that $E_1$ does not occur is at most $m_ie^{-3\nu^2 p_i m_i/8}$. Therefore, adding over all $i$, we get that the probability of event $E_6$ is at most $\pi_1 + \sum_{i=2}^{s-1} m_ie^{-3\nu^2 p_i m_i/8} = \pi_6$, as required.
\end{proof}

\begin{lemma}\label{E5}
Let $E_7$ be the event that there is $i$, $1 \leq i \leq s-1$, and vertex
subsets $U_1,U_2 \subset P_i$ of $G_i$ with $|U_1|=u_1$, $|U_2|=u_2$, and $u_1
\leq u_2$ such that
\begin{equation}\label{discre1} |e(U_1,U_2)-p_iu_1u_2| >5\sqrt{h},\end{equation}
where  $h=h(u_1,u_2)=u_1u_2^2\ln \frac{m_ie}{u_2}$. The probability of event
$E_7$ is at most
$\pi_7:=\sum_{i=1}^{s-1} 2m_i^{-2}$.
\end{lemma}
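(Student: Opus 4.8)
The plan is simply to apply Lemma \ref{gnpedge} to each of the graphs $G_i$ separately and then take a union bound over $i$. Recall that $G_i$ was defined to be a uniform random graph on the $m_i$ parts of $P_i$ with edge probability $p_i$; that is, $G_i$ is precisely an instance of $G(m_i, p_i)$, and these instances are chosen independently of one another.

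First I would fix $i$ with $1 \leq i \leq s-1$ and invoke Lemma \ref{gnpedge} with $n = m_i$ and $p = p_i$. Since $m_i \geq m_1 \geq 2^{200}$, the standing assumption $n \geq m_1 \geq 2^{200}$ maintained throughout this subsection is satisfied. The conclusion of Lemma \ref{gnpedge} is exactly that, with probability at most $2m_i^{-2}$, there exist integers $u_1 \leq u_2$ and (not necessarily disjoint) subsets $U_1, U_2 \subset P_i$ with $|U_1| = u_1$, $|U_2| = u_2$, and $|e(U_1,U_2) - p_i u_1 u_2| > 5\sqrt{h}$, where $h = u_1 u_2^2 \ln\frac{m_i e}{u_2}$. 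This is precisely the quantity $h(u_1,u_2)$ appearing in (\ref{discre1}) once one substitutes $n = m_i$, and the constant $5$ in front of $\sqrt{h}$ is identical.

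Finally I would take the union bound over $i = 1, \ldots, s-1$: the probability that (\ref{discre1}) holds for some $i$ and some pair of subsets is at most $\sum_{i=1}^{s-1} 2m_i^{-2} = \pi_7$, which is exactly the claimed bound.

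There is essentially no genuine obstacle here; the only thing requiring care is the bookkeeping, namely verifying that the function $h(u_1,u_2) = u_1 u_2^2 \ln\frac{m_i e}{u_2}$ in the statement coincides verbatim with the $h$ from Lemma \ref{gnpedge} evaluated at $n = m_i$, and that the independence of the $G_i$ (which we do not even need, since a union bound suffices) causes no issue. Both points are immediate, so the proof is short.
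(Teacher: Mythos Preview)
Your proposal is correct and is essentially identical to the paper's proof: apply Lemma \ref{gnpedge} with $n=m_i$, $p=p_i$ to each $G_i$ separately and then take a union bound over $i$. The paper's version is even terser than yours, omitting the bookkeeping checks you spell out.
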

\begin{proof}
By Lemma \ref{gnpedge}, for each $i$, the probability that there are subsets
$U_1,U_2 \subset P_i$ such that (\ref{discre1}) fails is at most $2m_i^{-2}$.
By
the union bound, the probability of event $E_7$ is at most $\sum_{i=1}^{s-1}
2m_i^{-2}$.
\end{proof}

We gather the previous lemmas into one result, which shows that with positive
probability the edge distribution of $G$ has certain desirable properties.

\begin{lemma} \label{importantstep}
Suppose $\nu=3\sum_{i=1}^{s-1}p_i \leq 1/2$. With probability at least $1/2$,
the graph $G$ has the following properties for all $i$, $1 \leq i \leq s-1$.
\begin{itemize}
\item The degree of every vertex in graph $G_i$ differs from $p_im_i$ by at
most $m_i^{3/4}$ and the codegree of
every pair of distinct vertices differs from $p_i^2m_i$ by at most $m_i^{3/4}$.
\item The number of edges of $G_i$ not in $G^i$ is at
least $p_im_i^2/4$.
\item For all $X \in P_i$ and vertex $v \not \in X$, we have $d_G(v,U(X)) \leq
\nu$.
\item For all distinct $X,Y \in P_i$, $d \in \{1,2\}$, and $v \in X^{3-d}_Y$,
such that $(X,Y)$ is an edge of $G_i$ but $X$ and $Y$ are not adjacent in
$G^i$, we have $d_{G}(v,Y^d_X) \leq \nu$.
\item For all $X \in P_i$, $X$ has at most $\nu |N(X)|$ neighbors $Y$ in $G^i$ with $Y \in N(X)$.
\item For all vertex subsets $U_1,U_2 \subset P_i$ of graph $G_i$ with
$|U_1|=u_1$, $|U_2|=u_2$, and $u_1 \leq u_2$,
\begin{equation*}\label{discre} |e(U_1,U_2)-p_iu_1u_2| \leq
5\sqrt{h},\end{equation*}
where  $h=h(u_1,u_2)=u_1u_2^2\ln \frac{m_ie}{u_2}$.
\end{itemize}
\end{lemma}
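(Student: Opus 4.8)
The plan is to observe that each of the six asserted properties is, essentially by definition, the non-occurrence of one of the bad events already analyzed in the preceding lemmas, and then to conclude by a union bound. Specifically, the first property (the degree bound in $G_i$ together with the codegree bound) is exactly the non-occurrence of $E_1$ (Corollary \ref{commonneighborscor}) and of $E_4$ (Corollary \ref{commonneighbors2cor}); the second property (that $G_i$ has at least $p_im_i^2/4$ edges not in $G^i$) is the complement of $E_2$ (Lemma \ref{newedges}); the third property ($d_G(v,U(X))\le\nu$ for all $X\in P_i$ and $v\notin X$) is the complement of $E_5$ (Lemma \ref{puttog}); the fourth property ($d_G(v,Y^d_X)\le\nu$ whenever $(X,Y)$ is an edge of $G_i$ but not of $G^i$) is the complement of $E_3$ (Lemma \ref{E2}); the fifth property ($X$ has at most $\nu|N(X)|$ neighbors inside $N(X)$ in $G^i$) is the complement of $E_6$ (Lemma \ref{E4}); and the sixth property (the edge-discrepancy estimate inside $G_i$) is the complement of $E_7$ (Lemma \ref{E5}). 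The hypothesis $\nu\le 1/2$ of the present lemma is precisely what Lemma \ref{newedges} requires, and the auxiliary degree hypotheses appearing in Lemmas \ref{puttog1}, \ref{puttog2} and \ref{E4} have already been absorbed (paid for by the $\pi_1$ term) into the bounds $\pi_5$ and $\pi_6$. Hence, by the union bound, the probability that $G$ fails at least one of the six properties is at most
\[\pi_1+\pi_2+\pi_3+\pi_4+\pi_5+\pi_6+\pi_7,\]
and it remains only to check that this sum is at most $1/2$.

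To bound this sum, I would first use the standing hypothesis $p_i\ge m_i^{-1/10}$ to rewrite every exponent appearing in the $\pi_j$ — quantities such as $p_i^2m_i^2$, $\nu_ip_i^2m_i$, $\nu^2p_im_i$, and $p_j^2p_im_j$ — as a positive power of the relevant $m_i$ (or as $m_j^{\Omega(1)}/m_i$). Each summand then has one of the shapes $m_i^{O(1)}\exp(-m_i^{\Omega(1)})$, or a polynomial in $m_i,m_j$ times $\exp(-m_j^{\Omega(1)}/m_i)$ in the doubly indexed sums $\pi_3$ and $\sum_{i<j}\pi_{5ij}$, or simply $2m_i^{-2}$ in $\pi_7$. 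The crucial input is that $(m_i)$ grows astronomically fast: $m_{i+1}=m_i2^{\lfloor\rho m_i^{9/10}\rfloor}$ is at least exponential in $m_i^{9/10}$, and in particular $m_j$ exceeds every fixed power of $m_i$ once $j>i$. The first fact makes each one-index sum dominated, up to a small absolute constant, by its $i=1$ term; the second guarantees, for a two-index term with $j>i$, that the exponent dwarfs the logarithm $\ln(m_im_j)$ of its prefactor, so that term is at most, say, $m_j^{-2}$, whence $\pi_3$ and $\sum_{i<j}\pi_{5ij}$ are themselves $O(m_1^{-1})$.

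Collecting these estimates, each of $\pi_1,\dots,\pi_7$ is at most a small absolute constant times $m_1^{-1}$ — the single largest contribution being the term $2m_1^{-2}$ of $\pi_7$ — so, since $m_1\ge 2^{200}$, the sum of all seven is far below $1/2$. The only step that needs genuine care, rather than being a routine convergent-sum estimate, is the control of the prefactors $m_im_j$ in $\pi_3$ and $\sum_{i<j}\pi_{5ij}$: here $m_j$ may be a gigantic tower, but since $m_j$ is itself at most exponential in $m_{j-1}^{9/10}$ while $m_i\le m_{j-1}$, the exponent — a positive power of $m_j$ divided by $m_i$ — still overwhelms $\log m_j$ and hence the prefactor. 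That is the main obstacle; everything else is bookkeeping.
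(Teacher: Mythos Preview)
Your proposal is correct and follows exactly the paper's approach: identify each of the six properties as the complement of the corresponding bad event $E_1,\ldots,E_7$, apply the union bound, and then verify $\sum_h\pi_h\le 1/2$ using the growth of the $m_i$ and the lower bound $p_i\ge m_i^{-1/10}$. Your write-up in fact gives more detail on the numerical estimates than the paper, which simply asserts each $\pi_h\le 1/14$; your identification of the double sums $\pi_3$ and $\sum_{i<j}\pi_{5ij}$ as the only place needing a moment's thought is spot on.
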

\begin{proof}
By Corollaries \ref {commonneighborscor}, \ref{commonneighbors2cor} and Lemmas
\ref{newedges}, \ref{E2}, \ref{puttog}, \ref{E4}, \ref{E5} and the union bound,
the probability that at least one $E_h$, $1 \leq h \leq 7$, occurs is at most
$\sum_{h=1}^7 \textrm{Pr}[E_h] \leq \sum_{h=1}^7 \pi_h.$ Using the estimates
$\rho=2^{-20}$, $m_1 \geq 2^{200}$, $m_r=m_{r-1}a_{r-1}$ for $2 \leq r \leq s$,
where $a_{r-1}=2^{\lfloor \rho m_{r-1}^{9/10}\rfloor}$, and $p_i \geq
m_i^{-1/10}$ for $1 \leq r \leq s-1$, it is easy to verify that each $\pi_h
\leq 1/14$ and hence the probability that none of these events occur, i.e., $G$
has the desired properties, is at least $1/2$.
\end{proof}

For the rest of the proof of Theorem \ref{maingen}, we suppose that $G$ has the
properties described in
Lemma \ref{importantstep}.

\subsection{ Regular partitions are close to being refinements}

Let $\theta=m_1^{-1/2}$, $\zeta=\omega=20\theta$, $\beta = \frac{\zeta}{m_1}$,
and $\gamma = 1-\omega$. Suppose for
contradiction that there is an equitable partition $\mathcal{Z}:V=Z_1 \cup
\ldots
\cup Z_k$ of the vertex set of $G$ such that all but at most $\eta k^2$ ordered
pairs $(Z_j,Z_{\ell})$ of parts are $(\epsilon,\delta)$-regular, but
$\mathcal{Z}$ is not a $(\beta, \upsilon)$-refinement of $P_a$.

The two main lemmas for the proof are Lemmas \ref{mainclaim2} and
\ref{mainclaim1}, which show that if $Z_j$ satisfies certain conditions, then
there are at least $\theta^{-1}\eta k$ pairs $(Z_j,Z_{\ell})$ that are not
$(\epsilon,\delta)$-regular. The rest of the proof, Theorem \ref{mainagain},
shows that there are at least $\theta k$ $Z_j$ which satisfy the conditions of
Lemmas \ref{mainclaim2} or \ref{mainclaim1}. Together, we get at least
$\theta^{-1} \eta k \cdot \theta k = \eta k^2$ ordered pairs $(Z_j,Z_{\ell})$
which are not
$(\epsilon,\delta)$-regular, which completes the proof.

Since $P_1$ is a partition into $m_1$ parts, then, by the pigeonhole principle,
each $Z_j$ is
$\frac{1}{m_1}$-contained in $P_1$. We call $Z_j$ {\it ripe} with respect to
$r$
if $Z_j$ is $\beta$-contained in $P_r$ but not $(1-\beta)$-contained in
$P_{r}$. That is, $Z_j$ is ripe if there is $X \in P_r$ containing a
$\beta$-fraction of it but no $X \in P_r$ containing a $(1 - \beta)$-fraction
of it. Let $\psi=2^{20}\beta$. We call $Z_j$ {\it shattered} with respect to
$r$ if $Z_j$ is $(1-\beta)$-contained in $P_r$, but at least a $\psi$-fraction
of $Z_j$ is contained in subsets $X \cap Z_j$ with $X \in P_{r+1}$ and $|X \cap
Z_j| <\beta |Z_j|$. The sense here is that $Z_j$ is shattered by the partition
$P_{r+1}$ if $Z_j$ is almost completely contained in some $X \in P_r$ but it is
not well-covered by $P_{r+1}$.

We say that a subset $X \subset V$ {\it $(\beta,\gamma)$-supports} the
partition $\mathcal{Z}$ if at least a $\gamma$-fraction of the elements of $X$
are in sets $Z_j$ which $\beta$-overlap $X$. That is, a $\gamma$-fraction of
the elements of $X$ are in sets $Z_j$ for which $|X \cap Z_j| \geq \beta
|Z_j|$.

\begin{lemma}\label{claimfirstpart}
Each of the $m_1$ sets in the partition $P_1$ $(\beta,1-\beta m_1)$-supports
$\mathcal{Z}$.
\end{lemma}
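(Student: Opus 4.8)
The plan is to prove this by a short double-counting argument; there is essentially nothing subtle involved. Fix a set $X \in P_1$. By the definition of $(\beta,\gamma)$-support, it suffices to show that the elements of $X$ lying in parts $Z_j$ which fail to $\beta$-overlap $X$ — call these the \emph{bad} parts, so that $|X \cap Z_j| < \beta |Z_j|$ for each bad $Z_j$ — make up less than a $\beta m_1$ fraction of $X$.

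First I would record the identity $\sum_j |X \cap Z_j| = |X|$, which holds because $\mathcal{Z}$ partitions $V \supseteq X$. Restricting this sum to the bad parts and applying the defining inequality $|X \cap Z_j| < \beta |Z_j|$ to each of them gives
$$\sum_{j \text{ bad}} |X \cap Z_j| \;<\; \beta \sum_{j \text{ bad}} |Z_j| \;\leq\; \beta \sum_j |Z_j| \;=\; \beta |V|.$$
Since $P_1$ is an equitable partition of $V$ into $m_1$ parts, $|X| = |V|/m_1$ (up to the floor/ceiling corrections suppressed throughout the paper), so the right-hand side equals $\beta m_1 |X|$. Hence the number of elements of $X$ in bad parts is strictly less than $\beta m_1 |X|$, which means at least a $(1 - \beta m_1)$-fraction of the elements of $X$ lie in parts $Z_j$ that $\beta$-overlap $X$. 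That is precisely the statement that $X$ $(\beta, 1 - \beta m_1)$-supports $\mathcal{Z}$; since $X \in P_1$ was arbitrary, the lemma follows.

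The only step that requires any care is the passage from the equitability of $P_1$ to the exact relation $|X| = |V|/m_1$, and this is covered by the paper's blanket convention of omitting floor and ceiling signs. Keeping track of them would give $|X| \geq |V|/m_1 - 1$, which only weakens the support parameter by a negligible amount for the values of $m_1$ and $\beta$ under consideration, so I do not expect any genuine obstacle here.
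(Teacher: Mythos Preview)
Your proof is correct and follows essentially the same approach as the paper's: both bound the total size of the ``bad'' intersections $X \cap Z_j$ (those with $|X \cap Z_j| < \beta |Z_j|$) by $\beta |V|$ and then use $|X| = |V|/m_1$ to convert this into a $\beta m_1$-fraction of $X$. Your version just spells out the summation more explicitly.
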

\begin{proof}
Let $X \in P_1$. At most a $\beta$-fraction of $V$ is in sets of the form $X
\cap Z_j$ with $|X \cap Z_j| < \beta |Z_j|$. Hence, as $|X| = |V|/m_1$, at most
a $\beta m_1$-fraction of $X$ belongs to $Z_i$ which do not $\beta$-overlap
$X$.
\end{proof}

Let $S_i$ denote the set of $X \in P_i$ which $(\beta,\gamma)$-support
$\mathcal{Z}$. We will let $\kappa_i=\frac{|S_i|}{|P_i|}$. Let $W_i$ denote the
set of $X \in P_i$ for which  $|N(X) \cap S_i| \leq \kappa_i|N(X)|/4$.

\begin{lemma}\label{Wibound}
For $1 \leq i \leq s-1$ with $\kappa_i > 100p_i^{-2}m_i^{-1}\ln (m_ie)$, we
have $|W_i| \leq 100p_i^{-2}\ln (\kappa_i^{-1}e)$.
\end{lemma}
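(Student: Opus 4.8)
The plan is a standard expander-mixing argument: I will double-count the number $e(W_i,S_i)$ of edges of $G_i$ between $W_i$ and the set $S_i$ of parts that $(\beta,\gamma)$-support $\mathcal{Z}$, recalling that $|S_i|=\kappa_i m_i$ and $|P_i|=m_i$. We may assume $W_i\neq\emptyset$, since otherwise the claim is trivial (the right-hand side is nonnegative); put $w=|W_i|\ge 1$ and $s=|S_i|=\kappa_i m_i$, noting $s\ge 1$ because the hypothesis on $\kappa_i$ forces $\kappa_i>0$. From the first bullet of Lemma \ref{importantstep}, together with $p_i\ge m_i^{-1/10}$ and $m_i\ge 2^{200}$ (which give $m_i^{3/4}\le p_i m_i$), every part $X\in P_i$ satisfies $|N(X)|\le p_i m_i+m_i^{3/4}\le 2p_i m_i$.

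First I would bound $e(W_i,S_i)$ from above. Writing $e(W_i,S_i)=\sum_{X\in W_i}|N(X)\cap S_i|$ and using that each $X\in W_i$ has, by definition of $W_i$, $|N(X)\cap S_i|\le \kappa_i|N(X)|/4\le \kappa_i p_i m_i/2$, summation over $X\in W_i$ gives $e(W_i,S_i)\le w\kappa_i p_i m_i/2$. Next I would bound $e(W_i,S_i)$ from below using the last bullet of Lemma \ref{importantstep} applied to the (possibly overlapping) subsets $W_i$ and $S_i$ of $P_i$: this yields $e(W_i,S_i)\ge p_i w s-5\sqrt{h}=p_i w\kappa_i m_i-5\sqrt{h}$, where $h=h(\min(w,s),\max(w,s))$. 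Comparing the two estimates, $p_i w\kappa_i m_i-5\sqrt{h}\le w\kappa_i p_i m_i/2$, hence $5\sqrt{h}\ge w\kappa_i p_i m_i/2$, i.e.\ $h\ge p_i^2 w^2\kappa_i^2 m_i^2/100$.

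It remains to extract the bound on $w$ from $h\ge p_i^2 w^2\kappa_i^2 m_i^2/100$, which is a short case analysis. If $w>s$ then $h=h(s,w)=\kappa_i m_i\, w^2\ln\frac{m_i e}{w}$; cancelling $w^2\kappa_i m_i$ gives $\ln\frac{m_i e}{w}\ge p_i^2\kappa_i m_i/100$, but the hypothesis $\kappa_i>100p_i^{-2}m_i^{-1}\ln(m_i e)$ makes the right-hand side strictly exceed $\ln(m_i e)$, contradicting $w\ge 1$; so in fact $w\le s$. When $w\le s$ we have $h=h(w,s)=w(\kappa_i m_i)^2\ln\frac{m_i e}{\kappa_i m_i}=w\kappa_i^2 m_i^2\ln(\kappa_i^{-1}e)$; cancelling $w\kappa_i^2 m_i^2$ gives $\ln(\kappa_i^{-1}e)\ge p_i^2 w/100$, that is $w\le 100p_i^{-2}\ln(\kappa_i^{-1}e)$, as claimed. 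I do not expect a conceptual obstacle here; the only point requiring care is bookkeeping the constants so that the factor $2$ from the degree estimate and the factor $5$ from the discrepancy estimate in Lemma \ref{importantstep} combine to give exactly $100$, and noting that the hypothesis on $\kappa_i$ is used precisely to rule out the degenerate regime $w>s$.
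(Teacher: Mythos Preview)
Your proof is correct and follows essentially the same approach as the paper: both upper-bound $e(W_i,S_i)$ by $p_i|W_i||S_i|/2$ using the degree estimate and the definition of $W_i$, then compare with the discrepancy bound from Lemma~\ref{importantstep} to force $u_1=\min(|W_i|,|S_i|)\le 100p_i^{-2}\ln(m_ie/u_2)$, and finally use the hypothesis on $\kappa_i$ to rule out $u_1=|S_i|$. The only cosmetic difference is that the paper extracts the bound on $u_1$ before splitting into cases, whereas you substitute the explicit form of $h$ in each case; the arithmetic and constants match exactly.
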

\begin{proof}
In graph $G_i$, the number $e(W_i,S_i)$ of pairs in $W_i \times S_i$ which are
edges is at most $\kappa_i/4$ times the sum of the degrees of the vertices in
$W_i$. Since, by Lemma \ref{importantstep}, every vertex has degree at most
$2p_i|P_i|$ in $G_i$, we have
$e(W_i,S_i) \leq |W_i|\cdot (\kappa_i/4) \cdot 2p_i|P_i|=p_i|S_i||W_i|/2$.
Hence, by Lemma \ref{importantstep},
$$p_i|S_i||W_i|/2 \leq |e(W_i,S_i)-p_i|W_i||S_i|| \leq 5\sqrt{h},$$
where $h=u_1u_2^2\ln \frac{m_i e}{u_2}$, and $u_1=\min (|W_i|,|S_i|)$ and
$u_2=\max(|W_i|,|S_i|)$. By squaring both sides, substituting
$u_1u_2=|W_i||S_i|$ and simplifying, we have $u_1
\leq 100p_i^{-2}\ln \frac{m_ie}{u_2}$. If $u_1=|S_i|=\kappa_im_i$, then
$$\kappa_im_i=u_1 \leq 100p_i^{-2}\ln \frac{m_ie}{u_2} \leq 100p_i^{-2} \ln
(m_ie),$$ contradicting our assumption. Hence,
$u_1=|W_i|$, and $|W_i| \leq 100p_i^{-2}\ln (\kappa_i^{-1}e)$.
 \end{proof}

The following simple proposition demonstrates the hereditary nature of
supporting sets.

\begin{proposition}\label{firstpropherabc}
Suppose $Y \in P_i$ is such that $Y$ $(\beta,\gamma)$-supports the partition
$\mathcal{Z}$. Then, for each $X \in P_i$ distinct from $Y$ and $d \in
\{1,2\}$, $Y^d_X$ $(\beta/4,1/4)$-supports the partition
$\mathcal{Z}$.
\end{proposition}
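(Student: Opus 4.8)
The plan is to argue directly from the definitions, using nothing about the randomness of $G$ beyond the fact that the partition $Q_{YX}\colon Y=Y^1_X\cup Y^2_X$ is equitable, so that $|Y^d_X|=|Y|/2$, together with the fact that $\gamma=1-\omega$ with $\omega=20m_1^{-1/2}$ is tiny since $m_1\geq 2^{200}$. Write $W=Y^d_X\subseteq Y$. It suffices to show that the set of vertices of $W$ lying in parts $Z_j$ that \emph{fail} to $(\beta/4)$-overlap $W$ has size less than $\tfrac34|W|$, since then more than a $\tfrac14$-fraction of $W$ lies in parts that do $(\beta/4)$-overlap it, which is exactly the conclusion that $W$ $(\beta/4,1/4)$-supports $\mathcal{Z}$.

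The main step is to classify each ``bad'' part $Z_j$ (one with $|W\cap Z_j|<\tfrac{\beta}{4}|Z_j|$) according to its behaviour relative to $Y$. If $Z_j$ does not $\beta$-overlap $Y$, then it is among the parts whose total intersection with $Y$ is controlled by the hypothesis that $Y$ $(\beta,\gamma)$-supports $\mathcal{Z}$: these contribute in total at most $\omega|Y|=2\omega|W|$ to $W$. If instead $Z_j$ does $\beta$-overlap $Y$, i.e. $|Y\cap Z_j|\geq\beta|Z_j|$, then combining this with $|W\cap Z_j|<\tfrac{\beta}{4}|Z_j|$ gives $|W\cap Z_j|<\tfrac14|Y\cap Z_j|$; summing over all such parts and using that the sets $Y\cap Z_j$ are disjoint subsets of $Y$ bounds their total contribution to $W$ by $\tfrac14|Y|=\tfrac12|W|$. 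Adding the two estimates, the bad vertices of $W$ number at most $(\tfrac12+2\omega)|W|<\tfrac34|W|$, as required.

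I do not expect any genuine obstacle: the statement is in effect the assertion that halving a supporting set costs only a bounded factor, and the constants $\beta/4$ and $1/4$ in the conclusion are tuned precisely so that the two contributions above — the factor $2$ lost in passing from $Y$ to its half $W$, and the factor $\tfrac14$ salvaged from the slack between being a $\beta$-overlap of $Y$ and a $\tfrac{\beta}{4}$-overlap of $W$ — combine to leave room to spare. The only point needing a moment's care is bookkeeping in the type-(b) estimate: keeping straight that $\beta$-overlap is tested against $Y$ while $\tfrac{\beta}{4}$-overlap is tested against $W$, so that the inequality $|W\cap Z_j|<\tfrac14|Y\cap Z_j|$ is applied correctly. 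Once that is set up, the estimate closes immediately.
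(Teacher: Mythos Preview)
Your argument is correct and is essentially the same as the paper's: both split the parts $Z_j$ that fail to $(\beta/4)$-overlap $W=Y^d_X$ into those not $\beta$-overlapping $Y$ (contributing at most $(1-\gamma)|Y|$) and those $\beta$-overlapping $Y$ (contributing at most $\tfrac14|Y|$ via the inequality $|W\cap Z_j|<\tfrac14|Y\cap Z_j|$), then use $\gamma\geq 7/8$ to conclude. The only cosmetic difference is that the paper phrases it as a lower bound on the ``good'' contribution rather than an upper bound on the ``bad'' one.
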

\begin{proof}
We will use the fact $\gamma \geq 7/8$. The sum of $|Z_t \cap Y_X^d|$ over all
$Z_t$ which $\beta$-overlap $Y$ but do not $\beta/4$-overlap $Y_X^d$ is at most
$|Y|/4$. Since $Y$ $(\beta,\gamma)$-supports the partition, the sum of $|Z_t
\cap Y^d_X|$ over all $Z_t$ which $\beta/4$-overlaps $Y^d_X$ is at least
$$|Y_X^d|-(1-\gamma)|Y|-|Y|/4 \geq |Y|/8=|Y_X^d|/4.$$
Hence $Y^d_X$ $(\beta/4,1/4)$-supports the partition $\mathcal{Z}$.
\end{proof}

The following lemma shows that if $Z_j$ satisfies certain conditions, then
there are many (at least $\theta^{-1}\eta k$) $Z_{\ell}$ such that
$(Z_j,Z_{\ell})$ is
not $(\epsilon,\delta)$-regular.

\begin{lemma}\label{mainclaim2} Suppose $X \in P_i \setminus W_i$,
$\kappa_i=|S_i|/|P_i| \geq 1/2$, $Z_j$ is shattered with respect to $i$, and
$Z_j$ $(1-\beta)$-overlaps $X$. There are at least $\theta^{-1}\eta k$ sets
$Z_{\ell} \in \mathcal{Z}$ for which $(Z_j,Z_{\ell})$ is not $(\epsilon,\delta)$-regular.
\end{lemma}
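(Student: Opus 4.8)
The plan is to exploit the pseudorandomness of the bisections $X=X^1_Y\cup X^2_Y$, $Y\in N(X)$, built into $G$: for many $Y$ the shattered set $Z_j$ is split roughly in half by this bisection, and then the complete bipartite graph that $G$ places between $X^1_Y$ and $Y^1_X$ witnesses that $(Z_j,Z_\ell)$ is irregular for a large family of parts $Z_\ell$ meeting $Y^1_X$. I will first split $Z_j$, then identify enough ``good'' neighbours $Y$, then produce the irregularity witnesses and count the resulting $Z_\ell$.

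First, index by $[a_i]$ the parts of $P_{i+1}$ lying inside $X$, and define a nonnegative vector $\lambda\in\mathbb{R}^{a_i}$ by $\lambda_t=|X'\cap Z_j|$ when $X'$ is the $t$-th such part with $|X'\cap Z_j|<\beta|Z_j|$, and $\lambda_t=0$ otherwise. Since $Z_j$ is shattered with respect to $i$ and $(1-\beta)$-overlaps $X$, the small parts of $P_{i+1}$ inside $X$ carry at least $(\psi-\beta)|Z_j|\ge\psi|Z_j|/2$ of $Z_j$, so $\|\lambda\|_1\ge\psi|Z_j|/2$; as $\|\lambda\|_\infty<\beta|Z_j|$ we get $\sigma:=\|\lambda\|_2/\|\lambda\|_1\le(\|\lambda\|_\infty/\|\lambda\|_1)^{1/2}<(2\beta/\psi)^{1/2}<2^{-9}$. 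By construction $\{Q_{XY}:Y\in N(X)\}$ is a sequence of $|N(X)|$ partitions of $[a_i]$ satisfying the conclusion of Corollary \ref{firstcor} with $\mu=2^{-9}$, so apply Lemma \ref{quiteuseful} with $\tau=1-2^{-7}$ and $\alpha=2^{-5}$; the hypothesis $(\tfrac12-\mu)(1-\sigma^2)>\tfrac\tau2+2(1-\tau)\alpha(1-\alpha)$ holds since its left side exceeds $(\tfrac12-2^{-9})(1-2^{-18})>0.498$ and its right side is below $0.497$. This gives at least $\tau|N(X)|$ neighbours $Y$ for which the mass of $Z_j$ in small parts of $P_{i+1}$ inside each of $X^1_Y,X^2_Y$ exceeds $\alpha\|\lambda\|_1\ge\alpha\psi|Z_j|/2>\delta|Z_j|$; in particular $|X^1_Y\cap Z_j|,|X^2_Y\cap Z_j|>\delta|Z_j|$ for such $Y$.

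Next, call $Y\in N(X)$ \emph{good} if it is as above, lies in $S_i$, and is not $G^i$-adjacent to $X$. Because $X\notin W_i$ and $\kappa_i\ge1/2$ we have $|N(X)\cap S_i|>\kappa_i|N(X)|/4\ge|N(X)|/8$; the fifth bullet of Lemma \ref{importantstep} says at most $\nu|N(X)|$ neighbours of $X$ are $G^i$-adjacent to it; and $\nu<2^{-8}$ since $1-2^7\nu>\epsilon>0$. Hence the number of good $Y$ is at least $(\tau-\tfrac78-\nu)|N(X)|\ge|N(X)|/16\ge p_im_i/32$, using $|N(X)|\ge p_im_i/2$ from the degree bound in Lemma \ref{importantstep} and $p_i\ge m_i^{-1/10}$. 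Fix a good $Y$. By Proposition \ref{firstpropherabc}, $Y^1_X$ $(\beta/4,1/4)$-supports $\mathcal Z$, so the family $\mathcal S_Y$ of parts $Z_\ell$ with $|Z_\ell\cap Y^1_X|\ge(\beta/4)|Z_\ell|$ satisfies $\sum_{Z_\ell\in\mathcal S_Y}|Z_\ell\cap Y^1_X|\ge|Y^1_X|/4$. Since $(X,Y)$ is an edge of $G_i$ but not of $G^i$, the fourth bullet of Lemma \ref{importantstep} gives $d_G(v,Y^1_X)\le\nu$ for every $v\in X^2_Y$, so $e(X^2_Y\cap Z_j,\,Y^1_X)\le\nu|Y^1_X|\,|X^2_Y\cap Z_j|$; averaging over $\mathcal S_Y$, the parts $Z_\ell\in\mathcal S_Y$ with $d(X^2_Y\cap Z_j,Z_\ell\cap Y^1_X)>8\nu$ carry total $Y^1_X$-mass less than $|Y^1_X|/8$, so the remaining \emph{useful} $Z_\ell\in\mathcal S_Y$ have $\sum|Z_\ell\cap Y^1_X|\ge|Y^1_X|/8$.

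Finally, for a useful $Z_\ell$ set $Z'_\ell=Z_\ell\cap Y^1_X$; then $|Z'_\ell|\ge(\beta/4)|Z_\ell|>\delta|Z_\ell|$, and as $G$ contains every edge between $X^1_Y$ and $Y^1_X$ we have $d(X^1_Y\cap Z_j,Z'_\ell)=1$ while $d(X^2_Y\cap Z_j,Z'_\ell)\le8\nu$. Since $|X^1_Y\cap Z_j|,|X^2_Y\cap Z_j|>\delta|Z_j|$ and $1-8\nu>1-2^7\nu>\epsilon$, the pair $(Z_j,Z_\ell)$ is not $(\epsilon,\delta)$-regular. To count distinct such $Z_\ell$: for each good $Y$ the useful $Z_\ell$ have $|Z_\ell\cap Y^1_X|\le|Z_\ell|\le2|V|/k$ while $|Y^1_X|=|V|/(2m_i)$, so there are at least $k/(32m_i)$ of them; over the $\ge p_im_i/32$ good $Y$ this is at least $p_ik/2^{10}$ useful pairs with multiplicity, and since the sets $Y^1_X$ ($Y\in N(X)$) are pairwise disjoint a fixed $Z_\ell$ lies in $\mathcal S_Y$ for at most $4/\beta$ values of $Y$; hence there are at least $p_ik\beta/2^{12}$ distinct $Z_\ell$ with $(Z_j,Z_\ell)$ irregular, and $p_ik\beta/2^{12}=20p_ik/(2^{12}m_1^{3/2})\ge m_1^{1/2}\eta k=\theta^{-1}\eta k$ by $\beta=20m_1^{-3/2}$ and the hypothesis $p_i>2^{10}\eta m_1^2$. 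The one genuinely delicate step is the averaging in the previous paragraph: restricting a single supporting $Z_\ell$ to $Z_\ell\cap Y^1_X$ is useless when that intersection is tiny compared to $Y^1_X$, since the only available bound is $d(X^2_Y\cap Z_j,Z_\ell\cap Y^1_X)\le\nu|Y^1_X|/|Z_\ell\cap Y^1_X|$; treating all of $\mathcal S_Y$ at once recovers a constant fraction of its mass on parts of genuinely small density. The remainder is bookkeeping the inequalities among $\beta,\psi,\delta,\nu,\sigma,\tau,\alpha$ so that every estimate closes up.
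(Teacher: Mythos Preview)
Your proof is correct and follows essentially the same approach as the paper's: set up the vector $\lambda$ from the small $P_{i+1}$-parts, invoke Lemma~\ref{quiteuseful} to split $Z_j$ across the bisection $X=X^1_Y\cup X^2_Y$ for most $Y\in N(X)$, intersect with $S_i$ and with the non-$G^i$-neighbours, use the fourth bullet of Lemma~\ref{importantstep} together with the support property from Proposition~\ref{firstpropherabc} to exhibit the irregularity, and finish with the $4/\beta$ multiplicity bound. The only differences are cosmetic: you use $(\alpha,\tau)=(2^{-5},1-2^{-7})$ where the paper uses $(1/8,1-2^{-5})$, and you work only with $Y^1_X$ rather than both $Y^d_X$, which costs a harmless factor of two. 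One tiny slip: from $1-2^7\nu>\epsilon>0$ you only get $\nu<2^{-7}$, not $\nu<2^{-8}$, but your inequality $\tau-7/8-\nu\ge 1/16$ still holds with that weaker bound.
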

\begin{proof}
Since $Z_j$ is shattered with respect to $i$ and $Z_j$ $(1-\beta)$-overlaps
$X$, then $|X \cap Z_j| \geq (1-\beta)|Z_j|$, but the sum of $|X' \cap Z_j|$
over all $X' \in P_{i+1}$ with
$|X' \cap Z_j| <\beta|Z_j|$ is at least $\psi|Z_j|$.

Let $Z_j'= X \cap Z_j$, so $|Z_j'| \geq (1-\beta)|Z_j|$. For each $X' \in
P_{i+1}$ with $X' \subset X$ and $|X' \cap Z_j| <\beta|Z_j|$, let
$\lambda_{X'}=|X' \cap Z_j'|/|Z_j|$, so each
$\lambda_{X'}<\beta$, i.e.,
$\beta>||\lambda||_{\infty}$. Also, $||\lambda||_1 \geq \psi-\beta$ follows from the facts that $|X \cap Z_j| \geq (1-\beta)|Z_j|$ and the sum of $|X' \cap Z_j|$ over all $X' \in P_{i+1}$ with  $|X' \cap Z_j| <\beta|Z_j|$ is at least $\psi|Z_j|$. Therefore,
$$\sigma^2=\left(\frac{||\lambda||_2}{||\lambda||_1}\right)^2 \leq
\frac{||\lambda||_{\infty}}{||\lambda||_1} < \frac{\beta}{\psi-\beta}=\frac{1}{2^{20}-1} < 2^{-19}.$$

By Lemma \ref{quiteuseful} with $\alpha=1/8$, $\mu=2\rho^{1/2}=2^{-9}$,
$\sigma<2^{-9}$, and
$\tau=1-2^{-5}$, we have that the number of $Y \in N(X)$
for which \begin{equation}\label{12abcd} |Z_j \cap X^1_Y|,|Z_j \cap X^2_Y| \geq
\alpha||\lambda||_1|Z_j| \geq
\alpha(\psi-\beta)|Z_j|\end{equation} is at least $(1-2^{-5})|N(X)|$, where
$N(X)$ is
the neighborhood of $X$ in graph $G_i$. 

By Lemma \ref{importantstep}, the
number of $Y \in N(X)$ which are also adjacent to $X$ in $G^i$ is at most $\nu|N(X)|$.
Also, since $X \not \in W_i$, we have $|N(X) \cap S_i| \geq \kappa_i|N(X)|/4$.
Therefore, the number of $Y \in S_i$ with $(X,Y)$ an edge of $G_i$ but not an
edge
of $G^i$, and (\ref{12abcd}) is satisfied is at least $$(1-2^{-5}) |N(X)|-|N(X)
\setminus
S_i|-\nu|N(X)| > (\kappa_i/4-2^{-5}-\nu)|N(X)| \geq |N(X)|/16.$$

Fix such a $Y$, and let $U_d=Z_j \cap X^d_Y$ for $d \in \{1,2\}$, so
$|U_1|,|U_2| \geq \alpha(\psi-\beta)|Z_j|$. Since $Y \in S_i$, we have
$Y$ $(\beta,\gamma)$-supports $\mathcal{Z}$. By Proposition
\ref{firstpropherabc}, $Y^d_X$ $(\beta/4,1/4)$-supports
$\mathcal{Z}$. By Lemma \ref{importantstep}, for each vertex $v \in X^{3-d}_Y$,
we have $d(v,Y^{d}_X) \leq \nu$. In particular, $d(U_{3-d},Y^d_X) \leq \nu$.
Let $R^d$ be the union of all $Z_{\ell} \cap Y^d_X$ such that
$Z_{\ell}$
$\beta/4$-overlaps $Y^d_X$, so $R^d$ is a subset of $Y^d_X$ of cardinality at
least $|Y^d_X|/4$. Hence, $d(U_{3-d},R^d) \leq 4\nu$.

For $Z_{\ell}$ which $\beta/4$-overlaps $Y^d_X$, let $Z_{\ell}'=R^d \cap
Z_{\ell}$, so $|Z_{\ell}'|
\geq \beta|Z_{\ell}|/4$. We next show that there are many $Z_{\ell}'$ which
satisfy
\begin{equation}\label{thirdest2}
d(U_{3-d},Z_{\ell}') \leq 8 \nu.
\end{equation}

Indeed, the union of the $Z_{\ell}'$ which do not satisfy (\ref{thirdest2}) has
cardinality at most $\frac{1}{2}|R^d|$, so at least
$1/2$ of $R^d$ consists of the union of $Z_{\ell}'$ which satisfy
(\ref{thirdest2}).
The number of ${\ell}$ which satisfy
(\ref{thirdest2}) is at least
\begin{eqnarray*} \frac{1}{2}|R^d|/|Z_{\ell}| & \geq &
\frac{1}{2}(|Y^d_X|/4)/|Z_{\ell}| \\
& = & \frac{1}{16}|Y|/|Z_{\ell}| = \frac{1}{16}k/m_i,
\end{eqnarray*}
where in the last equality we used $|Y| = |V|/m_i$ and $|Z_{\ell}|=|V|/k$.

For each $Z_{\ell}'$ which satisfies (\ref{thirdest2}), we have $d(U_d,Z_{\ell}')=1$
since $(X,Y)$ is an edge of $G_i$ and, therefore, the density of edges between
$X_Y^d$ and $Y_X^d$ is $1$.
Hence
$$d(U_d,Z_{\ell}')-d(U_{3-d},Z_{\ell}') \geq 1-8\nu \geq \epsilon.$$
Since also $|U_d|,|U_{3-d}| \geq \alpha(\psi-\beta)|Z_j| \geq \delta |Z_j|$,
and $|Z_{\ell}'| \geq \frac{\beta}{4} |Z_{\ell}| \geq \delta |Z_{\ell}|$,
we have in this case $(Z_j,Z_{\ell})$ is not $(\epsilon,\delta)$-regular.

Since the number of such $Y$ is at least $|N(X)|/16$, we have that the number
of pairs $(Z_{\ell},Y^d_X)$ such that $Z_{\ell}$ $\beta/4$-overlaps $Y^d_X$ and
$(Z_j,Z_{\ell})$ is not $(\epsilon,\delta)$-regular is at least
$\left(\frac{1}{16}k/m_i\right)\left(|N(X)|/16\right) \geq
2^{-9}p_ik,$ where we used $|N(X)| \geq \frac{1}{2}p_im_i$ from Lemma
\ref{importantstep}. As $Z_{\ell}$ $\beta/4$-overlaps $Y^d_X$ in each such
pair, a
given $Z_{\ell}$ is in at most $4\beta^{-1}$ such pairs. Hence, the number of
$Z_{\ell}$
for which $(Z_j,Z_{\ell})$ is not $(\epsilon,\delta)$-regular is at least
$2^{-11}\beta p_ik \geq \theta^{-1}\eta k.$
\end{proof}

Like Lemma \ref{mainclaim2}, the next lemma shows that if $Z_j$ satisfies
certain conditions, then there are at least $\theta^{-1}\eta k$ $Z_{\ell}$ such
that
$(Z_j,Z_{\ell})$ is not $(\epsilon,\delta)$-regular.

\begin{lemma}\label{mainclaim1} Suppose $X \in P_i \setminus W_i$, $\kappa_i \geq 1/2$, $Z_j$ is ripe with respect to $i$, and $Z_j$
$\beta$-overlaps $X$. Then there are at least $\theta^{-1}\eta k$ sets
$Z_{\ell} \in \mathcal{Z}$ for
which $(Z_j,Z_{\ell})$ is not $(\epsilon,\delta)$-regular.
\end{lemma}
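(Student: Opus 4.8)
The plan is to follow the same template as the proof of Lemma \ref{mainclaim2}, but the mechanism producing irregular pairs is different and in fact simpler. Since $Z_j$ is ripe with respect to $i$ it is not $(1-\beta)$-contained in $P_i$, so in particular $|Z_j\cap X|<(1-\beta)|Z_j|$; together with the hypothesis that $Z_j$ $\beta$-overlaps $X$ this gives $|Z_j\cap X|\ge\beta|Z_j|$ and $|Z_j^{\mathrm{out}}|\ge\beta|Z_j|$, where $Z_j^{\mathrm{out}}:=Z_j\setminus X$. Using $\delta<\beta/4$, both $Z_j\cap X$ and $Z_j^{\mathrm{out}}$ have size at least $4\delta|Z_j|$. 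These are the two subsets of $Z_j$ that will have very different densities to a common third set: the part of $Z_j$ inside $X$ sees complete bipartite neighbourhoods inside $U(X)$, while the part of $Z_j$ outside $X$ sees essentially nothing inside $U(X)$ by Lemma \ref{importantstep}.

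First I would isolate a large family of good neighbours of $X$. By Lemma \ref{importantstep}, $d_G(v,U(X))\le\nu$ for every $v\notin X$, where $U(X)=\bigcup_{Y\in N(X)}Y$; averaging over $v\in Z_j^{\mathrm{out}}$ and then over the equal-sized parts $Y\in N(X)$ yields $|N(X)|^{-1}\sum_{Y\in N(X)}d_G(Z_j^{\mathrm{out}},Y)\le\nu$, so by Markov at most $|N(X)|/12$ of the $Y\in N(X)$ have $d_G(Z_j^{\mathrm{out}},Y)>12\nu$. On the other hand, since $X\notin W_i$ and $\kappa_i\ge 1/2$, at least $\kappa_i|N(X)|/4\ge|N(X)|/8$ of the $Y\in N(X)$ lie in $S_i$. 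Hence there is a set $\mathcal{Y}\subseteq N(X)$ with $|\mathcal{Y}|\ge|N(X)|/8-|N(X)|/12=|N(X)|/24\ge p_im_i/48$ (using $|N(X)|\ge\tfrac12 p_im_i$ from Lemma \ref{importantstep}) such that every $Y\in\mathcal{Y}$ both $(\beta,\gamma)$-supports $\mathcal{Z}$ and satisfies $d_G(Z_j^{\mathrm{out}},Y)\le12\nu$.

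Now fix $Y\in\mathcal{Y}$. Since $X=X^1_Y\cup X^2_Y$, for some $d=d(Y)\in\{1,2\}$ we have $|Z_j\cap X^d_Y|\ge\tfrac12|Z_j\cap X|\ge 2\delta|Z_j|$; put $U:=Z_j\cap X^d_Y$. Because $(X,Y)$ is an edge of $G_i$, by construction every pair of $X^d_Y\times Y^d_X$ is an edge of $G$, so $d_G(U,W')=1$ for every nonempty $W'\subseteq Y^d_X$. Since $Y\in S_i$, Proposition \ref{firstpropherabc} gives that $Y^d_X$ $(\beta/4,1/4)$-supports $\mathcal{Z}$; let $R$ be the union of the sets $Z_\ell\cap Y^d_X$ over those $Z_\ell$ that $\beta/4$-overlap $Y^d_X$, so $|R|\ge|Y^d_X|/4=|Y|/8$. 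From $d_G(Z_j^{\mathrm{out}},Y)\le12\nu$ and $|R|\ge|Y|/8$ we get $d_G(Z_j^{\mathrm{out}},R)\le 8\cdot12\nu=96\nu$, and a second application of Markov, now to the pieces $Z_\ell':=Z_\ell\cap Y^d_X$ partitioning $R$, shows that the pieces with $d_G(Z_j^{\mathrm{out}},Z_\ell')\le128\nu$ have total size at least $\tfrac14|R|\ge|Y|/32$ (discarding those of density exceeding $\tfrac43\cdot96\nu=128\nu$ removes at most $\tfrac34|R|$). For each such $Z_\ell$ we have $|U|\ge\delta|Z_j|$, $|Z_j^{\mathrm{out}}|\ge\delta|Z_j|$, $|Z_\ell'|\ge\tfrac\beta4|Z_\ell|\ge\delta|Z_\ell|$, and $d_G(U,Z_\ell')-d_G(Z_j^{\mathrm{out}},Z_\ell')\ge 1-128\nu=1-2^7\nu>\epsilon$, so $(Z_j,Z_\ell)$ is not $(\epsilon,\delta)$-regular.

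It remains to count. Each $Y\in\mathcal{Y}$ contributes at least $(|Y|/32)/|Z_\ell|=k/(32m_i)$ indices $\ell$ with $Z_\ell$ $\beta/4$-overlapping $Y^{d(Y)}_X$ and $(Z_j,Z_\ell)$ irregular, so summing over $\mathcal{Y}$ produces at least $\tfrac{p_im_i}{48}\cdot\tfrac{k}{32m_i}=\tfrac{p_ik}{1536}$ pairs $(Z_\ell,(Y,d(Y)))$. Since for fixed $X$ the sets $Y^d_X$ ($Y\in N(X)$, $d\in\{1,2\}$) are pairwise disjoint, a given $Z_\ell$ $\beta/4$-overlaps at most $4\beta^{-1}$ of them, so the number of distinct $Z_\ell$ with $(Z_j,Z_\ell)$ not $(\epsilon,\delta)$-regular is at least $\tfrac{\beta p_ik}{6144}\ge\theta^{-1}\eta k$, the last step using $\beta=20m_1^{-3/2}$, $p_i>2^{10}\eta m_1^2$ and $\theta=m_1^{-1/2}$. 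I expect the only genuinely delicate point to be the constant-chasing in the two Markov steps: unlike Lemma \ref{mainclaim2}, which had a per-$Y^d_X$ density bound available directly from Lemma \ref{importantstep}, here only the averaged bound $d_G(Z_j^{\mathrm{out}},U(X))\le\nu$ is at hand, so one extra averaging step is needed and the constants must be budgeted so that $\mathcal{Y}$ remains a fixed positive fraction of $N(X)$ while the density gap stays above $\epsilon$. Notably, no analogue of Lemma \ref{quiteuseful} is required here, because the mass of $Z_j\cap X$ automatically falls, up to a factor $2$, into one of the two halves $X^1_Y,X^2_Y$.
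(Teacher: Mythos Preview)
Your proof is correct and follows essentially the same approach as the paper: both use $Z_j\setminus X$ and $Z_j\cap X^d_Y$ as the two witness subsets of $Z_j$, both invoke Proposition~\ref{firstpropherabc} to find well-supported halves $Y^d_X$, both use a Markov step to locate many $Z_\ell'$ with low density to $Z_j\setminus X$, and both finish with the same $4\beta^{-1}$ overcounting. The one structural difference is that you insert an extra Markov step on $Y\in N(X)$ to obtain a per-$Y$ density bound before passing to $R_Y$, whereas the paper works directly with the global set $R=\bigcup_{Y\in N(X)\cap S_i}R_Y$: since $|R|\ge 2^{-3}|\bigcup_{Y\in N(X)\cap S_i}Y|\ge 2^{-6}|U(X)|$, the per-vertex bound $d(v,U(X))\le\nu$ immediately gives $d(v,R)\le 2^6\nu$ and a single Markov step on the pieces of $R$ suffices; your extra step is harmless but not needed.
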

\begin{proof}
Since $Z_j$ is ripe with respect to $i$, $|X \cap Z_j| < (1 - \beta) |Z_j|$.
Therefore, letting $U' = Z_j \setminus X$, we have $|U'| \geq \beta|Z_j|$.

By Lemma \ref{importantstep}, for each vertex $v$ of $G$ which is not in $X$,
we have $d(v,U(X)) \leq \nu$. Since $X \not \in W_i$, we have 

\begin{equation}\label{a1b2}|N(X) \cap S_i|
\geq \kappa_i |N(X)|/4 \geq |N(X)|/8.
\end{equation}
So \begin{equation}\label{firstest}
d(v,\bigcup_{Y \in N(X) \cap S_i}Y) \leq 8\nu.
\end{equation}

Fix for this paragraph $Y \in N(X) \cap S_i$.
Since $Z_j$ $\beta$-overlaps $X$, there is $d=d(j,Y) \in \{1,2\}$ such that
$Z_j$ $\beta/2$-overlaps $X^d_Y$. Let $U_Y=Z_j \cap X^d_Y$, so $|U_Y| \geq
\frac{\beta}{2}|Z_j|$ and $d(U_Y,Y^d_X)=1$. As $Y \in S_i$, we have $Y$
$(\beta,\gamma)$-supports $\mathcal{Z}$. By Proposition \ref{firstpropherabc},
 $Y^d_X$ $(\beta/4,1/4)$-supports $\mathcal{Z}$.

For $Y \in N(X) \cap S_i$, let $R_Y$ denote the set of
vertices $y$ which are in $Y^d_X$ with $d=d(j,Y)$, and $y$ is also in a
$Z_{\ell}$ which
$\beta/4$-overlaps $Y^d_X$, so \begin{equation}\label{d4z}|R_Y| \geq \frac{1}{4}|Y_X^d|=\frac{1}{8}|Y|=\frac{|V|}{8m_i}.\end{equation} Let
$R=\bigcup_{Y \in N(X) \cap S_i} R_Y$. We have 
\begin{equation}\label{y25}|R| \geq |N(X) \cap S_i|\frac{|V|}{8m_i} \geq 2^{-6}|N(X)|\frac{|V|}{m_i} \geq 2^{-6}\frac{p_im_i}{2}\cdot \frac{|V|}{m_i}=2^{-7}p_i|V|,\end{equation}
where we used (\ref{d4z}), (\ref{a1b2}), and $|N(X)| \geq p_im_i/2$. 

By (\ref{firstest}) and (\ref{d4z}), we have for $v \not \in X$,
\begin{equation}\label{secondest}
d(v,R)\leq  2^6\nu. \end{equation}

By (\ref{secondest}), we have $d(U',R) \leq 2^6\nu$.
For $Z_{\ell}$ which $\beta/4$-overlaps $Y^d_X$ for some $Y \in N(X) \cap S_i$
and $d=d(j,Y)$, let $Z_{\ell}^Y=Z_{\ell} \cap Y^d_X$, so $|Z_{\ell}^Y|
\geq (\beta/4)|Z_{\ell}|$. By definition, for each $Y \in N(X) \cap S_i$, $R_Y$ is the
union of the sets $Z_{\ell}^Y$. We next show that there are many $Z_{\ell}^Y$ which satisfy
\begin{equation}\label{thirdest}
d(U',Z_{\ell}^Y) \leq 2^7\nu.
\end{equation}

Indeed, the union of the $Z_{\ell}^Y$ which do not satisfy (\ref{thirdest}) has
cardinality at most $\frac{1}{2}|R|$, so at least
$1/2$ of $R$ consists of the union of $Z_{\ell}^Y$ which satisfy
(\ref{thirdest}).
The number of pairs $(\ell,Y)$  which satisfy (\ref{thirdest}) is at least
\begin{eqnarray*} \frac{1}{2}|R|/|Z_\ell| & \geq & \frac{1}{2}2^{-7}p_i|V|/|Z_{\ell}|=2^{-8}p_ik. 
\end{eqnarray*}
where we used (\ref{y25}) and $|Z_{\ell}|=|V|/k$. Since for each such $\ell$, we have $Z_{\ell}$
$\beta/4$-overlaps
$Y_X^d$, each such $\ell$ is in at most $4\beta^{-1}$ of the pairs $(\ell,Y)$
we just
counted. Hence, the number of $\ell$ for which there is $Y$ such that
(\ref{thirdest}) holds is at least $2^{-10}\beta p_i k$.

By (\ref{thirdest}) and $d(U_Y,Z_{\ell}^Y)=1$,
we have $$d(U_Y,Z_{\ell}^Y)-d(U',Z_{\ell}^Y) \geq 1-2^7\nu>\epsilon,$$
and as $|U_Y|,|U'| \geq \frac{\beta}{2}|Z_j| \geq \delta |Z_j|$ and
$|Z_{\ell}^Y| \geq \frac{\beta}{4} |Z_{\ell}| \geq \delta |Z_{\ell}|$, we have
that $(Z_j,Z_{\ell})$ is not $(\epsilon,\delta)$-regular for at least
$2^{-10}\beta p_i k \geq \theta^{-1}\eta k$ values of $\ell$.
\end{proof}

The following theorem completes the proof.

\begin{theorem}\label{mainagain}
The number of ordered pairs $(Z_j,Z_{\ell})$ which are not
$(\epsilon,\delta)$-regular is at least  $\eta k^2$.
\end{theorem}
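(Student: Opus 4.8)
The plan is to derive a contradiction from the standing hypothesis that $\mathcal{Z}=\{Z_1,\dots,Z_k\}$ is an $(\epsilon,\delta,\eta)$-regular equitable partition of $G$ that is not a $(\beta,\upsilon)$-refinement of $P_a$. By the definition of refinement, more than $\upsilon k = 5\theta k$ of the parts $Z_j$ fail to be $(1-\beta)$-contained in $P_a$; call these the bad parts. The goal is to produce at least $\theta k$ bad parts each meeting the hypotheses of Lemma~\ref{mainclaim2} or Lemma~\ref{mainclaim1} for some level $i\le a$. Given that, each such part supplies at least $\theta^{-1}\eta k$ indices $\ell$ with $(Z_j,Z_\ell)$ not $(\epsilon,\delta)$-regular; since these ordered pairs have pairwise distinct first coordinates, we obtain at least $\theta k\cdot\theta^{-1}\eta k=\eta k^2$ irregular ordered pairs (in fact, because $p_i>2^{10}\eta m_1^2$, the per-part count is a constant factor larger, so the total strictly exceeds $\eta k^2$), contradicting $(\epsilon,\delta,\eta)$-regularity.

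First I would establish that the supporting parts are plentiful at every level the argument reaches. Lemma~\ref{claimfirstpart} gives $\kappa_1=1$, and I would show inductively that $\kappa_i\ge 1-O(m_1^{-1/2})$ for all $i\le a$ with $m_i\le k/\beta$ --- these being the only levels at which a $Z_j$ can be ripe or shattered. The two inputs are Lemma~\ref{Wibound}, which bounds $|W_i|$ by $O(p_i^{-2})=O(m_i^{1/5})$ (negligible against $m_i$ since $p_i\ge m_i^{-1/10}$), and the hereditary behavior of supporting sets from Proposition~\ref{firstpropherabc}, which controls how the coverage of $V$ by $\mathcal{Z}$ descends from $P_i$ to $P_{i+1}$. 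The resulting loss in $\kappa_i$ per level is polynomially small in $m_i^{-1}$, and since $m_{i+1}=m_i\,2^{\lfloor\rho m_i^{9/10}\rfloor}$ grows super-exponentially, these losses are dominated by the first one and sum to well under $m_1^{-1/2}$.

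Next, for each bad $Z_j$ I would locate the level where it first goes wrong. Since $Z_j$ is $1/m_1$-contained --- hence $\beta$-contained, as $\beta<1/m_1$ --- in $P_1$, and ``$\beta$-contained in $P_{i+1}$'' forces ``$\beta$-contained in $P_i$'', there is a largest $r=r(j)\le a$ with $Z_j$ $\beta$-contained in $P_r$; moreover $m_r\le k/\beta$, since the part $X$ witnessing containment has $|V|/m_r=|X|\ge|Z_j\cap X|\ge\beta|V|/k$. Descending the levels, the first time $Z_j$ is $\beta$- but not $(1-\beta)$-contained it is ripe with respect to that level, and the first time it is $(1-\beta)$-contained but scatters into small pieces of the next partition it is shattered; as $Z_j$ is not $(1-\beta)$-contained in $P_a$, the descent halts at some level $\le a$, producing a part $X(j)$ that $Z_j$ $\beta$-overlaps (ripe case) or $(1-\beta)$-overlaps (shattered case). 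I would then discard the bad parts with a defective anchor, i.e.\ $X(j)\in W_{r(j)}$ or $X(j)\notin S_{r(j)}$. In the shattered case the anchor is unique, so the number discarded at level $i$ is at most $O\big((|W_i|+(1-\kappa_i)m_i)\,k/m_i\big)$, which, using $|W_i|=O(m_i^{1/5})$, $\kappa_i\ge1-O(m_1^{-1/2})$, and the rapid growth of the $m_i$, sums over $i$ to a small multiple of $\theta k$. The ripe case needs more care, since a ripe part may $\beta$-overlap several parts of $P_i$; there one uses that $\kappa_i$ is close to $1$ and $|W_i|$ is tiny to conclude that only a negligible fraction of bad ripe parts $\beta$-overlap no part of $S_i\setminus W_i$ at all. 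Discarding at most $4\theta k$ of the more than $5\theta k$ bad parts leaves at least $\theta k$ good ones, which is exactly what is needed.

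The hardest part will be the second and third steps together: keeping $\kappa_i$ close to $1$ uniformly over all relevant levels, and running the ripe/shattered dichotomy with the \emph{fixed} parameter $\beta$ so that no constant is lost on descending a level --- then checking that all the error terms (the mass of $W_i$, the deficiency $1-\kappa_i$, and the slack $\upsilon k-\theta k$) fit together with the prescribed values of $\theta,\zeta,\omega,\beta,\gamma,\psi$. What makes this feasible is the super-exponential growth of the $m_i$, which renders every sum over levels convergent and dominated by its first term.
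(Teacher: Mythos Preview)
Your overall architecture is right and matches the paper: exhibit at least $\theta k$ parts $Z_j$ satisfying the hypotheses of Lemma~\ref{mainclaim2} or Lemma~\ref{mainclaim1}, and multiply by the $\theta^{-1}\eta k$ irregular pairs each such $Z_j$ generates. The descent dichotomy (ripe versus shattered) is also the right idea.

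The gap is your treatment of $\kappa_i$. You assert an inductive bound $\kappa_i\ge 1-O(m_1^{-1/2})$ for all relevant levels, but neither of the tools you cite delivers this. Lemma~\ref{Wibound} bounds $|W_i|$ \emph{given} a lower bound on $\kappa_i$; it does not produce one. Proposition~\ref{firstpropherabc} concerns passage from $Y\in P_i$ to the half $Y^d_X$ (still at level $i$), not from $P_i$ to $P_{i+1}$; it is used inside the proofs of Lemmas~\ref{mainclaim1} and~\ref{mainclaim2}, not here. There is no direct inheritance of the supporting property across levels: a part $X\in S_i$ can have all of its children in $P_{i+1}$ fail to $(\beta,\gamma)$-support $\mathcal{Z}$. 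Concretely, if many $Z_j$ are shattered at level~$1$, then $\kappa_2$ can be small; your ``polynomially small loss per level'' is simply false without an extra hypothesis.

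The paper resolves this with a device you are missing: define $i_0$ to be the \emph{least} level at which $\mathcal{Z}$ fails to be a $(\beta,\upsilon)$-refinement of $P_{i_0}$. For $i<i_0$, minimality says $\mathcal{Z}$ \emph{is} a $(\beta,\upsilon)$-refinement of $P_i$, so at most a $(\beta+\upsilon)$-fraction of vertices lie in small pieces at level $i$, and $\kappa_i\ge 1-(\beta+\upsilon)/\omega\ge 1/2$ follows directly from the definitions. At level $i_0$ the paper runs a \emph{conditional} argument: either at least $\theta k$ of the $Z_j$ are already ripe/shattered at some $i<i_0$ with anchor in $P_i\setminus W_i$ (the class $\mathcal{Z}^1$), in which case Lemmas~\ref{mainclaim1}--\ref{mainclaim2} finish the proof; or $|\mathcal{Z}^1|<\theta k$, and then one can bound the small-piece mass at level $i_0$ and deduce $\kappa_{i_0}\ge 1/2$, after which the ripe parts at level $i_0$ (class $\mathcal{Z}^5$) supply the needed $\theta k$ parts. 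Your plan implicitly needs exactly this case split; phrasing it as an unconditional induction hides the real content and, as written, does not go through.
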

\begin{proof}
By assumption, $\mathcal{Z}$ is not a $(\beta,\upsilon)$-refinement of $P_a$.
Hence, the number of parts $Z_j$ of partition $\mathcal{Z}$ which are not
$(1-\beta)$-contained in $P_a$ is at least
$\upsilon k$. Let $i_0$ be the minimum positive integer for which $P_{i_0}$ is
not a $(\beta,\upsilon)$-refinement of $\mathcal{Z}$. As, by assumption, $P_a$
is not a $(\beta,\upsilon)$-refinement of $\mathcal{Z}$, we have $1 \leq i_0
\leq a$.

\begin{claim}\label{claimforthm1}
We have $\kappa_1=1$ and $\kappa_{i} \geq 1/2$ for $i<i_0$.
\end{claim}
As $\beta=\zeta/m_1$, by Lemma \ref{claimfirstpart}, each of the $m_1$ parts of
partition $P_1$ $(\beta,1-\zeta)$-supports $\mathcal{Z}_j$. As $\zeta=\omega$
and $\gamma=1-\omega$, it follows that   $S_1=P_1$ and
$\kappa_1=|S_1|/|P_1|=1$. From the definition of $i_0$, for each $i<i_0$,
$P_{i_0}$ is a $(\beta,\upsilon)$-refinement of $\mathcal{Z}$. Fix for this
paragraph such an $i<i_0$. Hence at most a $(\beta+\upsilon)$-fraction of the
vertices are in parts $Z_j \cap X$ with $X \in P_i$ and $Z_j \in \mathcal{Z}$
and $|Z_j \cap X|<(1-\beta)|Z_j|$. In particular, as $1-\beta>\beta$ and
$\gamma=1-\omega$, the fraction of $X \in P_i$ which do not
$(\beta,\gamma)$-support $\mathcal{Z}$ is at most
$\frac{\beta+\upsilon}{\omega}$. Hence $\kappa_i \geq
1-\frac{\beta+\upsilon}{\omega} \geq 1/2$, which completes the proof of Claim
\ref{claimforthm1}.

Consider the partition $\mathcal{Z}=\mathcal{Z}^1 \cup \mathcal{Z}^2 \cup
\mathcal{Z}^3 \cup \mathcal{Z}^4 \cup \mathcal{Z}^5 \cup \mathcal{Z}^6$, where
$Z_j \in \mathcal{Z}^h$ if $h$ is minimum such that $Z_j$ satisfies property
$h$ below.
\begin{enumerate}
\item There is $i<i_0$ and $X \in P_i \setminus W_i$ such that $Z_j$
is shattered with respect to $i$ and $(1-\beta)$-overlaps $X$ or if $Z_j$ is
ripe with respect to $i$ and $\beta$-overlaps $X$,
\item For every $X \in P_1$ such that $Z_j$ $\beta$-overlaps $X$, $X \in W_1$.
\item There is $i$, $1 < i \leq i_0$, and $X \in W_i$ such that $Z_j$
$\beta$-overlaps $X$,
\item $i_0>1$ and $Z_j$ is ripe with respect to $i_0$, and there is $X \in
W_{i_0}$ such that $Z_j$ $\beta$-overlaps $X$.
\item $Z_j$ is ripe with respect to $i_0$, and there is $X \in P_{i_0}
\setminus W_{i_0}$ such that $Z_j$ $\beta$-overlaps $X$.
\item $Z_j$ is $(1-\beta)$-contained in $P_{i_0}$.
\end{enumerate}

It is not immediately obvious that the above six subfamilies of $\mathcal{Z}$
form a partition of $\mathcal{Z}$, so we first show that this is indeed the
case.

\begin{claim}\label{claimforthm2}
The above six subfamilies form a partition of $\mathcal{Z}$.
\end{claim}
As $Z_j  \in \mathcal{Z}^h$ if and only $h$ is the minimum such that $Z_j$
satisfies property $h$, the subfamilies $\mathcal{Z}^h$, $1 \leq h \leq 6$, are
pairwise disjoint. We thus need to show that each $Z_j$ is in at least one
$\mathcal{Z}^h$. Suppose for contradiction that $Z_j$ is in none of the
$\mathcal{Z}^h$. By property 6, $Z_j$ is not $(1-\beta)$-contained in
$P_{i_0}$. If $Z_j$ is $\beta$-contained in $P_{i_0}$, then $Z_j$ is ripe with
respect to $i_0$, and there is $X \in P_{i_0}$  such that $Z_j$
$\beta$-overlaps $X$. Either every such $X \in W_{i_0}$ or there is such an $X
\not \in W_{i_0}$, and by properties 2, 4 and 5, we must have in this case
$Z_j$ is in a $\mathcal{Z}^h$ for some $h \leq 5$. So $Z_j$ is not
$\beta$-contained in $P_{i_0}$, and noting that every $Z_j$ is
$\beta$-contained in $P_1$, we must have $Z_j$ is ripe or shattered with
respect to at least one $i$ with $1 \leq i<i_0$. In particular, there is
$i<i_0$ and $X \in P_i$ such that $Z_j$ is shattered with respect to $i$ and
$(1-\beta)$-overlaps $X$ or $Z_j$ is ripe with respect to $i$ and
$\beta$-overlaps $X$. Since $Z_j \not \in \mathcal{Z}^1$, for every such
$i<i_0$ and $X \in P_i$, we must have $X \in W_i$. But then $Z_j \in
\mathcal{Z}^2$ or $\mathcal{Z}^3$. Thus $Z_j$ is in at least one of the six
subfamilies, completing the claim that that these subfamilies indeed form a
partition of $\mathcal{Z}$.

As the number of parts $Z_j$ of partition $\mathcal{Z}$ which are not
$(1-\beta)$-contained in $P_{i_0}$ is at least $\upsilon k$, we have
\begin{equation}\label{up6}
| \mathcal{Z} \setminus \mathcal{Z}^6| \geq \upsilon k.
\end{equation}

Let $w_i=|W_i|/|P_i|$. By Claim \ref{claimforthm1}, $\kappa_1=1$ and $\kappa_i
\geq 1/2$ for $i<i_0$. Hence, from
Lemma \ref{Wibound}, we have $w_1 \leq 100p_1^{-2}m_1^{-1}\ln(2e) \leq
m_1^{-1/2}$ and similarly
$w:=\sum_{1<i<i_0} w_i \leq m_2^{-1/2}$. Here we used $p_i \geq m_i^{-1/10}$,
$m_1 \geq 2^{200}$, and $m_i \geq 2^{m_{i-1}^{1/2}}$.

We next bound the size of $\mathcal{Z}^2$. If $Z_j \in \mathcal{Z}^2$, since
$Z_j$ does not $\beta$-overlap any $X \in P_1 \setminus W_1$, and
$|P_1|=m_1$, then at least a $(1-\beta m_1)$-fraction of $Z_j$ is contained in
sets $X \in W_i$. Hence, the fraction of $Z_j \in \mathcal{Z}$ which
satisfy $Z_j \in \mathcal{Z}^2$ is at most $(1-\beta m_1)^{-1}w_1 \leq
2m_1^{-1/2}$,
i.e., $|\mathcal{Z}^2| \leq (2m_1^{-1/2})k$.

Similarly, the fraction of $Z_j \in \mathcal{Z}$ such that there is $1<i < i_0$
and $X \in
W_i$ for which $Z_j$ $\beta$-overlaps $X$ is at most  $\beta^{-1}m_2^{-1/2}$.
Hence, $|\mathcal{Z}^3| \leq \beta^{-1}m_2^{-1/2}k$.

By Lemmas \ref{mainclaim2} and \ref{mainclaim1}, each $Z_j \in \mathcal{Z}^1$
is in at least $\theta^{-1}\eta k$ pairs $(Z_j,Z_{\ell})$ which are not
$(\epsilon,\delta)$-regular. We are thus done if $|\mathcal{Z}^1| \geq \theta
k$. So we may suppose $|\mathcal{Z}^1| < \theta k$.

We next give a lower bound on $\kappa_{i_0}$.

\begin{claim}\label{claimforthm3}
We have $\kappa_{i_0} \geq 1/2$.
\end{claim}
Note that if $i_0=1$, by Claim \ref{claimforthm1}, $\kappa_{1}=1$. So we may
suppose that $i_0>1$. In order to give a
lower bound on $\kappa_{i_0}$, we next give an upper bound on the union of all
sets $Z_j \cap X$ with $|Z_j \cap X| <\beta |Z_j|$ and $X \in P_{i_0}$. If
$Z_j$ is not $(1-\beta)$-contained in $P_{i_0}$, then it must be shattered or
ripe with respect to some $i$ with $i<i_0$, or must have at most $\psi|Z_j|$
vertices in parts $X \cap Z_j$ with $X \in P_{i_0}$ and $|X \cap Z_j|<\beta |Z_j|$.
Each $Z_j$ which is shattered or ripe with respect to some $i$ with $i<i_0$ is
in $\mathcal{Z}^1$, $\mathcal{Z}^2$, or $\mathcal{Z}^3$, and hence the number of such
$Z_j$ is at most
\begin{equation}\label{up123}
| \mathcal{Z}^1 \cup \mathcal{Z}^2 \cup \mathcal{Z}^3| \leq \theta k+
 (2m_1^{-1/2})k + \beta^{-1}m_2^{-1/2}k.
\end{equation}
Every set $Z_j$ which $(1-\beta)$-overlaps $P_{i_0}$ has at most a
$\beta$-fraction of it contained in sets $X \cap Z_j$ with $|X \cap
Z_j|<\beta|Z_j|$ and $X \in P_{i_0}$. In total, we get that the fraction of vertices
which belong to one of the sets $X \cap Z_j$ with $|X \cap Z_j|<\beta |Z_j|$
and $X \in P_{i_0}$ is at most $$\theta  + (2m_1^{-1/2}) +
\beta^{-1}m_2^{-1/2}+\beta+\psi.$$
The fraction of sets in $P_{i_0}$ which do not $(\beta,\gamma)$-support
$\mathcal{Z}$ is therefore $$1-\kappa_{i_0} \leq \omega^{-1}\left(\theta  +
(2m_1^{-1/2}) + \beta^{-1}m_2^{-1/2}+\beta+\psi\right) \leq 1/2.$$ Hence,
$\kappa_{i_0} \geq 1/2$, which completes Claim \ref{claimforthm3}.

Noting that $\kappa_{i_0} \geq 1/2$, the same argument that bounded
$|\mathcal{Z}^3|$ also gives that
\begin{equation}\label{up4}|\mathcal{Z}^4| \leq \beta^{-1} m_{2}^{-1/2}k.
\end{equation} From the bounds (\ref{up6}), (\ref{up123}), (\ref{up4}), we have
$$|\mathcal{Z}^5| \geq |\mathcal{Z}\setminus \mathcal{Z}^6|-|\mathcal{Z}^1 \cup
\mathcal{Z}^2 \cup \mathcal{Z}^3|-|\mathcal{Z}^4|\geq \upsilon k - \left(\theta k+
2m_1^{-1/2}k + \beta^{-1}m_2^{-1/2}k\right)-\beta^{-1} m_{2}^{-1/2}k \geq
\theta k.$$

As $\kappa_{i_0} \geq 1/2$, by Lemma \ref{mainclaim1}, each $Z_j \in
\mathcal{Z}^5$ is in at least $\theta^{-1}\eta k$ pairs $(Z_j,Z_{\ell})$ which
are not $(\epsilon,\delta)$-regular. Hence, the number of irregular pairs is at
least $$|\mathcal{Z}^5| \theta^{-1}\eta k \geq \eta k^2,$$ which completes the
proof.
\end{proof}

\subsection{Proof of Theorem \ref{exceptionalpairs}}\label{sectexcept}

To prove Theorem \ref{exceptionalpairs}, it suffices to prove the following
theorem.

\begin{corollary}
Let $\epsilon=1/2$, $\delta=2^{-400}$, $\eta<2^{-700}$, $s=\lfloor
2^{-600}\eta^{-1} \rfloor$, and $k$ be
at most a tower of twos of height $s$. There is a graph $G=(V,E)$ for which any
equitable partition $\mathcal{Z}$ of $V$ into at most $k$ parts has at least
$\eta k^2$ ordered pairs of parts which are not $(\epsilon,\delta)$-regular.
\end{corollary}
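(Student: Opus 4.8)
The plan is to deduce this statement directly from Theorem~\ref{maingen}, applied with $a=s-1$, after fixing the remaining free parameters of the general construction so that all of that theorem's hypotheses hold.

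First I would run the construction with $m_1=2^{200}$ and $\rho=2^{-20}$: then $\beta=20m_1^{-3/2}=20\cdot2^{-300}$ and $\upsilon=5m_1^{-1/2}=5\cdot2^{-100}$ are as required, and $\delta=2^{-400}<\beta/4$. For the edge probabilities take $p_i=\max(m_i^{-1/10},\,2^{411}\eta)$ for $1\le i\le s-1$, so that $p_i\ge m_i^{-1/10}$ as the construction demands. This choice is made so that the two competing demands of Theorem~\ref{maingen} hold at once. On the one hand $p_i\ge 2^{411}\eta>2^{410}\eta=2^{10}\eta m_1^2$ for every $1\le i\le s-1$, so the hypothesis needed to conclude a $(\beta,\upsilon)$-refinement of $P_a$ holds with $a=s-1$. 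On the other hand $\nu:=3\sum_{i=1}^{s-1}p_i$ is tiny: since the $m_i$ grow faster than exponentially ($m_{i+1}\ge 2^{2^{-21}m_i^{9/10}}$ because $m_i\ge m_1=2^{200}$, so the floor in the definition of $a_i$ loses only a factor $2$), the tail $\sum_{i\ge2}m_i^{-1/10}$ is dominated by $m_1^{-1/10}$, whence $\sum_{i=1}^{s-1}m_i^{-1/10}<2m_1^{-1/10}=2^{-19}$; and $s\le 2^{-600}\eta^{-1}$ gives $(s-1)\,2^{411}\eta<2^{-189}$; hence $\nu<2^{-16}$ and in particular $1-2^7\nu>1-2^{-9}>\tfrac12=\epsilon$. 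Since also $\nu\le 1/2$, Lemma~\ref{importantstep} applies, and every hypothesis of Theorem~\ref{maingen} is satisfied. Thus there is a graph $G$ (one from the positive-probability event of that theorem) in which every $(\epsilon,\delta,\eta)$-regular equitable partition is a $(\beta,\upsilon)$-refinement of $P_{s-1}$; fix such a $G$, on a vertex set whose size is a multiple of $m_s$.

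Next I would show that no equitable partition of $V$ into $k\le T(s)$ parts can be a $(\beta,\upsilon)$-refinement of $P_{s-1}$, where $T$ is the tower of twos. A short induction on $j$ (the values for small $j$ being enormous, and the step using $m_{j+1}\ge 2^{2^{-21}m_j^{9/10}}$) gives $m_j\ge 2^{2T(j)}=T(j+1)^2$, so $m_{s-1}\ge T(s)^2$. As $\eta<2^{-700}$ forces $s\ge 2^{100}$, we have $T(s)\ge 4$, hence $(1-\beta)m_{s-1}\ge\tfrac12T(s)^2\ge 2T(s)\ge 2k$. Now if an equitable partition $\mathcal{Z}$ into $k$ parts were a $(\beta,\upsilon)$-refinement of $P_{s-1}$, then, as $\upsilon<1$, some part $Z$ of $\mathcal{Z}$ would be $(1-\beta)$-contained in some part $X\in P_{s-1}$, so $|X|\ge|X\cap Z|\ge(1-\beta)|Z|$; but $|Z|$ is essentially $|V|/k$ and $|X|=|V|/m_{s-1}$, so this forces $(1-\beta)m_{s-1}\le k$ up to rounding, which the doubly exponential ratio $m_{s-1}/k$ absorbs, contradicting $(1-\beta)m_{s-1}\ge 2k$. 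Therefore $\mathcal{Z}$ is not such a refinement, and by Theorem~\ref{maingen} it is not $(\epsilon,\delta,\eta)$-regular; running the counting in the proof of Theorem~\ref{maingen} (i.e.\ Theorem~\ref{mainagain}) then shows that at least $\eta k^2$ ordered pairs of parts of $\mathcal{Z}$ are not $(\epsilon,\delta)$-regular. The same argument applies to any equitable partition into $k'\le k$ parts, giving Theorem~\ref{exceptionalpairs} as well.

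I do not expect a genuine obstacle here: all the substance is in Theorem~\ref{maingen} and the edge-distribution estimates behind it, and what remains is bookkeeping of constants. The one point that really uses the specific value $s=\lfloor 2^{-600}\eta^{-1}\rfloor$ is balancing the two requirements on the $p_i$ — they must exceed $2^{10}\eta m_1^2$ for all $i\le s-1$ yet have sum below the threshold coming from $1-2^7\nu>\epsilon$ — which works exactly because $s\eta\le 2^{-600}$ is an arbitrarily small absolute constant.
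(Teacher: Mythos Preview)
Your proposal is correct and follows essentially the same approach as the paper: fix $m_1=2^{200}$, choose $p_i=\max(m_i^{-1/10},C\eta)$ for an absolute constant $C$, verify the hypotheses of Theorem~\ref{maingen} with $a=s-1$, and then observe that a $(\beta,\upsilon)$-refinement of $P_{s-1}$ must have at least $(1-\beta)^{-1}m_{s-1}>k$ parts. The paper uses $C=2^{500}$ where you use $C=2^{411}$, and it states the tower lower bound on $m_i$ a bit more tersely, but the argument is the same; your remark about needing to ``re-run'' Theorem~\ref{mainagain} is unnecessary, since ``not $(\epsilon,\delta,\eta)$-regular'' already means by definition that at least $\eta k^2$ ordered pairs are irregular.
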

\begin{proof}
Let $m_1=2^{200}$ and $p_i=\max(m_i^{-1/10},2^{500}\eta)$ for $1 \leq i \leq s-1$
and consider the graph $G$ given
with positive probability by Theorem \ref{maingen}. As
$\nu=3\sum_{i=1}^{s-1}p_i$, we have
\begin{eqnarray*}
\nu \leq 3\sum_{i=1}^{s-1}(m_i^{-1/10}+2^{500}\eta)=3\cdot
2^{500}\eta(s-1)+3\sum_{i=1}^{s-1}m_i^{-1/10} \leq 3 \cdot 2^{-100}
+3\frac{3}{2}p_1 < 6p_1 < 2^{-10},
\end{eqnarray*}
so $1-2^7\nu>\epsilon$.
The first inequality uses that the maximum of two nonnegative real
numbers is at most their sum. The second inequality uses $s=\lfloor
2^{-600}\eta^{-1} \rfloor$ and the fact that the sum of $m_i^{-1/10}$ rapidly
converges, and $p_1 = m_1^{-1/10}=2^{-20}$.

Note that as $m_1=2^{200}>2^{2^{2^{2}}}$ and $m_i \geq 2^{m_{i-1}^{1/2}}$ for
$i>1$, we have $|P_i|=m_i$ is greater than a tower of twos of height $i+2$ for $1
\leq i \leq s$. By Theorem \ref{maingen} with $a=s-1$, any
$(\epsilon,\delta,\eta)$-regular equitable partition of $G$ is a
$(\beta,\upsilon)$-refinement of $P_{s-1}$. In particular, at least one part of
$P_{s-1}$ contains at least a $(1-\beta)$-fraction of a part from $\mathcal{Z}$.
As $1-\beta>1/2$, this implies $|\mathcal{Z}| \geq \frac{1}{2}|P_{s-1}| > k$, which
completes the proof.
\end{proof}

\section{Lower bound for the strong regularity lemma}\label{strongsect}

In this section we prove Theorem \ref{stronglow}, which gives a lower bound on
the strong regularity lemma and states the following. Let $0<\epsilon<2^{-100}$ and $f:\mathbb{N} \rightarrow (0,1)$ be a decreasing function with
$f(1) \leq 2^{-100}\epsilon^{6}$. Define $W_{\ell}$ recursively by $W_1=1$,
$W_{\ell+1} = T\left(2^{-70}\epsilon^5/f(W_{\ell})\right)$, where $T$ is the
tower function defined in the introduction. Let $W=W_{t-1}$ with
$t=2^{-20}\epsilon^{-1}$. Then there is a graph $G$ such that if equitable
partitions $\mathcal{A},\mathcal{B}$ of the vertex set of $G$ satisfy
$q(\mathcal{B}) \leq q(\mathcal{A})+\epsilon$ and $\mathcal{B}$ is
$f(|\mathcal{A}|)$-regular, then $|\mathcal{A}|,|\mathcal{B}| \geq W$.

We next describe the proof of Theorem \ref{stronglow}.  In the first subsection, we construct the graph $G$ as a specialization of the construction in Theorem \ref{maingen}. The graph $G$ we
use to prove Theorem \ref{stronglow} has vertex partitions
$P_{i,j}$ with $1 \leq i \leq t$, and $1 \leq j \leq h_i$ satisfying
$P_{i',j'}$ is a refinement of $P_{i,j}$ if $i'=i$ and $j' > j$ or if $i'>i$.
Furthermore, as the number of parts in each successive refinement is roughly
exponential in the number of parts in the previous partition, we show in the first subsection that $|P_{t-2,h_{t-2}-2}| \geq W$. The edges of $G$ are defined based on certain
graphs $G_{i,j}$ on $P_{i,j}$. In Subsection \ref{msddp}, we prove a lemma
which implies that the construction has the property that \begin{equation}
\label{eq1msd} q(P_{i,h_i})>q(P_{i,h_i-2})+2\epsilon\end{equation} for each
$i<t$.

Let $\mathcal{A}$ and $\mathcal{B}$ be equitable partitions of the vertex set of $G$ such that 
$q(\mathcal{B}) \leq q(\mathcal{A})+\epsilon$ and $\mathcal{B}$ is
$f(|\mathcal{A}|)$-regular. Let $M_1=1$ and $M_{\ell}=|P_{\ell-1,h_{\ell-1}-2}|$ for $1<\ell \leq t-1$. Let $r$ with $1 \leq r \leq t-1$ be maximum such that $|\mathcal{A}| \geq M_r$. Let
$P'=P_{r,h_r-2}$ and $P=P_{r,h_r}$. In Subsection
\ref{sub41}, after defining $G$, we show that it satisfies the hypothesis of
Theorem \ref{maingen}, and conclude that, as $\mathcal{B}$ is an
$f(|\mathcal{A}|)$-regular partition of $G$ and $f$ is a decreasing function, it must be close to being a
refinement of $P$. It follows that if $|\mathcal{A}| \geq M_{t-1}=|P_{t-2,h_{t-2}-2}| > W$, then $|\mathcal{B}| > W$ as well, and we are done in this case. Thus we may assume $|\mathcal{A}|<M_{t-1}$ and hence $r \leq t-2$. 
In Subsection \ref{qmsd} we prove
\begin{equation}\label{eq2msd}
q(\mathcal{A})<q(P')+\frac{\epsilon}{2}.\end{equation} This follows from a
lemma that states that $q(P')$ is close to the maximum mean square density
density over all partitions of the same number of parts as $P'$. In Subsection \ref{sub42}, we use the result that $\mathcal{B}$ is close to being a
refinement of $P$ to conclude \begin{equation}\label{eq3msd} q(P) \leq
q(\mathcal{B})+\frac{\epsilon}{2}.\end{equation}  Putting the three estimates
(\ref{eq1msd}) (with $i=r$ and noting in this case $P_{i,h_i}=P$,
$P_{i,h_i-2}=P'$), (\ref{eq2msd}), (\ref{eq3msd}) together, we get that
$$q(\mathcal{B}) \geq
q(P)-\frac{\epsilon}{2}>q(P')+2\epsilon-\frac{\epsilon}{2}>q(\mathcal{A})+\epsilon,$$
contradicting the hypothesis of Theorem \ref{stronglow}, and completing the
proof of Theorem \ref{stronglow}. \qed

\subsection{Construction of $G$ and proof that $\mathcal{B}$ is an approximate
refinement}\label{sub41}

We will construct the graph $G$ as a special case of the construction in
Theorem \ref{maingen}.

Let $t=2^{-20}\epsilon^{-1}$. We have
partitions $P_{\ell,j}$ of the vertex set $V$ for $1 \leq \ell \leq t$ and $1
\leq j \leq h_{\ell}$, where $h_{\ell}$ is defined later in the paragraph and
$P_{\ell,j}=P_i$ are the partitions used to construct $G$ in Theorem
\ref{maingen} with $i=j+\sum_{d<\ell}h_d$. We set
$m_{\ell,j}=|P_{\ell,j}|=|P_i|=m_i$, and $p_{\ell,j}=p_i$.
As above, let $M_1=1$ and $M_{\ell}=m_{\ell-1,h_{\ell-1}-2}$ for $1<\ell \leq t$. Let
$\epsilon_{\ell}=f(M_{\ell})$,
$h_{\ell}=\frac{\epsilon^{5}}{2^{70}\epsilon_{\ell}}$, and
$p_{\ell,j}=\max(m_{\ell,j}^{-1/10},2^{30}\epsilon^{-4}\epsilon_{\ell})$ for $1
\leq j \leq h_{\ell}$ with
$j \neq h_{\ell}-1$, and
$p_{\ell,h_{\ell}-1}=\max(m_{\ell,j}^{-1/10},2^{30}\epsilon^{-4}\epsilon_{\ell},2^{10}\epsilon)$.

Let $m_1=2^{10}\epsilon^{-2}$, so $m_1 \geq 2^{200}$.  Note that, as each
$m_{\ell,j}$ is exponential in a power of $m_{\ell,j-1}$, we get that
$M_{\ell}$ is at least a tower of $2$s of height $h_{\ell}$. That is, $M_{\ell}
\geq T\left(2^{-70}\epsilon^{5}/f(M_{\ell-1})\right)$. In particular, by
induction, $M_{\ell} \geq W_{\ell}$, where $W_{\ell}$ is defined earlier in
this section.

We will apply Theorem \ref{maingen} to conclude the following corollary which
states that any sufficiently regular partition of $G$ is roughly a refinement
of a particular $P_{\ell,j}$. To accomplish this we need to show that the
conditions of the theorem hold, which we postpone until after stating the
following corollary. We fix $G$ to be a graph satisfying the properties of
Lemma \ref{importantstep} so that if $G$ also satisfies the conditions stated
in Theorem \ref{maingen}, then it satisfies the conclusion of Theorem
\ref{maingen}.

\begin{corollary}\label{corforstr}
Let $r \leq t-1$ be the maximum positive integer for which $|\mathcal{A}| \geq M_{r}$, so $f(|\mathcal{A}|) \leq f(M_r) = \epsilon_r$, and $P=P_{r,h_{r}}$. The partition $\mathcal{B}$, which is $\epsilon_{r}$-regular, is a
$(\beta,\upsilon)$-refinement of $P$ with $\beta=20m_1^{-3/2}$ and
$\upsilon=5m_1^{-1/2}$.
\end{corollary}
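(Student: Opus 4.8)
The plan is to derive Corollary~\ref{corforstr} directly from Theorem~\ref{maingen}, applied with the index $a$ chosen so that $P_a=P_{r,h_r}=P$; explicitly, $a=h_r+\sum_{d<r}h_d$ in the numbering $P_1,\dots,P_s$ used to build $G$. Since $G$ has been fixed to enjoy all the edge-distribution properties of Lemma~\ref{importantstep}, and these are exactly the properties invoked in the proof of Theorem~\ref{maingen} (whose conclusion, as the text notes, is established for whichever $a$ satisfies its hypotheses), it suffices to check the numerical hypotheses of Theorem~\ref{maingen} for this $a$ and then to feed $\mathcal{B}$ into its conclusion.

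First I would put $\mathcal{B}$ into the required input form. As $|\mathcal{A}|\ge M_r$ and $f$ is decreasing, $f(|\mathcal{A}|)\le f(M_r)=\epsilon_r$, so $\mathcal{B}$ is $\epsilon_r$-regular; and any $\epsilon_r$-regular pair is $(\epsilon,\delta)$-regular once $\epsilon\ge\epsilon_r$ and $\delta\ge\epsilon_r$ (enlarging $\epsilon$ widens the admissible density window, enlarging $\delta$ discards the smallest test subsets), with at most $\epsilon_r\binom{|\mathcal{B}|}{2}$ exceptional pairs. Hence $\mathcal{B}$ is $(\epsilon,\delta,\eta)$-regular for the given $\epsilon$, for $\eta:=\epsilon_r$, and for any $\delta$ with $\epsilon_r\le\delta<\beta/4$. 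Such a $\delta$ exists: $\epsilon_r\le f(1)\le 2^{-100}\epsilon^{6}$, whereas $\beta/4=5m_1^{-3/2}=5\cdot 2^{-15}\epsilon^{3}$ since $m_1=2^{10}\epsilon^{-2}$, and $2^{-100}\epsilon^{6}<5\cdot 2^{-15}\epsilon^{3}$ for $0<\epsilon<1$. This settles the hypothesis $\delta<\beta/4$, while $\beta=20m_1^{-3/2}$ and $\upsilon=5m_1^{-1/2}$ agree verbatim with the statement of the corollary.

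It then remains to verify $1-2^{7}\nu>\epsilon$ and $p_i>2^{10}\eta m_1^{2}$ for all $1\le i\le a$. Each $p_{\ell,j}$ is a maximum of $m_{\ell,j}^{-1/10}$, of $2^{30}\epsilon^{-4}\epsilon_\ell$, and (only when $j=h_\ell-1$) of $2^{10}\epsilon$, hence is at most the sum of these three. The terms $m_{\ell,j}^{-1/10}$ sum over all $(\ell,j)$ to a small multiple of $m_1^{-1/10}$ (the $m_{\ell,j}$ grow tower-fast); for fixed $\ell$, $\sum_{j}2^{30}\epsilon^{-4}\epsilon_\ell\le h_\ell\,2^{30}\epsilon^{-4}\epsilon_\ell=2^{-40}\epsilon$ by $h_\ell=\epsilon^{5}/(2^{70}\epsilon_\ell)$, which sums over $\ell\le t=2^{-20}\epsilon^{-1}$ to at most $2^{-60}$; and the $2^{10}\epsilon$ terms contribute at most $t\cdot 2^{10}\epsilon=2^{-10}$. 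Thus $\nu=3\sum_i p_i$ is at most an absolute constant comfortably below $2^{-7}$, giving $1-2^{7}\nu>\epsilon$ for $\epsilon<2^{-100}$. For the second hypothesis, each $i\le a$ corresponds to some $(\ell,j)$ with $\ell\le r$, so $\epsilon_\ell=f(M_\ell)\ge f(M_r)=\epsilon_r=\eta$, and therefore $p_i\ge 2^{30}\epsilon^{-4}\epsilon_\ell\ge 2^{30}\epsilon^{-4}\eta=2^{10}\eta m_1^{2}$; since the proof of Theorem~\ref{maingen} in fact only uses a bound on $p_i$ of the form ``a constant multiple of $m_1^{2}\eta$'', the non-strictness here is harmless. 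This step — matching $\eta$ to the regularity of $\mathcal{B}$ and checking $p_i$ exceeds (a constant times) $m_1^2\eta$ for every level $i\le a$ — is the crux, and it is precisely where the choices $m_1=2^{10}\epsilon^{-2}$, $h_\ell=\epsilon^{5}/(2^{70}\epsilon_\ell)$, $p_{\ell,j}\ge 2^{30}\epsilon^{-4}\epsilon_\ell$, and $t=2^{-20}\epsilon^{-1}$ have been tuned.

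With all hypotheses verified, Theorem~\ref{maingen} — whose conclusion is in force for our $a$ because $G$ satisfies Lemma~\ref{importantstep} — guarantees that every $(\epsilon,\delta,\eta)$-regular equitable partition of $G$ is a $(\beta,\upsilon)$-refinement of $P_a=P_{r,h_r}=P$. Applying this to $\mathcal{B}$ gives the corollary.
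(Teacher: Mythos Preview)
Your proof is correct and follows essentially the same approach as the paper: verify the numerical hypotheses of Theorem~\ref{maingen} (the bound on $\nu$, the lower bound $p_i\ge 2^{10}\eta m_1^2$ for $i\le a$ with $\eta=\epsilon_r$, and $\delta<\beta/4$) and then apply it with $a$ chosen so that $P_a=P_{r,h_r}$. Your treatment is if anything slightly more explicit than the paper's about why $\epsilon_r$-regularity of $\mathcal{B}$ yields $(\epsilon,\delta,\eta)$-regularity for the parameters needed, but the substance is identical.
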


 Note that
\begin{eqnarray*}
\nu & = & 3\sum_{i=1}^{s-1} p_i=\sum_{\ell=1}^{t}\sum_{j=1}^{h_{\ell}}
p_{\ell,j}\leq
2^{10}\epsilon t+\sum_{\ell=1}^{t} \sum_{j=1}^{h_{\ell}}
\left(m_{\ell,j}^{-1/10}+2^{30}\epsilon^{-4}\epsilon_{\ell}\right)
\\ & \leq &  2^{10}\epsilon t+\sum_{i=1}^{s-1} m_i^{-1/10}+\sum_{\ell=1}^{t}
\sum_{j=1}^{h_{\ell}} 2^{30}\epsilon^{-4}\epsilon_{\ell} \leq  2^{10}\epsilon
t+\sum_{i=1}^{s-1} m_i^{-1/10}+\sum_{\ell=1}^{t}2^{-40}\epsilon \leq
2^{-9},
\end{eqnarray*}
where we used that the maximum of a set of nonnegative numbers is at most their sum, and substituted in $h_{\ell}=\frac{\epsilon^{5}}{2^{70}\epsilon_{\ell}}$,
$m_1 = 2^{10}\epsilon^{-2} \geq 2^{200}$, $m_{i+1} =m_ia_i \geq
m_i2^{\lfloor 2^{-20} m_i^{9/10} \rfloor}$, and $t=2^{-20}\epsilon^{-1}$.  We thus have $1-2^7\nu
\geq 1/2 \geq \epsilon_r$. Notice if $\eta=\epsilon_{r}=f(M_{r})$, then, for $1 \leq i
\leq r$ and $1 \leq j \leq h_i$, we have $$p_{i,j} \geq
2^{30}\epsilon^{-4}\epsilon_{i}
\geq 2^{30}\epsilon^{-4}\epsilon_{r} = 2^{10}\eta m_1^2,$$
where we used $m_1=2^{10}\epsilon^{-2}$. Since $\beta=20m_1^{-3/2} = 20 \cdot 2^{-15}\epsilon^3$ and
 $f(1) \leq 2^{-100}\epsilon^6$, we have $\delta = \epsilon_{r}=f(M_{r}) \leq
f(M_1)=f(1)<\beta/4$.

By the above estimates, the conditions of Theorem \ref{maingen} are satisfied,
and Corollary \ref{corforstr} stated above indeed holds. \qed 

Note that if $r=t-1$ in Corollary \ref{corforstr}, then $|\mathcal{A}| \geq M_{t-1}=|P_{t-2,h_{t-2}-2}|>W$, and 
$\mathcal{B}$ is a $(\beta,\upsilon)$-refinement of $P=P_{r,h_r}$. 
As $1-\beta>1/2$, this implies $|\mathcal{B}| \geq \frac{1}{2}|P_{r,h_r}| > W$, which
completes the proof of Theorem \ref{stronglow} in this case. We can therefore assume $r<t-1$.  

\subsection{Approximate refinements and mean square density}\label{sub42}

From Corollary \ref{corforstr} and the following lemma, we deduce at the end of
this subsection that
if $P$ is the partition in Corollary \ref{corforstr}, then $q(P) \leq
q(\mathcal{B})+\frac{\epsilon}{2}$.

\begin{lemma}\label{almostref}
Suppose $G$ is a graph, $P$ is a vertex partition, and $Q$ is an equitable
partition which is a $(\beta,\upsilon)$-refinement of $P$. Then $q(P) \leq
q(Q)+2\beta+\frac{1}{2}\upsilon$.
\end{lemma}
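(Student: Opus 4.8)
The plan is to introduce the common refinement $R$ of $P$ and $Q$ — the partition whose parts are the nonempty sets $X\cap Z$ with $X\in P$ and $Z\in Q$, equivalently $Q$ with each part subdivided according to which part of $P$ its vertices lie in — and to compare both $q(P)$ and $q(Q)$ with $q(R)$. The tool for this is the standard sharp monotonicity of mean square density under refinement, a consequence of the convexity of $t\mapsto t^2$: for partitions $\mathcal{R}$ refining $\mathcal{S}$,
$$q(\mathcal{R}) - q(\mathcal{S}) \;=\; \sum_{S_i,S_j\in\mathcal{S}} \frac{|S_i|\,|S_j|}{|V|^2}\,\mathrm{Var}_{ij}\,,$$
where $\mathrm{Var}_{ij}\ge 0$ is the variance of $d(R,R')$ as $(R,R')$ runs over pairs of parts of $\mathcal{R}$ inside $S_i\times S_j$, weighted proportionally to $|R|\,|R'|$ (its mean being exactly $d(S_i,S_j)$). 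Applying this with $\mathcal{R}=R$, $\mathcal{S}=P$ gives $q(P)\le q(R)$, so it suffices to prove $q(R)\le q(Q)+2\beta+\tfrac12\upsilon$; and applying the identity once more to $R$ refining $Q$ reduces this to bounding $\sum_{a,b}\frac{|Z_a||Z_b|}{|V|^2}\,\mathrm{Var}_{ab}$ over the parts $Z_a,Z_b$ of $Q$.

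Next I would record two bounds on these variances. Since all densities lie in $[0,1]$, the variance of a $[0,1]$-valued random variable is at most $\tfrac14$, so $\mathrm{Var}_{ab}\le\tfrac14$ always. On the other hand, if both $Z_a$ and $Z_b$ are $(1-\beta)$-contained in $P$, pick $X_a,X_b\in P$ with $|Z_a\cap X_a|\ge(1-\beta)|Z_a|$ and $|Z_b\cap X_b|\ge(1-\beta)|Z_b|$, and bound the variance by the mean square deviation from the constant $d(Z_a\cap X_a,Z_b\cap X_b)$: the sub-block $(Z_a\cap X_a)\times(Z_b\cap X_b)$ contributes $0$ and has weight $\ge(1-\beta)^2\ge 1-2\beta$, while the remaining sub-blocks have total weight at most $2\beta$ and each contribute at most $1$; hence $\mathrm{Var}_{ab}\le 2\beta$ for such pairs.

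Finally I would assemble the estimate. Call a part of $Q$ \emph{bad} if it is not $(1-\beta)$-contained in $P$; by hypothesis there are at most $\upsilon|Q|$ bad parts, and since $Q$ is equitable (ignoring floors, as throughout the paper) the vertices in bad parts form at most a $\upsilon$-fraction of $V$, so the pairs $(Z_a,Z_b)$ with at least one bad coordinate have total weight at most $1-(1-\upsilon)^2\le 2\upsilon$. Splitting the sum according to whether both coordinates are good or not,
$$q(R)-q(Q)\;\le\;2\beta\!\!\sum_{Z_a,Z_b\ \mathrm{good}}\!\!\frac{|Z_a||Z_b|}{|V|^2}\;+\;\frac14\!\!\sum_{Z_a\ \mathrm{or}\ Z_b\ \mathrm{bad}}\!\!\frac{|Z_a||Z_b|}{|V|^2}\;\le\;2\beta\cdot 1+\frac14\cdot 2\upsilon\;=\;2\beta+\frac{\upsilon}{2}\,,$$
and combining with $q(P)\le q(R)$ yields the claim. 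The proof is short; the only genuine care needed is in the coefficient of $\upsilon$: a crude argument charging every subdivided part a full $1$ gives only $2\beta+O(\upsilon)$, whereas the stated $\tfrac12\upsilon$ requires using the $\mathrm{Var}\le\tfrac14$ bound on bad blocks together with the much smaller $\mathrm{Var}\le 2\beta$ bound on good blocks, plus the (routine, given equitability) passage from a fraction of parts to a fraction of the vertex set.
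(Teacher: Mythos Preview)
Your proof is correct and follows essentially the same route as the paper's: both introduce the common refinement $R$ of $P$ and $Q$, use $q(P)\le q(R)$, and bound $q(R)-q(Q)$ pairwise over $Q\times Q$, getting at most $2\beta$ on ``good'' pairs and at most $\tfrac14$ on the $\le 2\upsilon$-fraction of ``bad'' pairs. Your use of the variational characterization of variance (bounding $\mathrm{Var}_{ab}$ by the mean square deviation from the well-chosen constant $d(Z_a\cap X_a,\,Z_b\cap X_b)$) is a slightly cleaner repackaging of the paper's explicit calculation, but the underlying structure and all constants are identical.
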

\begin{proof}
Let $Q'$ be the common refinement of $P$ and $Q$, so $q(Q') \geq q(P)$. Let
$X,Y \in Q$ be such that $X,Y$ are each $(1-\beta)$-contained in $P$. Let
$X=X_1 \cup \ldots \cup X_r$ be the partition of $X$ consisting of parts from
$Q'$ with $|X_1| \geq (1-\beta)|X|$, and $Y=Y_1 \cup \ldots \cup Y_s$ be the
partition of $Y$ consisting of parts from $Q'$ with $|Y_1| \geq (1-\beta)|Y|$.
Let $p=d(X_1,Y_1)$ and $p'=\frac{1}{1-p_1q_1}\sum d(X_i,Y_j)p_iq_j$, where
$p_i=\frac{|X_i|}{|X|}$, $q_j=\frac{|Y_j|}{|Y|}$ and the sum is over all pairs
$(i,j) \in [r] \times [s]$ except $(i,j)=(1,1)$. That is, $p'$ is the weighted
average edge density between the pairs of parts except $(X_1,Y_1)$. We have
$$\sum_{i=1}^r\sum_{j=1}^s d^2(X_i,Y_j)p_iq_j \leq p^2p_1q_1+\sum_{(i,j) \not =
(1,1)} d(X_i,Y_j)p_iq_j = p_1q_1p^2+p'(1-p_1q_1)$$
and $$d(X,Y)=pp_1q_1+p'(1-p_1q_1).$$
Let $\epsilon=1-p_1q_1$, so \begin{eqnarray*}
\sum_{i=1}^r\sum_{j=1}^s d^2(X_i,Y_j)p_iq_j-d^2(X,Y) & \leq &
(1-\epsilon)p^2+p'\epsilon-\left(p(1-\epsilon)+p'\epsilon\right)^2 \\ & = &
\epsilon \left((1-\epsilon)p^2+p'-2pp'(1-\epsilon)-p'^2\epsilon\right) \\ &
\leq & \epsilon \left((1-\epsilon)p^2+p'-2pp'(1-\epsilon)\right) \\ &  \leq &
\epsilon \\ & \leq & 2\beta.\end{eqnarray*}
The second to last inequality is by noting the right hand side of the third to
last line is linear in $p'$ and must therefore be maximized when $p'=0$ or $1$,
and the last inequality follows from $\epsilon=1-p_1q_1$ and $p_1,q_1 \geq
1-\beta$.

Now for parts $X,Y \in Q$ that are not both $(1-\beta)$-contained in $P$, again
letting $X=X_1 \cup \ldots \cup X_r$ and $Y=Y_1 \cup \ldots \cup Y_s$ be the
partitions of $X$ and $Y$ consisting of parts from $Q'$, and letting $q$ denote
the edge density between $X$ and $Y$, and  $p_i=\frac{|X_i|}{|X|}$,
$q_j=\frac{|Y_j|}{|Y|}$,  we have
$$\sum_{i=1}^r\sum_{j=1}^s d^2(X_i,Y_j)p_iq_j - d^2(X,Y)  \leq q-q^2 \leq
1/4.$$

Since $Q$ is a $(\beta,\upsilon)$-refinement of $P$, at most $a$ 
$2\upsilon$-fraction of the pairs of parts from $Q$ are such that not both
parts are $(1-\beta)$-contained in $P$. Putting together the estimates from the
last two paragraphs, we therefore get $$q(P) \leq q(Q') \leq
q(Q)+2\beta+\frac{1}{4} \cdot 2\upsilon.$$
\end{proof}

Noting that $m_1=2^{10}\epsilon^{-2}$, $\beta=20m_1^{-3/2}  < \epsilon/8$ and
$\upsilon=5m_1^{-1/2} \leq \epsilon/4$ in Corollary \ref{corforstr}, we have
the following corollary of Corollary \ref{corforstr} and Lemma \ref{almostref}.

\begin{corollary}
If $P$ is the partition in Corollary \ref{corforstr}, then $q(P) \leq
q(\mathcal{B})+\frac{\epsilon}{2}$.
\end{corollary}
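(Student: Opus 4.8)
The plan is to combine Corollary~\ref{corforstr} with Lemma~\ref{almostref} directly; no new idea is required, only a short arithmetic check on the error terms.

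First I would recall from Corollary~\ref{corforstr} that the equitable partition $\mathcal{B}$ is a $(\beta,\upsilon)$-refinement of $P = P_{r,h_r}$, with $\beta = 20m_1^{-3/2}$ and $\upsilon = 5m_1^{-1/2}$. Since $\mathcal{B}$ is equitable (it is one of the two partitions appearing in the hypothesis of Theorem~\ref{stronglow}), Lemma~\ref{almostref} applies with this $P$ and with $Q = \mathcal{B}$, giving
$$q(P) \;\le\; q(\mathcal{B}) + 2\beta + \tfrac{1}{2}\upsilon.$$

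It then remains only to check that $2\beta + \tfrac{1}{2}\upsilon \le \epsilon/2$. Substituting $m_1 = 2^{10}\epsilon^{-2}$, we get $\beta = 20\cdot 2^{-15}\epsilon^3 < \epsilon/8$ and $\upsilon = 5\cdot 2^{-5}\epsilon \le \epsilon/4$, exactly the bounds stated immediately before the corollary (here $\epsilon < 2^{-100} < 1$ makes $\epsilon^3 \le \epsilon$, so the $\beta$ bound is harmless). Hence $2\beta + \tfrac{1}{2}\upsilon < \epsilon/4 + \epsilon/8 = 3\epsilon/8 < \epsilon/2$, and plugging this into the displayed inequality yields $q(P) \le q(\mathcal{B}) + \epsilon/2$, as desired.

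There is no genuine obstacle at this point: the two substantive ingredients — that $\mathcal{B}$ is an approximate refinement of $P$ (Corollary~\ref{corforstr}) and that approximate refinements do not decrease mean square density by more than a controlled amount (Lemma~\ref{almostref}) — are already in hand, so the corollary reduces to observing that the fixed small powers of $m_1^{-1} = 2^{-10}\epsilon^2$ occurring in $2\beta$ and $\tfrac12\upsilon$ add up to less than $\epsilon/2$.
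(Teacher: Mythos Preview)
Your proposal is correct and matches the paper's own argument essentially verbatim: the paper, immediately before stating the corollary, notes the same bounds $\beta < \epsilon/8$ and $\upsilon \leq \epsilon/4$ and says the result follows from Corollary~\ref{corforstr} and Lemma~\ref{almostref}. There is nothing to add.
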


\subsection{Mean square densities of the defining partitions}\label{msddp}

The next lemma shows that the mean square density of each successive partition
increases by a constant factor of the edge density of each $G_i$.

\begin{lemma}
For each $i$, we have $q(P_{i+1}) \geq q(P_i)+2^{-5}p_i$.
\end{lemma}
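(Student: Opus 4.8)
The plan is to exploit the elementary identity that passing to a refinement increases the mean square density by exactly a sum of nonnegative local variance terms, and to show that each pair of parts of $P_i$ joined by an edge of $G_i$ that is not already present in $G^i$ contributes on the order of $p_i/m_i^2$. First I would record that for any partition $Q$ refining $P$, writing $\{X_a\}$ for the parts of $Q$ inside a part $X$ of $P$,
\[
q(Q)-q(P)=\sum_{X,Y\in P}\ \sum_{X_a\subset X,\ Y_b\subset Y}\frac{|X_a||Y_b|}{|V|^2}\bigl(d(X_a,Y_b)-d(X,Y)\bigr)^2,
\]
which follows by expanding the square and using $\sum_{X_a,Y_b}e(X_a,Y_b)=e(X,Y)$ and $\sum_{X_a}|X_a|=|X|$. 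Every summand is nonnegative, so it suffices to lower bound the contribution of a convenient family of pairs $(X,Y)$ and discard the rest.

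For this family I would take all ordered pairs $(X,Y)$ of distinct parts of $P_i$ with $(X,Y)$ an edge of $G_i$ but $X,Y$ not adjacent in $G^i$; by the second bullet of Lemma \ref{importantstep} there are at least $p_im_i^2/4$ such unordered pairs, hence at least $p_im_i^2/2$ ordered ones. Fixing such a pair, set $A=X^1_Y$, $B=X^2_Y$, $C=Y^1_X$, $D=Y^2_X$, each a union of parts of $P_{i+1}$ of size $|V|/(2m_i)$. By construction $G$ contains every edge between $A$ and $C$ and between $B$ and $D$, and edges are never deleted, so $d(A,C)=d(B,D)=1$; meanwhile the fourth bullet of Lemma \ref{importantstep} applies precisely because $(X,Y)$ is an edge of $G_i$ but not of $G^i$, and gives $d(A,D),d(B,C)\le\nu$. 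Averaging the four quadrant densities then yields $\tfrac12\le d(X,Y)\le\tfrac12+\tfrac{\nu}{2}$.

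Now I would restrict the inner sum of the identity, for this block, to the four quadrants: $A\times C$ and $B\times D$ (total weight $\tfrac{1}{2m_i^2}$) contribute $\tfrac{1}{2m_i^2}(1-d(X,Y))^2$ since $d(X_a,Y_b)=1$ there, while $A\times D$ and $B\times C$ contribute, by Jensen's inequality applied inside each quadrant, at least $\tfrac{1}{2m_i^2}(d(X,Y)-\nu)^2$ (using $d(A,D),d(B,C)\le\nu\le d(X,Y)$). The function $(1-x)^2+(x-\nu)^2$ is decreasing on $[\tfrac12,\tfrac12+\tfrac\nu2]$, so it is at least its value $\tfrac{(1-\nu)^2}{2}$ at the right endpoint, whence the block contributes at least $\tfrac{(1-\nu)^2}{4m_i^2}$. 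Summing over the at least $p_im_i^2/2$ good ordered pairs and dropping the other (nonnegative) blocks gives $q(P_{i+1})-q(P_i)\ge\tfrac{p_i(1-\nu)^2}{8}\ge 2^{-5}p_i$, using $\nu\le\tfrac12$.

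The one delicate point is keeping the two bullets of Lemma \ref{importantstep} aligned: the pairs supplied by the second bullet must be exactly those for which the fourth bullet controls the cross-quadrant densities $d(A,D)$ and $d(B,C)$ (this fails for pairs that are edges of $G^i$, where in fact all four quadrant densities equal $1$ and the block contributes nothing). It is precisely this density control that, together with the two "complete" quadrants, forces a $\Theta(1)$ local variance. A cruder estimate using only the two complete quadrants loses a factor of two and yields only $2^{-6}p_i$; including the $A\times D$ and $B\times C$ quadrants via Jensen is what recovers the stated constant $2^{-5}$.
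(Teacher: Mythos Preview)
Your proof is correct and follows essentially the same approach as the paper: both restrict to pairs $(X,Y)$ that are edges of $G_i$ but not of $G^i$, use the $2\times 2$ quadrant densities ($d=1$ on matching quadrants, $d\le\nu$ on cross quadrants), and lower bound the per-block variance contribution to obtain $2^{-5}p_i$. The only cosmetic differences are that you work with the variance form $(d(X_a,Y_b)-d(X,Y))^2$ and invoke Jensen explicitly, whereas the paper computes $\sum\frac14 d^2(X_Y^i,Y_X^j)-d^2(X,Y)$ directly at the $2\times2$ level and implicitly uses that $P_{i+1}$ refines that local partition.
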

\begin{proof}
The fraction of pairs $(X,Y) \in P_i \times P_i$ which are edges of $G_i$ and not edges of $G^i$ is at
least $p_i/4$ by the second property in Lemma \ref{importantstep}. For each
such pair, the equitable partitions $X=X_Y^1 \cup X_Y^2$, $Y=Y_X^1 \cup Y_X^2$
satisfy $d(X_Y^d,Y_X^d)=1$ and $d(X_Y^d,Y_X^{3-d}) \leq \nu \leq 1/4$ for
$d=1,2$. Let $d_1=d(X_Y^1,Y_X^2)$ and $d_2=d(X_Y^2,Y_X^1)$, so
$$\sum_{i=1}^2\sum_{j=1}^2\frac{1}{4}d^2(X_Y^i,Y_X^j)-d^2(X,Y)=\frac{1}{2}+\frac{d_1^2+d_2^2}{4}-\left(\frac{1}{2}+\frac{d_1+d_2}{4}\right)^2 \geq \frac{1}{4}-\frac{(d_1+d_2)}{4}
\geq \frac{1}{8}.$$
As we get this density increment for at least a $p_i/4$-fraction of the pairs
$(X,Y) \in P_i \times P_i$, we get a total density increment of at least
$\frac{1}{8}\frac{p_i}{4}=2^{-5}p_i$.
\end{proof}

We have the following corollary, noting that $p_{r,h_r-1} \geq 2^{10}\epsilon$.

\begin{corollary}
For $P=P_{r,h_r}$ and $P'=P_{r,h_r-2}$, we have
$$q(P)=q(P_{r,h_r}) \geq q(P_{r,h_r-1})+2^{-5}p_{r,h_r-1}
\geq q(P_{r,h_r-1})+2\epsilon \geq q(P')+2\epsilon.$$
\end{corollary}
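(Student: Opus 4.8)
The plan is to derive this corollary directly from the preceding lemma, which supplies the density increment $q(P_{i+1}) \geq q(P_i) + 2^{-5}p_i$ for every index $i$ of the master sequence $P_1,\ldots,P_s$. The first step is to identify the three partitions $P_{r,h_r-2}$, $P_{r,h_r-1}$, $P_{r,h_r}$ as consecutive members of that sequence: since $P_{\ell,j}$ equals $P_i$ with $i=j+\sum_{d<\ell}h_d$, these are exactly the partitions $P_{i-2}$, $P_{i-1}$, $P_i$ for $i=\sum_{d\leq r}h_d$, all lying inside block $r$. Here one should note that $h_r$ is enormous: because $\epsilon_r=f(M_r)\leq f(1)\leq 2^{-100}\epsilon^6$, we get $h_r=\epsilon^5/(2^{70}\epsilon_r)\geq 2^{30}\epsilon^{-1}\geq 3$, so $P_{r,h_r-2}$ is a genuine partition; and since we have already reduced to the case $r\leq t-2$, these indices are well within the range where the increment lemma applies.

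Next I would apply the increment lemma at index $i-1$ to obtain $q(P_{r,h_r})\geq q(P_{r,h_r-1})+2^{-5}p_{r,h_r-1}$, which is the first displayed inequality. For the second inequality, I would substitute the definition $p_{r,h_r-1}=\max\!\left(m_{r,h_r-1}^{-1/10},\,2^{30}\epsilon^{-4}\epsilon_r,\,2^{10}\epsilon\right)\geq 2^{10}\epsilon$, so that $2^{-5}p_{r,h_r-1}\geq 2^{5}\epsilon\geq 2\epsilon$; this is precisely the observation flagged in the sentence preceding the statement. Finally, the third inequality $q(P_{r,h_r-1})\geq q(P_{r,h_r-2})=q(P')$ holds because $P_{r,h_r-1}$ refines $P_{r,h_r-2}$ and the mean square density is monotone nondecreasing under refinement (equivalently, one may apply the increment lemma once more at index $i-2$ and discard the nonnegative term $2^{-5}p_{r,h_r-2}$). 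Chaining the three inequalities yields $q(P)\geq q(P')+2\epsilon$.

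There is no genuine obstacle here; the only points needing a moment's care are the bookkeeping that matches $P_{r,h_r},P_{r,h_r-1},P_{r,h_r-2}$ with consecutive indices of the master sequence — so that $p_{r,h_r-1}$ really is the $p$-value attached to the step from $P_{r,h_r-1}$ to $P_{r,h_r}$ — and the verification that $h_r\geq 3$ so that $P_{r,h_r-2}$ is defined. Both are immediate from the parameter choices fixed at the start of the section.
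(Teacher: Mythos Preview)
Your proposal is correct and follows essentially the same approach as the paper: apply the preceding increment lemma at the step from $P_{r,h_r-1}$ to $P_{r,h_r}$, use the built-in lower bound $p_{r,h_r-1}\geq 2^{10}\epsilon$ from the parameter definition, and then invoke monotonicity of the mean square density under refinement for the last inequality. The paper's own justification is the single remark ``noting that $p_{r,h_r-1}\geq 2^{10}\epsilon$''; your additional bookkeeping (verifying $h_r\geq 3$ and matching the double-indexed partitions to consecutive entries of the master sequence) is a welcome bit of care but does not alter the argument.
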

\subsection{Quasirandomness and mean square density} \label{qmsd}

The goal of this subsection is to show that if $\mathcal{A}$ is a vertex
partition of $G$ with $|\mathcal{A}| \leq |P_i|$, then $q(\mathcal{A})$ is at most $q(P_{i})+p_i$ plus a small error term. To accomplish this, we show that the graphs used to define $G$ are quasirandom with small error.

The study of quasirandom graphs began with the papers by Thomason \cite{Th} and
Chung, Graham, and Wilson \cite{CGW}. They showed that a large number of
interesting graph properties satisfied by random graphs are all equivalent.
These properties are known as {\it quasirandom properties}, and any graph that
has one of these properties (and hence all of these properties) is known as a
{\it quasirandom graph}.

This development was heavily influenced by and closely related to
Szemer\'edi's regularity lemma. Furthermore, all
known proofs of Szemer\'edi's theorem on long arithmetic progressions in dense
subsets of the integers use some notion of quasirandomness. For graphs on $n$
vertices with edge density $p$ bounded away from zero, the following two
properties are quasirandom properties. The first property states that the
number of $4$-cycles (or, equivalently, the number of closed walks of length
$4$) in the graph is $p^4n^4+o(n^4)$. The second property states that all pairs
of vertex subsets $S,T$ have edge density roughly $p$ between them, apart from
$o(n^2)$ edges. This fact, that the number of $4$-cycles in a graph can control
the edge distribution, is quite notable. For our purposes, we will need to show
that the first property implies the second property, with reasonable error
estimates. The now standard proof bounds the second largest (in absolute value)
eigenvalue of the adjacency matrix of the graph, and then applies the expander
mixing lemma, which bounds the edge discrepancy between subsets in terms of the
subset sizes and the second largest eigenvalue.

\begin{lemma}\label{p1p2}
Suppose $G=(V,E)$ is a graph with $n$ vertices and average degree $d$, and the
number of closed walks of length $4$ in $G$ is at most $d^4+\alpha n^4.$ For all vertex subsets $S$ and $T$,
$$|e(S,T)-\frac{d|S||T|}{n}|<\lambda\sqrt{|S||T|},$$ where $\lambda \leq
\alpha^{1/4}n$.
\end{lemma}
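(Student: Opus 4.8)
The plan is to translate the $4$-walk hypothesis into spectral information about the adjacency matrix $A$ of $G$ and then read off the edge-discrepancy bound via the expander mixing lemma. Since a closed walk of length $4$ is exactly a sequence $v_0v_1v_2v_3v_0$ with consecutive vertices adjacent, the number of such walks is $\operatorname{tr}(A^{4})=\sum_{i=1}^{n}\mu_i^{4}$, where $\mu_1\ge\mu_2\ge\cdots\ge\mu_n$ are the eigenvalues of $A$. Two standard facts: the Rayleigh quotient at the all-ones vector gives $\mu_1\ge\frac{\mathbf 1^{T}A\mathbf 1}{n}=\frac{2|E|}{n}=d$, and by Perron--Frobenius $|\mu_i|\le\mu_1$ for all $i$. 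Hence $\sum_{i\ge 2}\mu_i^{4}=\operatorname{tr}(A^{4})-\mu_1^{4}\le(d^{4}+\alpha n^{4})-d^{4}=\alpha n^{4}$, so the ``second eigenvalue'' $\lambda:=\max_{i\ge 2}|\mu_i|$ satisfies $\lambda\le\alpha^{1/4}n$.

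For the conclusion I would phrase the expander mixing lemma through the matrix $B:=A-\frac dn J$, where $J=\mathbf 1\mathbf 1^{T}$, since this sidesteps the fact that $G$ need not be regular (so that the top eigenvector of $A$ need not be uniform). With the convention $e(S,T)=\mathbf 1_S^{T}A\mathbf 1_T$ one has $e(S,T)-\frac{d|S||T|}{n}=\mathbf 1_S^{T}B\mathbf 1_T$, hence $\big|e(S,T)-\frac{d|S||T|}{n}\big|\le\|B\|_{\mathrm{op}}\,\|\mathbf 1_S\|\,\|\mathbf 1_T\|=\|B\|_{\mathrm{op}}\sqrt{|S||T|}$, and it suffices to show $\|B\|_{\mathrm{op}}\le\alpha^{1/4}n$. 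Since $B$ is real symmetric, writing $\beta_1,\dots,\beta_n$ for its eigenvalues we have $\|B\|_{\mathrm{op}}^{4}=\max_i\beta_i^{4}\le\sum_i\beta_i^{4}=\operatorname{tr}(B^{4})$, and expanding the fourth power and using $J\mathbf 1=n\mathbf 1$ and $\mathbf 1^{T}A\mathbf 1=dn$ repeatedly collapses all the cross terms to the identity
$$\operatorname{tr}\!\big((A-\tfrac dn J)^{4}\big)=\operatorname{tr}(A^{4})-d^{4}-\tfrac{4d}{n}\big(\mathbf d^{T}A\mathbf d-d\|\mathbf d\|^{2}\big),$$
where $\mathbf d=A\mathbf 1$ is the degree vector (note $\mathbf d^{T}A\mathbf d=\mathbf 1^{T}A^{3}\mathbf 1$ and $\|\mathbf d\|^{2}=\mathbf 1^{T}A^{2}\mathbf 1$). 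Combined with $\operatorname{tr}(A^{4})\le d^{4}+\alpha n^{4}$, this gives $\operatorname{tr}(B^{4})\le\alpha n^{4}-\tfrac{4d}{n}\big(\mathbf d^{T}A\mathbf d-d\|\mathbf d\|^{2}\big)$.

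The one nontrivial point is therefore the degree-moment inequality $\mathbf d^{T}A\mathbf d\ge d\|\mathbf d\|^{2}$, equivalently $n\,\mathbf 1^{T}A^{3}\mathbf 1\ge(\mathbf 1^{T}A\mathbf 1)(\mathbf 1^{T}A^{2}\mathbf 1)$, equivalently $\tfrac1{|E|}\sum_{\{a,b\}\in E}d_ad_b\ge\tfrac1n\sum_c d_c^{2}$. This is a ``friendship-paradox''-type statement (the edge-average of the product of endpoint degrees dominates the vertex-average of the squared degree); it holds for every graph, with equality for regular and complete bipartite graphs, and can be verified by working with the spectral measure of $\mathbf 1$, which is supported on $[-\mu_1,\mu_1]$ and carries a positive atom at $\mu_1\ge d$. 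Granting it, $\operatorname{tr}(B^{4})\le\alpha n^{4}$, so $\lambda:=\|B\|_{\mathrm{op}}\le\alpha^{1/4}n$ and the lemma follows. If one instead insists on the literal route of the cited proof --- bound $\max_{i\ge 2}|\mu_i|$ and invoke the expander mixing lemma for $A$ itself --- then the corresponding obstacle is to control the correction coming from the principal eigenvector failing to be the uniform vector, which is again exactly the regularity-defect term that the $4$-walk bound keeps small; I expect this to be the crux in either formulation, with everything else routine.
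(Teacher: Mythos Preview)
Your argument follows the paper's route almost exactly --- bound a spectral quantity via $\operatorname{tr}(A^4)$ and then read off the discrepancy bound by an expander-mixing argument --- and you are in fact \emph{more} careful than the paper at the one delicate point. The paper simply bounds $\lambda=|\lambda_2|$ and invokes ``the expander mixing lemma'' from the Krivelevich--Sudakov survey, but that lemma is stated there for $d$-regular graphs; for an irregular graph the main term it produces is $\lambda_1\langle\mathbf 1_S,v_1\rangle\langle\mathbf 1_T,v_1\rangle$, not $\frac{d|S||T|}{n}$. You correctly flag this and switch to $B=A-\frac dn J$, which makes the discrepancy step immediate. Your trace identity
\[
\operatorname{tr}\bigl((A-\tfrac dn J)^4\bigr)=\operatorname{tr}(A^4)-d^4-\tfrac{4d}{n}\bigl(\mathbf d^TA\mathbf d-d\|\mathbf d\|^2\bigr)
\]
is correct, and the reduction to the single inequality $\mathbf d^TA\mathbf d\ge d\|\mathbf d\|^2$ (equivalently $w_3\ge d\,w_2$, or $n\,w_3\ge w_1 w_2$) is clean.

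The gap is that you do not actually prove this inequality; you only assert that it ``can be verified by working with the spectral measure of $\mathbf 1$.'' In spectral terms the claim is $\sum_i c_i^2(\mu_i-d)^2(\mu_i+d)\ge 0$, and the cubic $(\mu-d)^2(\mu+d)$ is \emph{negative} on $(-\mu_1,-d)$; graphs can certainly have eigenvalues there with nonzero weight $c_i^2$ (any unbalanced complete bipartite graph does, and there equality holds only because the positive and negative contributions cancel exactly). Knowing that the spectral measure is supported on $[-\mu_1,\mu_1]$ with an atom at $\mu_1\ge d$ does not by itself force the sum to be nonnegative --- one has to quantitatively balance the atom at $\mu_1$ against the mass below $-d$, and your sketch gives no mechanism for this. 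Equivalently, your claim is that the degree vector $\mathbf d$ and the ``neighbour-degree'' vector $A\mathbf d$ are always nonnegatively correlated across vertices, which is a nontrivial combinatorial statement. If you want to close the argument along your lines, you need either a genuine proof of $w_3\ge d\,w_2$ or a direct bound on the correction term $\frac{4d}{n}\lvert \mathbf d^TA\mathbf d-d\|\mathbf d\|^2\rvert$ by something absorbable into $\alpha n^4$; neither is supplied. (In the paper's actual application the graph is almost regular, so the issue evaporates --- but as a proof of the lemma as stated for arbitrary $G$, both the paper's citation and your spectral-measure remark leave the same hole.)
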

\begin{proof}
Let $A$ be the adjacency matrix of $G$, and
$\lambda_1,\lambda_2,\ldots,\lambda_n$ be the eigenvalues of $A$, with
$|\lambda_1| \geq |\lambda_2| \geq \ldots \geq |\lambda_n|$. Let
$\lambda=|\lambda_2|$.
It is easy to check that $\lambda_1 \geq d$. Let $\lambda=|\lambda_2|$. The
number of closed walks of length $4$ in $G$ is equal to the trace
$$Tr(A^4)=\sum_{i=1}^n \lambda_i^4 \geq \lambda_1^4+\lambda^4.$$
As $\lambda_1 \geq d$, and the number of closed walks of length $4$ is at most
$d^4+\alpha n^4$, we conclude $\lambda \leq (\alpha n^4)^{1/4}=\alpha^{1/4}n.$
The expander mixing lemma (see Section 2.4 of \cite{KrSu}) states that for all
vertex subsets $S,T$, we have $|e(S,T)-\frac{d|S||T|}{n}|<\lambda
\sqrt{|S||T|}$. This completes the proof.
\end{proof}

A spanning subgraph of graph $G$ is a subgraph of $G$ on the same vertex set
$V$ as $G$.
We let $H_i$ be the spanning subgraph of $G$ where vertices $u,v \in V$ are
adjacent in $H_i$ if and only if there is an edge $(X,Y)$ of $G_i$, and $j \in
\{1,2\}$ with $u \in X^j_Y, v\in Y^j_X$. Note that the edge set of $G$ is
precisely the union of the edge sets of the $H_i$, although this is likely not
an edge partition. We next use Lemma \ref{p1p2} to show that the edges of $H_i$
are uniformly distributed. That is, the edge density in $H_i$ is roughly the
same between large vertex subsets of $V$.

\begin{lemma}\label{Hiuniform}
Let $|V|=n$. For each $i$, the graph $H_i$ on vertex set $V$ has the property
that for all vertex subsets $S$ and $T$,
$$|e_{H_i}(S,T)-\frac{p_i}{2}|S||T||<2m_i^{-1/80}p_in\sqrt{|S||T|}.$$
\end{lemma}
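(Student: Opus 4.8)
The plan is to deduce the lemma from Lemma~\ref{p1p2} applied to $H_i$; write $n=|V|$. One needs the average degree $d$ of $H_i$ to satisfy $d=\tfrac{p_i}{2}n\pm\tfrac{n}{2}m_i^{-1/4}$ and the number of closed walks of length $4$ in $H_i$ to be at most $d^4+\alpha n^4$ with $\alpha=m_i^{-1/20}p_i^4$. Granting these, Lemma~\ref{p1p2} gives $|e_{H_i}(S,T)-\tfrac{d}{n}|S||T||<m_i^{-1/80}p_i n\sqrt{|S||T|}$ for all $S,T$; since $|\tfrac{d}{n}-\tfrac{p_i}{2}|\le\tfrac12 m_i^{-1/4}$, $|S||T|\le n\sqrt{|S||T|}$, and $m_i^{-1/4}\le m_i^{-1/80}p_i$ (using $p_i\ge m_i^{-1/10}$), the stated bound follows with constant $2$ for $m_i$ large, hence for all $m_i$ in the construction as $m_1\ge 2^{200}$. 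The degree claim is immediate: a vertex $b$ in a part $X\in P_i$ is adjacent in $H_i$ precisely to the disjoint union $\bigcup_{Y\in N(X)}Y^{j(b,Y)}_X$ of halves of size $\tfrac{n}{2m_i}$, where $j(b,Y)$ is the side of $Q_{X,Y}$ containing $b$, so $\deg_{H_i}(b)=|N(X)|\cdot\tfrac{n}{2m_i}$, and the degree bound $|N(X)|=p_im_i\pm m_i^{3/4}$ of Lemma~\ref{importantstep} applies.

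For the $4$-walk count, write $\operatorname{Tr}(A_{H_i}^4)=\sum_{b,d}\operatorname{codeg}_{H_i}(b,d)^2$, where $\operatorname{codeg}_{H_i}$ counts common neighbours. Pairs with $b=d$ or in the same part of $P_i$ contribute only $O(p_i^2n^3+p_i^2n^4/m_i)$, since then $\operatorname{codeg}_{H_i}(b,d)\le\deg_{H_i}(b)=O(p_in)$; this is negligible against $\alpha n^4$. Now fix $b\in X_s$, $d\in X_t$ with $s\ne t$: a common neighbour must lie in a part $X_u$ with $u$ a common $G_i$-neighbour of $s$ and $t$, and the common neighbours inside $X_u$ are exactly $(X_u)^{j(b,u)}_{X_s}\cap(X_u)^{k(d,u)}_{X_t}$, so $\operatorname{codeg}_{H_i}(b,d)=\sum_u f_u(b,d)$ with $f_u(b,d)=|(X_u)^{j(b,u)}_{X_s}\cap(X_u)^{k(d,u)}_{X_t}|$, the sum over common neighbours $u$. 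Expanding the square, the terms $u=u'$ contribute only $O(p_i^2n^4/m_i)$ (each intersection has size $\le\tfrac{n}{2m_i}$), leaving $\Sigma:=\sum_{s\ne t}\sum_{u\ne u'}\sum_{b\in X_s,\,d\in X_t}f_u(b,d)f_{u'}(b,d)$.

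Fix $s,t,u,u'$. As $b$ runs over $X_s$, the pair $(j(b,u),j(b,u'))$ takes the value $(p,q)$ for exactly $|(X_s)^p_{X_u}\cap(X_s)^q_{X_{u'}}|$ vertices, and similarly for $d$ over $X_t$, while $f_u$ records the sizes $|(X_u)^p_{X_s}\cap(X_u)^q_{X_t}|$. Each half has size exactly $\tfrac{n}{2m_i}$, and by Corollary~\ref{firstcor} any two partitions in the family attached to a fixed part of $P_i$ are nearly orthogonal: the four common pieces each have size $\tfrac{n}{4m_i}\pm\tfrac{n}{m_i}a_i^{-1/4}$. (This two-sided estimate on intersection sizes is the one delicate input — it comes from the second property of Lemma~\ref{firstlemma1} together with the exact sizes of the halves; the trivial one-sided bound would be useless below.) A short calculation then collapses the fourfold sum to
\[
\sum_{b\in X_s,\,d\in X_t}f_u(b,d)f_{u'}(b,d)=\frac{n^4}{16m_i^4}+D_sD_tR,
\]
where $D_s,D_t,R$ are products of these deviations from $\tfrac{n}{4m_i}$, so that $|D_s|,|D_t|,|R|^{1/2}\le\tfrac{2n}{m_i}a_i^{-1/4}$ and hence $|D_sD_tR|\le 16\,a_i^{-1}n^4/m_i^4$.

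Finally, summing the main term $\tfrac{n^4}{16m_i^4}$ over the at most $\sum_{s\ne t}\operatorname{codeg}_{G_i}(s,t)^2$ quadruples and using $\operatorname{codeg}_{G_i}(s,t)=p_i^2m_i\pm m_i^{3/4}$ (Lemma~\ref{importantstep}) bounds that part by $\tfrac{n^4}{16m_i^4}\,p_i^4m_i^4\bigl(1+O(m_i^{-1/20})\bigr)\le d^4+O(m_i^{-1/20}p_i^4n^4)$, while the total of all the $D_sD_tR$ terms is at most $64\,p_i^4n^4/a_i$, which is negligible since $a_i=2^{\lfloor\rho m_i^{9/10}\rfloor}$ dwarfs every power of $m_i$. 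Collecting all contributions gives $\operatorname{Tr}(A_{H_i}^4)\le d^4+m_i^{-1/20}p_i^4n^4$ for $m_i$ large, and Lemma~\ref{p1p2} completes the proof. The main obstacle is precisely the control of $\sum D_sD_tR$: with the crude bound $|D_s|\le\tfrac{n}{2m_i}$ this term would be of the same order as $d^4$, so everything hinges on the two-sided near-orthogonality of the defining partitions; the remaining estimates are routine bookkeeping with Lemma~\ref{importantstep}.
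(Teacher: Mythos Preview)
Your overall strategy coincides with the paper's: bound the average degree of $H_i$, bound the closed $4$-walk count by $d^4+m_i^{-1/20}p_i^4n^4$, and invoke Lemma~\ref{p1p2}. The degree computation and the final deduction are identical.

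The difference is in how the $4$-walk count is controlled. You expand $\operatorname{codeg}_{H_i}(b,d)=\sum_u f_u(b,d)$, square, and then exploit a Fourier-type cancellation: writing each intersection size as $\tfrac{n}{4m_i}+(-1)^{p+q}\epsilon$ and using the row/column constraints, all linear, quadratic and cubic error terms in the fourfold sum vanish, leaving only $\tfrac{n^4}{16m_i^4}+16\,\epsilon^\sigma\epsilon^\tau\epsilon^A\epsilon^{A'}$. This is correct and the resulting $a_i^{-1}$ error is indeed negligible. The paper takes a shorter path: it bounds each codegree \emph{pointwise} by $|N_{H_i}(u,v)|\le(\tfrac14+a_i^{-1/4})(p_i^2m_i+m_i^{3/4})\tfrac{n}{m_i}$ for $u,v$ in distinct parts (since every piece $Z_X^{j_1}\cap Z_Y^{j_2}$ has size at most $(\tfrac14+a_i^{-1/4})\tfrac{n}{m_i}$), uses the trivial bound for $u,v$ in the same part, and sums the squares directly. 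Both routes need exactly the same two-sided control on intersection sizes from Corollary~\ref{firstcor}; yours packages it as a cancellation identity, the paper's as a uniform upper bound on each summand. Your approach is correct but more laborious; the pointwise codegree bound gets there in two lines.
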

\begin{proof}
Note that each edge $(X,Y)$ of $G_i$ gives rise to two complete bipartite
graphs, between $X_Y^j$ and $Y_X^j$ with $j \in \{1,2\}$, in $H_i$. In
particular, each such edge of $G_i$ contributes $\frac{n}{2m_i}$ degree in
graph $H_i$ to each vertex in $X$ and in $Y$.

We first give a lower bound on the average degree $d$ in $H_i$. From the first
property in Lemma \ref{importantstep}, every vertex in $G_i$ has degree
differing from $p_im_i$ by at most $m_i^{3/4}$. Hence, every vertex in $H_i$
has degree differing from $p_im_i \frac{n}{2m_i}=p_in/2$ by at most
$m_i^{3/4}\cdot \frac{n}{2m_i}=\frac{1}{2}m_i^{-1/4}n$. Thus, the average
degree $d$ of $H_i$ satisfies $|d-\frac{1}{2}p_i n| \leq
\frac{1}{2}m_i^{-1/4}n$.

We next give an upper bound on the number $W_4$ of labeled closed walks of
length four in $H_i$. By counting over the first and third vertex of the closed
walk, we have $W_4=\sum_{u,v} |N_{H_i}(u,v)|^2$, that is, $W_4$ is the sum of
the squares of the codegrees over all labeled pairs of vertices of $H_i$.
By the first part of Lemma \ref{importantstep}, if $X,Y$ are distinct parts of
partition $P_i$, then the codegree of $X$ and $Y$ in $G_i$ is at most
$p_i^2m_i+m_i^{3/4}$. Hence, from Corollary \ref{firstcor},
if $u$ and $v$  are in different parts in the partition $P_i$, then
$$|N_{H_i}(u,v)| \leq
(\frac{1}{4}+a_{i}^{-1/4})(p_i^2m_i+m_i^{3/4})\frac{n}{m_i}=(\frac{1}{4}+a_{i}^{-1/4})(p_i^2+m_i^{-1/4})n.$$
For each pair of vertices $u,v$ in the same part of $P_i$, we have $u$ and $v$
have the same neighborhood in $H_i$ and in this case we use the trivial
estimate $|N_{H_i}(u,v)| \leq n$.  In total, we get

\begin{eqnarray*} W_4 & = & \sum_{u,v} |N_{H_i}(u,v)|^2 \leq m_i(m_i-1)
\left(\frac{n}{m_i}\right)^2 \cdot
\left((\frac{1}{4}+a_{i}^{-1/4})(p_i^2+m_i^{-1/4})n\right)^2+m_i
\left(\frac{n}{m_i}\right)^2\cdot n^2 \\ & \leq &
\left(1+5m_i^{-1/20}\right)p_i^4n^4/16,\end{eqnarray*}
where we used $p_i \geq m_i^{-1/10}$, $a_i=2^{\lfloor \rho m_i^{9/10} \rfloor}$
with $\rho = 2^{-20}$ and $m_i \geq m_1 \geq 2^{200}$.

Let \begin{eqnarray*} \alpha & = & n^{-4}\left(W_4-d^4\right) \leq
n^{-4}\left(W_4-(1-4m_i^{-1/4}p_i^{-1})p_i^4n^4/16\right) \leq
\left(5m_i^{-1/20}+4m_i^{-1/4}p_i^{-1}\right)p_i^4/16 \\ & \leq &
m_i^{-1/20}p_i^4. \end{eqnarray*}

By the choice of $\alpha$, we have $W_4 = d^4+\alpha n^4$. From Lemma
\ref{p1p2}, we have
$$|e(S,T)-\frac{d|S||T|}{n}|<\alpha^{1/4}n\sqrt{|S||T|}.$$
Substituting in that the average degree $d$ differs from $p_in/2$ by at most
$m_i^{-1/4}n/2$, the bounds
$\alpha^{1/4} \leq m_i^{-1/80}p_i$, $m_i^{-1/4}/2 \leq m_i^{-1/80}p_i$, and
$|S||T| \leq n\sqrt{|S||T|}$, and using the triangle inequality, we have the
desired estimate holds on the number $e_{H_i}(S,T)$ of edges in $H_i$ between
$S$ and $T$.
\end{proof}

We next prove the following lemma which estimates the edge density of $G$
between certain vertex subsets.

\begin{lemma} \label{xyab}
Let $X,Y \in P_i$ be distinct with $(X,Y)$ not an edge of $G^i$. If also
$(X,Y)$ is not an edge of $G_i$ and $A \subset X$, $B \subset Y$, or if $(X,Y)$
is an edge of $G_i$ and there is $j \in \{1,2\}$ such that $A \subset X_Y^j$
and $B \subset Y_X^{3-j}$, then $$\left|d_G(A,B)-\left(1-\prod_{h >
i}\left(1-\frac{p_h}{2}\right)\right)\right|<6m_{i+1}^{-1/80}p_{i+1}\frac{n}{\sqrt{|A||B|}},$$
where $n=|V|$ is the number of vertices of $G$.
\end{lemma}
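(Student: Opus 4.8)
The plan is a downward induction on $i$, from $i=s-1$ down to $i=1$, exploiting that $E(G)=\bigcup_{h=1}^{s-1}E(H_h)$ and that each $H_h$ is quasirandom by Lemma \ref{Hiuniform}. Write $\phi_i=1-\prod_{h>i}(1-p_h/2)$ for the target density, and note the recursion $\phi_i=\tfrac{p_{i+1}}{2}+\bigl(1-\tfrac{p_{i+1}}{2}\bigr)\phi_{i+1}$. The two hypothesis forms in the lemma ($(X,Y)\notin G_i$ with $A\subset X$, $B\subset Y$; or $(X,Y)\in G_i$ with $A\subset X^j_Y$, $B\subset Y^{3-j}_X$) are exactly what makes this induction self-feeding.

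The first step is a structural observation. Since $P_i$ refines $P_{h+1}$ for $h<i$, for distinct $X,Y\in P_i$ the subgraph $H_h$ meets $X\times Y$ only inside a complete bipartite pair coming from a $G_h$-edge; hence $H_h$ places some edge between $X$ and $Y$ precisely when $(X,Y)$ is an edge of $G_{h,i}$, and then $X\times Y\subseteq E(H_h)$. So if $(X,Y)\notin G^i$, no $H_h$ with $h\le i$ contributes any edge between $X$ and $Y$; and in the second hypothesis form, $H_i$ joins only ``aligned'' halves $X^d_Y\times Y^d_X$ while $A,B$ sit in opposite halves, so $H_i$ contributes nothing between $A$ and $B$ either. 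Thus in both cases $e_G(A,B)=e_{\bigcup_{h\ge i+1}H_h}(A,B)$. Running the same argument at level $i+1$ shows that for every pair $(X',Y')$ of parts of $P_{i+1}$ with $X'\subseteq X$, $Y'\subseteq Y$ meeting $A$ and $B$ respectively, one has $(X',Y')\notin G^{i+1}$ (a witness would force $(X,Y)\in G^i$ in the first case, or contradict the half-alignment in the second).

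For the inductive step, partition $A=\bigcup_{X'}A_{X'}$ and $B=\bigcup_{Y'}B_{Y'}$ over the $a_i=m_{i+1}/m_i$ parts $X'\subseteq X$, $Y'\subseteq Y$ of $P_{i+1}$. If $(X',Y')\notin G_{i+1}$, then $H_{i+1}$ gives nothing between $A_{X'}$ and $B_{Y'}$, so the induction hypothesis (first form) at level $i+1$ gives $e_G(A_{X'},B_{Y'})=\phi_{i+1}|A_{X'}||B_{Y'}|$ up to a $6m_{i+2}^{-1/80}p_{i+2}n\sqrt{|A_{X'}||B_{Y'}|}$ error. If $(X',Y')\in G_{i+1}$, split $A_{X'}=A^1\cup A^2$, $B_{Y'}=B^1\cup B^2$ along the two halves: the aligned blocks $A^1\times B^1$, $A^2\times B^2$ lie entirely in $H_{i+1}$, contributing $|A^1||B^1|+|A^2||B^2|$ exactly, while the cross blocks $A^1\times B^2$, $A^2\times B^1$ receive no edge from any $H_h$ with $h\le i+1$ and are governed, via the induction hypothesis (second form) at level $i+1$, by $\phi_{i+1}(|A^1||B^2|+|A^2||B^1|)$ up to a comparable error. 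Summing over all $(X',Y')$, using $\sum|A_{X'}||B_{Y'}|=|A||B|$ together with the identity $\sum_{(X',Y')\in G_{i+1}}(|A^1||B^1|+|A^2||B^2|)=e_{H_{i+1}}(A,B)$ (a pair $(a,b)$ is an $H_{i+1}$-edge iff its $P_{i+1}$-parts form a $G_{i+1}$-edge and it lies in aligned halves), the main terms collapse to
$$e_G(A,B)=\phi_{i+1}|A||B|+(1-\phi_{i+1})\,e_{H_{i+1}}(A,B)\quad(\text{plus errors}),$$
and Lemma \ref{Hiuniform} gives $e_{H_{i+1}}(A,B)=\tfrac{p_{i+1}}{2}|A||B|\pm 2m_{i+1}^{-1/80}p_{i+1}n\sqrt{|A||B|}$, so the main term equals $\bigl(\phi_{i+1}+(1-\phi_{i+1})\tfrac{p_{i+1}}{2}\bigr)|A||B|=\phi_i|A||B|$.

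It remains to bound the errors. The $H_{i+1}$-term contributes at most $2m_{i+1}^{-1/80}p_{i+1}n\sqrt{|A||B|}$, and Cauchy--Schwarz bounds the accumulated level-$(i+1)$ errors by $O(1)\cdot m_{i+2}^{-1/80}p_{i+2}n\,a_i\sqrt{|A||B|}$ (since $\sum_{X',Y'}\sqrt{|A_{X'}||B_{Y'}|}\le a_i\sqrt{|A||B|}$); because $m_{i+2}=m_{i+1}a_{i+1}$ with $a_{i+1}=2^{\lfloor\rho m_{i+1}^{9/10}\rfloor}$ and $p_{i+1}\ge m_{i+1}^{-1/10}$, $p_{i+2}\le1$, $a_i\le m_{i+1}$, this second contribution is astronomically smaller than $m_{i+1}^{-1/80}p_{i+1}n\sqrt{|A||B|}$. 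Hence the total error is below $3m_{i+1}^{-1/80}p_{i+1}n\sqrt{|A||B|}$, and dividing by $|A||B|$ yields the claimed bound, the factor $6$ leaving ample slack. The base case $i=s-1$ is immediate: there are no higher levels, so by the structural observation both hypothesis forms force $e_G(A,B)=0=\phi_{s-1}$. The main obstacle is organizational rather than computational: tracking, at each step of the recursion, which of the two hypothesis forms applies to each sub-pair and verifying $(X',Y')\notin G^{i+1}$ each time — it is precisely the two-form statement of the lemma that makes the induction close.
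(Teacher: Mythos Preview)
Your argument is correct, and it is a different (and arguably cleaner) organization of the same core mechanism the paper uses.

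The paper fixes $A,B$ once and for all and runs an \emph{upward} induction on an auxiliary level $i'\ge i$: writing $d_{i'}$ for the density of $A\times B$ pairs that are edges of some $H_\ell$ with $\ell\le i'$, it shows $|d_{i'}-(1-t_{i'})|<q\prod_{h=i+1}^{i'}(1+p_h)$ where $q=2m_{i+1}^{-1/80}p_{i+1}n/\sqrt{|A||B|}$. At each step it partitions the ``not-yet-edge'' portion of $A\times B$ along $P_{i'}$ (one or two pieces per $(X',Y')$, exactly your case split) and applies Lemma~\ref{Hiuniform} at level $i'+1$ to each such piece, summing the errors via Jensen; the multiplicative $\prod(1+p_h)\le e$ absorbs the accumulation.

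You instead induct \emph{downward} on $i$ in the lemma statement itself, recurse into $P_{i+1}$-sub-pieces, and invoke Lemma~\ref{Hiuniform} just once per level, on the full $A,B$, via the neat identity $\sum_{(X',Y')\in G_{i+1}}(|A^1||B^1|+|A^2||B^2|)=e_{H_{i+1}}(A,B)$. Your error then telescopes differently: the inherited IH error $O(m_{i+2}^{-1/80}p_{i+2}\,a_i)$ is dwarfed by the current $H_{i+1}$ term because $a_{i+1}^{1/80}$ is enormously larger than $m_{i+1}^{11/10}$, so the constant never grows. Both routes rest on the same two ingredients (the quasirandomness of each $H_h$ and the recursion $\phi_i=\phi_{i+1}+(1-\phi_{i+1})p_{i+1}/2$); the paper's forward bookkeeping keeps $A,B$ fixed and pays a mild multiplicative accumulation, whereas your backward recursion hides the bookkeeping in the two-form hypothesis and pays essentially only the top-level error. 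One cosmetic point: at $i=s-2$ your written ``IH error'' would formally involve the undefined $p_s$; this is harmless since, as you note, the level-$(s-1)$ statement holds with zero error, so the inductive step at $i=s-2$ simply has no IH contribution.
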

\begin{proof}
For $i' \geq i$, let $d_{i'}$ denote the density between $A$ and $B$ of the
pairs which are edges of at least one $H_{\ell}$ with $\ell \leq i'$. In
particular, by the choice of $A$ and $B$, no edges of $H_h$ for $h \leq i$ go
between $A$ and $B$, and hence $d_i=0$. Furthermore, we have
$d_{i+1}=d_{H_{i+1}}(A,B)$. By Lemma \ref{Hiuniform}, the number of edges
between $A$ and $B$ in $H_{i+1}$ satisfies
\begin{equation}\label{dtifirst}\left|e_{H_{i+1}}(A,B)-\frac{p_{i+1}}{2}|A||B|\right|\leq
2m_{i+1}^{-1/80}p_{i+1}n\sqrt{|A||B|}.\end{equation}

 Let $t_{i}=1$ and for $i'>i$, let
$t_{i'}=\prod_{i + 1 \leq h \leq i'}\left(1-\frac{p_h}{2}\right)$. We prove by
induction on $i'$ that for each $i' \geq i+1$, we have
\begin{equation}\label{dti}
| d_{i'}-(1-t_{i'})|<q\prod_{h=i+1}^ {i'}(1+p_h),
\end{equation}
where $$q=2m_{i+1}^{-1/80}p_{i+1}\frac{n}{\sqrt{|A||B|}}.$$
In the base case $i'=i+1$, we have the desired estimate (\ref{dti}) from dividing
(\ref{dtifirst}) out by $|A||B|$. So suppose we have established (\ref{dti})
for $i'$, and we next prove it for $i'+1$, completing the proof of (\ref{dti})
by induction.

Let $X',Y' \in P_{i'}$ with $X' \subset X$, $Y' \subset Y$, and $(X',Y')$ not
an edge of $G^{i'}$. If $(X',Y')$ is not an edge of $G_{i'}$, letting $A'=X'
\cap A$ and $B'=Y' \cap B$, or if $(X',Y')$ is an edge of $G_{i'}$, and letting
$j \in \{1,2\}$ and $A'=X_Y'^j \cap A$ and $B'=Y_X'^{3-j} \cap B$, we have
$$\left|e_{H_{i'+1}}(A',B')-\frac{p_{i'+1}}{2}|A'||B'|\right| \leq
2m_{i'+1}^{-1/80}p_{i'+1}n\sqrt{|A'||B'|}.$$

Each such pair $X',Y'$ with $(X',Y')$ not an edge of $G_{i'}$ gives rise to a
pair $(A',B')$, and each such pair with $(X',Y')$ an edge of $G_{i'}$ gives
rise to two pairs $(A',B')$ of this form, one for each $j \in \{1,2\}$. The
number of pairs $(X',Y')$ is
$\left(1-d_{G^{i'}}(X,Y)\right)\left(m_{i'}/m_i\right)^2$. The total number
$\Delta$ of such pairs $(A',B')$ is therefore
$$\Delta=\left(1-d_{G^{i'}}(X,Y)+d_{G_{i'}}(X,Y)\right)(m_{i'}/m_i)^2.$$ On the
other hand, the sum of $|A'||B'|$ over all such pairs is $(1-d_{i'})|A||B|$.
Hence, the average value of $|A'||B'|$ over all such pairs $(A',B')$ is
$(1-d_{i'})|A||B|/\Delta.$

By the triangle inequality, summing over all such pairs $A',B'$, we have the
number of edges $E$ of $H_{i'+1}$ between $A$ and $B$ which are not edges of
any $H_{\ell}$ with $\ell \leq i'$ satisfies
\begin{eqnarray*}\left|E-\frac{p_{i'+1}}{2}(1-d_{i'})|A||B|\right| & \leq &
\sum_{A',B'}2m_{i'+1}^{-1/80}p_{i'+1}n\sqrt{|A'||B'|} \\ & \leq &
2m_{i'+1}^{-1/80}p_{i'+1}n((1-d_{i'})|A||B|)^{1/2}\Delta^{1/2} \\ & \leq &
4m_{i'+1}^{-1/80}p_{i'+1}n(|A||B|)^{1/2}m_{i'}/m_i,\end{eqnarray*}
where we used Jensen's inequality for the concave function $f(y)=y^{1/2}$.

Hence,
\begin{eqnarray*}|d_{i'+1}-(1-t_{i'+1})|& = &
\left|d_{i'}(A,B)+\frac{E}{|A||B|} -(1-t_{i'+1})\right| \leq
 |d_{i'}-(1-t_{i'})|+\left|\frac{E}{|A||B|} -(t_{i'}-t_{i'+1})\right| \\ & = &
 |d_{i'}-(1-t_{i'})|+\left|\frac{E}{|A||B|} -\frac{p_{i'+1}}{2}t_{i'}\right|\\
& \leq &   (1+\frac{p_{i'+1}}{2})|d_{i'}-(1-t_{i'})|+\left|\frac{E}{|A||B|}
-\frac{p_{i'+1}}{2}(1-d_{i'})\right| \\ & \leq &
(1+\frac{p_{i'+1}}{2})|d_{i'}-(1-t_{i'})|+4m_{i'+1}^{-1/80}p_{i'+1}n(|A||B|)^{-1/2}m_{i'}/m_i
\\ & \leq & q\left(1+\frac{p_{i'+1}}{2}\right)\prod_{h=i+1}^{i'}(1+p_h) +
4m_{i'+1}^{-1/80}p_{i'+1}n(|A||B|)^{-1/2}m_{i'}/m_i \\ & \leq &
q\prod_{h=i+1}^{i'+1}(1+p_h),
\end{eqnarray*}
which completes the induction proof of (\ref{dti}).

As $\sum p_h \leq 1$, we have $\prod (1+p_h) \leq e$. From (\ref{dti}) with
$i'=s-1$, we get
$$\left|d_G(A,B)-\left(1-\prod_{h>i}(1-p_h)\right)\right|=|d_{s-1}-(1-t_{s-1})|<q\prod_{h=i+1}^{s-1}(1+p_h)<6m_{i+1}^{-1/80}p_{i+1}\frac{n}{\sqrt{|A||B|}},$$
which completes the proof.
\end{proof}

The following lemma is the main result in this subsection, showing that
$q(\mathcal{A})-q(P')$ is small, where the mean square densities are with
respect to the graph $G$.

\begin{lemma}
For $P'=P_{r,h_r-2}$, we have $q(\mathcal{A}) \leq q(P')+\frac{\epsilon}{2}$.
\end{lemma}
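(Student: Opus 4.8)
The plan is to combine three facts: an upper bound on $q(\mathcal{A})$ for \emph{any} equitable partition with $|\mathcal{A}|\le m_r$ (where $m_r=|P_r|=|P_{r,h_r-2}|$), a matching lower bound showing $q(P')$ is nearly extremal among partitions with that many parts, and the quasirandomness estimates of Lemma~\ref{xyab}. First I would recall the standard fact (the "defect form" of Cauchy--Schwarz used in the proof of Szemer\'edi's regularity lemma) that for any partition $\mathcal{A}$ of $V(G)$ into at most $k$ parts, $q(\mathcal{A})\le q^*(k):=\max\{q(\mathcal{Q}):\mathcal{Q}\text{ a partition into }\le k\text{ parts}\}$, and that $q^*$ is attained by some partition; moreover $q(\mathcal{A})\le q(\mathcal{A}')$ whenever $\mathcal{A}'$ refines $\mathcal{A}$. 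So it suffices to show that (a) $q(P')$ is within $O(p_{r,h_r-2})$ plus a small error of $q^*(m_r)$, and (b) that $q(\mathcal{A})-q(P')\le \epsilon/2$ then follows because $q(\mathcal{A})\le q^*(|\mathcal{A}|)\le q^*(m_r)$ and $m_r=|P'|$.

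The heart of the argument is showing $P'$ is \emph{nearly extremal}: among all partitions of $V(G)$ into at most $m_r$ parts, $P'$ essentially maximizes the mean square density. Here is where Lemma~\ref{xyab} enters. That lemma says that between two sets $A\subset X$, $B\subset Y$ with $X,Y\in P_i$ not joined in $G^i$ and lying in "opposite" halves (or with $(X,Y)$ a non-edge of $G_i$), the density $d_G(A,B)$ is within $6m_{i+1}^{-1/80}p_{i+1}\,n/\sqrt{|A||B|}$ of a fixed value $c_i:=1-\prod_{h>i}(1-p_h/2)$ that depends only on the level $i$. In other words, at the scale of $P_i$ the graph $G$ looks like a blow-up of a graph whose densities are essentially determined by the level, so refining $P_i$ beyond what the graphs $G_{i},G_{i+1},\dots$ "see" gains essentially nothing in $q$. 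Quantitatively, I would argue: take the extremal partition $\mathcal{Q}^*$ with $|\mathcal{Q}^*|\le m_r$; pass to its common refinement with $P_r$, which only increases $q$ and still has a bounded number of parts; then on each part of $P_r$, use the near-constancy of the densities (Lemma~\ref{xyab} with $i=r$) together with a convexity/defect-Cauchy--Schwarz estimate — exactly as in Lemma~\ref{almostref} — to show that subdividing a $P_r$-part can increase $q$ by at most (roughly) the contribution of the level-$r$ graph $G_r$, i.e.\ by at most something like $2^{-5}p_r$ plus an error term controlled by the $m_{r+1}^{-1/80}$-type bound from Lemma~\ref{xyab}. Since $P'=P_{r,h_r-2}$ already incorporates the graphs $G_{r,1},\dots,G_{r,h_r-2}$ and the remaining increments come only from $G_{r,h_r-1},G_{r,h_r}$ and deeper levels, and since $\sum_{h>i}p_h=\nu/3\le 2^{-9}\cdot\tfrac13$ is tiny while $p_{r,h_r-2}\le 2^{30}\epsilon^{-4}\epsilon_r + m_{r,h_r-2}^{-1/10}$, I would show the total possible gain is at most $\epsilon/4$; combined with the error terms (each a negative power of $m_1=2^{10}\epsilon^{-2}$, hence $\le\epsilon/4$ in total) this yields $q^*(m_r)\le q(P')+\epsilon/2$, and hence $q(\mathcal{A})\le q(P')+\epsilon/2$.

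The main obstacle I anticipate is making the "near-extremality of $P'$" step rigorous and quantitative: one must control how much an \emph{arbitrary} finer partition can beat $q(P')$, and the clean tool is to reduce to partitions that refine $P_r$ and then bound the per-part gain. The delicate point is that Lemma~\ref{xyab} only applies between \emph{opposite} halves at each level and only for pairs $(X,Y)$ not in $G^i$; for pairs that \emph{are} edges of $G^i$ (or same-side halves) the density is $1$ and contributes a genuine increment — but these are exactly the increments already captured by $P'$ up to level $h_r-2$, so one must carefully separate "increments already in $P'$" from "new increments" and bound only the latter. I would handle this by an induction over levels $i\ge r$ mirroring the induction in the proof of Lemma~\ref{xyab}, at each level splitting pairs of parts according to whether they are edges of $G_i$ or of $G^i$, using the second and fifth bullets of Lemma~\ref{importantstep} to bound the fraction of "$G^i$-edge" pairs by $\nu$, and using Lemma~\ref{xyab} to show the remaining pairs contribute density-squared-increment at most $q$-increment $\le 2^{-5}p_i + (\text{error})$ per level — and the crucial observation is that $\sum_{i>h_r-2}$ of these, taken over level $r$'s tail plus all deeper levels, telescopes to at most $O(\nu)+O(p_{r,h_r-1})$, which after the choice $p_{r,h_r-1}\le 2^{10}\epsilon\cdot(\text{but this one is in }P,\text{ not }P')$ — wait, it is $G_{r,h_r-1}$ that gives the big jump in the previous subsection, so I must be sure to route that increment into $P$ and \emph{not} into the bound on $q(\mathcal{A})$; since $\mathcal{A}$ has $<M_{t-1}$ parts but could have up to $m_r$ parts and $P'=P_{r,h_r-2}$ has exactly $m_r$ parts, the increments from $G_{r,h_r-1}$ and $G_{r,h_r}$ are precisely what separate $q(P')$ from $q(P)$, and the above argument shows $\mathcal{A}$, having at most $|P'|$ parts, cannot access them. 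Once this bookkeeping is set up correctly, the estimate $q(\mathcal{A})\le q(P')+\epsilon/2$ follows.
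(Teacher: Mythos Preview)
Your approach is essentially the paper's, but you have wrapped it in unnecessary scaffolding and in places talk yourself out of the clean argument. The paper does exactly what you eventually suggest: let $\mathcal{A}'$ be the common refinement of $\mathcal{A}$ and $P'$ (so $|\mathcal{A}'|\le |P'|\,|\mathcal{A}|\le |P'|^2$), note $q(\mathcal{A})\le q(\mathcal{A}')$, and bound $q(\mathcal{A}')-q(P')$ by expanding over pairs $X,Y\in P_i$ (where $P_i=P'$) and splitting into three cases: if $(X,Y)$ is an edge of $G^i$ then $d(A,B)=d(X,Y)=1$ and the contribution is zero; if $(X,Y)$ is an edge of $G_i$ but not $G^i$, use the trivial bound $d^2(A,B)-d^2(X,Y)\le 1$ and the fact that this happens for at most a $p_i+m_i^{-1/4}$ fraction of pairs; if $(X,Y)$ is neither, apply Lemma~\ref{xyab} once to both $d(A,B)$ and $d(X,Y)$, so their difference is at most $12 m_{i+1}^{-1/80}p_{i+1}\,n/\sqrt{|A||B|}$, and a Cauchy--Schwarz sum over the at most $m_i^2$ pairs $(A,B)$ inside each $(X,Y)$ gives the bound $24 m_{i+1}^{-1/80}p_{i+1}m_i^2$. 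Summing, $q(\mathcal{A})-q(P')\le p_i+m_i^{-1/4}+24 m_{i+1}^{-1/80}p_{i+1}m_i^2\le 3p_i\le \epsilon/2$, using that $p_i=p_{r,h_r-2}$ is small.

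Two points where you overcomplicate: first, the detour through $q^*(k)$ and an ``extremal partition'' is unnecessary---you can and should work with $\mathcal{A}$ directly. Second, your proposed induction over levels $i\ge r$ is redundant because that induction is already packaged inside Lemma~\ref{xyab}; a single application at level $i$ (with $P_i=P'$) suffices. Your worry about ``routing the big increment from $G_{r,h_r-1}$'' is misplaced: since Lemma~\ref{xyab} is applied at level $i$, all contributions from $G_h$ with $h>i$---including the large $p_{r,h_r-1}$---are absorbed into the single target value $1-\prod_{h>i}(1-p_h/2)$ that \emph{both} $d(A,B)$ and $d(X,Y)$ are close to, and hence cancel in the difference. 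The only level-specific quantity that survives is the fraction of $G_i$-edges, controlled by $p_i=p_{r,h_r-2}$, which is the small one by construction.
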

\begin{proof}
Consider the partition $\mathcal{A}'$ which is the common refinement of $P'$
and $\mathcal{A}$. The number of parts of $\mathcal{A}'$ is at most
$|P'||\mathcal{A}| \leq |P'|^2$, and each part of $P'$ is refined into at most
$|\mathcal{A}| \leq |P'|$ parts of $\mathcal{A}'$. Let $i$ be such that $P_i=P_{r,h_r-2}=P'$.
As $\mathcal{A}'$ is a refinement of $P'$, in $H_j$ for each $j<i$ between each pair of parts of
$\mathcal{A}'$ the edge density is $0$ or $1$.
Noting that $\mathcal{A}'$ is a
refinement of $\mathcal{A}$, we have
\begin{equation}\label{qapi}q(\mathcal{A})-q(P_i)\leq q(\mathcal{A}')-q(P_i) =
\sum_{X,Y \in P_i} m_i^{-2}\sum_{A,B \subset \mathcal{A}', A \subset X, B
\subset Y} \frac{|A||B|}{|X||Y|}\left(d^2(A,B)-d^2(X,Y)\right).\end{equation}

Note that the summand in the above sum if $(X,Y)$ is an edge of $G^i$ is $0$ as
in this case  $d(A,B)=d(X,Y)=1$. We have $d^2(A,B)-d^2(X,Y) \leq 1$ for $(X,Y)$
an edge of $G_i$, and the fraction of pairs $(X,Y)$ which are edges of $G_i$ is
at most $p_i+m_i^{-1/4}$.

For a pair $X,Y \in P_i$ with $(X,Y)$ not an edge of $G^i$ or $G_i$, $A,B
\subset \mathcal{A}'$ with $A \subset X$ and $B \subset Y$, we have by Lemma
\ref{xyab} and the triangle inequality that
\begin{equation}\label{dabdxy}|d(A,B)-d(X,Y)| \leq 2 \cdot
6m_{i+1}^{-1/80}p_{i+1}\frac{n}{\sqrt{|A||B|}}.\end{equation}
Summing over all parts $A,B$ of $\mathcal{A}'$  with $A \subset X$ and $B
\subset Y$, we have
\begin{eqnarray*}\sum_{A,B \subset \mathcal{A}', A \subset X, B \subset Y}
|A||B|\left(d^2(A,B)-d^2(X,Y)\right) & \leq & \sum_{A,B \subset
\mathcal{A}', A \subset X, B \subset Y} |A||B|2\left|d(A,B)-d(X,Y)\right| \\ &
\leq & \sum_{A,B \subset \mathcal{A}', A \subset X, B \subset
Y}24m_{i+1}^{-1/80}p_{i+1}n\sqrt{|A||B|} \\ & \leq &
24m_{i+1}^{-1/80}p_{i+1}n^2,
\end{eqnarray*}
where the first inequality follows from $a^2-b^2=(a+b)(a-b) \leq 2(a-b)$ for $0
\leq a,b \leq 1$, the second inequality is by (\ref{dabdxy}), and the last
inequality is by using the Cauchy-Schwarz inequality, noting that $$\sum_{A,B
\subset \mathcal{A}', A \subset X, B \subset Y} |A||B|=|X||Y|=(n/m_i)^2,$$ and
the number of pairs $A, B \subset \mathcal{A}'$ satisfying  $A \subset X, B
\subset Y$ is at most $m_i^2$.

Dividing out by $|X||Y|=(n/m_i)^2$, we have,
\begin{equation}\label{frest}\sum_{A,B \subset \mathcal{A}', A \subset X, B
\subset Y} \frac{|A||B|}{|X||Y|}\left(d^2(A,B)-d^2(X,Y)\right) \leq
24m_{i+1}^{-1/80}p_{i+1}m_i^2.
 \end{equation}

From the estimate (\ref{frest}), we have from (\ref{qapi}) that
\begin{equation} q(\mathcal{A})-q(P')\leq
p_i+m_i^{-1/4}+24m_{i+1}^{-1/80}p_{i+1}m_i^2 \leq 3p_i \leq
\frac{\epsilon}{2},
\end{equation}
where we used $$p_i=\max(m_i^{-1/10},2^{30}\epsilon^{-4}\epsilon_r),$$
$$\epsilon_r \leq \epsilon_1= f(1)=2^{-100}\epsilon^6,$$ $$i \geq h_1-2 =
\frac{\epsilon^5}{2^{70}\epsilon_1}-2 \geq 2^{29}\epsilon^{-1}$$ and hence $m_i
\geq (6/\epsilon)^{10}$. This completes the proof.

\end{proof}

\section{Induced graph removal lemma} \label{indremovalsection}

The induced graph removal lemma states that for any fixed graph $H$ on $h$
vertices and $\epsilon>0$, there is $\delta=\delta(\epsilon,H)>0$ such that if
a graph $G$ on $n$ vertices has at most $\delta n^h$ induced copies of $H$,
then we can add or delete $\epsilon n^2$ edges of $G$ to obtain an induced
$H$-free graph. The main goal of this section is to prove Theorem
\ref{inducedtower}, which gives a bound on $\delta^{-1}$ which is a tower in $h$ of height polynomial in $\epsilon^{-1}$. We in fact prove the key corollary of the strong regularity lemma, Lemma \ref{strongeasycor}, with a tower-type bound.
This is sufficient to prove the desired tower-type bound for the induced graph removal lemma.

We first use the weak regularity lemma of Duke, Lefmann, and R\"odl to find a
large subset of a graph which is $\epsilon$-regular with itself.  By
iteratively pulling out such subsets and redistributing the set of leftover
vertices, we obtain a partition of any vertex subset into large vertex subsets
each of which is $\epsilon$-regular with itself. Then, in Subsection
\ref{inducedsub3}, we establish Lemma \ref{scrl}, the strong cylinder
regularity lemma, with a tower-type bound. We show in Subsection
\ref{inducedsub4} that the strong cylinder regularity lemma implies the key
corollary of the strong regularity lemma, Lemma \ref{strongeasycor}, with a
tower-type bound. This in turn implies Theorem \ref{inducedtower}.

 In this section and the next, we call a pair $(A,B)$ of vertex subsets of a graph {\it
$\epsilon$-regular} if for all $A' \subset A$ and $B' \subset B$ with $|A'|
\geq \epsilon|A|$ and $|B'| \geq \epsilon |B|$, we have
$\left|d(A',B')-d(A,B)\right| \leq \epsilon$.

\subsection{The Duke-Lefmann-R\"odl regularity lemma} \label{inducedsub2}

Given a $k$-partite graph $G=(V,E)$ with $k$-partition $V=V_1 \cup \ldots \cup
V_k$, we will consider a partition $\mathcal{K}$ of the cylinder $V_1 \times
\cdots \times V_k$ into cylinders $K=W_1 \times \cdots \times W_k$, $W_i
\subset V_i$ for $i=1,\ldots,k$, and we let $V_i(K)=W_i$.  Recall from the
introduction that a cylinder is $\epsilon$-regular if all the ${k \choose 2}$
pairs of subsets $(W_i,W_j)$, $1 \leq i < j \leq k$, are $\epsilon$-regular.
The partition $\mathcal{K}$ is $\epsilon$-regular if all but an
$\epsilon$-fraction of the $k$-tuples $(v_1,\ldots,v_k) \in V_1 \times \cdots
\times V_k$ are in $\epsilon$-regular cylinders in the partition $\mathcal{K}$.

The weak regularity lemma of Duke, Lefmann, and R\"odl \cite{DLR} is now as
follows. Note that, like the Frieze-Kannan weak regularity lemma, it has only a
single-exponential bound on the number of parts, which is much better than the
tower-type bound on the number of parts in Szemer\'edi's regularity lemma.
Duke, Lefmann, and R\"odl \cite{DLR} used their regularity lemma to derive a
fast approximation algorithm for the number of copies of a fixed graph in a
large graph.

\begin{lemma}\label{dukelefrod}
Let $0<\epsilon<1/2$ and $\beta=\beta(\epsilon)=\epsilon^{k^2\epsilon^{-5}}$.
Suppose $G=(V, E)$ is a $k$-partite graph with $k$-partition $V=V_1 \cup \ldots
\cup V_k$. Then there exists an $\epsilon$-regular partition $\mathcal{K}$ of
$V_1 \times \cdots \times V_k$ into at most $\beta^{-1}$ parts such that, for
each $K \in \mathcal{K}$ and $1 \leq i \leq k$, we have $|V_i(K)| \geq
\beta|V_i|$.
\end{lemma}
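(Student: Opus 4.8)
The plan is to run a mean-square-density (index) increment argument directly on partitions of the cylinder $V_1 \times \cdots \times V_k$ into sub-cylinders. The key point, which is what will make the final bound only singly exponential rather than tower-type, is that one ``round'' of refinement is arranged so as to multiply the number of parts by only a bounded factor while increasing the index by a fixed amount; contrast this with Szemer\'edi's proof, where one takes common refinements and the part count can square each step.

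Concretely, for a partition $\mathcal{K}$ of $V_1 \times \cdots \times V_k$ into cylinders, set $\mu(K) = \prod_{i=1}^{k} |V_i(K)|/|V_i|$, so that $\sum_{K \in \mathcal{K}} \mu(K) = 1$, and define the index
$$q(\mathcal{K}) = \sum_{K \in \mathcal{K}} \mu(K) \sum_{1 \le i < j \le k} d\big(V_i(K), V_j(K)\big)^2 \in \big[0, \tbinom{k}{2}\big].$$
Starting from the trivial partition $\mathcal{K}_0 = \{V_1 \times \cdots \times V_k\}$, I would iterate: if $\mathcal{K}_\ell$ is $\epsilon$-regular, stop; otherwise more than an $\epsilon$-fraction of the $k$-tuples lie in non-$\epsilon$-regular cylinders, and for each such cylinder $K$ I fix a pair $i_K < j_K$ together with witnessing subsets $A_K \subseteq V_{i_K}(K)$ and $B_K \subseteq V_{j_K}(K)$ with $|A_K|\ge\epsilon|V_{i_K}(K)|$, $|B_K|\ge\epsilon|V_{j_K}(K)|$ and $|d(A_K,B_K) - d(V_{i_K}(K),V_{j_K}(K))| > \epsilon$. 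I then obtain $\mathcal{K}_{\ell+1}$ by replacing each such $K$ by the (at most four) cylinders gotten from splitting $V_{i_K}(K)$ along $A_K$ and $V_{j_K}(K)$ along $B_K$ and leaving all other sides intact, while leaving the $\epsilon$-regular cylinders untouched; thus $|\mathcal{K}_{\ell+1}| \le 4|\mathcal{K}_\ell|$. The crucial estimate, by a cylinder-by-cylinder defect Cauchy--Schwarz computation, is that one round raises the index by more than $\epsilon^5$: for a fixed irregular $K$, every pair other than $(i_K,j_K)$ contributes at least as much after the split (convexity), while the variance bound makes the $(i_K,j_K)$-summand grow by at least $\mu(K)\,\tfrac{|A_K|}{|V_{i_K}(K)|}\,\tfrac{|B_K|}{|V_{j_K}(K)|}\,\epsilon^2 \ge \mu(K)\,\epsilon^4$ (the degenerate case $A_K = V_{i_K}(K)$ being handled by splitting only the $j_K$-side, still gaining $\ge \mu(K)\epsilon^4$), and summing over all irregular $K$, whose $\mu$-measures total more than $\epsilon$, gives $q(\mathcal{K}_{\ell+1}) - q(\mathcal{K}_\ell) > \epsilon^5$. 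Since $q \le \binom{k}{2}$, the iteration will halt after fewer than $L := \binom{k}{2}\epsilon^{-5} \le k^2\epsilon^{-5}$ rounds with an $\epsilon$-regular partition having at most $4^{L} \le 2^{k^2\epsilon^{-5}} \le \beta^{-1}$ cylinders (using $\epsilon < 1/2$).

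The main obstacle I expect is maintaining the lower bound $|V_i(K)| \ge \beta|V_i|$ on every side of every cylinder: a split $V_i(K) = A_K \cup (V_i(K)\setminus A_K)$ only controls $|A_K| \ge \epsilon|V_i(K)|$, so the leftover could be tiny. To handle this I would only ever refine a cylinder all of whose sides still exceed $\beta|V_i|$, freezing the others for good, and whenever the natural witness leaves a tiny leftover in some coordinate I would replace the corresponding witness subset by the full side --- legitimate at the cost of passing from $\epsilon$ to $\epsilon/2$ in the regularity parameter, since a set occupying all but a negligible fraction of a side carries essentially the same density --- so that every cylinder we do split can be split ``safely'', both resulting pieces having size at least $\beta|V_i|$. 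A short calculation shows the frozen cylinders carry only a negligible fraction of the $k$-tuples throughout (their total measure is at most $k\beta\,|\mathcal{K}_\ell| \le k\beta\,4^{L}$, far below $\epsilon$ for the stated $\beta$), so the index-increment argument is unaffected, and since a surviving side is split at most once per round it shrinks by at most a factor $\epsilon^{L} > \beta$.

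The remaining steps are routine: verifying $\sum_K \mu(K) = 1$ and $0 \le q(\mathcal{K}) \le \binom{k}{2}$; the defect Cauchy--Schwarz inequality itself; and the bookkeeping of absolute constants guaranteeing that $4^{L}$, the side shrinkage $\epsilon^{L}$, and the frozen-mass bound $k\beta\,4^{L}$ are all comfortably controlled by $\beta^{\pm 1}$ with $\beta = \epsilon^{k^2\epsilon^{-5}}$, so that both conclusions of the lemma --- at most $\beta^{-1}$ cylinders and all sides of size at least $\beta|V_i|$ --- hold for this value of $\beta$.
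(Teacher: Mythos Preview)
The paper does not prove this lemma; it is quoted from Duke, Lefmann and R\"odl \cite{DLR}. Your index-increment strategy --- split only the two witnessed coordinates of each irregular cylinder, so the part count at most quadruples per round while the index rises by more than $\epsilon^5$, terminating in fewer than $L=\binom{k}{2}\epsilon^{-5}$ rounds --- is exactly the standard proof and is correct.

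The one place your sketch is muddled is the side-size lower bound, and your two proposed mechanisms work at cross purposes. Freezing a cylinder once some side drops below $\beta|V_i|$ does \emph{not} deliver the conclusion: that frozen cylinder, with its too-small side, sits in the final partition. And ``both resulting pieces having size at least $\beta|V_i|$'' is not what your witness-replacement actually guarantees --- if the current side is only slightly above $\beta|V_i|$, the witness itself (an $\epsilon$-fraction of the side) is already well below $\beta|V_i|$. What does work is your witness-replacement alone, with the threshold for ``tiny leftover'' taken to be $\epsilon/2$ of the \emph{current} side (not $\beta|V_i|$ and not ``negligible''): if $|V_{i_K}(K)\setminus A_K|<\tfrac{\epsilon}{2}|V_{i_K}(K)|$ then replacing $A_K$ by the full side changes $d(A_K,B_K)$ by less than $\epsilon/2$, so the deviation survives as $>\epsilon/2$ and the one-sided split still gains at least $\mu(K)\,b\,(\epsilon/2)^2\ge\mu(K)\epsilon^3/4$; and both complements cannot simultaneously be this small, else the deviation would already be below $\epsilon$. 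With this choice every actual split leaves both pieces at least an $\epsilon/2$-fraction of the old side, so after fewer than $L$ rounds every side has size at least $(\epsilon/2)^{L}|V_i|$ --- not the $\epsilon^{L}$ you wrote, but since $L<\tfrac{1}{2}k^2\epsilon^{-5}$ and $1/\epsilon>2$ this is still comfortably above $\beta=\epsilon^{k^2\epsilon^{-5}}$. Drop the freezing; it is never invoked.
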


\subsection{Finding an $\epsilon$-regular subset}

For a graph $G=(V,E)$, a vertex subset $U \subset V$ is {\it
$\epsilon$-regular} if the pair $(U,U)$ is $\epsilon$-regular. The following
lemma demonstrates that any graph contains a large vertex subset which is
$\epsilon$-regular.

\begin{lemma}\label{oneepsilonregularsubset}
For each $0<\epsilon<1/2$, let
$\delta=\delta(\epsilon)=2^{-\epsilon^{-(10/\epsilon)^{4}}}$. Every graph
$G=(V,E)$ contains an $\epsilon$-regular vertex subset $U$ with $|U| \geq
\delta |V|$.
\end{lemma}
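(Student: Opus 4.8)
The plan is to deduce this from the Duke--Lefmann--R\"odl regularity lemma, Lemma~\ref{dukelefrod}. First I would fix an integer $k$ depending only on $\epsilon$ together with a small auxiliary parameter $\epsilon'$ (to be pinned down later), partition $V$ into $k$ parts $V_1,\dots,V_k$ of nearly equal size, regard $G$ as a $k$-partite graph on this partition, and apply Lemma~\ref{dukelefrod} with parameter $\epsilon'$. This produces a single cylinder $K=W_1\times\cdots\times W_k$ all of whose $\binom{k}{2}$ pairs $(W_i,W_j)$ are $\epsilon'$-regular, with $|W_i|\ge\beta|V_i|\ge\beta n/k$ where $\beta=\beta(\epsilon')$. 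The set $U$ will be built out of the $W_i$; the point of taking $k$ moderately large and $\epsilon'$ small is that the edge density of $G$ between any two large subsets that are ``spread out'' across the $W_i$ is then essentially pinned down by the bipartite densities $d(W_i,W_j)$.

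Concretely, to check $\epsilon$-regularity of a candidate $U$ I would take $A,B\subseteq U$ with $|A|,|B|\ge\epsilon|U|$, slice them as $A_i=A\cap W_i$ and $B_j=B\cap W_j$, and compare $e(A,B)=\sum_{i,j}e(A_i,B_j)$ with $\sum_{i,j}d(W_i,W_j)|A_i||B_j|$: whenever $|A_i|\ge\epsilon'|W_i|$ and $|B_j|\ge\epsilon'|W_j|$ the term $e(A_i,B_j)$ is controlled by $\epsilon'$-regularity of $(W_i,W_j)$, while the contribution of the ``small'' slices is negligible once $\epsilon'\ll\epsilon^2/k$. For this to yield $|d(A,B)-d(U,U)|\le\epsilon$ one also needs the relevant weighted average of the $d(W_i,W_j)$ to be essentially independent of $A$ and $B$; I would arrange this by passing from $\bigcup W_i$ to a sub-configuration on which these densities are (almost) all equal --- for instance by a pigeonhole/averaging step over the $\binom{k}{2}$ densities, or by intersecting two of the $W_i$ for a regular pair $(W_i,W_j)$, in which case $d(U,U)\approx d(W_i,W_j)$ directly and the argument is cleanest.

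The crux --- and the reason the bound on $\delta$ is a tower in $\epsilon^{-1}$ of height about two rather than single-exponential --- is the ``diagonal'' contribution $\sum_i e(A_i,B_i)$ coming from edges \emph{inside} a single $W_i$: Lemma~\ref{dukelefrod} says nothing about these, and since $\sum_i|W_i|^2$ can be as large as $n^2/k$ with $k$ essentially bounded (the bound $\beta^{-1}$ in Lemma~\ref{dukelefrod} blows up far too fast in $k$ to allow $k$ comparable to $\beta^{-1}$), this term cannot simply be discarded. I would deal with it by applying Lemma~\ref{dukelefrod} a second time, now at the finer scale, to tidy up the interiors of the $W_i$ --- equivalently, by choosing $\epsilon'$ to be roughly $\epsilon$ times the bound $\beta(\epsilon)$ coming from a first, coarse application, so that the set finally produced is an $(\epsilon'/\epsilon)$-fraction of each $W_i$ it meets and the diagonal becomes negligible. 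Tracking the parameters through this two-stage application of Lemma~\ref{dukelefrod} gives $|U|\ge\delta n$ with $\delta$ a tower in $\epsilon^{-1}$ of absolute constant height, which after the (unoptimised) bookkeeping can be taken to be $2^{-\epsilon^{-(10/\epsilon)^4}}$. I expect the extraction of a single self-regular set from the multipartite conclusion of Lemma~\ref{dukelefrod}, and the resulting parameter juggling, to be the main difficulty; the final step showing $(U,U)$ is $\epsilon$-regular could alternatively be routed through closed walks, since once $G[U]$ has at most $(d^4+\mathrm{poly}(\epsilon))|U|^4$ closed walks of length $4$ Lemma~\ref{p1p2} gives the conclusion immediately.
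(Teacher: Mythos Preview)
Your outline begins correctly --- apply the Duke--Lefmann--R\"odl lemma to an arbitrary balanced $k$-partition, extract a cylinder $W_1\times\cdots\times W_k$ with all pairs $\epsilon'$-regular, and then pass to a sub-configuration on which the densities $d(W_i,W_j)$ are nearly equal --- and this is exactly the route the paper takes. But your treatment of the two key steps diverges from the paper's, and in one place goes wrong.

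First, the ``pigeonhole/averaging step over the $\binom{k}{2}$ densities'' needs to be Ramsey, not plain pigeonhole: you need $s$ parts with \emph{all} $\binom{s}{2}$ pairwise densities in the same interval of length $\alpha$, i.e., a monochromatic clique in a $t$-colouring of $K_k$ (Lemma~\ref{steps} in the paper). This forces $k\ge r_t(s)\le t^{ts}$ with $s,t\approx\epsilon^{-2}$, so $k$ is already single-exponential in $\epsilon^{-O(1)}$, and $\beta(\epsilon',k)^{-1}$ is the second exponential. Your alternative suggestion of ``intersecting two of the $W_i$'' does not make sense here, since the $W_i$ are pairwise disjoint.

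Second, and more importantly, you misdiagnose the diagonal term $\sum_i e(A_i,B_i)$. Once the parts $W_{i_1},\dots,W_{i_s}$ have \emph{equal size} (which the paper arranges in Lemma~\ref{onecyl}) and $U=W_{i_1}\cup\cdots\cup W_{i_s}$, one has $\sum_j|W_{i_j}|^2=|U|^2/s$, so the diagonal contributes at most a $1/s$-fraction of $|U|^2$. Taking $s\ge 2\alpha^{-1}\approx\epsilon^{-2}$ makes this negligible outright (Lemma~\ref{sunioncyl}); there is no need for a second application of DLR ``at the finer scale,'' and indeed it is unclear what that would mean for a single part $W_i$. Your worry that ``$\beta^{-1}$ blows up far too fast in $k$'' is beside the point: one only needs $s$ --- the number of parts \emph{retained after Ramsey} --- to be polynomial in $\epsilon^{-1}$, while $k$ can (and must) be exponentially larger. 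With $\alpha=(\epsilon/3)^2$ and $s=t=\lceil 2\alpha^{-1}\rceil$, Lemmas~\ref{steps} and~\ref{sunioncyl} give the result directly.
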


Lemma \ref{dukelefrod} implies that each $k$-partite graph $G=(V,E)$ with
$k$-partition $V=V_1 \cup \ldots \cup V_k$ has a cylinder $K$ which is
$\epsilon$-regular in which each part has size $|V_i(K)| \geq
\epsilon^{k^2\epsilon^{-5}}|V_i|$. The proof can be easily modified to show
that if each part of $G$ has the same size, then each
part of the $\epsilon$-regular cylinder $K$ has equal size, which is at least
$\epsilon^{k^2\epsilon^{-5}}|V_i|$. This implies that for any graph $G=(V,E)$,
if $G$ has at least $k$ vertices, by considering any $k$ vertex disjoint
subsets of equal size $\lfloor |G|/k \rfloor \geq |G|/(2k)$, and then applying
this result, we get the following lemma.

\begin{lemma}\label{onecyl} For each $0<\epsilon<1/2$, any graph $G=(V,E)$ on at least $k$ vertices has an
$\epsilon$-regular $k$-cylinder with parts of equal size, which is at least
$\frac{1}{2k}\epsilon^{k^2\epsilon^{-5}}|V|$.
\end{lemma}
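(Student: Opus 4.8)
The plan is to deduce the lemma from the Duke--Lefmann--R\"odl regularity lemma, Lemma~\ref{dukelefrod}, by passing to a balanced $k$-partite subgraph.

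First I would set $n=|V|$ and $m=\lfloor n/k\rfloor$. Since $n\ge k$ we have $m\ge 1$, and since $\lfloor x\rfloor\ge x/2$ for every real $x\ge 1$ (if $1\le x<2$ then $\lfloor x\rfloor=1\ge x/2$, while if $x\ge 2$ then $\lfloor x\rfloor>x-1\ge x/2$), it follows that $m\ge n/(2k)$. As $km\le n$, I can choose pairwise disjoint subsets $V_1,\dots,V_k\subset V$ with $|V_i|=m$ for every $i$. Let $G'$ be the $k$-partite graph with vertex set $V_1\cup\cdots\cup V_k$, $k$-partition $V_1\cup\cdots\cup V_k$, and edge set consisting of all edges of $G$ joining two distinct parts. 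Because whether a cylinder whose parts lie in distinct $V_i$'s is $\epsilon$-regular depends only on the densities between distinct parts, this is the same notion in $G'$ as in $G$; hence it suffices to find the desired cylinder inside $G'$.

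Next I would apply Lemma~\ref{dukelefrod} to $G'$ with the given $\epsilon$, in the equal-parts form announced in the discussion above: since all parts $V_i$ of $G'$ have the same size $m$, this yields an $\epsilon$-regular partition $\mathcal{K}$ of the cylinder $V_1\times\cdots\times V_k$ into at most $\beta^{-1}$ cylinders, where $\beta=\epsilon^{k^2\epsilon^{-5}}$, such that every cylinder $K=W_1\times\cdots\times W_k\in\mathcal{K}$ has $|W_1|=\cdots=|W_k|\ge\beta m$. Finally I would extract one good cylinder: by the definition of $\epsilon$-regularity of $\mathcal{K}$, the cylinders of $\mathcal{K}$ that are not $\epsilon$-regular together contain at most an $\epsilon$-fraction of the $k$-tuples of $V_1\times\cdots\times V_k$; since $\epsilon<1$ this is not all of them, so some $K=W_1\times\cdots\times W_k\in\mathcal{K}$ is $\epsilon$-regular. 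Being a cylinder of $\mathcal{K}$, this $K$ has all parts of the common size $|W_i|\ge\beta m\ge\beta n/(2k)=\tfrac{1}{2k}\epsilon^{k^2\epsilon^{-5}}|V|$, which is precisely the assertion of the lemma.

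The only point that is not mere bookkeeping is the equal-parts strengthening of Lemma~\ref{dukelefrod} used above, and this is exactly the (standard) modification pointed out before the lemma: in the refinement procedure behind the Duke--Lefmann--R\"odl lemma one may replace a refining step of the form $W_i\mapsto\{A,\,W_i\setminus A\}$ by a partition of $W_i$ into many equal blocks that approximate $A$ up to a small error, at the cost only of a lower-order loss in $\epsilon$ and a bounded further increase in the number of parts --- the same device that makes Szemer\'edi's regularity lemma produce equitable partitions --- so that equal-sized input parts force equal-sized parts in every output cylinder. I therefore expect the write-up to consist almost entirely of the routine reduction above, with this modification being the one place requiring a (brief) argument.
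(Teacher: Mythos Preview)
Your proposal is correct and follows essentially the same route as the paper: pick $k$ disjoint equal-sized sets of size $\lfloor |V|/k\rfloor\ge |V|/(2k)$, apply the equal-parts modification of the Duke--Lefmann--R\"odl lemma, and extract a single $\epsilon$-regular cylinder. The paper likewise flags the equal-parts strengthening of Lemma~\ref{dukelefrod} as the only nontrivial ingredient and treats it as a straightforward modification of the original proof.
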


The {\it $t$-color Ramsey number} $r_t(s)$ is the minimum $k$ such that every
$t$-coloring of the edges of the complete graph $K_k$ on $k$ vertices contains
a monochromatic clique of order $s$. A simple pigeonhole argument (see
\cite{GRS}) gives $r_t(s) \leq t^{ts}$ for $t \geq 2$.

\begin{lemma}\label{steps} For integers $s,t \geq 2$, let $k=t^{ts}$. Let
$G=(V,E)$ be a graph on at least $k$ vertices, and $0<\alpha<1/2$.
The graph $G$ contains an $\alpha$-regular $s$-cylinder with parts of equal
size at least $N=\frac{1}{2k}\alpha^{k^2\alpha^{-5}}|V|$ such that the density
between each pair of parts differs by at most $1/t$.
\end{lemma}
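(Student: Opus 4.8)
The plan is to combine the existence of a single $\alpha$-regular $k$-cylinder with equal parts, supplied by Lemma \ref{onecyl}, with the Ramsey bound $r_t(s)\le t^{ts}$ quoted above. First I would set $k=t^{ts}$ and apply Lemma \ref{onecyl} to $G$ with regularity parameter $\alpha$ — the hypothesis $|V|\ge k$ is exactly what is needed — obtaining an $\alpha$-regular $k$-cylinder $W_1\times\cdots\times W_k$ whose parts $W_1,\dots,W_k$ are pairwise disjoint, of equal size at least $\frac{1}{2k}\alpha^{k^2\alpha^{-5}}|V|=N$, and all of whose $\binom{k}{2}$ pairs $(W_i,W_j)$ are $\alpha$-regular.

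Next I would run a Ramsey argument on the index set $[k]$. Colour the edge $\{i,j\}$ of the complete graph on $[k]$ by $\min(\lfloor t\, d(W_i,W_j)\rfloor,\, t-1)\in\{0,\dots,t-1\}$, so that two edges receiving the same colour have their densities lying in a common interval of length $1/t$ (namely $[\ell/t,(\ell+1)/t)$, closed on the right when $\ell=t-1$). This is a $t$-colouring of $K_k$, and since $k=t^{ts}\ge r_t(s)$ there is a set $S\subset[k]$ with $|S|=s$ spanning a monochromatic clique; hence $|d(W_i,W_j)-d(W_{i'},W_{j'})|\le 1/t$ for all pairs $\{i,j\},\{i',j'\}$ from $S$.

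Finally I would take the $s$-cylinder $\prod_{i\in S}W_i$. Its parts are among $W_1,\dots,W_k$, so they have equal size at least $N$; each pair of its parts is one of the pairs $(W_i,W_j)$ with $i,j\in S$, which is already $\alpha$-regular, so the $s$-cylinder is $\alpha$-regular; and by the choice of $S$ the density between any two of its parts differs by at most $1/t$. This is precisely the conclusion of the lemma.

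I do not expect a serious obstacle. The only point one must be slightly careful about is that passing from the $k$-cylinder to an $s$-sub-collection of its parts must not weaken the regularity parameter: this is immediate because we are not shrinking any individual part $W_i$ (in contrast to taking subsets of parts, where $\epsilon$-regularity only survives as $\epsilon/\gamma$-regularity), so the relevant pairs $(W_i,W_j)$ with $i,j\in S$ are literally the same pairs that were $\alpha$-regular before. The other small bookkeeping point is to ensure the density colouring uses exactly $t$ colours rather than $t+1$, which is why the $\min$ with $t-1$ is used so that the Ramsey number $r_t(s)$, and not $r_{t+1}(s)$, is what governs $k$.
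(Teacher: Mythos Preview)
Your proposal is correct and is essentially the same argument as the paper's own proof: apply Lemma \ref{onecyl} to obtain an $\alpha$-regular $k$-cylinder with equal parts of size at least $N$, $t$-colour the pairs by which length-$1/t$ subinterval of $[0,1]$ their density falls into, and use $k=t^{ts}\ge r_t(s)$ to extract a monochromatic $s$-clique. The paper's only cosmetic difference is that it phrases the colouring via a fixed partition $[0,1]=I_1\cup\cdots\cup I_t$ rather than the floor function, and it does not spell out the bookkeeping points you noted about sub-cylinders preserving regularity and avoiding a $(t+1)$st colour.
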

\begin{proof}
Note that $k=t^{ts} \geq r_t(s)$. By Lemma \ref{onecyl}, $G$ contains an
$\alpha$-regular $k$-cylinder $U_1 \times \cdots \times U_k$ with parts of equal size at least $N$. Partition
the unit interval  $[0,1]=I_1 \cup \ldots \cup I_t$ into $t$ intervals of
length $1/t$. Consider the edge-coloring of the complete graph on $k$ vertices $1,\ldots,k$ for
which the color of edge $(i,j)$ is the number $a$ for which the density
$d(U_i,U_j) \in I_a$. Since $k\geq r_t(s)$, there is a monochromatic clique of
order $s$ in this $t$-coloring, and the corresponding parts form the desired
$s$-cylinder.
\end{proof}

\begin{lemma}\label{sunioncyl}
Suppose $\alpha \leq 1/9$ and $U_1 \times \cdots \times U_s$ is an
$\alpha$-regular cylinder in a graph $G=(V,E)$ with $s \geq 2\alpha^{-1}$
parts $U_i$ of equal size and the densities between the pairs of distinct parts
lie in an interval of length at most $\alpha$. Then the set $U=U_1 \cup \ldots
\cup U_s$ is $\epsilon$-regular with itself, where $\epsilon=3\alpha^{1/2}$.
\end{lemma}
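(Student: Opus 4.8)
The plan is to show that for any $A, B \subset U$ with $|A|, |B| \geq \epsilon |U|$, the density $d(A,B)$ differs from $d(U,U)$ by at most $\epsilon = 3\alpha^{1/2}$. The starting observation is that $U$ is partitioned into the $s$ equal-sized parts $U_1, \ldots, U_s$, so we should analyze $A$ and $B$ through their intersections $A_i = A \cap U_i$ and $B_j = B \cap U_j$. Write $a_i = |A_i|/|U_i|$ and $b_j = |B_j|/|U_i|$ (all $|U_i|$ being equal to $m := |U|/s$); then $\sum_i a_i = |A|s/|U| = s\cdot(|A|/|U|)$ and similarly for $b_j$. The density $d(A,B)$ is a weighted average $\sum_{i,j} d(A_i, B_j)\, a_i b_j / (\sum_i a_i)(\sum_j b_j)$, including the diagonal terms $i = j$. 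The key point is that since all the densities $d(U_i, U_j)$ for $i \neq j$ lie in an interval of length $\alpha$, and each off-diagonal pair $(U_i, U_j)$ is $\alpha$-regular, whenever $a_i \geq \alpha$ and $b_j \geq \alpha$ we have $d(A_i, B_j)$ within $\alpha$ of $d(U_i, U_j)$, hence within $2\alpha$ of some fixed reference value $p$ (say $p = d(U_1, U_2)$).

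First I would handle the "bad" index sets. Let $I = \{i : a_i < \alpha\}$ and $J = \{j : b_j < \alpha\}$; the total weight contributed to $\sum_i a_i$ by $i \in I$ is at most $s\alpha$, and since $\sum_i a_i \geq s\epsilon$ (as $|A| \geq \epsilon|U|$), the fraction of the weight coming from bad indices is at most $\alpha/\epsilon = \alpha/(3\alpha^{1/2}) = \tfrac{1}{3}\alpha^{1/2}$, and likewise for $B$. So the contribution of all pairs $(i,j)$ with $i \in I$ or $j \in J$ to the weighted average $d(A,B)$ — and to $d(U,U)$ computed the same way with weights $1/s$ — is small, at most about $\tfrac{2}{3}\alpha^{1/2}$ in each case. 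Next I would handle the diagonal pairs $i = j$ with $i \notin I$: there are at most $s$ of them, each carrying weight $a_i b_i \leq 1$ in the numerator against a total numerator weight $(\sum a_i)(\sum b_j) \geq s^2\epsilon^2$, so the diagonal contributes at most $s/(s^2\epsilon^2) = 1/(s\epsilon^2)$; using $s \geq 2\alpha^{-1}$ and $\epsilon^2 = 9\alpha$ this is at most $1/(2\alpha^{-1}\cdot 9\alpha) = 1/18$, which I would need to check is dominated by a term of order $\alpha^{1/2}$ (true since $\alpha \leq 1/9$ gives $\alpha^{1/2} \geq \tfrac13$). The same bound applies to the diagonal of $d(U,U)$.

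After discarding the bad-index pairs and the diagonal, both $d(A,B)$ and $d(U,U)$ are (up to the small discarded masses) weighted averages of quantities all lying in $[p - 2\alpha, p + 2\alpha]$, hence within $4\alpha$ of each other on the surviving part; combining this with the discarded-mass bounds (each $O(\alpha^{1/2})$) and the diagonal bounds ($O(1)$ but absorbed since $\alpha^{1/2} \geq 1/3$) gives $|d(A,B) - d(U,U)| \leq 3\alpha^{1/2}$ after bookkeeping the constants. The main obstacle is purely the arithmetic of making all these error terms — the bad-weight fraction $\alpha/\epsilon$, the diagonal fraction $1/(s\epsilon^2)$, and the intrinsic $4\alpha$ spread — add up to at most $3\alpha^{1/2}$ rather than some larger constant multiple of $\alpha^{1/2}$; this is where the precise choices $\epsilon = 3\alpha^{1/2}$, $s \geq 2\alpha^{-1}$, and $\alpha \leq 1/9$ all get used, and I would carry the constants carefully, likely splitting into the contributions from $A$'s bad indices, $B$'s bad indices, the diagonal, and the regular off-diagonal remainder, and summing.
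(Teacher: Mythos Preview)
Your overall strategy matches the paper's proof: decompose $d(A,B)$ over the grid $A_i\times B_j$, separate the diagonal, the ``bad'' indices where $a_i<\alpha$ or $b_j<\alpha$, and the good off-diagonal block where $\alpha$-regularity pins every $d(A_i,B_j)$ to within $2\alpha$ of a fixed reference density; then show $|d(A,B)-\gamma|\le\epsilon/2$ and conclude by the triangle inequality.

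There is, however, a genuine gap in your diagonal estimate. You bound the diagonal contribution by $\sum_i a_ib_i/\big((\sum a_i)(\sum b_j)\big)\le s/(s^2\epsilon^2)=1/(s\epsilon^2)\le 1/18$, and then assert this is dominated by a term of order $\alpha^{1/2}$ because ``$\alpha\le 1/9$ gives $\alpha^{1/2}\ge\tfrac13$''. The inequality is backwards: $\alpha\le 1/9$ gives $\alpha^{1/2}\le\tfrac13$. So your diagonal error is an absolute constant $1/18$, not $O(\alpha^{1/2})$, and once $3\alpha^{1/2}<1/9$ (i.e.\ $\alpha<1/729$) your total error budget is blown and the argument fails. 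The fix is a sharper diagonal bound: use
\[
\sum_i a_ib_i \;\le\; \Big(\max_i b_i\Big)\sum_i a_i \;\le\; \sum_i a_i,
\]
so that $\sum_i a_ib_i/\big((\sum a_i)(\sum b_j)\big)\le 1/\sum_j b_j\le 1/(s\epsilon)\le \alpha^{1/2}/6$. This is exactly the estimate the paper uses (phrased there as $\sum_i |A_i||B_i|/(|A||B|)\le\max_i|B_i|/|B|\le 1/(s\epsilon)$). With that one-line replacement your three error terms become $\alpha^{1/2}/6 + 2\alpha + 2\alpha/\epsilon \le \tfrac{3}{2}\alpha^{1/2}=\epsilon/2$, and the proof goes through.
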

\begin{proof}
Let $A,B \subset U$ with $|A|,|B| \geq \epsilon|U|$ and, for $1 \leq i \leq s$,
let $A_i=A \cap U_i$ and $B_i=B \cap U_i$.
Suppose $d(U_i,U_j) \in [\gamma,\gamma+\alpha]$ for $1 \leq i < j \leq s$. Let
$A^1$ be the union of all $A_i$ for which $|A_i| \geq \alpha|U_i|$ and $A^2=A \setminus A^1$. Similarly, let
$B^1$ be the union of all $B_i$ for which $|B_i| \geq \alpha|U_i|$ and $B^2=B \setminus B^1$. We have
$|A^2| < \alpha|U| \leq \alpha \epsilon^{-1}|A|$ and $|B^2| < \alpha|U| \leq
\alpha \epsilon^{-1}|B|$.

Let $I_1$ denote the set of all pairs $(i,i)$ with $i \in [s]$, $I_2$ the set
of all pairs $(i,j) \in [s] \times [s]$ with $i \not = j$, $A_i \subset A^1$, and
$B_j \subset B^1$, and $I_3=[s] \times [s] \setminus (I_1 \cup I_2)$. Let
$D(A_i,B_j)=\left|d(A_i,B_j)-\gamma\right|\frac{|A_i||B_j|}{|A||B|}$. We have
$$\sum_{(i,i) \in I_1} D(A_i,B_i) \leq \sum_{(i,i) \in I_1}
\frac{|A_i||B_i|}{|A||B|} \leq \max_i \frac{|B_i|}{|B|} \leq \max_i
\frac{|U_i|}{|B|} \leq \frac{1}{s\epsilon}.$$
If $(i,j) \in I_2$, using the triangle inequality and $\alpha$-regularity,
$$|d(A_i,B_j)-\gamma| \leq |d(A_i,B_j)-d(U_i,U_j)|+|d(U_i,U_j)-\gamma| \leq
\alpha+\alpha=2\alpha.$$
Hence, $$\sum_{(i,j) \in I_2} D(A_i,B_j) \leq \sum_{(i,j) \in I_2}
2\alpha\frac{|A_i||B_j|}{|A||B|} \leq 2\alpha.$$
Finally, $$\sum_{(i,j) \in I_3} D(A_i,B_j) \leq \sum_{(i,j) \in I_3}
\frac{|A_i||B_j|}{|A||B|} \leq
1-\left(1-\frac{|A^2|}{|A|}\right)\left(1-\frac{|B^2|}{|B|}\right)<1-(1-\alpha\epsilon^{-1})^2
\leq 2\alpha\epsilon^{-1}.$$

We have by the triangle inequality \begin{eqnarray*} \left|d(A,B)-\gamma\right|
& \leq & \sum_{1 \leq i,j \leq s}D(A_i,B_j) \\ &
= &   \sum_{(i,j) \in I_1}D(A_i,B_j)+\sum_{(i,j) \in I_2}D(A_i,B_j)+\sum_{(i,j)
\in I_3}D(A_i,B_j) \\ & \leq & \frac{1}{s\epsilon}+2\alpha+2\alpha\epsilon^{-1}
\leq \frac{\epsilon}{2}.
\end{eqnarray*}
By the triangle inequality, for any $A,B,X,Y \subset U$ each of cardinality at
least $\epsilon|U|$, we have $$\left|d(A,B)-d(X,Y)\right|\leq
\left|d(A,B)-\gamma\right|+\left|\gamma-d(X,Y)\right| \leq
\frac{\epsilon}{2}+\frac{\epsilon}{2}=\epsilon.$$ In particular, this holds for
$X=Y=U$, and hence $U$ is $\epsilon$-regular.
\end{proof}

By applying Lemma \ref{steps} with $\alpha=(\epsilon/3)^2$ and $s=t=\lceil
2\alpha^{-1} \rceil$, and then applying Lemma \ref{sunioncyl},  we obtain Lemma
\ref{oneepsilonregularsubset}. Note that the proof assumes that the number of vertices of the graph is sufficiently large, at least $k=t^{ts}$, but we can make this assumption as otherwise we can trivially pick $U$ to consist of a single vertex, which is $\epsilon$-regular.

The next lemma shows that if we have an $\epsilon$-regular pair, and add a
small fraction of vertices to one part, then the pair is still regular, but
with a slightly worse regularity.

\begin{lemma}\label{partfoura}
Suppose $A$ and $B$ are vertex subsets of a graph $G$ which form an
$\epsilon$-regular pair, and $C$ is a vertex subset disjoint from $B$
of size $|C| \leq \beta|B|$. Then the pair $(A,B \cup C)$ is $\alpha$-regular
with $\alpha=\epsilon+\sqrt{\beta}+\beta$.
\end{lemma}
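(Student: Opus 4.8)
The plan is to reduce the claim to the definition of $\alpha$-regularity by carefully tracking how a large subset $B'' \subseteq B \cup C$ intersects $B$ and $C$. First I would fix arbitrary subsets $A' \subseteq A$ and $B'' \subseteq B \cup C$ with $|A'| \geq \alpha |A|$ and $|B''| \geq \alpha |B \cup C|$, and set $B' = B'' \cap B$ and $C' = B'' \cap C$. The goal is to compare $d(A', B'')$ with $d(A, B \cup C)$; by the triangle inequality it suffices to compare each of these with $d(A,B)$ and bound the two discrepancies separately. For the second one, note $d(A, B \cup C)$ is a weighted average of $d(A,B)$ and $d(A,C)$ with weights $\tfrac{|B|}{|B|+|C|}$ and $\tfrac{|C|}{|B|+|C|}$; since $|C| \leq \beta |B|$, the weight on $d(A,C)$ is at most $\beta/(1+\beta) \le \beta$, so $|d(A,B\cup C) - d(A,B)| \le \beta$.

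For the first discrepancy, the key point is that $B'$ is still a large enough subset of $B$ to invoke $\epsilon$-regularity of the pair $(A,B)$. Indeed $|C'| \le |C| \le \beta |B|$, while $|B''| \ge \alpha(|B|+|C|) \ge \alpha |B|$, so $|B'| = |B''| - |C'| \ge (\alpha - \beta)|B| = \sqrt{\beta}\,|B| \ge \epsilon|B|$ — wait, this needs $\sqrt\beta \ge \epsilon$, which need not hold; instead I would argue more carefully that if $|B'| \geq \epsilon |B|$ then $\epsilon$-regularity of $(A,B)$ gives $|d(A',B') - d(A,B)| \leq \epsilon$, and then $d(A',B'')$ differs from $d(A',B')$ by a weighted-average argument as before with the weight on the $C'$-contribution being $|C'|/|B''| \le \beta|B|/(\alpha|B|) = \beta/\alpha \le \sqrt\beta$ (using $\alpha \ge \sqrt\beta$, which holds since $\alpha = \epsilon + \sqrt\beta + \beta \ge \sqrt\beta$). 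The remaining case is $|B'| < \epsilon|B| \le \sqrt\beta\,|B|$, in which case $B''$ is almost entirely inside $C$: then $|B'|/|B''| \le \epsilon|B|/(\alpha|B|) = \epsilon/\alpha$, and combining the trivial bound $d(A',B') \in [0,1]$ with $d(A',C') \in [0,1]$, one sees $d(A',B'')$ is within $\epsilon/\alpha + \beta$ or so of $d(A, B\cup C)$; I would tune constants to check this is at most $\alpha$. Collecting all three contributions gives $|d(A',B'') - d(A,B\cup C)| \le \epsilon + \sqrt\beta + \beta = \alpha$, as desired.

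The main obstacle is the bookkeeping in the case split: the clean case ($B'$ large in $B$) is routine, but I must make sure the degenerate case where $B''$ is dominated by the newly-added set $C$ does not blow up the error, and that every weighted-average step correctly uses $|C| \le \beta|B|$ versus $|C'| \le |B''|$ as appropriate. The arithmetic is elementary but one has to be disciplined about which denominator ($|B|$, $|B''|$, or $|B|+|C|$) appears in each ratio; once that is pinned down, the bound $\alpha = \epsilon + \sqrt\beta + \beta$ drops out with a small amount of slack, and no optimization of constants is needed.
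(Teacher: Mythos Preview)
Your approach is essentially the paper's, but you made an arithmetic slip that sent you into an unnecessary case split. You computed $|B'| \geq (\alpha - \beta)|B|$ and then wrote $(\alpha-\beta) = \sqrt{\beta}$; in fact $\alpha - \beta = \epsilon + \sqrt{\beta} + \beta - \beta = \epsilon + \sqrt{\beta} \geq \epsilon$, so $|B'| \geq \epsilon|B|$ holds \emph{always}. There is no degenerate case: if $|B'| < \epsilon|B|$ then $|B''| = |B'| + |C'| < \epsilon|B| + \beta|B| < \alpha|B| \leq |B''|$, a contradiction. Once you fix this, your ``main case'' is exactly the paper's argument: apply $\epsilon$-regularity of $(A,B)$ to $(A',B')$, then use the weighted-average bounds $|d(A',B'') - d(A',B')| \leq |C'|/|B''| \leq \beta/\alpha \leq \sqrt{\beta}$ and $|d(A,B\cup C) - d(A,B)| \leq \beta$, and sum via the triangle inequality to get $\epsilon + \sqrt{\beta} + \beta = \alpha$.
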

\begin{proof}
Let $A' \subset A$ and $B' \cup C' \subset B \cup C$ with $B' \subset B$, $C'
\subset C$, $|A'| \geq \alpha|A|$ and $|B'\cup C'| \geq \alpha|B \cup C|$. Note
that $|A'| \geq \alpha |A| \geq \epsilon|A|$ and $|B'|=|B' \cup C'|-|C'| \geq
\alpha|B \cup C|-|C| \geq (\alpha-\beta)|B| \geq \epsilon|B|$. Since the pair
$(A,B)$ is $\epsilon$-regular, we have $$\left | d(A',B')-d(A,B) \right | \leq
\epsilon.$$

Also, $|C'| \leq |C| \leq \beta |B| \leq \beta|B \cup C| \leq
\beta\alpha^{-1}|B' \cup C'|$. Therefore,
 \begin{eqnarray*}|d(A',B' \cup C')-d(A',B')| & = &
\left|d(A',B')\frac{|B'|}{|B' \cup C'|}+d(A',C')\frac{|C'|}{|B' \cup
C'|}-d(A',B')\right|\\ & = &  \left|d(A',C')-d(A',B')\right| \frac{|C'|}{|B'
\cup C'|}\leq \frac{|C'|}{|B' \cup C'|} \leq \beta\alpha^{-1}.
\end{eqnarray*}
We similarly have $$|d(A,B \cup
C)-d(A,B)|=\left|d(A,C)-d(A,B)\right|\frac{|C|}{|B \cup C|}  \leq \beta.$$
Hence, by the triangle inequality, we have $\left|d(A',B' \cup C')-d(A,B \cup
C)\right|$ is at most \begin{eqnarray*}  \left|d(A',B' \cup
C')-d(A',B')\right|+\left|d(A',B')-d(A,B)\right|+\left|d(A,B)-d(A,B \cup
C)\right|  & \leq &  \beta\alpha^{-1}+\epsilon+\beta \\ & \leq &
\alpha.\end{eqnarray*}
Hence, the pair $(A,B \cup C)$ is $\alpha$-regular.
\end{proof}

By repeatedly pulling out $3\epsilon/4$-regular sets using Lemma
\ref{oneepsilonregularsubset} until there are at most
$\frac{\epsilon^2}{100}|V|$ remaining vertices, distributing the remaining
vertices among the parts, and using Lemma \ref{partfoura} twice in each part,  we
arrive at the following lemma. It shows how to partition a graph into large
parts, each part being $\epsilon$-regular with itself.

\begin{lemma}\label{epsdeltaone}
For each $0<\epsilon<1/2$, let
$\delta=\delta(\epsilon)=2^{-\epsilon^{-(20/\epsilon)^{4}}}$. Every graph
$G=(V,E)$ has a vertex partition $V=V_1 \cup \ldots \cup V_k$ such that for
each $i$, $1 \leq i \leq k$, we have $|V_i| \geq \delta|V|$ and $V_i$ is an
$\epsilon$-regular set.
\end{lemma}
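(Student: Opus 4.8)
The plan is to implement the recipe indicated immediately before the statement: extract $3\epsilon/4$-regular sets one at a time via Lemma~\ref{oneepsilonregularsubset}, stop once only an $\epsilon^2/100$-fraction of the vertices is left over, spread those leftovers evenly among the parts already found, and control the resulting loss of regularity with two applications of Lemma~\ref{partfoura}.

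Concretely, I would run the following greedy procedure. Put $V^{(0)}=V$. Given $V^{(j)}$ with $|V^{(j)}| > \epsilon^2|V|/100$, apply Lemma~\ref{oneepsilonregularsubset} with parameter $3\epsilon/4$ to the induced subgraph $G[V^{(j)}]$ to obtain a $3\epsilon/4$-regular set $V_{j+1}\subseteq V^{(j)}$ with $|V_{j+1}| \ge \delta(3\epsilon/4)|V^{(j)}| > \delta(3\epsilon/4)\epsilon^2|V|/100$, and set $V^{(j+1)}=V^{(j)}\setminus V_{j+1}$ (since $V_{j+1}\subseteq V^{(j)}$, regularity of $V_{j+1}$ with itself is the same computed in $G$ as in $G[V^{(j)}]$). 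As each extracted part claims more than a fixed positive fraction of $V$, the procedure halts after some $k \le 100\,\epsilon^{-2}\delta(3\epsilon/4)^{-1}$ steps, leaving a remainder $R = V^{(k)}$ with $|R| \le \epsilon^2|V|/100$ together with sets $V_1,\dots,V_k$, each $3\epsilon/4$-regular with itself and of size at least $\delta(3\epsilon/4)\epsilon^2|V|/100$. (If $|V|$ is below the fixed threshold needed for this to make sense, I would instead take every $V_i$ to be a single vertex, which is trivially $\epsilon$-regular, exactly as in the proof of Lemma~\ref{oneepsilonregularsubset}.)

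Next I would distribute $R$ proportionally to the sizes of the $V_i$: write $R = R_1 \cup \dots \cup R_k$ with $R_i$ adjoined to $V_i$ and $|R_i| \le \frac{|V_i|}{|V|-|R|}|R| + 1$. Since $|R| \le \epsilon^2|V|/100$, this forces $|R_i| \le \beta|V_i|$ with $\beta$ just above $\epsilon^2/100$ once $|V|$, hence each $|V_i|$ (a fixed fraction of $|V|$), is large enough to absorb the integrality term. Now fix $i$ and let $A=V_i$, $C=R_i$, so $C$ is disjoint from $A$ and $|C| \le \beta|A|$. Applying Lemma~\ref{partfoura} to the $3\epsilon/4$-regular pair $(A,A)$ shows $(A, A\cup C)$ is $(3\epsilon/4 + \sqrt\beta + \beta)$-regular; applying it a second time to the pair $(A\cup C, A)$ — which is regular by the symmetry of pair-regularity — again enlarging the second coordinate by $C$, shows $(A\cup C, A\cup C)$ is $(3\epsilon/4 + 2\sqrt\beta + 2\beta)$-regular. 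With $\beta$ of order $\epsilon^2/100$ one has $2\sqrt\beta + 2\beta < \epsilon/4$, so $V_i \cup R_i$ is $\epsilon$-regular with itself, and the sets $V_i\cup R_i$ partition $V$. Finally $|V_i \cup R_i| \ge |V_i| \ge \delta(3\epsilon/4)\epsilon^2|V|/100$, and comparing exponents — $\delta(3\epsilon/4) = 2^{-(4/(3\epsilon))^{(40/(3\epsilon))^4}}$ against the claimed $\delta(\epsilon) = 2^{-\epsilon^{-(20/\epsilon)^4}}$, using $(40/(3\epsilon))^4 = (16/81)(20/\epsilon)^4$ and $\ln(4/(3\epsilon)) \le 2\ln(1/\epsilon)$ for $\epsilon<1/2$ — gives $\delta(3\epsilon/4)\epsilon^2/100 \ge \delta(\epsilon)$, as required.

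The whole argument is bookkeeping on top of Lemmas~\ref{oneepsilonregularsubset} and~\ref{partfoura}; the two points needing care are (i) that stopping at an $\epsilon^2/100$-fraction really leaves enough room, i.e. that spreading that much back in and running Lemma~\ref{partfoura} twice pushes the regularity parameter from $3\epsilon/4$ only up to at most $\epsilon$ (which is precisely why one extracts $3\epsilon/4$-regular, not $\epsilon$-regular, sets), and (ii) the final size bound, i.e. checking that worsening the parameter in Lemma~\ref{oneepsilonregularsubset} from $\epsilon$ to $3\epsilon/4$, together with the factor $\epsilon^2/100$, is swallowed by the gap between $(10/(3\epsilon/4))^4$ and $(20/\epsilon)^4$ in the double-exponent. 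I expect (ii), though entirely computational, to be the main obstacle.
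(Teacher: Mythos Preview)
Your proposal is correct and follows exactly the approach the paper indicates: greedily extract $3\epsilon/4$-regular sets via Lemma~\ref{oneepsilonregularsubset} until at most an $\epsilon^2/100$-fraction of $V$ remains, redistribute the leftovers, and recover $\epsilon$-regularity by two applications of Lemma~\ref{partfoura}. The paper itself only gives the one-sentence outline preceding the statement; your write-up supplies precisely the bookkeeping (the two-step use of Lemma~\ref{partfoura} and the exponent comparison $(40/(3\epsilon))^4=(16/81)(20/\epsilon)^4$) that the paper leaves implicit.
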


\subsection{Tools}

In this subsection, we prove two simple lemmas concerning mean square density
which will be useful in establishing and using the strong cylinder regularity
lemma.

The first lemma, which is rather standard, shows that for any vertex partition
$P$, there is a vertex equipartition $P'$ with a similar number of parts to $P$
and mean square density not much smaller than the mean square density of $P$.
It is useful in density increment arguments where at each stage one would like
to work with an equipartition.

\begin{lemma}\label{makeequip} Let $G=(V,E)$ be a graph, and $P:V=V_1 \cup
\ldots \cup V_k$ be a vertex partition into $k$ parts.  There is an equitable
partition $P'$ of $V$ into $t$ parts such that $q(P') \geq q(P)-2\frac{k}{t}$.

\end{lemma}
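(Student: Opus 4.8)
The statement to prove is Lemma \ref{makeequip}: given a partition $P : V = V_1 \cup \ldots \cup V_k$, one wants an equitable partition $P'$ into $t$ parts with $q(P') \geq q(P) - 2k/t$. The plan is to build $P'$ by chopping each $V_i$ into blocks of a common size $b = \lfloor |V|/t \rfloor$, throwing the leftover vertices of each part into a single ``garbage'' collection, and then redistributing the garbage to fill out exactly $t$ equitable parts. The key point is that mean square density is a \emph{convex} functional under refinement in the following sense: if $Q$ refines $P$, then $q(Q) \geq q(P)$ (this is the standard fact behind the energy-increment proof of the regularity lemma, and is implicitly used throughout the paper). So as long as $P'$ is ``almost'' a refinement of $P$ — differing only on the few garbage vertices — we lose only a little.

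Concretely, first I would form the partition $R$ obtained by splitting each $V_i$ into $\lfloor |V_i|/b \rfloor$ parts of size exactly $b$ and one remainder part $V_i^{\mathrm{rem}}$ of size $< b$; since $R$ refines $P$ we have $q(R) \geq q(P)$. The total number of ``full'' blocks of size $b$ is at least $\sum_i (|V_i|/b - 1) \geq t - k$ (using $\sum_i |V_i| = |V| \geq bt$), and the remainder vertices number fewer than $kb \le k|V|/t$. Next I would take all the full blocks together with the remainder vertices, and regroup: keep the $\geq t-k$ full blocks as they are, and partition the union of all remainder vertices plus, if necessary, a few of the full blocks, into the remaining parts so that the final partition $P'$ has exactly $t$ parts each of size $b$ or $b+1$ (an equitable partition). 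At most $k$ of the $t$ parts of $P'$ are ``new'' (not one of the original full blocks), so $P'$ agrees with the refinement $R$ except on a set $W$ of at most $(k)(b+1) \le 2k|V|/t$ vertices — more carefully, at most $2kb \le 2k|V|/t$ vertices lie in parts of $P'$ that are not parts of $R$.

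For the density estimate, I would compare $q(P')$ directly to $q(R)$ (hence to $q(P)$) by bounding $|q(R) - q(P')|$. Writing $q(\mathcal{P}) = \sum_{X,Y \in \mathcal{P}} d(X,Y)^2 p_X p_Y$ with $p_X = |X|/|V|$, the terms where both $X$ and $Y$ are common parts of $R$ and $P'$ contribute identically if those parts are untouched; the difference is controlled by the total weight $\sum_{X \in W\text{-parts}} p_X \le 2k/t$ carried by parts of $P'$ lying in $W$, together with the analogous contribution from $R$. Since all densities lie in $[0,1]$, each ``bad'' pair contributes at most its weight product, and summing gives $|q(R) - q(P')| \le 2 \cdot (2k/t) \cdot 1 \le 2k/t$ on each side — I should be a little careful to get the constant down to $2k/t$, which follows because replacing parts of total weight $w$ in an otherwise common refinement changes $q$ by at most $2w$ (one factor from the row, roughly one from the column, with the density-squared bounded by $1$). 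Thus $q(P') \geq q(R) - 2k/t \geq q(P) - 2k/t$, which is even slightly stronger than claimed.

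The main obstacle is purely bookkeeping: one must verify that the regrouping can actually be carried out so that \emph{exactly} $t$ equitable parts result while touching at most $O(k)$ parts — this requires that $t \geq k$ (otherwise the statement is vacuous or one takes $P'$ trivial) and a careful count of how remainder vertices and leftover full blocks combine. There is no real analytic difficulty; the only subtlety is making the constant in the error term come out as $2k/t$ rather than something larger, which is why I would phrase the comparison as ``changing parts of total $p$-weight $w$ alters $q$ by at most $2w$'' and apply it with $w = 2k/t$... actually with $w = k/t$ twice, or directly with the weight of the garbage which is $< k/t$ on the $R$ side and $\le$ (a bit more) on the $P'$ side; I would absorb the slack into the stated bound $2k/t$. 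Since the paper explicitly says it omits floors and ceilings and does not optimize constants, this level of care suffices.
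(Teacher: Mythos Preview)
Your proposal is correct and follows essentially the same approach as the paper: refine each $V_i$ into blocks of the target size, use the monotonicity $q(R)\ge q(P)$ for refinements, collect the fewer than $k|V|/t$ leftover vertices, and bound the loss in $q$ by (roughly) twice their proportion. The paper's final step is slightly cleaner than yours: rather than bounding $|q(R)-q(P')|$, it simply observes that the parts of $P'$ lying in $V\setminus U$ coincide with those of $Q$, drops the nonnegative terms of $q(P')$ involving the garbage set $U$, and reads off $q(P')\ge q(Q)-\bigl(1-(1-|U|/|V|)^2\bigr)\ge q(P)-2k/t$ directly.
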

\begin{proof}
For an equipartition of $V$ into $t$ parts, we have a certain number of parts
of order $\lfloor |V|/t \rfloor$ and the remaining parts are of order $\lceil
|V|/t \rceil$.
For each part $V_i \in P$, partition it into parts of order $\lfloor |V|/t
\rfloor$ or $\lceil |V|/t \rceil$ so that there are not too many parts of either
order to allow an equipartition of the whole set, with possibly one remaining set of cardinality less than $|V|/t$. Let $Q$ be this refinement of $P$. From the Cauchy-Schwarz inequality, it follows that $q(Q) \geq q(P)$. Let $U$ be the vertices in the remaining parts of
$Q$, so $|U| < k|V|/t$.

Arbitrarily chop the vertices of $U$ into parts of the desired orders so as to
obtain an equipartition $P'$.
We have $$q(P') \geq \sum_{X,Y \in Q, X,Y \subset V \setminus U}
d^2(X,Y)\frac{|X||Y|}{|V|^2}\geq
q(Q)-\left(1-\left(1-\frac{|U|}{|V|}\right)^2\right) \geq q(Q)-2\frac{k}{t}
\geq q(P)-2\frac{k}{t}.$$
\end{proof}

The next lemma is helpful in deducing the induced graph removal lemma from the strong cylinder regularity lemma. 
Let $G=(V,E)$ and $P:V=V_1 \cup \ldots \cup V_k$ be an equipartition, and
$\mathcal{K}$ be a partition of the cylinder $V_1 \times \cdots \times V_k$
into cylinders. For $K=W_1 \times \cdots \times W_k \in \mathcal{K}$, define
the density $d(K)=\frac{|W_1| \times \cdots \times |W_k|}{|V_1| \times \cdots
\times |V_k|}$. The cylinder $K$ is {\it $\epsilon$-close} to $P$ if
$\left|d(W_i,W_j)-d(V_i,V_j)\right| \leq \epsilon$ for all but at most $\epsilon k^2$ pairs $1 \leq i \not = j \leq k$. if cylinder $K$ is not $\epsilon$-close to $P$, then 
$$\sum_{1 \leq i \not =  j \leq k} \left|d(W_i,W_j)-d(V_i,V_j)\right| > \epsilon^2
k^2.$$
The cylinder partition $\mathcal{K}$ is {\it $\epsilon$-close} to $P$ if  $\sum
d(K) \leq \epsilon$, where the sum is over all $K \in \mathcal{K}$ that are not
$\epsilon$-close to $P$.
Note that if $\mathcal{K}$ is not $\epsilon$-close, then
$$\sum_{K \in \mathcal{K}}\sum_{1 \leq i \not = j \leq k}
\left|d(W_i,W_j)-d(V_i,V_j)\right|d(K) > \epsilon^3 k^2.$$
Recall that $Q(\mathcal{K})$ is the common refinement of all the parts $V_i(K)$ with $i \in [k]$ and $K \in \mathcal{K}$.

\begin{lemma}\label{cylinderclose}
Let $G=(V,E)$ and $P:V=V_1 \cup \ldots \cup V_k$ be an equipartition with no
empty parts, i.e., $|V| \geq k$. Let $\mathcal{K}$ be a partition of the
cylinder $V_1 \times \cdots \times V_k$ into cylinders. If $Q=Q(\mathcal{K})$
satisfies $q(Q) \leq q(P)+\epsilon$, then $\mathcal{K}$ is $2^{1/3}
\epsilon^{1/6}$-close to $P$.
\end{lemma}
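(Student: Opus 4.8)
The plan is to establish the contrapositive. Fix $\epsilon' = 2^{1/3}\epsilon^{1/6}$, so that $\epsilon'^{6} = 4\epsilon$, and assume $\mathcal{K}$ is \emph{not} $\epsilon'$-close to $P$; the goal is then to derive $q(Q) > q(P) + \epsilon$, where $Q = Q(\mathcal{K})$, contradicting the hypothesis. Write $n = |V|$ and $p_i = |V_i|/n$; since $P$ is equitable and $n \ge k$, every part is nonempty and $\lfloor n/k\rfloor > n/(2k)$, so $p_i p_j > 1/(4k^2)$ for all $i,j$. We may assume $k \ge 2$, the case $k = 1$ being vacuous since then there are no pairs $i \ne j$. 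Recall that $Q$ refines $P$; within each $V_i$ write $V_i^a$ for the parts of $Q$ it contains, and note that each set $W_i = V_i(K)$ appearing in a cylinder $K = W_1 \times \cdots \times W_k \in \mathcal{K}$ is a union of some of these $V_i^a$.

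The first step is the defect form of the Cauchy--Schwarz (density increment) identity. Decomposing $q(Q)$ and $q(P)$ according to which parts of $P$ contain the two $Q$-parts, and using that $d(V_i,V_j)$ is the $\frac{|V_i^a||V_j^b|}{|V_i||V_j|}$-weighted average of the densities $d(V_i^a,V_j^b)$, one gets the exact identity $q(Q) - q(P) = \sum_{i,j} p_i p_j \Delta_{ij}$, where $\Delta_{ij} := \sum_{a,b}\frac{|V_i^a||V_j^b|}{|V_i||V_j|}\big(d(V_i^a,V_j^b) - d(V_i,V_j)\big)^2 \ge 0$. Hence $q(Q) - q(P) \ge \frac{1}{4k^2}\sum_{i \ne j}\Delta_{ij}$.

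The heart of the argument is the pointwise bound $\Delta_{ij} \ge \delta_{ij}^2$, where $\delta_{ij} := \sum_{K \in \mathcal{K}} d(K)\,|d(W_i,W_j) - d(V_i,V_j)|$. I would prove this probabilistically. Choose a uniformly random tuple $(v_1,\dots,v_k) \in V_1 \times \cdots \times V_k$ and let $\mathbf{K}$ be the unique cylinder of $\mathcal{K}$ containing it; then $\Pr[\mathbf{K} = K] = d(K)$, and since the event $\{\mathbf{K} = K\}$ is the product event $\{(v_1,\dots,v_k) \in W_1 \times \cdots \times W_k\}$, conditioned on $\mathbf{K} = K$ the pair $(v_i, v_j)$ is uniform on $W_i \times W_j$, whereas unconditionally it is uniform on $V_i \times V_j$. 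Apply this to the function $g(v_i, v_j) = |d(V_i^{a(v_i)}, V_j^{b(v_j)}) - d(V_i,V_j)|$ recording the discrepancy of the $Q$-cell densities; by the triangle inequality $|d(W_i,W_j) - d(V_i,V_j)|$ is at most the conditional average of $g$ given $\mathbf{K} = K$, so averaging over $\mathbf{K}$ gives $\delta_{ij} \le \mathbb{E}\,g(v_i,v_j) = \sum_{a,b}\frac{|V_i^a||V_j^b|}{|V_i||V_j|}|d(V_i^a,V_j^b) - d(V_i,V_j)| \le \Delta_{ij}^{1/2}$, the last inequality by Cauchy--Schwarz with the weights $\frac{|V_i^a||V_j^b|}{|V_i||V_j|}$. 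This step --- correctly identifying the conditional law of $(v_i,v_j)$ given $\mathbf{K}$, which is where one genuinely uses that $\mathcal{K}$ partitions the cylinder and that $Q$ refines every $W_i$ --- is the spot I expect to require the most care.

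Finally, I would invoke the elementary estimate stated immediately before the lemma: since $\mathcal{K}$ is not $\epsilon'$-close to $P$, $\sum_{i \ne j}\delta_{ij} = \sum_{K}d(K)\sum_{i\ne j}|d(W_i,W_j) - d(V_i,V_j)| > \epsilon'^{3}k^2$. Two more applications of Cauchy--Schwarz --- the pointwise $\Delta_{ij} \ge \delta_{ij}^2$ and convexity over the at most $k^2$ ordered pairs $i \ne j$ --- then yield $q(Q) - q(P) \ge \frac{1}{4k^2}\sum_{i\ne j}\delta_{ij}^2 \ge \frac{1}{4k^4}\big(\sum_{i\ne j}\delta_{ij}\big)^2 > \frac{\epsilon'^{6}}{4} = \epsilon$, contradicting $q(Q) \le q(P) + \epsilon$. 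Therefore $\mathcal{K}$ is $\epsilon'$-close to $P$, which is the assertion.
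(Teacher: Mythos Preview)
Your proof is correct and follows essentially the same approach as the paper's. Both arguments hinge on the same two ingredients: the density-increment identity $q(Q)-q(P)=\sum_{i,j}p_ip_j\Delta_{ij}$ together with $p_ip_j>1/(4k^2)$, and the bound $\delta_{ij}\le\Delta_{ij}^{1/2}$ obtained by the triangle inequality (expressing $d(W_i,W_j)$ as a weighted average of the $Q$-cell densities), swapping the sums over $K$ and over $(a,b)$, and Cauchy--Schwarz. The only cosmetic differences are that the paper argues directly (bounding $\sum_{i\ne j}\delta_{ij}\le 2\epsilon^{1/2}k^2$ and then reading off closeness) while you argue by contrapositive, and that you phrase the summation swap probabilistically via the conditional law of $(v_i,v_j)$ given $\mathbf{K}$; the computations are identical.
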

\begin{proof}
 Let $Q_i$ denote the partition of $V_i$ which is the restriction of partition
$Q$ to $V_i$.

Since $P$ is an equipartition and $|V| \geq k$, then all parts have order at
least $\left \lfloor \frac{|V|}{k} \right \rfloor \geq \frac{|V|}{2k}$.
Therefore, \begin{equation}\label{QPlow}\epsilon \geq q(Q)-q(P)=\sum_{1 \leq
i,j \leq k}\left(q(Q_i,Q_j)-q(V_i,V_j)\right)\frac{|V_i||V_j|}{|V|^2} \geq
\frac{1}{4k^2}\sum_{1 \leq i \not = j \leq
k}\left(q(Q_i,Q_j)-q(V_i,V_j)\right),\end{equation}
where $q(Q_i,Q_j)=\sum_{A \in Q_i,B \in Q_j}d^2(A,B)p_A p_B$ with $p_A =
\frac{|A|}{|V_i|}$ and $p_B = \frac{|B|}{|V_j|}$, and
$q(V_i,V_j)=d^2(V_i,V_j)$.

Fix for now $1 \leq i \not = j \leq k$. For $K=W_1 \times \cdots \times W_k \in
\mathcal{K}$, we have $$d(W_i,W_j)=\sum
d(A,B)\frac{|A|}{|W_i|}\frac{|B|}{|W_j|},$$
and hence, by the triangle inequality,
$$|d(W_i,W_j)-d(V_i,V_j)| \leq  \sum
|d(A,B)-d(V_i,V_j)|\frac{|A|}{|W_i|}\frac{|B|}{|W_j|},$$
where the sums are over all $A \in Q_i$ with $A \subset W_i$ and $B \in Q_j$
with $B \subset W_j$.
Summing over all $K \in \mathcal{K}$, we have,
\begin{eqnarray*} \sum_{K=W_1 \times \cdots W_k \in \mathcal{K}}
\left|d(W_i,W_j)-d(V_i,V_j)\right|d(K) &  \leq & \sum_{K=W_1 \times \cdots W_k
\in \mathcal{K}} \sum
\left|d(A,B)-d(V_i,V_j)\right|\frac{|A|}{|W_i|}\frac{|B|}{|W_j|}d(K)
\\ & = & \sum_{A \in Q_i,B \in Q_j} \left|d(A,B)-d(V_i,V_j)\right|p_{A}p_{B} \\
& \leq & \left(\sum_{A \in Q_i,B \in Q_j} \left(d(A,B)-d(V_i,V_j)\right)^2
p_{A}p_{B}\right)^{1/2} \\ & = &
\left(q(Q_i,Q_j)-q(V_i,V_j)\right)^{1/2}.\end{eqnarray*}
where the first equality follows by swapping the order of summation and the
last inequality is the Cauchy-Schwarz inequality.

Summing over all $1 \leq i \not = j \leq k$ and changing the order of summation,
\begin{eqnarray*}\sum_{K=W_1 \times \cdots W_k \in \mathcal{K}} \, \sum_{1 \leq
i \not = j \leq k}  \left|d(W_i,W_j)-d(V_i,V_j)\right|d(K) & \leq &  \sum_{1 \leq i \not = j
\leq k} \left(q(Q_i,Q_j)-q(V_i,V_j)\right)^{1/2} \\ & \leq & \left(k^2\sum_{1
\leq i \not = j \leq k} q(Q_i,Q_j)-q(V_i,V_j)\right)^{1/2} \\ & \leq & \sqrt{k^2 \cdot
4k^2\epsilon}=2\epsilon^{1/2} k^2,\end{eqnarray*}
where the second inequality is the Cauchy-Schwarz inequality and the last
inequality uses the estimate (\ref{QPlow}).
By the remark before the lemma, we get that $\mathcal{K}$ is
$\left(2\epsilon^{1/2}\right)^{1/3}=2^{1/3}\epsilon^{1/6}$-close to $P$.
\end{proof}

\subsection{The strong cylinder regularity lemma} \label{inducedsub3}

Using the lemmas established in the previous subsections, in this subsection we
prove Lemma \ref{scrl}, the strong cylinder regularity lemma, with a tower-type
bound.

Recall that a $k$-cylinder $W_1 \times \cdots \times W_k$ is strongly
$\epsilon$-regular if all pairs $(W_i,W_j)$ with $1 \leq i,j \leq k$ are
$\epsilon$-regular. A partition $\mathcal{K}$ of $V_1 \times \cdots \times V_k$
into cylinders is strongly $\epsilon$-regular if all but $\epsilon|V_1| \times
\cdots \times |V_k|$ vertices $(v_1,\ldots,v_k) \in V_1 \times \cdots \times
V_k$ are contained in strongly $\epsilon$-regular cylinders $K \in
\mathcal{K}$.

We recall the statement of the strong cylinder regularity lemma.
\begin{lemma} \label{scr2}
For $0<\epsilon<1/3$, positive integer $s$, and decreasing function $f:\mathbb{N} \rightarrow
(0,\epsilon]$, there is $S=S(\epsilon,s,f)$ such that the following holds. For every
graph $G$, there is an integer $s \leq k \leq S$, an equitable  partition $P:V=V_1 \cup
\ldots \cup V_k$ and a strongly $f(k)$-regular partition $\mathcal{K}$ of the
cylinder $V_1 \times \cdots \times V_k$ into cylinders satisfying that the
partition $Q=Q(\mathcal{K})$ of $V$ has at most $S$ parts and $q(Q) \leq
q(P)+\epsilon$. Furthermore, there is an absolute constant $c$ such that
letting $s_1=s$ and $s_{i+1}=t_4\left(\left(s_i/f(s_i)\right)^c\right)$, we may
take $S=s_{\ell}$ with $\ell=2\epsilon^{-1}+1$.
\end{lemma}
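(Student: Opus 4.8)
The plan is to follow the standard recipe for deducing a ``strong'' regularity statement from a weak one by an iterated density-increment argument, but using the Duke--Lefmann--R\"odl cylinder regularity lemma (Lemma~\ref{dukelefrod}) in place of Szemer\'edi's lemma. Since Lemma~\ref{dukelefrod} has a merely single-exponential bound on the number of cylinders, each round of the iteration stacks only a bounded number of exponentials --- a $t_4$ of a polynomial --- rather than a whole tower, so that after $O(\epsilon^{-1})$ rounds one obtains a tower of height $O(\epsilon^{-1})$ rather than a wowzer-type bound.

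Here is the iteration. Build equipartitions $P_1,P_2,\dots$, with $P_1$ any equipartition of $V$ into $s$ parts (small graphs are handled trivially, as elsewhere in the paper). Given $P_i\colon V=V_1\cup\cdots\cup V_{k_i}$, set $\epsilon''_i=f(k_i)\,\delta(f(k_i))$ with $\delta$ the function of Lemma~\ref{epsdeltaone}, and apply Lemma~\ref{dukelefrod} with parameter $\epsilon''_i$ to the $k_i$-partition $P_i$; this yields an $\epsilon''_i$-regular cylinder partition $\mathcal K^0_i$ with $|\mathcal K^0_i|\le\beta(\epsilon''_i)^{-1}$ and with each part $V_a(K)$ of each cylinder of size at least $\beta(\epsilon''_i)|V_a|$. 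Now use Lemma~\ref{epsdeltaone} to break every part $W_a=V_a(K)$ of every $K\in\mathcal K^0_i$ into $f(k_i)$-regular subsets of size at least $\delta(f(k_i))|W_a|$, and let $\mathcal K_i$ be the cylinder partition obtained by replacing each $K$ by the product of these refinements of its parts. One checks that any sub-cylinder of an $\epsilon''_i$-regular cylinder of $\mathcal K^0_i$ is then strongly $f(k_i)$-regular: the diagonal pairs are $f(k_i)$-regular by construction, while an off-diagonal pair $(W'_a,W'_b)$ sits inside an $\epsilon''_i$-regular pair $(W_a,W_b)$ with $|W'_a|\ge\delta(f(k_i))|W_a|$ and $|W'_b|\ge\delta(f(k_i))|W_b|$, so by the choice $\epsilon''_i=f(k_i)\delta(f(k_i))$ it remains $f(k_i)$-regular. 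Since $\epsilon''_i\le f(k_i)$, the partition $\mathcal K_i$ is strongly $f(k_i)$-regular. Put $Q_i=Q(\mathcal K_i)$, a refinement of $P_i$. If $q(Q_i)\le q(P_i)+\epsilon$, output $(k_i,P_i,\mathcal K_i)$ and stop; otherwise apply Lemma~\ref{makeequip} to $Q_i$ with $t=\lceil 4|Q_i|/\epsilon\rceil$ to obtain an equipartition $P_{i+1}$, so that $q(P_{i+1})\ge q(Q_i)-\epsilon/2>q(P_i)+\epsilon/2$, and repeat.

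Correctness and the bound are then bookkeeping. Because $0\le q(\cdot)\le1$ and $q$ strictly increases by more than $\epsilon/2$ at every non-terminating round, the process stops within $2\epsilon^{-1}+1$ rounds; on output, $P=P_i$ is equitable, $s\le k_i$ (the $k_i$ only grow), $\mathcal K_i$ is strongly $f(k_i)$-regular, and $q(Q_i)\le q(P_i)+\epsilon$, as required. For the size: $|Q_i|\le k_i\cdot 2^{|\mathcal K_i|}$, since within each $V_a$ the partition $Q_i$ is the common refinement of the at most $|\mathcal K_i|$ sets $V_a(K)$; moreover $|\mathcal K_i|\le\beta(\epsilon''_i)^{-1}\delta(f(k_i))^{-k_i}$, $\beta(\epsilon''_i)^{-1}=(1/\epsilon''_i)^{k_i^2(\epsilon''_i)^{-5}}$, and $1/\epsilon''_i\le\delta(f(k_i))^{-2}$ is doubly exponential in a polynomial of $1/f(k_i)$. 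Composing these, $|\mathcal K_i|$ is at most a triple exponential in a polynomial of $k_i/f(k_i)$, hence $2^{|\mathcal K_i|}$, and thus also $|Q_i|$ and $k_{i+1}\le\lceil 4|Q_i|/\epsilon\rceil$, is at most $t_4\big((k_i/f(k_i))^c\big)$ for an absolute constant $c$. Since $f$ decreasing and $k_i\le s_i$ give $k_i/f(k_i)\le s_i/f(s_i)$, induction yields $k_i\le s_i$ and $|Q_i|\le k_{i+1}\le s_{i+1}$, so $S=s_\ell$ with $\ell=2\epsilon^{-1}+1$ works.

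The main obstacle is exactly the tension in the second paragraph between \emph{strong} regularity and the size bound: the Duke--Lefmann--R\"odl precision $\epsilon''_i$ must be chosen small enough that, after each part of each cylinder is refined into self-regular pieces (losing a factor $\delta(f(k_i))^{-1}$ in regularity), the off-diagonal pairs of a sub-cylinder are still $f(k_i)$-regular, yet large enough that the number of cylinders --- and then the common-refinement blow-up $2^{|\mathcal K_i|}$ defining $Q(\mathcal K_i)$ --- does not exceed a quadruple exponential. Checking that the single-exponential DLR bound, composed with the doubly-exponential $\delta^{-1}$ of Lemma~\ref{epsdeltaone} and the one further exponential from passing to $Q(\mathcal K)$, is exactly four stacked exponentials per round --- so that $\ell=O(\epsilon^{-1})$ rounds give a tower of height $O(\epsilon^{-1})$, not a wowzer --- is the delicate part.
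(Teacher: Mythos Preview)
Your proof is correct and follows essentially the same density-increment architecture as the paper: iterate equipartitions $P_1,P_2,\dots$, at each round combine the Duke--Lefmann--R\"odl lemma with Lemma~\ref{epsdeltaone} to obtain a strongly $f(k)$-regular cylinder partition, and pass to the next equipartition via Lemma~\ref{makeequip} unless $q(Q)\le q(P)+\epsilon$, terminating within $2\epsilon^{-1}+1$ rounds with a $t_4$ blow-up per round. The one difference is the order in which the two ingredients are applied: the paper first uses Lemma~\ref{epsdeltaone} on each $V_i$ (with parameter $\gamma=f(k)\beta$, $\beta=\beta(f(k),k)$) and then runs DLR with parameter $f(k)$ on every resulting $k$-tuple of sub-parts, so that self-regularity of the cylinder pieces is inherited from the $\gamma$-regular $V_{i\ell}$'s; you instead run DLR first with the smaller parameter $\epsilon''=f(k)\delta(f(k))$ and then apply Lemma~\ref{epsdeltaone} inside each part of each cylinder, so that off-diagonal regularity is inherited while diagonal regularity is imposed directly. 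Both orders give the same quadruple-exponential bookkeeping and neither buys anything over the other.
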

\begin{proof}
We may assume $|V| \geq S$, as otherwise we can let $P$ and $Q$ be the trivial
partitions into singletons, and it is easy to see the lemma holds.
We will define a sequence of partitions $P_1,P_2,\ldots$ of equitable partitions,
with $P_{j+1}$ a refinement of $P_j$ and $q(P_{j+1}) > q(P_j)+\epsilon/2$. Let
$P_1$ be an arbitrary equitable partition of $V$ consisting of $s_1=s$ parts. Suppose we
have already found an equitable partition $P_j:V=V_1 \cup \ldots \cup V_k$ with
$k \leq s_{j}$.

Let $\beta(x,\ell)=x^{\ell^2x^{-5}}$ as in Lemma \ref{dukelefrod} and
$\delta(x)=2^{-x^{-(20/x)^4}}$ as in Lemma \ref{epsdeltaone}. We apply Lemma
\ref{epsdeltaone} to each part $V_i$ of the partition $P_j$ to get a partition
of each part $V_i=V_{i1} \cup \ldots \cup V_{ih_i}$ of $P_i$ into parts each of
cardinality at least $\delta|V_i|$, where $\delta=\delta(\gamma)$ and
$\gamma=f(k) \cdot \beta$ with $\beta=\beta(f(k),k)$, such that each part $V_{ih}$ is
$\gamma$-regular. Note that $\delta^{-1}$ is at most triple-exponential in a
polynomial in $k/f(k)$. For each $k$-tuple $\ell=(\ell_1,\ldots,\ell_k) \in
[h_1] \times \cdots \times [h_k]$,  by Lemma \ref{dukelefrod} there is an
$f(k)$-regular partition $\mathcal{K}_{\ell}$ of the cylinder $V_{1\ell_1}
\times \cdots \times V_{k\ell_k}$ into at most $\beta^{-1}$ cylinders such
that, for each $K \in \mathcal{K}_{\ell}$, $|V_{i\ell_i}(K)| \geq \beta
|V_{i\ell_i}|$.   The union of the $\mathcal{K}_{\ell}$ forms a partition
$\mathcal{K}$  of $V_1 \times \cdots \times V_k$ which is strongly
$f(k)$-regular.

Recall that $Q=Q(\mathcal{K})$ is the partition of $V$ which is the common refinement of all
parts $V_i(K)$ with $i \in [k]$ and $K \in \mathcal{K}$. The number of parts of
$\mathcal{K}$ is at most $\delta^{-k}\beta^{-1}$, and hence the number of parts
of $Q$ is at most $k2^{1/(\delta^k \beta)}$. Thus, the number of parts of $Q$
is at most quadruple-exponential in a polynomial in $k/f(k)$. Let $P_{j+1}$ be
an equitable partition into $4\epsilon^{-1}|Q|$ parts with
$q(P_{j+1}) \geq q(Q)-\frac{\epsilon}{2}$, which exists by Lemma
\ref{makeequip}. Hence, there is an absolute constant $c$ such that
$$|P_{j+1}| \leq t_4\left((k/f(k))^c\right) \leq s_{j+1}.$$

If $q(Q) \leq q(P_j)+\epsilon$, then we may take $P=P_j$ and
$Q=Q(\mathcal{K})$, and these partitions satisfy the desired properties.
Otherwise, $q(P_{j+1}) \geq q(Q)-\frac{\epsilon}{2} >
q(P_j)+\frac{\epsilon}{2}$, and we continue the sequence of partitions. Since
$q(P_1) \geq 0$, and the mean square density goes up by more than $\epsilon/2$
at each step and is always at most $1$, this process must stop within
$2/\epsilon$ steps, and we obtain the desired partitions.
\end{proof}

\subsection{A tower-type bound for the key corollary} \label{inducedsub4}

In the previous subsection, we established the strong cylinder regularity lemma
with a tower-type bound. We next use this result to deduce a tower-type bound
for Lemma \ref{strongeasycor}, which is the key corollary of the strong
regularity lemma, and easily implies the induced graph removal lemma as shown
below. We recall the statement of Lemma \ref{strongeasycor} below.

\begin{lemma} \label{easycor2}
For each $0 < \epsilon < 1/3$ and decreasing function $f:\mathbb{N}\rightarrow
(0,\epsilon]$ there is $\delta'=\delta'(\epsilon,f)$ such that every graph $G=(V,E)$ with $|V| \geq \delta'^{-1}$ 
has an equitable partition $V=V_1 \cup \ldots \cup V_k$ and vertex subsets $W_i
\subset V_i$ such that $|W_i| \geq \delta' |V|$, each pair $(W_i,W_j)$ with
$1 \leq i \leq j \leq k$ is $f(k)$-regular, and all but at most $\epsilon k^2$
pairs $1 \leq i \leq j \leq k$ satisfy $|d(V_i,V_j)-d(W_i,W_j)| \leq \epsilon$.  Furthermore, we may take $\delta'=\frac{1}{8S^2}$, where $S=(\frac{\epsilon^6}{4},s,f)$ is defined as in Lemma \ref{scr2} and $s=2\epsilon^{-1}$.
\end{lemma}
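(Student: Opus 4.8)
The plan is to derive Lemma~\ref{easycor2} from the strong cylinder regularity lemma, Lemma~\ref{scr2}, by the averaging argument sketched just after the statement of Lemma~\ref{strongeasycor} in the introduction: first apply Lemma~\ref{scr2} to $G$ with a suitable parameter, obtaining an equitable partition $P:V=V_1\cup\cdots\cup V_k$ and a strongly $f(k)$-regular cylinder partition $\mathcal{K}$ of $V_1\times\cdots\times V_k$ with $q(Q(\mathcal{K}))\le q(P)+\epsilon'$ for $\epsilon'=\epsilon^6/4$ and $s\le k\le S$; then pick a random cylinder $K=W_1\times\cdots\times W_k\in\mathcal{K}$, each with probability equal to its density $d(K)$, and set $W_i=V_i(K)$. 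The three claimed properties — lower bound on $|W_i|$, $f(k)$-regularity of all pairs $(W_i,W_j)$ including $i=j$, and closeness of densities $|d(V_i,V_j)-d(W_i,W_j)|\le\epsilon$ for all but $\epsilon k^2$ pairs — must then be verified, and the first two hold deterministically for a typical $K$ while the third holds with positive probability.

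First I would set up the parameters. Apply Lemma~\ref{scr2} with approximation parameter $\epsilon'=\epsilon^6/4$ and starting value $s=2\epsilon^{-1}$ and the given decreasing $f$, obtaining $S=S(\epsilon^6/4,s,f)$ and partitions $P,Q=Q(\mathcal{K})$ with $q(Q)\le q(P)+\epsilon'$ and $s\le k\le S$; take $\delta'=\tfrac{1}{8S^2}$. Since the hypothesis is $|V|\ge\delta'^{-1}=8S^2\ge k$, the equipartition $P$ has all parts of size at least $\lfloor|V|/k\rfloor\ge|V|/(2k)\ge|V|/(2S)$. Each cylinder $K\in\mathcal{K}$ produced by the construction satisfies $|V_i(K)|\ge\beta|V_{i\ell_i}|\ge\beta\delta|V_i|$ in the notation of the proof of Lemma~\ref{scr2}, but rather than re-deriving that I would instead use the closeness machinery: by Lemma~\ref{cylinderclose}, since $q(Q)\le q(P)+\epsilon'$, the cylinder partition $\mathcal{K}$ is $2^{1/3}(\epsilon')^{1/6}$-close to $P$, and $2^{1/3}(\epsilon^6/4)^{1/6}=2^{1/3}\cdot 2^{-1/3}\epsilon=\epsilon$; so $\mathcal{K}$ is $\epsilon$-close to $P$. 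This means $\sum d(K)\le\epsilon$ over the cylinders $K$ that are \emph{not} $\epsilon$-close to $P$, i.e.\ for all but an $\epsilon$-weighted fraction of $K$ (weighted by density), all but $\epsilon k^2$ of the pairs satisfy $|d(W_i,W_j)-d(V_i,V_j)|\le\epsilon$.

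Next, the probabilistic selection. Choose $K\in\mathcal{K}$ with probability $d(K)$ (these sum to $1$ over all of $\mathcal{K}$ since $\mathcal{K}$ partitions the full cylinder). Three bad events must each have probability less than $1/3$ so that a good $K$ exists. Event (a): $K$ is not $\epsilon$-close to $P$ — probability $\le\epsilon\le 1/3$ by the previous paragraph. Event (b): some $|V_i(K)|$ is too small; here I would note that $\sum_{K}d(K)\,|V_i(K)|/|V_i|=\cdots$ — more simply, $\Pr[|V_i(K)|<\beta\delta|V_i|]$ is controlled because the cylinders within each $\mathcal{K}_\ell$ have parts of size $\ge\beta|V_{i\ell_i}|\ge\beta\delta|V_i|$, so in fact \emph{every} $K$ satisfies $|V_i(K)|\ge\beta\delta|V_i|\ge\beta\delta|V|/(2S)\ge\delta'|V|$ after checking $\beta\delta/(2S)\ge 1/(8S^2)$, i.e.\ this event is empty; alternatively keep it probabilistic with a union bound over $i\le k\le S$. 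Event (c): some pair $(W_i,W_j)$, $1\le i\le j\le k$, fails to be $f(k)$-regular — but $\mathcal{K}$ is \emph{strongly} $f(k)$-regular, meaning all but an $f(k)$-fraction (by density, i.e.\ by $\sum d(K)$) of the $k$-tuples lie in strongly $f(k)$-regular cylinders, and a strongly $f(k)$-regular cylinder has \emph{all} pairs $(W_i,W_j)$ with $1\le i,j\le k$ being $f(k)$-regular; so $\Pr[\text{event (c)}]\le f(k)\le\epsilon\le 1/3$. Since the bad probabilities sum to less than $1$, pick $K$ avoiding all of them; the resulting $W_i=V_i(K)$ satisfy all three conclusions, with $\delta'=1/(8S^2)$ after arithmetic.

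The main obstacle, I expect, is the bookkeeping around the size lower bound $|W_i|\ge\delta'|V|$: one has to track the three nested contractions — $|V_i|\ge|V|/(2k)$ from equitability, $|V_{i\ell_i}|\ge\delta(\gamma)|V_i|$ from Lemma~\ref{epsdeltaone}, and $|V_i(K)|\ge\beta(f(k),k)|V_{i\ell_i}|$ from Lemma~\ref{dukelefrod} — and check that the product is at least $\tfrac{1}{8S^2}$, using that $S$ (a quadruple-exponential tower in $k/f(k)$ iterated $2\epsilon^{-1}+1$ times) dominates the single reciprocal $\beta^{-1}\delta^{-1}$; a clean way is to observe $S\ge s_{\ell}\ge s_2=t_4((k/f(k))^c)\ge k^2\cdot(\beta\delta)^{-1}\cdot 8$ for the relevant $k$, so $\beta\delta/(2k)\ge 1/(8S^2)$ with room to spare, and no delicate optimization is needed. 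A secondary subtlety is correctly invoking the "remark before Lemma~\ref{cylinderclose}" that converts the $\epsilon$-close conclusion about $\mathcal{K}$ into the per-cylinder pair-count statement, and matching indices $1\le i\le j\le k$ (unordered, including $i=j$) versus the ordered-pair convention used in the definition of $\epsilon$-closeness, which only costs a harmless factor of $2$ absorbed into the constants since we make no attempt to optimize.
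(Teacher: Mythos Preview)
Your proposal is correct and follows essentially the same approach as the paper: apply Lemma~\ref{scr2} with parameter $\epsilon^6/4$, invoke Lemma~\ref{cylinderclose} to get $\epsilon$-closeness, then select a cylinder $K$ by weighting each with its density $d(K)$ and showing the three bad events have total weight below $1$. The one noteworthy difference is in the size bound $|W_i|\ge\delta'|V|$: the paper argues probabilistically using only the conclusion $|Q(\mathcal{K})|\le S$ from the \emph{statement} of Lemma~\ref{scr2}---observing that $V_i(K)$ always contains the $Q$-atom of $v_i$, so $|V_i(K)|<\tfrac{1}{4S}|V_i|$ forces $v_i$ into one of at most $S$ small atoms, contributing at most $\tfrac{1}{4S}\cdot S=\tfrac14$ to the bad probability---which is cleaner than your deterministic route through the internal construction $|V_i(K)|\ge\beta\delta|V_i|$ and the arithmetic check $\beta\delta/(2k)\ge 1/(8S^2)$.
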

\begin{proof} Let $\alpha=\frac{\epsilon^6}{4}$, $s=2\epsilon^{-1}$, and $\delta'=\frac{1}{8S^2}$,
where $S=S(\alpha,s,f)$ is as in Lemma \ref{scr2}. We apply Lemma \ref{scr2} with
$\alpha$ in place of $\epsilon$. We get an equipartition $P:V=V_1 \cup \ldots
\cup V_k$ with $s \leq k \leq S$ and a strongly $f(k)$-regular partition $\mathcal{K}$ of $V_1 \times
\cdots \times V_k$ into cylinders such that the refinement $Q=Q(\mathcal{K})$
of $P$ has at most $S=S(\alpha, s, f)$ parts and satisfies $q(Q) \leq q(P)+\alpha$. 
Since $|V| \geq \delta'^{-1}=8S^2$, and $P$ is an equipartition into $k \leq S$ parts, the cardinality of each part $V_i \in P$ satisfies $|V_i| \geq \frac{|V|}{2S}$. By Lemma \ref{cylinderclose},  as $2^{1/3}\alpha^{1/6}=\epsilon$, the cylinder
partition $\mathcal{K}$ is $\epsilon$-close to $P$. Hence, at most an
$\epsilon$-fraction of the $k$-tuples $(v_1,\ldots,v_k) \in V_1 \times \cdots
\times V_k$ belong to parts $K=W_1 \times \cdots \times W_k$ of $\mathcal{K}$
that are not $\epsilon$-close to $P$. Since $Q(\mathcal{K})$ has at most $S$
parts, the fraction of $k$-tuples $(v_1,\ldots,v_k) \in V_1 \times \cdots
\times V_k$ that belong to parts $K=W_1 \times \cdots \times W_k$ of
$\mathcal{K}$ with $|W_i|<\frac{1}{4S} |V_i|$ for at least one $i \in [k]$ is at
most $\frac{1}{4S}  \cdot S = \frac{1}{4}$. Therefore, at least a fraction $1-f(k)-\epsilon-\frac{1}{4}>0$
of the $k$-tuples  $(v_1,\ldots,v_k) \in V_1 \times \cdots \times V_k$ belong
to parts $K=W_1 \times \cdots \times W_k$ of $\mathcal{K}$ satisfying $K$ is
strongly $f(k)$-regular,  $|W_i| \geq \frac{1}{4S} |V_i| \geq \delta'|V|$ for $i \in [k]$, and $K$
is $\epsilon$-close to $P$.  Since a positive fraction of the $k$-tuples belong
to such $K$, there is at least one such $K$. This $K$ has the desired
properties. Indeed the number of pairs $1 \leq i \not = j \leq k$ for which $|d(W_i,W_j)-d(V_i,V_j)| >
\epsilon$ is at most $\epsilon k^2$ and hence the number of pairs $1 \leq i \leq j \leq k$ for 
which $|d(W_i,W_j)-d(V_i,V_j)| > \epsilon$ is at most $\epsilon k^2/2+k \leq \epsilon k^2$. This completes the proof. 
\end{proof}

We next discuss how to obtain the induced graph removal lemma from Lemma
\ref{strongeasycor}. This is a bit easier to obtain than in \cite{AFKS} because
Lemma \ref{strongeasycor} has the additional property that the subsets $W_i$ in
the cylinder $K$ are $\epsilon$-regular. We finish this section by giving this
proof and discussing the bound it gives for the induced graph removal lemma,
which is a tower in $h$ of height polynomial in $\epsilon^{-1}$. We first need the
following counting lemma, which is rather standard (see, e.g., Lemma 3.2 in
Alon, Fischer, Krivelevich, and Szegedy \cite{AFKS} for a minor variant). We
omit the proof.

\begin{lemma}
If $H$ is a graph with vertices $1,\ldots,h$, and $G$ is a graph with not
necessarily disjoint vertex subsets $W_1,\ldots,W_h$ such that every pair
$(W_i,W_j)$ with $1 \leq i < j \leq h$ is $\gamma$-regular with $\gamma \leq
\frac{1}{4h}\eta^h$, $|W_i| \geq \gamma^{-1}$ for $1 \leq i \leq h$ and, for $1
\leq i < j \leq k$, $d(W_i,W_j)>\eta$ if $(i,j)$ is an edge of $H$ and
$d(W_i,W_j)<1-\eta$ otherwise, then $G$ contains at least
$\left(\frac{\eta}{4}\right)^{{h \choose 2}}|W_1| \times \cdots \times |W_h|$
induced copies of $H$ with the copy of vertex $i$ in $W_i$.
\end{lemma}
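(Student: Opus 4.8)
\medskip
\noindent\textbf{Proof idea.}\ The plan is to count induced copies of $H$ by a greedy embedding. We embed the vertices $1,\dots,h$ of $H$ in this order, choosing images $v_1\in W_1,\dots,v_h\in W_h$ one at a time; at each stage we maintain, for every not-yet-embedded vertex $i$, a \emph{candidate set} $C_i\subseteq W_i$ consisting of the vertices of $W_i$ that are already correctly joined to all previously chosen images -- adjacent if the corresponding edge is present in $H$, non-adjacent otherwise. Lower-bounding the number of admissible choices at each of the $h$ stages and multiplying these bounds will yield at least $\left(\frac{\eta}{4}\right)^{{h \choose 2}}|W_1|\cdots|W_h|$ induced copies with $i$ mapped into $W_i$.

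Formally, after $v_1,\dots,v_{j-1}$ have been chosen, let $C_i^{(j-1)}$ be the set of $w\in W_i$ such that, for every $i'<j$, we have $wv_{i'}\in E(G)$ if $(i',i)\in E(H)$ and $wv_{i'}\notin E(G)$ if $i'\ne i$ and $(i',i)\notin E(H)$; thus $C_i^{(0)}=W_i$. Write $d^{\star}_{ji}=d(W_j,W_i)$ if $(j,i)\in E(H)$ and $d^{\star}_{ji}=1-d(W_j,W_i)$ otherwise, so $d^{\star}_{ji}>\eta$ in every case. Call $v_j\in C_j^{(j-1)}$ \emph{bad for} $i$ (with $j<i\le h$) if the restricted set $C_i^{(j)}$ has $|C_i^{(j)}|<(d^{\star}_{ji}-\gamma)\,|C_i^{(j-1)}|$. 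The first key step is: using only the $\gamma$-regularity of $(W_j,W_i)$ and the bound $|C_i^{(j-1)}|\ge\gamma|W_i|$, fewer than $\gamma|W_j|$ vertices of $W_j$ are bad for a given $i$. Indeed, if $B\subseteq W_j$ were the set of these with $|B|\ge\gamma|W_j|$, then $\gamma$-regularity applied to the pair $\bigl(B,C_i^{(j-1)}\bigr)$ of large subsets would force $d\bigl(B,C_i^{(j-1)}\bigr)$ within $\gamma$ of $d(W_j,W_i)$, contradicting (by averaging the neighbourhood sizes over $v_j\in B$) that every such $v_j$ is bad; in the non-edge case one runs the identical argument with non-neighbourhoods. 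Summing over the at most $h$ relevant values of $i$, at stage $j$ all but fewer than $h\gamma|W_j|$ vertices of $C_j^{(j-1)}$ are good, i.e.\ bad for no $i>j$.

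The second key step controls the sizes of the candidate sets. If we only ever select good vertices, then $|C_i^{(t)}|\ge(\eta-\gamma)^{t}|W_i|$ for all $i>t$, so $|C_j^{(j-1)}|\ge(\eta-\gamma)^{h-1}|W_j|$. From $\gamma\le\frac{1}{4h}\eta^{h}$ we get $\eta-\gamma\ge\eta\bigl(1-\tfrac1{4h}\bigr)$, hence $(\eta-\gamma)^{h-1}\ge\eta^{h-1}\bigl(1-\tfrac{h-1}{4h}\bigr)\ge\tfrac34\,\eta^{h-1}$, which lies comfortably above $\gamma$ and above $2h\gamma\le\tfrac12\eta^{h}\le\tfrac12\eta^{h-1}$. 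Consequently the hypothesis $|C_i^{(j-1)}|\ge\gamma|W_i|$ of the first step holds throughout, the candidate sets are nonempty (using $|W_j|\ge\gamma^{-1}$), and the number of good choices for $v_j$ is at least $|C_j^{(j-1)}|-h\gamma|W_j|\ge\tfrac12(\eta-\gamma)^{j-1}|W_j|$. Multiplying over $j=1,\dots,h$ gives at least
$$\prod_{j=1}^{h}\tfrac12(\eta-\gamma)^{j-1}|W_j|\ =\ 2^{-h}(\eta-\gamma)^{{h\choose2}}\prod_{i=1}^{h}|W_i|\ \ge\ \Bigl(\tfrac{\eta}{4}\Bigr)^{{h\choose2}}\prod_{i=1}^{h}|W_i|$$
induced copies of $H$ with $i$ placed in $W_i$, the last inequality using $\eta-\gamma\ge\tfrac34\eta$ together with $3^{{h\choose2}}\ge 2^{h}$ for $h\ge3$; the cases $h\le2$ are immediate.

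I expect the delicate point to be the second step -- keeping the candidate sets large enough uniformly in $h$. The crude estimate $\eta-\gamma\ge\tfrac34\eta$ is too lossy there, since $(\tfrac34\eta)^{h-1}$ need not dominate $\gamma$; one must instead exploit that $\gamma=O(\eta^{h}/h)$ is super-polynomially small in $\eta$, so that $(\eta-\gamma)^{h-1}=\eta^{h-1}\bigl(1-O(1/h)\bigr)^{h-1}$ stays a constant fraction of $\eta^{h-1}$ and hence far above the ``bad vertex'' threshold $h\gamma=O(\eta^{h})$. Everything else is the routine regular-pair bookkeeping, which is why the paper omits the details.
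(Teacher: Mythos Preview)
The paper omits the proof of this lemma, citing Lemma~3.2 of \cite{AFKS} for a minor variant; your greedy embedding is precisely the standard argument used there, and the numerical estimates you give are correct. One small point you glossed over: since the $W_i$ need not be disjoint, at step $j$ you should also exclude the at most $h-1$ previously chosen vertices to ensure the embedding is injective, but this loss is negligible against your $\tfrac12(\eta-\gamma)^{j-1}|W_j|\ge\tfrac38\eta^{h-1}\gamma^{-1}=\tfrac{3h}{2\eta}$ good candidates and is easily absorbed into the final bound.
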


We finish the section with a quantitative version of Theorem \ref{inducedtower}.

\begin{theorem}
There is an absolute constant $c$ such that for any graph $H$ on $h$ vertices
$1,\ldots,h$ and $0 < \epsilon< 1/2$ there is $\delta>0$ with $\delta^{-1} =
t_j(h)$ with $j=O(\epsilon^{-6})$ such that if a graph $G$ on $n$ vertices has
at most $\delta n^h$ induced copies of $H$, then we can add or delete $\epsilon
n^2$ edges of $G$ to obtain an induced $H$-free graph.
\end{theorem}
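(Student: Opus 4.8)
The plan is to derive this quantitative statement from the strengthened corollary, Lemma~\ref{easycor2} (whose tower-type bound was just established via the strong cylinder regularity lemma, Lemma~\ref{scr2}), together with the counting lemma above, carrying out the reduction sketched in Section~\ref{removalsubsection} while tracking all parameters. Put $\epsilon'=\epsilon/8$ and $\eta=\epsilon'$, and apply Lemma~\ref{easycor2} with approximation parameter $\epsilon'$ and with the constant (hence non-increasing) function $f\equiv\frac{1}{4h}\eta^{h}$, which takes values in $(0,\epsilon']$. This yields $\delta'=\frac{1}{8S^{2}}$ with $S=S(\epsilon'^{6}/4,\,2\epsilon'^{-1},\,f)$, an equipartition $V=V_{1}\cup\dots\cup V_{k}$ with $2\epsilon'^{-1}\le k\le S$, and subsets $W_{i}\subset V_{i}$ with $|W_{i}|\ge\delta'|V|$ such that every pair $(W_{i},W_{j})$ with $1\le i\le j\le k$ is $f(k)$-regular and all but at most $\epsilon'k^{2}$ pairs $1\le i\le j\le k$ satisfy $|d(V_{i},V_{j})-d(W_{i},W_{j})|\le\epsilon'$. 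Finally set $\delta=(\eta/4)^{\binom{h}{2}}\delta'^{h}$.

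First I would dispose of small $n$. Both Lemma~\ref{easycor2} and the counting lemma need $|W_{i}|\ge\delta'n\ge\gamma^{-1}=4h\eta^{-h}$, i.e.\ $n$ at least a threshold $n_{0}$ that is of the same tower-type in $h$ as $S$; if $n<n_{0}$ then $\delta n^{h}<1$ by the choice of $\delta$, so $G$ has no induced copy of $H$ and there is nothing to prove. So assume $n\ge n_{0}$. The key dichotomy is: if there is a map $\phi:V(H)\to[k]$, not necessarily injective, with $d(W_{\phi(a)},W_{\phi(b)})>\eta$ for every edge $ab$ of $H$ and $d(W_{\phi(a)},W_{\phi(b)})<1-\eta$ for every non-edge $ab$, then the counting lemma applied to the sets $W_{\phi(1)},\dots,W_{\phi(h)}$ — with the case $\phi(a)=\phi(b)$ covered precisely because each $W_{i}$ is $f(k)$-regular with itself, so all the pairs indexed by distinct vertices of $H$ are $f(k)=\gamma$-regular — produces at least $(\eta/4)^{\binom{h}{2}}\prod_{a\in V(H)}|W_{\phi(a)}|\ge(\eta/4)^{\binom{h}{2}}(\delta'n)^{h}=\delta n^{h}$ induced copies of $H$ in $G$, contradicting the hypothesis. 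Hence no such $\phi$ exists.

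Given this, modify $G$ into $G'$ by, for every pair $i\le j$ (including $i=j$), deleting all edges between $V_{i}$ and $V_{j}$ when $d(W_{i},W_{j})\le\eta$ and adding all edges between them when $d(W_{i},W_{j})\ge1-\eta$. The number of edges changed is bounded exactly as in the sketch in Section~\ref{removalsubsection}: the at most $\epsilon'k^{2}$ pairs with $|d(V_{i},V_{j})-d(W_{i},W_{j})|$ large contribute $O(\epsilon')n^{2}$ by equitability, and among the remaining pairs one only deletes within pairs of density at most $2\epsilon'$ or adds within pairs of co-density at most $2\epsilon'$; in total at most $5\epsilon'n^{2}<\epsilon n^{2}$ edges are modified. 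And $G'$ is induced $H$-free: an induced copy of $H$ in $G'$ with vertex $a$ placed in $V_{\phi(a)}$ would have, for each edge $ab$ of $H$, the pair $(V_{\phi(a)},V_{\phi(b)})$ not fully deleted and hence $d(W_{\phi(a)},W_{\phi(b)})>\eta$, and for each non-edge $ab$ the pair not fully added and hence $d(W_{\phi(a)},W_{\phi(b)})<1-\eta$ — that is exactly a forbidden $\phi$, a contradiction.

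It remains to track the tower height. By Lemma~\ref{scr2}, $S=s_{\ell}$ with $\ell=8\epsilon'^{-6}+1=O(\epsilon^{-6})$, $s_{1}=2\epsilon'^{-1}$, and $s_{i+1}=t_{4}((s_{i}/f(s_{i}))^{c})$; since $f$ is the constant $\frac{1}{4h}\eta^{h}$ we have $s_{i}/f(s_{i})=4h\eta^{-h}s_{i}\le 2^{O(h\log\epsilon^{-1})}s_{i}$, so each step raises the tower height by $4$ while inflating the argument by a factor that, since $h\ge2$, can be absorbed into a bounded further increase of the height. By induction $S\le t_{j'}(h)$ with $j'=O(\epsilon^{-6})$, and then $\delta'^{-1}=8S^{2}\le t_{j'+1}(h)$ and $\delta^{-1}=(\eta/4)^{-\binom{h}{2}}\delta'^{-h}\le t_{j'+3}(h)$, giving $\delta^{-1}=t_{j}(h)$ with $j=O(\epsilon^{-6})$. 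I expect the main obstacle to be organisational rather than conceptual: all the depth sits in the already-proved ingredients (the tower-type strong cylinder regularity lemma and the tower-type $\epsilon$-regular self-pair lemma), and the one point that must be handled with care is the use of the strengthened corollary in which each $W_{i}$ is regular with itself — this is exactly what removes the need for $\phi$ to be injective and lets a surviving induced copy be converted directly into a forbidden template, closing the argument.
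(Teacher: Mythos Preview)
Your proof is essentially identical to the paper's: same constant function $f\equiv\frac{1}{4h}\eta^{h}$, same application of Lemma~\ref{easycor2}, same dichotomy via the counting lemma (exploiting that each $W_i$ is regular with itself so $\phi$ need not be injective), same edge-modification rule, and the same tower-height bookkeeping through Lemma~\ref{scr2}. The only slip is in the small-$n$ case: with your choice $\delta=(\eta/4)^{\binom{h}{2}}\delta'^{h}$ the assertion ``$\delta n_{0}^{h}<1$'' for $n_{0}=4h\eta^{-h}\delta'^{-1}$ does not hold (one needs an exponent like $h^{2}$ rather than $\binom{h}{2}$, which is what the paper uses), but this is a trivial parameter adjustment that leaves the tower bound unchanged.
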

\begin{proof}
Let $\eta=\frac{\epsilon}{8}$. Let $\delta=\left(\frac{\eta}{4}\right)^{h^2}\delta'^{h}$, where $\delta'=\delta'(\eta,f)$ as in Lemma \ref{easycor2} and $f(k)=\frac{1}{4h}\eta^h$.  If the number of vertices satisfies $|V| < \delta^{-1/h}$, then $\delta|V|^h<1$ and there are no induced copies of $H$, in which case no edges of $G$ need to be modified. We can therefore assume that $|V| \geq \delta^{-1/h}=\left(\frac{\eta}{4}\right)^{-h}\delta'^{-1}$. 
As $|V| \geq \delta'^{-1}$, we can apply Lemma \ref{easycor2} to graph $G=(V,E)$
with $\eta$ in place of $\epsilon$ and $f$ as above. We obtain an
equitable partition $V=V_1 \cup \ldots \cup V_k$ and vertex subsets $W_i
\subset V_i$ such that $|W_i| \geq \delta' |V| \geq \left(\frac{\eta}{4}\right)^{-h}$, the cylinder $W_1 \times
\cdots W_k$ is strongly $f(k)$-regular, and all but at most $\eta k^2$ pairs $1
\leq i \leq j \leq k$ satisfy $|d(W_i,W_j)-d(V_i,V_j)| \leq \eta$.

The above counting lemma shows that if there is any mapping $\phi:[h] \rightarrow [k]$
such that $d(W_{\phi(i)},W_{\phi(j)})>\eta$ for $(i,j)$ an edge of $H$ and
$d(W_{\phi(i)},W_{\phi(j)})<1-\eta$ for $i,j$ distinct and nonadjacent in $H$,
then $G$ contains at least $\left(\frac{\eta}{4}\right)^{{h \choose 2}}|W_1| \times \cdots \times |W_h| \geq \delta n^h$ induced copies of $H$. Hence, no such
mapping $\phi$ exists. That is, for every mapping $\phi:[h] \rightarrow [k]$,
there is an edge $(i,j)$ for which  $d(W_{\phi(i)},W_{\phi(j)}) \leq \eta$ or
distinct $i,j$ that are nonadjacent in $H$ with $d(W_{\phi(i)},W_{\phi(j)})
\geq 1-\eta$.

For each pair $(W_i,W_j)$ for which $d(W_i,W_j)\leq \eta$, delete all edges
between $V_i$ and $V_j$, and for each pair $(W_i,W_j)$ for which $d(W_i,W_j)
\geq 1-\eta$, add all edges between  $V_i$ and $V_j$. Let $G'$ be this
modification of $G$. By the previous paragraph, there are no induced copies of
$H$ in $G'$.

We have left to show that few edge modifications are made in obtaining $G'$
from $G$. If a pair $(W_i,W_j)$ for which edges were modified satisfies
$|d(W_i,W_j)-d(V_i,V_j)| \leq \eta$, then the density between the two sets was
only changed by at most $2\eta$. The number of $ 1\leq i \leq j \leq k$ for which $|d(W_i,W_j)-d(V_i,V_j)| > \eta$ is at most $\eta k^2$. Since $V_1,\ldots,V_k$ is an equipartition into nonempty parts, at most $\eta k^2 \cdot 4\left(\frac{n}{k}\right)^2=4 \eta n^2$ edges are changed between such pairs. In total at most $2\eta{n \choose 2}+ 4\eta n^2 \leq 5\eta n^2 < \epsilon n^2$ edges were changed in order to obtain $G'$ from $G$.

From Lemma \ref{easycor2}, we have $\delta' = \frac{1}{8S^2}$, where $S = S(\alpha, s, f)$ is the function from Lemma \ref{scr2} with $\alpha = \frac{\eta^6}{4}$, $s = 2 \eta^{-1}$ and $f(k) = \frac{1}{4h} \eta^h$. From Lemma \ref{scr2}, $S(\alpha, s, f)$ will be at most a tower in $h$ whose height is proportional to $\eta^{-6}$. Therefore, by the choice of $\eta$ and $\delta$ in the above proof of the induced graph removal lemma, we indeed get the desired tower-type bound. This also completes the proof of Theorem \ref{inducedtower}. \end{proof}

\section{Regular approximation lemma} \label{RALsection}

In this section we show how to derive the regular approximation lemma from
Tao's regularity lemma, as discussed in Subsection \ref{LSsubsection}. The key
lemma is Lemma \ref{keyforral}, which shows how to turn a bipartite graph into
a regular pair by changing some edges according to a weak regular partition.
We use the notation $x = y \pm \epsilon$ to denote the fact that $y-\epsilon
\leq x \leq y+\epsilon$.

It will be helpful to use the Hoeffding-Azuma inequality for concentration of
measure. Say that a random variable $X(\omega)$ on an $n$-dimensional product
space $\Omega = \prod_{i=1}^n \Omega_i$ is
\emph{Lipschitz}\/ if changing $\omega$ in any single coordinate
affects the value of $X(\omega)$ by at most one.  The Hoeffding-Azuma
inequality (see, e.g., \cite{AlSp}) provides concentration for these
distributions.

\begin{theorem}[Hoeffding-Azuma Inequality]
  \label{thm:azuma}
  Let $X$ be a Lipschitz random variable on an $n$-dimensional
  product space.  Then for any $t \geq 0$,
  \begin{displaymath}
    \pr{ | X - \E{X} | > t }
    \leq
    2 \exp\left\{
      -\frac{t^2}{2n}
    \right\}.
  \end{displaymath}
\end{theorem}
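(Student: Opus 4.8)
\medskip\noindent\textbf{Proof sketch.}
The plan is to run the usual martingale argument. Write the product space as $\Omega = \prod_{i=1}^{n}\Omega_i$, let $\omega = (\omega_1,\ldots,\omega_n)$ denote the random point, and form the Doob martingale $X_0,X_1,\ldots,X_n$ by setting $X_i = \E{X \mid \omega_1,\ldots,\omega_i}$; thus $X_0 = \E{X}$ and $X_n = X$. Writing $D_i = X_i - X_{i-1}$ for the martingale differences, we have $X - \E{X} = \sum_{i=1}^{n} D_i$ and, by the tower property, $\E{D_i \mid \omega_1,\ldots,\omega_{i-1}} = 0$ for each $i$.

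The next step is to turn the Lipschitz hypothesis into a bounded-range statement for the $D_i$. Fixing $\omega_1,\ldots,\omega_{i-1}$, the value $X_i$ depends only on $\omega_i$ and equals the average of $X$ over $\omega_{i+1},\ldots,\omega_n$; since changing $\omega$ in the single coordinate $i$ moves $X$ by at most $1$, so does this average, and hence $X_i$ (and therefore $D_i$, which is just $X_i$ recentered) ranges over an interval of length at most $1$ as $\omega_i$ varies. So, conditionally on $\omega_1,\ldots,\omega_{i-1}$, the variable $D_i$ has mean zero and is supported in an interval of length at most $1$. I would then invoke Hoeffding's lemma: a mean-zero random variable $Y$ taking values in an interval of length $\ell$ satisfies $\E{e^{\lambda Y}} \le e^{\lambda^2 \ell^2/8}$ for all real $\lambda$, which follows from a short convexity estimate on its cumulant generating function. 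Applied conditionally, this gives $\E{e^{\lambda D_i} \mid \omega_1,\ldots,\omega_{i-1}} \le e^{\lambda^2/8}$.

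Finally I would peel off the differences one at a time using the tower property: conditioning first on $\omega_1,\ldots,\omega_{n-1}$ and using the bound above on $D_n$,
\[
\E{e^{\lambda(X - \E{X})}} = \E{e^{\lambda\sum_{i=1}^{n-1} D_i}\,\E{e^{\lambda D_n} \mid \omega_1,\ldots,\omega_{n-1}}} \le e^{\lambda^2/8}\,\E{e^{\lambda\sum_{i=1}^{n-1} D_i}},
\]
and iterating down to $D_1$ yields $\E{e^{\lambda(X - \E{X})}} \le e^{n\lambda^2/8}$. Markov's inequality then gives $\pr{X - \E{X} > t} \le e^{-\lambda t + n\lambda^2/8}$ for every $\lambda > 0$; optimizing with $\lambda = 4t/n$ produces $\pr{X - \E{X} > t} \le e^{-2t^2/n} \le e^{-t^2/(2n)}$. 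Running the same argument with $-X$ in place of $X$ bounds $\pr{X - \E{X} < -t}$ identically, and a union bound over the two tails contributes the factor of $2$, giving $\pr{|X - \E{X}| > t} \le 2 e^{-t^2/(2n)}$.

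\medskip\noindent\textbf{Main obstacle.} There is no serious difficulty here; the statement is in fact weaker than what the argument delivers. The two points needing a little care are (i) checking that the Lipschitz condition really forces the conditional range of each $D_i$ to have length at most $1$ — mild care is needed because $D_i$ is built from nested conditional expectations rather than from $X$ itself — and (ii) proving Hoeffding's lemma with the sharp constant $\frac{1}{8}$, which is the only genuinely computational step.
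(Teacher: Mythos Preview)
Your argument is correct and is the standard proof of the Hoeffding--Azuma inequality via the Doob martingale and Hoeffding's lemma; as you observe, it even yields the sharper bound $2e^{-2t^2/n}$. The paper itself does not give a proof of this theorem: it simply states it and cites \cite{AlSp}, so there is nothing to compare against beyond noting that your sketch is the textbook derivation one finds in that reference.
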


For a bipartite graph across parts $A$ and $B$, and partitions
$\mathcal{A}:A=A_1 \cup \ldots \cup A_r$ and $\mathcal{B}:B=B_1 \cup \ldots
\cup B_s$, let $q(A,B)=d^2(A,B)$ and
$q(\mathcal{A}, \mathcal{B}) = \sum_{i,j}\frac{|A_i||B_j|}{|A||B|}d^2(A_i,B_j)$
be the mean square density across the partitions $\mathcal{A}$ and
$\mathcal{B}$.

\begin{lemma}\label{keyforral} Let $0<\delta<1$. Suppose
$A,B$ are disjoint vertex subsets of a graph with $|A| \geq |B| > 8\delta^{-2}$ and $d(A,B)=\eta$. Suppose further that $\mathcal{A}:A=A_1 \cup \ldots \cup A_r$ and $\mathcal{B}:B=B_1 \cup \ldots \cup B_s$ form a weak $\delta$-regular
partition of the pair $(A,B)$, i.e., for all $S \subset A$ and $T \subset B$,
we have
$$\left|\sum_{i=1}^r \sum_{j=1}^s |A_i \cap S||B_j \cap
T|d(A_i,B_j)-d(S,T)|S||T| \right|\leq \delta |A||B|.$$
Then, one can add or remove at most
$\left(\delta+\left(q(\mathcal{A},\mathcal{B})-q(A,B)\right)^{1/2}\right)|A||B|$
edges across $(A,B)$ and thus turn it into a $2\delta^{1/3}$-regular pair
satisfying $d(A, B) = \eta  \pm \delta$.
\end{lemma}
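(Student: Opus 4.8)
The plan is to modify the bipartite graph across $(A,B)$ so that, within each block $A_i \times B_j$, the edge density becomes \emph{exactly} (up to rounding) $d(A_i,B_j)$ in a random, quasirandom-looking way. Concretely, I would replace the current bipartite graph between $A_i$ and $B_j$ by a random bipartite graph in which each potential edge is present independently with probability $d(A_i,B_j)$; call the resulting graph $G'$. The number of edges changed in block $(i,j)$ is, in expectation, at most $2d(A_i,B_j)(1-d(A_i,B_j))|A_i||B_j| \le$ something controlled, but more usefully the total change is governed by how far the original block densities are from $d(A_i,B_j)$ together with the random fluctuation. First I would bound the expected number of changed edges: for a fixed block, $\E[\,|\text{changes}|\,] \le |d_{\mathrm{orig}}(A_i,B_j)-d(A_i,B_j)|\,|A_i||B_j| + (\text{fluctuation})$, and summing over blocks and applying Cauchy--Schwarz to $\sum_{i,j}|d_{\mathrm{orig}}(A_i,B_j)-d(A_i,B_j)|\frac{|A_i||B_j|}{|A||B|}$ produces the term $\bigl(q(\mathcal{A},\mathcal{B})-q(A,B)\bigr)^{1/2}$ after noting that the weak $\delta$-regularity forces $d_{\mathrm{orig}}(A_i,B_j)$ to coincide with $d(A_i,B_j)$ in the relevant averaged sense, so that $\sum \frac{|A_i||B_j|}{|A||B|} d_{\mathrm{orig}}(A_i,B_j)^2$ and $q(\mathcal{A},\mathcal{B})$ differ only by $O(\delta)$. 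The leftover $\delta |A||B|$ comes from the weak-regularity slack plus the random fluctuations, which I would control by Hoeffding--Azuma since the number of changed edges is a Lipschitz function of the independent edge-indicator choices.

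Next I would verify that $G'$ is $2\delta^{1/3}$-regular across $(A,B)$. Fix $S \subset A$, $T \subset B$ with $|S| \ge 2\delta^{1/3}|A|$, $|T| \ge 2\delta^{1/3}|B|$. In $G'$, $e(S,T)$ is a sum of independent indicators with mean $\sum_{i,j} d(A_i,B_j)|A_i\cap S||B_j\cap T|$. On one hand, by the weak $\delta$-regularity of the partition (applied in $G$, and the block densities $d(A_i,B_j)$ being the same objects there), this mean is within $\delta|A||B|$ of $d(S,T)_{G}|S||T|$; but what I actually need is that it is within $\delta|A||B|$ of $\eta|S||T|$ where $\eta = d(A,B)$ computed \emph{after} the edge modification, i.e.\ $d(A,B)_{G'}$. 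Applying weak regularity with $S=A$, $T=B$ shows $d(A,B)_{G'} = \eta \pm \delta$ (this also gives the claimed density conclusion $d(A,B)=\eta\pm\delta$), and then a second application with general $S,T$ gives $\E[e_{G'}(S,T)] = \eta|S||T| \pm 2\delta|A||B|$. On the other hand, Hoeffding--Azuma (or a Chernoff bound, since the indicators are independent) shows $e_{G'}(S,T)$ is concentrated around its mean to within, say, $\delta|A||B|$ with probability at least $1 - 2\exp(-\delta^2|A||B|/2)$; a union bound over the at most $4^{|A|+|B|}$ choices of $(S,T)$ survives because $|A|\ge|B| > 8\delta^{-2}$ makes the exponential beat the entropy factor. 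Thus with positive probability \emph{every} large pair $(S,T)$ satisfies $|d_{G'}(S,T) - \eta| \le 3\delta|A||B|/(|S||T|) \le 3\delta/(2\delta^{1/3})^2 < 2\delta^{1/3}$, giving $2\delta^{1/3}$-regularity. One then fixes an outcome of the randomness for which both the edge-change bound and the regularity property hold.

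The main obstacle I expect is the bookkeeping that ties the number of changed edges to $\bigl(q(\mathcal{A},\mathcal{B})-q(A,B)\bigr)^{1/2}$ cleanly: one must use that weak $\delta$-regularity implies the within-block densities, \emph{averaged against the block weights}, reproduce both $d(A,B)$ and $\sum \frac{|A_i||B_j|}{|A||B|}d(A_i,B_j)^2 = q(\mathcal{A},\mathcal{B})$ up to $O(\delta)$, so that $\sum_{i,j}\frac{|A_i||B_j|}{|A||B|}\bigl(d_{\mathrm{orig}}(A_i,B_j)-d(A_i,B_j)\bigr)^2 \le q(\mathcal{A},\mathcal{B})-q(A,B)+O(\delta)$, after which Cauchy--Schwarz converts the $\ell_2$ bound into the $\ell_1$ bound on edge changes. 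The random-fluctuation terms and the rounding to integer edge counts per block are then absorbed into the $\delta|A||B|$ error using $|B| > 8\delta^{-2}$, which guarantees each block is large enough (or, if a block is tiny, its total contribution to everything is negligible). The concentration step itself is routine given Theorem~\ref{thm:azuma} and the independence of the edge choices; the only care needed is that the Lipschitz constant of $e_{G'}(S,T)$ as a function of the per-edge choices is $1$, so the $n$ in Hoeffding--Azuma is $|A||B|$, matching the $\exp(-t^2/(2|A||B|)$ denominator and the $|A|\ge|B|>8\delta^{-2}$ hypothesis.
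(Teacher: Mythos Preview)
Your probabilistic framework (independent edge edits, Hoeffding--Azuma, union bound over all pairs $A',B'$) is exactly the paper's, but the specific modification you describe is wrong in two independent ways.

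\textbf{Wrong target density.} You propose to replace the bipartite graph in each block $A_i\times B_j$ by a fresh i.i.d.\ random bipartite graph with edge probability $d(A_i,B_j)$. But $d(A_i,B_j)$ is, in the lemma's notation, the \emph{current} density of that block; there is no separate ``$d_{\mathrm{orig}}(A_i,B_j)$'', and the quantity $\sum_{i,j}|d_{\mathrm{orig}}(A_i,B_j)-d(A_i,B_j)|$ you apply Cauchy--Schwarz to is identically zero. More seriously, after your resampling the block densities are (in expectation) unchanged, and
\[
\E[e_{G'}(S,T)]=\sum_{i,j}d(A_i,B_j)\,|A_i\cap S|\,|B_j\cap T|.
\]
Weak $\delta$-regularity says this is within $\delta|A||B|$ of $e_G(S,T)$, \emph{not} of $\eta|S||T|$; your step ``a second application with general $S,T$ gives $\E[e_{G'}(S,T)]=\eta|S||T|\pm 2\delta|A||B|$'' is unjustified. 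Concretely, take $r=s=2$, equal parts, $d(A_1,B_1)=d(A_2,B_2)=1$, $d(A_1,B_2)=d(A_2,B_1)=0$; the partition is perfectly weak-regular, your resampling returns the same graph, and the pair is maximally irregular. The correct target is $\eta=d(A,B)$ in \emph{every} block.

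\textbf{Full resampling changes too many edges.} Even with target $\eta$, replacing the block by fresh Bernoulli$(\eta)$ edges flips in expectation $\bigl(d(A_i,B_j)(1-\eta)+(1-d(A_i,B_j))\eta\bigr)|A_i||B_j|$ pairs, which is $\Theta(|A_i||B_j|)$ even when $d(A_i,B_j)=\eta$, and hence $\Theta(|A||B|)$ in total. To get expected change $\sum_{i,j}|d(A_i,B_j)-\eta|\,|A_i||B_j|$ you must touch only the \emph{excess}: when $d(A_i,B_j)\ge\eta$ delete each existing edge independently with probability $(d(A_i,B_j)-\eta)/d(A_i,B_j)$; when $d(A_i,B_j)<\eta$ add each non-edge with probability $(\eta-d(A_i,B_j))/(1-d(A_i,B_j))$. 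This is precisely the paper's construction. Now Cauchy--Schwarz against
\[
\sum_{i,j}\frac{|A_i||B_j|}{|A||B|}\bigl(d(A_i,B_j)-\eta\bigr)^2=q(\mathcal A,\mathcal B)-q(A,B)
\]
gives the claimed bound on expected changes directly, with no $O(\delta)$ slack. The paper then uses weak $\delta$-regularity to show that for every $A',B'$ the expected new edge count is within $\delta|A||B|$ of $\eta|A'||B'|$ (this is where the hypothesis is actually used, since the expected count after a partial modification still depends on the original edge locations inside the blocks), and finishes with the Hoeffding--Azuma plus union-bound argument you outlined.
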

\begin{proof}
Let $\alpha_{i,j}=d(A_i,B_j)-d(A,B)$. If $\alpha_{i,j}\geq 0$, we delete each
of the edges connecting $A_i$ and $B_j$ independently with probability
$\frac{\alpha_{i,j}}{d(A_i,B_j)}$. If $\alpha_{i,j} <0$, we add each of the
nonedges between $A_i$ and $B_j$ with probability
$-\frac{\alpha_{i,j}}{1-d(A_i,B_j)}$. Clearly the expected value of $d(A,B)$
after these modifications is $\eta$. By the Hoeffding-Azuma inequality, the
probability that the new density deviates from $\eta$ by more than $\delta$ is
at most $$2 \exp\left\{
      -\frac{(\delta |A||B|)^2}{2|A|||B|}
    \right\}=2\exp\left\{-\delta^2|A||B|/2\right\}<1/4.$$ Also, the expected
number of edges changed is
\begin{eqnarray*}\sum_{i,j} |\alpha_{i,j}| |A_i||B_j| & =
&\sum_{i,j}|d(A_i,B_j)-d(A,B)||A_i||B_j| = |A||B| \sum_{i, j} |d(A_i, B_j) - d(A,
B)| p_i q_j\\ & \leq & |A||B|\left(\sum_{i, j} \left(d(A_i, B_j) - d(A,
B)\right)^2 p_i q_j\right)^{1/2} = |A||B| \left(q(\mathcal{A}, \mathcal{B}) - q(A,
B)\right)^{1/2},\end{eqnarray*}
where $p_i = |A_i|/|A|$ and $q_j = |B_j|/|B|$ and in the inequality we used the
Cauchy-Schwarz inequality. By the Hoeffding-Azuma inequality, the probability
that the number of edges changed deviates by more than $\delta |A||B|$ from its
expected value is at most
$$2 \exp\left\{
      -\frac{(\delta |A||B|)^2}{2|A||B|}
    \right\}=2\exp\left\{-\delta^2|A||B|/2\right\}<1/4.$$
Consider now two subsets $A' \subset A$ and $B' \subset B$. As $(A, B)$ was
initially weak $\delta$-regular,
the expected value of $e(A',B')$ differs from $\eta |A'||B'|$ by at most
$\delta |A||B|$.
By the Hoeffding-Azuma inequality, we get that the probability that $e(A',B')$
deviates from its expected value by more than $\delta |A||B|$ is at most
$$2 \exp\left\{
      -\frac{(\delta |A||B|)^2}{2|A'||B'|}
    \right\} \leq 2\exp\left\{-\delta^2|A||B|/2\right\}<2\exp\left\{-4|A|\right\}\leq 2^{-|A|-|B|-2},$$
where we use $|A| \geq |B| >8\delta^{-2}$.
As there are $2^{|A|+|B|}$ choices for $(A',B')$, we get that with probability at
least $3/4$, all pairs $(A', B')$ are within $2\delta |A||B|$ edges of having edge
density $\eta$. To recap, we get that with probability at least $1/4$ we made
at most
$\left(\delta+\left(q(\mathcal{A},\mathcal{B})-q(A,B)\right)^{1/2}\right)|A||B|$
edge modifications, $d(A, B) = \eta \pm \delta$ and all subsets $A' \subset
A,B' \subset B$ are within $2\delta |A||B|$ edges from having edge density $\eta$. Hence, 
there is such a choice for these edge modifications, and we claim that this implies that the pair $(A,B)$ is $2\delta^{1/3}$-regular. Indeed,
otherwise there would be $A' \subset A$, $B' \subset B$, with $|A'| \geq 2\delta^{1/3}|A|$, $|B'|\geq 2\delta^{1/3}|B|$, and $|d(A',B')-d(A,B)|>2\delta^{1/3}$, which implies that
$A',B'$ differs by at least $2\delta^{1/3}|A'||B'| \geq (2\delta^{1/3})^3|A||B|=8\delta |A||B|$  edges from
having edge density $d(A,B)$, a contradiction. This completes the proof. 
\end{proof}

We next use Lemma \ref{keyforral} to deduce the regular approximation lemma from Tao's regularity lemma.

\begin{proof}
Let $\delta:\mathbb{N} \rightarrow (0,1)$ be defined by
$\delta(t)=\min\left(\frac{g(t)^{3}}{32t^2},\epsilon/2\right)$. Let
$\epsilon_0=(\epsilon/2)^2$. Let $T_0=T_0(\delta,\epsilon_0,s)$ be the bound on
the number of parts in Tao's regularity lemma and $T=16T_0/\delta(T_0)^2$. If
the number $n$ of vertices of the graph $G$ satisfies $n \leq T$, then we can
partition $G$ into parts of size one, and the desired conclusion is satisfied
in this case. Hence, we may assume $n > T$. By Tao's regularity lemma, the
graph $G$ has an equitable vertex partition $P$ into $t \geq s$ parts and an equitable
vertex refinement $Q$ into at most $T_0$ parts which is weak
$\delta(t)$-regular such that $q(Q) \leq q(P)+\epsilon_0$.

For each pair of parts $(A,B)$ of partition $P$, let $\mathcal{A}$ and
$\mathcal{B}$ denote the partitions of $A$ and $B$ given by partition $Q$.
Since $Q$ is a weak $\delta(t)$-regular partition, and $A$ and $B$ have
cardinality at least $\lfloor \frac{n}{t} \rfloor \geq \frac{n}{2t}$, then the
partitions $\mathcal{A}$ and $\mathcal{B}$ form a weak $4t^2\delta(t)$-regular
partition. Note that $4t^2\delta(t) \leq \frac{g(t)^3}{8}$.

Since $|A|, |B| \geq \frac{n}{2t} > 8/\delta(t)^2$, we may apply Lemma
\ref{keyforral} to the graph between $A$ and $B$. That is, we may change at most
$\left(\delta(t)+\left(q(\mathcal{A},\mathcal{B})-q(A,B)
\right)^{1/2}\right)|A||B|$ edges across $A$ and $B$ and, in so doing, make
$(A,B)$ a $g(t)$-regular pair, where we used
that $g(t)=2\left(\frac{g(t)^3}{8}\right)^{1/3}$. In total, the number of edges
we change to obtain a graph $G'$ which is $g$-regular with respect to partition
$P$ is at most $$\sum_{A,B \in P}
\left(\delta(t)+\left(q(\mathcal{A},\mathcal{B})-q(A,B) \right)^{1/2}\right)|A||B|
\leq (\delta(t)+\epsilon_0^{1/2})n^2 \leq \epsilon n^2,$$
where we used Jensen's inequality for the concave function $h(x)=x^{1/2}$, the
inequality $q(Q) \leq q(P)+\epsilon_0$, and the bounds $\delta(t) \leq
\epsilon/2$, $\epsilon_0=(\epsilon/2)^2$.
To complete the proof, we recall that the number of parts in partition $P$ is at least $s$ and at most $T_0=T_0(\delta,\epsilon_0,s)$.
\end{proof}

\section{Frieze-Kannan weak regularity lemma}\label{weakregsection}

In this section we prove Theorem \ref{FKWRL} which provides a lower bound on
the weak regularity lemma.
For a vertex partition $P:V=V_1 \cup \ldots \cup V_k$ of a graph $G=(V,E)$, let
$$f_P(A,B)=f_P^G(A,B)=e(A,B)-\sum_{1 \leq i, j \leq k}
d(V_i,V_j)|A \cap V_i||B \cap V_j|,$$ which is the difference between the
number of edges between $A$ and $B$ and the expected number of edges based on
the densities across the pairs of parts of the partition and the intersection
sizes of $A$ and $B$ with the parts. We call a partition $P$ of the vertex set
of a graph $G=(V,E)$ {\it weak $\epsilon$-regular}  if it satisfies
$$|f_P(A,B)| \leq \epsilon |V|^2$$ for all $A,B \in V$.
Recall that the weak regularity lemma states that for each $\epsilon>0$ there
is a positive integer $k(\epsilon)$ such that every graph has an equitable weak
$\epsilon$-partition into at most $k(\epsilon)$ parts. Moreover, one may take $k(\epsilon) = 2^{O(\epsilon^{-2})}$. We will prove that the number of parts required in the weak regularity lemma satisfies $k(\epsilon)=2^{\Omega(\epsilon^{-2})}$, thus matching the upper
bound. 

The following simple lemma of Frieze and Kannan (see Lemma 7(a) of
\cite{FrKa1}) shows that the notion of weak regularity is robust. 

\begin{lemma}\label{weakrobust}
If a partition is weak $\epsilon$-regular, then any refinement of it is weak
$2\epsilon$-regular.
\end{lemma}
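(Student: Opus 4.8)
The plan is to reduce everything to the defect function $f_P$. First I would record the elementary reformulation that, writing $P(v)$ for the part of a partition $P$ containing $v$, one has $f_P(A,B)=e(A,B)-\sum_{u\in A,\,w\in B}d(P(u),P(w))$ for every $A,B\subseteq V$, since $\sum_{u\in A\cap V_i,\,w\in B\cap V_j}d(V_i,V_j)=d(V_i,V_j)|A\cap V_i||B\cap V_j|$. With $f_P$ in this form the key structural fact is that it is additive over block decompositions: if $P'$ is a refinement of $P$ and $C,D$ are arbitrary unions of parts $V'_\ell$ of $P'$, then $f_P(C,D)=\sum f_P(V'_\ell,V'_{\ell'})$, the sum being over parts $V'_\ell\subseteq C$ and $V'_{\ell'}\subseteq D$, because both the edge count $e$ and the ``expected edge count'' term split over the grid $C\times D$.

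Next I would establish the identity relating the two defect functions. If $V'_\ell\subseteq V_{i(\ell)}$ and $V'_{\ell'}\subseteq V_{j(\ell')}$ are parts of $P'$, then every pair $(u,w)\in V'_\ell\times V'_{\ell'}$ has $d(P(u),P(w))=d(V_{i(\ell)},V_{j(\ell')})$, so $f_P(V'_\ell,V'_{\ell'})=|V'_\ell||V'_{\ell'}|\bigl(d(V'_\ell,V'_{\ell'})-d(V_{i(\ell)},V_{j(\ell')})\bigr)$; equivalently, on this block $d(P'(u),P'(w))-d(P(u),P(w))=f_P(V'_\ell,V'_{\ell'})/(|V'_\ell||V'_{\ell'}|)$. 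Summing this over $(u,w)\in A\times B$ and setting $\alpha_\ell=|A\cap V'_\ell|/|V'_\ell|$ and $\beta_{\ell'}=|B\cap V'_{\ell'}|/|V'_{\ell'}|$, both in $[0,1]$, yields
$$f_{P'}(A,B)=f_P(A,B)-\sum_{\ell,\ell'}\alpha_\ell\beta_{\ell'}\,f_P(V'_\ell,V'_{\ell'}).$$

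Finally I would bound the correction sum. Viewed as a function of the vectors $(\alpha_\ell)$ and $(\beta_{\ell'})$ it is a bilinear form on the cube $[0,1]^m\times[0,1]^m$, so its maximum and minimum over the cube are attained at $0/1$-valued vectors; at such an extreme point $\alpha=\mathbf{1}_C$, $\beta=\mathbf{1}_D$ for unions $C,D$ of parts of $P'$, and by the additivity above the sum then equals $f_P(C,D)$. Hence $\bigl|\sum_{\ell,\ell'}\alpha_\ell\beta_{\ell'}f_P(V'_\ell,V'_{\ell'})\bigr|\le\max_{C,D}|f_P(C,D)|\le\epsilon|V|^2$ because $P$ is weak $\epsilon$-regular, and combining this with $|f_P(A,B)|\le\epsilon|V|^2$ in the displayed identity gives $|f_{P'}(A,B)|\le 2\epsilon|V|^2$ for all $A,B$, which is the claim. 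The only delicate point is the bookkeeping for degenerate pairs (terms with $u=w$, or blocks with $V'_\ell=V'_{\ell'}$): one must fix a single convention for $e(X,Y)$ and $d(X,Y)$ when $X,Y$ overlap and use it consistently in all three identities. With such a convention fixed each identity above is exact, so this is a matter of care rather than a genuine obstacle.
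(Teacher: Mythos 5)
Correct. The paper does not prove this lemma but simply cites Lemma~7(a) of Frieze and Kannan \cite{FrKa1}; your argument — expressing $f_{P'}(A,B)-f_P(A,B)$ as a bilinear form in the fractional overlaps $\alpha_\ell,\beta_{\ell'}$, pushing to a $0/1$-vertex of the cube, and using block-additivity of $f_P$ to identify the vertex value with $f_P(C,D)$ for $P'$-measurable $C,D$ — is exactly the standard proof (equivalently phrased: averaging $G-G_P$ over the cells of $P'$ yields $G_{P'}-G_P$ and cannot increase cut norm, whence $\|G-G_{P'}\|_\square\le\|G-G_P\|_\square+\|G_{P'}-G_P\|_\square\le 2\epsilon$), and the bookkeeping point you flag about diagonal pairs is indeed harmless once a single ordered-pair convention for $e(\cdot,\cdot)$ and $d(\cdot,\cdot)$ is fixed throughout.
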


The robustness of weak regularity described by Lemma \ref{weakrobust} is not
shared by the usual notion of regular partition. For example,
for any fixed $\epsilon>0$ and positive integer $t$, almost surely any
partition into $t$ parts of a uniform random graph on sufficiently many
vertices is $\epsilon$-regular, while any partition of the vertex set into
parts of size $2$ is not $(\epsilon,\delta,\eta)$-regular with $\epsilon=1$,
$\delta=\eta=1/2$. This is because almost surely in any such partition, between
most pairs of parts of size $2$, there will be at least one edge and at least
one nonedge.

What we will actually prove is the stronger result that any weak $\epsilon$-regular partition must have $2^{\Omega(\epsilon^{-2})}$ parts, whether it is equitable or not. The corresponding regularity lemma, which is an immediate corollary of the usual weak regularity lemma, is the following.

\begin{lemma} \label{noneqweak}
For each $\epsilon>0$ there is a positive integer $k^*(\epsilon)$ such that
every graph $G = (V, E)$ has a vertex partition $P$ with at most $k^*(\epsilon)$ parts
which is weak $\epsilon$-regular.
\end{lemma}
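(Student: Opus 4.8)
The plan is to obtain Lemma~\ref{noneqweak} as an immediate consequence of the Frieze--Kannan weak regularity lemma, Lemma~\ref{FKWRL}. First I would observe that the condition appearing in Lemma~\ref{FKWRL} for the equitable partition $V = V_1 \cup \dots \cup V_k$, namely that
$\bigl| e(A,B) - \sum_{1 \le i,j \le k} d(V_i,V_j)|A \cap V_i||B \cap V_j| \bigr| \le \epsilon |V|^2$
for all $A, B \subseteq V$, is exactly the statement that $|f_P(A,B)| \le \epsilon |V|^2$ for all $A,B \subseteq V$, i.e.\ that the partition $P$ is weak $\epsilon$-regular in the sense defined in this section. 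Since an equitable partition is in particular a vertex partition, applying Lemma~\ref{FKWRL} with parameter $\epsilon$ and setting $k^*(\epsilon) := k(\epsilon)$ produces, for any graph $G$, a weak $\epsilon$-regular partition into at most $k^*(\epsilon)$ parts, which is precisely the assertion of Lemma~\ref{noneqweak}.

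There is no real obstacle here: dropping the equitability requirement only weakens the conclusion, so no additional argument is needed, and in particular one may take $k^*(\epsilon) = 2^{O(\epsilon^{-2})}$ with the same implied constant as in Lemma~\ref{FKWRL}. The only reason to record Lemma~\ref{noneqweak} separately is that the lower bound established in Theorem~\ref{lbweakmain} holds for arbitrary, not necessarily equitable, weak $\epsilon$-regular partitions, so it is convenient to phrase the matching upper bound in the same generality; this makes transparent that the $2^{\Theta(\epsilon^{-2})}$ bound on the number of parts is intrinsic and not an artifact of the equitability constraint. (If desired, one could instead give a direct proof by the standard energy-increment argument, which is if anything slightly shorter without the bookkeeping needed to keep partitions equitable, but deducing it from Lemma~\ref{FKWRL} is the cleanest route.)
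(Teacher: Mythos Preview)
Your proposal is correct and matches the paper's approach exactly: the paper simply states that Lemma~\ref{noneqweak} ``is an immediate corollary of the usual weak regularity lemma,'' and your argument spells out precisely this deduction from Lemma~\ref{FKWRL}, including the observation that one may take $k^*(\epsilon)=k(\epsilon)=2^{O(\epsilon^{-2})}$.
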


In the other direction, the equitable version of the weak regularity lemma also follows from Lemma \ref{noneqweak}. This is because of the robustness property discussed in Lemma \ref{weakrobust} above, that is, any refinement of a weak $\epsilon$-regular partition is a $2\epsilon$-regular partition. By arbitrarily refining a not necessarily equitable partition into an equitable partition (except for a small fraction of vertices, which we distribute evenly amongst the other parts), we get an equitable weak $3 \epsilon$-partition whose number of parts is only a factor polynomial in $\epsilon^{-1}$ larger.

In order to prove the lower bound for weak regularity, we will need to perform some further reductions. We first state a bipartite variant which can easily be shown to be equivalent to Lemma \ref{noneqweak}. For a bipartite graph $G=(U,V,E)$ with $|U|=|V|=n$, partitions
$P_1:U=U_1 \cup \ldots \cup U_k$ and $P_2:V=V_1 \cup \ldots \cup V_{k'}$, and
vertex subsets $A \subset U$ and $B \subset V$, let
$$f_{P_1,P_2}(A,B)=f_{P_1,P_2}^G(A,B)=e(A,B)-\sum_{i=1}^k \sum_{j=1}^{k'}
d(U_i,V_j)|A \cap U_i| |B \cap V_j|.$$ We call the pair of partitions $P_1,P_2$
{\it weak $\epsilon$-regular} with respect to the bipartite graph $G$ if
$$|f_{P_1,P_2}(A,B)| \leq \epsilon n^2$$ for all $A \subset U$ and $B \subset
V$.

\begin{lemma}\label{brl}
For each $\epsilon>0$ there is a positive integer $k'(\epsilon)$ such that
every bipartite graph $G=(U,V,E)$ with parts of equal size has partitions $P_1$
of $U$ and $P_2$ of $V$ each with at most $k'(\epsilon)$ parts
which form a weak $\epsilon$-regular partition.
\end{lemma}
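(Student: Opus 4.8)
The plan is to deduce Lemma~\ref{brl} from the non-equitable weak regularity lemma, Lemma~\ref{noneqweak}, using the refinement-robustness of weak regularity recorded in Lemma~\ref{weakrobust}. Given a bipartite graph $G=(U,V,E)$ with $|U|=|V|=n$, I would first pass to the auxiliary graph $G'$ on the $2n$-vertex set $U\cup V$ whose edge set is exactly $E$, so that $U$ and $V$ are independent sets in $G'$ and every edge of $G'$ crosses between them. Apply Lemma~\ref{noneqweak} to $G'$ with parameter $\epsilon/8$ to obtain a weak $(\epsilon/8)$-regular vertex partition $\mathcal{P}$ of $U\cup V$ into at most $k^*(\epsilon/8)$ parts.

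Next, refine $\mathcal{P}$ by the partition $\{U,V\}$: let $P_1$ be the collection of nonempty sets $P\cap U$ with $P\in\mathcal{P}$, and $P_2$ the collection of nonempty sets $P\cap V$, so that $P_1$ partitions $U$, $P_2$ partitions $V$, each has at most $k^*(\epsilon/8)$ parts, and $P_1\cup P_2$ is a refinement of $\mathcal{P}$. By Lemma~\ref{weakrobust}, $P_1\cup P_2$ is weak $(\epsilon/4)$-regular with respect to $G'$. Now fix $A\subseteq U$ and $B\subseteq V$ and compare $f^{G'}_{P_1\cup P_2}(A,B)$ with $f^{G}_{P_1,P_2}(A,B)$. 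In the graph-version sum over ordered pairs of parts of $P_1\cup P_2$, any pair whose first part lies in $P_2$ contributes $0$ because $A\cap W=\emptyset$ for $W\subseteq V$, and any pair whose second part lies in $P_1$ contributes $0$ because $B\cap W=\emptyset$ for $W\subseteq U$; for the surviving pairs, consisting of a part of $P_1$ followed by a part of $P_2$, the density in $G'$ equals the corresponding density in $G$, and $e_{G'}(A,B)=e_{G}(A,B)$ since $A,B$ are disjoint and on opposite sides. Hence $f^{G'}_{P_1\cup P_2}(A,B)=f^{G}_{P_1,P_2}(A,B)$, and so $|f^{G}_{P_1,P_2}(A,B)|\le(\epsilon/4)|U\cup V|^2=(\epsilon/4)(2n)^2=\epsilon n^2$. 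Thus $P_1,P_2$ form a weak $\epsilon$-regular partition of $G$, and $k'(\epsilon):=k^*(\epsilon/8)$ works.

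For the converse direction needed to justify the claimed equivalence (not used for the lower bound), I would take a graph on a vertex set $W$, build the bipartite graph with sides two disjoint copies of $W$ and an edge between the copies of $w$ and $w'$ exactly when $ww'$ is an edge, apply Lemma~\ref{brl}, pull the resulting $P_1$ and $P_2$ back to partitions of $W$, and take their common refinement; this has at most $k'(\epsilon)^2$ parts and one checks it is weak $O(\epsilon)$-regular for the original graph by the same kind of bookkeeping. In this whole argument there is no genuinely hard step: the only thing requiring care is the identity $f^{G'}_{P_1\cup P_2}(A,B)=f^{G}_{P_1,P_2}(A,B)$ together with tracking the three constant factors in play (the doubling of the regularity parameter when refining, the factor $4$ coming from $|U\cup V|^2$ versus $n^2$, and avoiding an unnecessary doubling of the part count by discarding empty intersections), which together fix the choice of $\epsilon/8$.
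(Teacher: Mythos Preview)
Your proposal is correct and follows essentially the same route as the paper: apply Lemma~\ref{noneqweak} to the bipartite graph viewed as a graph on $U\cup V$, refine by $\{U,V\}$, invoke Lemma~\ref{weakrobust}, and identify $f^{G'}_{P_1\cup P_2}(A,B)$ with $f^{G}_{P_1,P_2}(A,B)$ for $A\subset U$, $B\subset V$. Your constant tracking is in fact more careful than the paper's sketch (which states $k'(\epsilon)\le k^*(\epsilon/2)$ without explicitly absorbing the factor $4$ from $|U\cup V|^2=(2n)^2$); your choice of $\epsilon/8$ handles this cleanly.
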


To prove Theorem \ref{FKWRL}, it suffices to show
$k'(\epsilon)=2^{\Omega(\epsilon^{-2})}$. Indeed, this follows from the bound
$k'(\epsilon) \leq k^*(\epsilon/2)$. This inequality follows from first
considering a single weak $\epsilon/2$-regular partition $P$ for the bipartite
graph $G$ into at most $k^*(\epsilon/2)$ parts, and then refining it into a
partition $P'$ with at most $2k^*(\epsilon/2)$ parts based on the intersections
of the parts of $P$ with $U$ and $V$.  By Lemma \ref{weakrobust}, $P'$ is a
weak $\epsilon$-regular partition. Letting $P_1$ be the parts of $P'$ in $U$
and $P_2$ be the parts of $P'$ in $V$, the pair $P_1,P_2$ form a weak
$\epsilon$-regular partition, each with at most $k^*(\epsilon/2)$ parts, so that
$k'(\epsilon) \leq k^*(\epsilon/2)$.

To get a lower bound for the weak regularity lemma, we do not need to show the
other direction of the equivalence between the weak regularity lemma and Lemma
\ref{brl}, that Lemma \ref{brl} implies the weak regularity lemma. However,
this is rather simple, so we sketch it here. From a graph $G$ we can consider
the bipartite double cover of $G$, which is the tensor product of $G$ with
$K_2$. Applying Lemma \ref{brl}, we get a pair $P_1,P_2$ of partitions of
$V(G)$ which form a weak $\epsilon/2$-regular partition with respect to the
bipartite double cover of $G$. Refining the two partitions $P_1,P_2$ of $V(G)$,
we get by Lemma \ref{weakrobust} a weak $\epsilon$-regular partition for $G$,
thus establishing $k^*(\epsilon) \leq k'(\epsilon/2)^2$.

The following technical lemma will allow us to construct a weighted graph
rather than a graph. A similar idea is present in the lower bound construction
of Gowers \cite{Go} for Szemer\'edi's regularity lemma. Let $W$ be a
$[0,1]$-valued $n \times n$ matrix. We view $W$ as a weighted graph with parts
$U$ and $V$, where $U$ and $V$ denote the set of columns and rows,
respectively, of $W$. Let $e_W(A,B)=\sum_{a \in A,b \in B} W(a,b)$ and
$d_W(A,B)=\frac{e_W(A,B)}{|A||B|}$.

\begin{lemma}\label{weighthigh}
Let $M$ be an $n \times n$ matrix with entries in the interval $[0,1]$. Let $G=(U,V,E)$ be a bipartite
random graph with $|U|=|V|=n$ and edges chosen independently given by $M$ and
let $\theta=4n^{-1/2}$. With probability at least $1-e^{-4n}$, we have
$|e_M(A,B)-e_G(A,B)| \leq \theta n^2$ for every pair of sets $A \subset U$, $B
\subset V$.
\end{lemma}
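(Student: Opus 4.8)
The plan is to apply a union bound over all pairs $(A,B)$ with $A\subset U$ and $B\subset V$, using the Hoeffding–Azuma inequality (Theorem~\ref{thm:azuma}) to control $e_G(A,B)$ around its mean, which is exactly $e_M(A,B)$. First, fix a pair of subsets $A\subset U$, $B\subset V$. The random variable $e_G(A,B)=\sum_{a\in A,\,b\in B}\mathbf{1}[(a,b)\in E]$ is a sum of independent indicator random variables, one for each pair $(a,b)\in A\times B$; its expectation is $\sum_{a\in A,\,b\in B}M(a,b)=e_M(A,B)$. Viewing $e_G(A,B)$ as a function on the product space indexed by the at most $n^2$ independent edge-choices, changing any single coordinate alters $e_G(A,B)$ by at most one, so $e_G(A,B)$ is Lipschitz on an $N$-dimensional product space with $N\le n^2$.

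Next I would apply Theorem~\ref{thm:azuma} with $t=\theta n^2$ and $n$ (the dimension of the product space) replaced by $N\le n^2$, obtaining
\begin{equation*}
\pr{\,|e_G(A,B)-e_M(A,B)|>\theta n^2\,}\le 2\exp\!\left\{-\frac{(\theta n^2)^2}{2N}\right\}\le 2\exp\!\left\{-\frac{\theta^2 n^4}{2n^2}\right\}=2\exp\!\left\{-\tfrac12\theta^2 n^2\right\}.
\end{equation*}
With $\theta=4n^{-1/2}$ we have $\theta^2 n^2=16n$, so this probability is at most $2e^{-8n}$. Finally, there are at most $2^n\cdot 2^n=4^n$ choices for the pair $(A,B)$, and $4^n\cdot 2e^{-8n}=2e^{(\ln 4)n-8n}\le e^{-4n}$ for all $n\ge 1$ (since $\ln 4<8-\tfrac{1}{n}$ comfortably). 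Hence by the union bound, with probability at least $1-e^{-4n}$ every pair $(A,B)$ simultaneously satisfies $|e_M(A,B)-e_G(A,B)|\le\theta n^2$, which is the claim.

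There is no serious obstacle here; the only points requiring a little care are (i) identifying the product space correctly so that the Lipschitz constant is $1$ and the dimension is at most $n^2$ — one should note that the edges corresponding to pairs outside $A\times B$ are simply irrelevant coordinates, so one may also take the dimension to be exactly $|A||B|\le n^2$, which only improves the bound — and (ii) checking that the factor $4^n$ from the union bound is dominated by the exponential decay $2e^{-8n}$, which it is with room to spare. Everything else is a direct substitution into the Hoeffding–Azuma inequality.
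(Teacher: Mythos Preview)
Your proof is correct and follows essentially the same approach as the paper: apply Hoeffding--Azuma to each fixed pair $(A,B)$ to get a failure probability of at most $2e^{-8n}$, then union bound over the $2^{2n}$ pairs. The paper's write-up is nearly identical, differing only cosmetically (it works with the centered variables $a(u,v)=G(u,v)-M(u,v)$ and uses $|A||B|$ directly in the denominator rather than bounding it by $n^2$).
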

\begin{proof}
Given two vertices $u \in U$ and $v \in V$, let $a(u,v)$ be the random variable
$G(u,v) - M (u,v)$ (where $G$ has been identified with its adjacency matrix).
The mean of $a(u,v)$ is zero for all $u,v$ and the modulus of $a(u,v)$ is at
most $1$. Hence, by the Hoeffding-Azuma inequality (Theorem \ref{thm:azuma}),
given two sets $A \subset U$ and $B \subset V$, the probability that
$$\left | \sum_{(u,v) \in A \times B} a(u,v) \right | \geq \theta n^2$$
is at most $2\exp \left \{ -(\theta n^2)^2/(2|A||B|) \right \} \leq 2\exp \left
\{ -8n \right \}$. Summing over all $A \subset U$ and $B \subset V$, the
probability that there are subsets $A \subset U$ and $B \subset V$ with
$|e_G(A,B)-e_M(A,B)| \geq \theta n^2$ is at most $2^{2n}\cdot 2e^{-8n} \leq
e^{-4n}$.
\end{proof}

For partitions $P_1:U=U_1 \cup \ldots \cup U_k$ and $P_2:V=V_1 \cup \ldots \cup
V_{k'}$, let
$$f_{P_1,P_2}(A,B)=e_W(A,B)-\sum_{i=1}^k\sum_{j=1}^{k'}d_W(U_i,V_j)|U_i \cap
A||V_j \cap B|.$$  We say that partitions $P_1,P_2$ {\it form a weak
$\epsilon$-regular partition of $W$} if $|f_{P_1,P_2}(A,B)| \leq \epsilon n^2$
for all subsets $A \subset U$ and $B \subset V$.

\begin{corollary}\label{weightunweight}
Suppose $W=(U,V,E)$ is an edge-weighted graph with weights in $[0,1]$ and
$|U|=|V|=n$. Let $G=(U,V,E)$ be a bipartite random graph with $|U|=|V|=n$ and
edges chosen independently given by $W$ and let $\theta=4n^{-1/2}$. With
probability at least $1-e^{-4n}$, every pair of partitions $P_1:U=U_1 \cup
\ldots \cup U_k$ and $P_2:V=V_1 \cup \ldots \cup V_{k'}$ which form a weak
$\epsilon$-partition for $G$ also form a weak $(\epsilon+2\theta)$-regular
partition for $W$.
\end{corollary}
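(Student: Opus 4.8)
The plan is to obtain this as an essentially immediate consequence of Lemma~\ref{weighthigh}. First I would condition on the event $\mathcal{E}$ it provides, namely that $|e_W(A,B)-e_G(A,B)|\le\theta n^2$ holds simultaneously for \emph{all} vertex sets $A\subset U$ and $B\subset V$; this event has probability at least $1-e^{-4n}$, and on it everything below is deterministic and uniform over the choice of partitions. It then suffices to show that on $\mathcal{E}$ one has $|f^W_{P_1,P_2}(A,B)-f^G_{P_1,P_2}(A,B)|\le 2\theta n^2$ for every pair of partitions $P_1:U=U_1\cup\dots\cup U_k$, $P_2:V=V_1\cup\dots\cup V_{k'}$ and all $A\subset U$, $B\subset V$; the corollary follows at once, since if $|f^G_{P_1,P_2}(A,B)|\le\epsilon n^2$ for all $A,B$ then $|f^W_{P_1,P_2}(A,B)|\le(\epsilon+2\theta)n^2$ for all $A,B$, which is exactly the assertion that $P_1,P_2$ form a weak $(\epsilon+2\theta)$-regular partition for $W$.

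For the comparison, writing $a_i=|A\cap U_i|/|U_i|$ and $b_j=|B\cap V_j|/|V_j|$ (both in $[0,1]$) and unfolding the definitions gives
$$f^W_{P_1,P_2}(A,B)-f^G_{P_1,P_2}(A,B)=\big(e_W(A,B)-e_G(A,B)\big)-\sum_{i,j}\delta_{ij}\,a_i b_j,\qquad \delta_{ij}:=e_W(U_i,V_j)-e_G(U_i,V_j).$$
The first term is at most $\theta n^2$ in absolute value by $\mathcal{E}$. For the second, the point is that $\sum_{i,j}\delta_{ij}a_i b_j$ is a bilinear form in the vectors $(a_i)\in[0,1]^k$ and $(b_j)\in[0,1]^{k'}$, so its extreme values over this box are attained at $0/1$ vectors; choosing $(a_i)=\mathbf{1}_I$ and $(b_j)=\mathbf{1}_J$ turns the sum into $\sum_{(i,j)\in I\times J}\delta_{ij}=e_W(U_I,V_J)-e_G(U_I,V_J)$, where $U_I=\bigcup_{i\in I}U_i$ and $V_J=\bigcup_{j\in J}V_j$. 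Hence $|\sum_{i,j}\delta_{ij}a_i b_j|\le\max_{I,J}|e_W(U_I,V_J)-e_G(U_I,V_J)|\le\theta n^2$ by $\mathcal{E}$ again, and the triangle inequality yields the bound $2\theta n^2$.

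There is really no hard part here: the entire argument is the observation that a weighted step-density sum with coefficients in $[0,1]$ can be bounded, via vertex-maximality of bilinear forms on a box, by the worst discrepancy $|e_W(U_I,V_J)-e_G(U_I,V_J)|$ over unions of parts, which Lemma~\ref{weighthigh} controls; the rest is bookkeeping with the definition of $f_{P_1,P_2}$ and the triangle inequality. The one item I would state carefully is that the good event of Lemma~\ref{weighthigh} is a single event covering all pairs $(A,B)$ at once, so the resulting conclusion automatically holds uniformly over all pairs of partitions $P_1,P_2$ and all $A,B$.
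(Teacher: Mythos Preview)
Your proof is correct and follows the same overall plan as the paper: condition on the high-probability event of Lemma~\ref{weighthigh}, split $f^W_{P_1,P_2}(A,B)-f^G_{P_1,P_2}(A,B)$ into the edge-count term and the density-sum term, and bound each by $\theta n^2$. The one place you differ is in handling the density-sum term. The paper observes that $\sum_{i,j} d_G(U_i,V_j)|A\cap U_i||B\cap V_j|$ equals the average of $e_G(A',B')$ over all pairs $(A',B')$ with the same intersection profile as $(A,B)$ (and likewise for $W$), so the difference of the two sums is an average of quantities each bounded by $\theta n^2$. You instead rewrite the difference as a bilinear form $\sum_{i,j}\delta_{ij}a_ib_j$ with $a_i,b_j\in[0,1]$ and use that such a form attains its extremes at $0/1$ vectors, reducing to $|e_W(U_I,V_J)-e_G(U_I,V_J)|\le\theta n^2$ for unions of parts. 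Both arguments are short and standard; yours has the mild advantage of identifying explicit witness sets rather than averaging, while the paper's avoids any optimisation step. Either way the conclusion and constants are identical.
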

\begin{proof}
By Lemma \ref{weighthigh}, with probability at least $1-e^{-4n}$, we have
$|e_G(A,B)-e_W(A,B)| \leq \theta n^2$ for every pair of vertex subsets $A
\subset U$ and $B \subset V$. Suppose this indeed holds.
For graph $G$, we have
$$f_{P_1,P_2}^G(A,B)=e_{G}(A,B)-\sum_{i=1}^k\sum_{j=1}^{k'}d_G(U_i,V_j)|A \cap
U_i||B \cap V_j|.$$
The first term is within $\theta n^2$ of $e_W(A,B)$. The second term is the
average of $e_{G}(A',B')$ over all subsets $A' \subset U$ and $B' \subset V$
with $|A' \cap U_i|=|A \cap U_i|$ for all $i$ and $|B' \cap V_j|=|B \cap V_j|$
for all $j$, and hence is within $\theta n^2$ of the corresponding average of
$e_W(A',B')$ over all of the same pairs $(A',B')$. By the triangle inequality,
for $W$, we get that for all $A \subset U$ and $B \subset V$, we have
$|f_{P_1,P_2}^W(A,B)| \leq |f_{P_1,P_2}^G(A,B)|+2\theta n^2 \leq
(\epsilon+2\theta)n^2$. Hence, $P_1,P_2$ also form a weak
$(\epsilon+2\theta)$-regular partition for $W$.
\end{proof}

From Corollary \ref{weightunweight} and the previous remarks, to obtain the
desired lower bound in Theorem \ref{lbweakmain} on the number of parts in the
weak regularity lemma it suffices to prove a lower bound of the form
$2^{\Omega(\epsilon^{-2})}$ on the number of parts $k_0(\epsilon)$ in the
following weak regularity lemma for weighted bipartite graphs.

\begin{lemma}\label{brlweight}
For each $\epsilon>0$ there is a positive integer $k_0(\epsilon)$ such that
every edge-weighted bipartite graph $G=(U,V,E)$ with weights in $[0,1]$ and
parts of equal size has partitions $P_1$ of $U$ and $P_2$ of $V$ each with at
most $k_0(\epsilon)$ parts  which form a weak $\epsilon$-regular partition.
\end{lemma}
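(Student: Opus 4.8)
The plan is to prove Lemma~\ref{brlweight} by running the standard energy-increment argument of Frieze and Kannan, which goes through verbatim for $[0,1]$-valued weights. For partitions $P_1:U=U_1\cup\dots\cup U_k$ and $P_2:V=V_1\cup\dots\cup V_{k'}$, define the mean square density $q(P_1,P_2)=\sum_{i,j}\frac{|U_i||V_j|}{n^2}d_W(U_i,V_j)^2$. Since every weight lies in $[0,1]$, each density $d_W(U_i,V_j)$ lies in $[0,1]$, so $0\le q(P_1,P_2)\le 1$ for every pair of partitions; this boundedness is exactly what will cap the number of iterations.

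The iteration is as follows. Start from the trivial partitions $P_1=\{U\}$, $P_2=\{V\}$. Given the current pair $(P_1,P_2)$, if it is weak $\epsilon$-regular we stop; otherwise fix $A\subset U$, $B\subset V$ with $|f_{P_1,P_2}(A,B)|>\epsilon n^2$, refine $P_1$ by intersecting each part with $A$ and with $U\setminus A$, refine $P_2$ likewise using $B$, and call the results $P_1',P_2'$. Each refinement at most doubles the number of parts. The key point is the energy increment $q(P_1',P_2')\ge q(P_1,P_2)+\epsilon^2$. To see it, write $w_{ij}=\frac{|U_i\cap A||V_j\cap B|}{n^2}$ and $\delta_{ij}=d_W(U_i\cap A,V_j\cap B)-d_W(U_i,V_j)$; expanding $e_W(A,B)$ over the cells gives the identity $f_{P_1,P_2}(A,B)/n^2=\sum_{i,j}\delta_{ij}w_{ij}$, so by Cauchy--Schwarz, together with $\sum_{i,j}w_{ij}=|A||B|/n^2\le 1$, we get $\sum_{i,j}\delta_{ij}^2 w_{ij}\ge\big(f_{P_1,P_2}(A,B)/n^2\big)^2>\epsilon^2$. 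The left-hand side is precisely the contribution to $q(P_1',P_2')-q(P_1,P_2)$ coming from the sub-cells of the form $(U_i\cap A)\times(V_j\cap B)$, and the remaining sub-cell contributions are nonnegative by convexity, which yields the claimed gain.

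Since $q$ is bounded above by $1$ and increases by more than $\epsilon^2$ at each step, the process terminates after fewer than $\epsilon^{-2}$ refinements, so at termination each of $P_1,P_2$ has at most $2^{\lceil\epsilon^{-2}\rceil}$ parts; thus one may take $k_0(\epsilon)=2^{\lceil\epsilon^{-2}\rceil}=2^{O(\epsilon^{-2})}$. I do not expect any real obstacle here: the only step with content is the energy increment, and it is identical to the unweighted computation, with the weights in $[0,1]$ changing nothing. (The substantive work of this section, the matching lower bound $k_0(\epsilon)=2^{\Omega(\epsilon^{-2})}$, is carried out separately via an explicit weighted construction, which is what Corollary~\ref{weightunweight} was set up to transfer back to ordinary graphs.)
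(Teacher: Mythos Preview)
Your argument is correct: the energy-increment computation goes through unchanged for $[0,1]$-valued weights, and the identity $f_{P_1,P_2}(A,B)/n^2=\sum_{i,j}\delta_{ij}w_{ij}$ together with Cauchy--Schwarz and the variance decomposition of $q$ across sub-cells gives exactly the $\epsilon^2$ gain you claim. The bound $k_0(\epsilon)\le 2^{\lceil\epsilon^{-2}\rceil}$ follows.

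As for comparison with the paper: the paper does not actually supply a proof of Lemma~\ref{brlweight}. It is stated there only to name the quantity $k_0(\epsilon)$ and to record that the weak matrix regularity lemma is a known fact; the entire effort of Section~\ref{weakregsection} is the lower bound $k_0(\epsilon)\ge 2^{\Omega(\epsilon^{-2})}$ via the explicit weighted construction and Theorem~\ref{lowbo}. Your write-up is the standard Frieze--Kannan proof and is exactly what one would expect to fill in here; nothing is missing.
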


Lemma \ref{brlweight} is also known as the weak matrix regularity lemma. This
is because it provides, for any $n \times n$ matrix with entries in $[0,1]$,
partitions of the rows and columns into a bounded number of parts, such that
the sum of the entries in any submatrix (which is the product of a set of rows
and columns) is within $\epsilon n^2$ of what is expected based on the
intersections with the parts and the density between the parts.

Our goal is to construct a bipartite graph $G$ with edge weights in $[0,1]$
which provides a lower bound of the form $k_0(\epsilon) =
2^{\Omega(\epsilon^{-2})}$. Suppose $0<\epsilon \leq 2^{-50}$. Consider the
following weighted bipartite graph $G$. The graph has parts $U$ and $V$ each of
order $n=2^{2^{-45}\epsilon^{-2}}$. Let $r=2^{-40}\epsilon^{-2}$ and
$\alpha=2^{14}\epsilon$. Consider, for $1 \leq i \leq r$, equitable partitions
$U=U^i_0 \cup U^i_1$ and $V=V^i_0 \cup V^i_1$ picked uniformly and
independently at random. For vertices $u \in U$ and $v \in V$, let $s(u,v)$ be
the number of $i \in [r]$ for which there is $j \in \{0,1\}$ such that $u \in
U^i_j$ and $v \in V^i_j$, and $t(u,v)$ be the number of $i \in [r]$ for which
there is $j \in \{0,1\}$ such that $u \in U^i_j$ and $v \in V^i_{1-j}$, so that
$s(u,v)+t(u,v)=r$. Let $W(u,v)=\frac{1}{2}+(s(u,v)-t(u,v))\alpha$. We define
the weight $w(u,v)$ between $u$ and $v$ as follows. If $0 \leq W(u,v) \leq 1$,
then $w(u,v)=W(u,v)$, if $W(u,v)<0$, then $w(u,v)=0$, and if $W(u,v)>1$, then
$w(u,v)=1$.

Call a pair $(u,v) \in U \times V$ {\it extreme} if $|W(u,v)-1/2| >1/4$, and a
vertex $u \in U$ {\it nice} if it is in at most $n/8$ pairs $(u,v)$ with $v \in
V$ which are extreme.

\begin{lemma}\label{lem34a}
With probability at least $3/4$, all but at most $e^{-100}n$ vertices of $U$
are nice.
\end{lemma}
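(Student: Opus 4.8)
The plan is to show that a fixed vertex $u \in U$ is nice with probability extremely close to $1$, and then apply Markov's inequality to deduce that all but an exponentially small fraction of vertices of $U$ are nice with probability at least $3/4$. First I would fix $u \in U$ and analyze, for each $v \in V$, the probability that the pair $(u,v)$ is extreme, i.e.\ that $|W(u,v)-1/2| = (s(u,v)-t(u,v))|\alpha| > 1/4$, equivalently $|s(u,v)-t(u,v)| > \frac{1}{4\alpha} = 2^{-16}\epsilon^{-1}$. For each $i \in [r]$, since the partitions $U = U^i_0 \cup U^i_1$ and $V = V^i_0 \cup V^i_1$ are picked uniformly and independently, the indicator that $u$ and $v$ land on the ``same side'' in partition $i$ is (very nearly, up to a $\pm 1/n$ correction from equitability) a fair coin, and these are independent over $i \in [r]$. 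Hence $s(u,v) - t(u,v)$ is essentially a sum of $r$ independent $\pm 1$ variables, so by the Chernoff/Hoeffding bound (\ref{chernoffest}), $\Pr[|s(u,v)-t(u,v)| > 2^{-16}\epsilon^{-1}] \leq 2\exp(-2(2^{-16}\epsilon^{-1})^2/r) = 2\exp(-2 \cdot 2^{-32}\epsilon^{-2}/(2^{-40}\epsilon^{-2})) = 2\exp(-2^9) = 2e^{-512}$, using $r = 2^{-40}\epsilon^{-2}$. Thus for fixed $u$, the expected number of $v \in V$ with $(u,v)$ extreme is at most $2e^{-512} n$, which is far below $n/8$.

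Next, I would promote this expectation bound into a high-probability statement for a single $u$. The event that $u$ is \emph{not} nice is the event that at least $n/8$ of the $n$ pairs $(u,v)$ are extreme. Here one must be slightly careful, since for fixed $u$ the events ``$(u,v)$ extreme'' over different $v \in V$ are \emph{not} independent (they all depend on the same partitions $U^i_0, U^i_1$ of $U$, in particular on which side $u$ falls). The clean way around this is to condition on the side of $u$ in each partition $i$, i.e.\ condition on the sequence $(\sigma_1,\ldots,\sigma_r) \in \{0,1\}^r$ with $u \in U^i_{\sigma_i}$. Conditioned on this, the events over $v$ become independent (each $v$ independently chooses its sides $V^i_0, V^i_1$), and each $v$ is extreme with probability at most $2e^{-512}$ as above — this conditional probability does not depend on $\sigma$, by symmetry of the $V$-partitions. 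So conditionally the number of extreme pairs through $u$ is stochastically dominated by $\mathrm{Bin}(n, 2e^{-512})$, and by (\ref{chernoffest}) again the probability it exceeds $n/8$ is at most $\exp(-2(n/8 - 2e^{-512}n)^2/n) \leq \exp(-n/64)$, say. Since this bound holds conditionally for every $\sigma$, it holds unconditionally: $\Pr[u \text{ not nice}] \leq e^{-n/64}$.

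Finally, I would take a union bound, or more efficiently just use linearity of expectation and Markov. The expected number of non-nice vertices of $U$ is at most $n e^{-n/64}$, which, since $n = 2^{2^{-45}\epsilon^{-2}}$ is enormous, is far smaller than $\frac14 e^{-100} n$. By Markov's inequality, $\Pr[\text{number of non-nice vertices} \geq e^{-100} n] \leq n e^{-n/64}/(e^{-100}n) = e^{100 - n/64} \leq 1/4$. Hence with probability at least $3/4$, all but at most $e^{-100} n$ vertices of $U$ are nice, as claimed.

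The main obstacle, and the only genuinely delicate point, is the dependence structure noted above: one cannot treat ``$(u,v)$ extreme'' as independent over $v$ for fixed $u$, so the conditioning on the position of $u$ within the $U$-partitions is essential to recover independence and apply a concentration inequality in the second step. Everything else is a routine application of the Chernoff bound (\ref{chernoffest}) with the given numerical values of $r$, $\alpha$, and $n$; the equitability of the partitions introduces only a negligible $O(1/n)$ perturbation to the ``fair coin'' heuristic, which is absorbed comfortably into the constants.
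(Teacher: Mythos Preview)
Your approach is sound in outline but takes a longer route than the paper and introduces an unnecessary technical difficulty. The paper's proof is much simpler: it computes the expected number of extreme pairs $(u,v)$ over all of $U\times V$ (at most $2e^{-2^7}n^2$, via the same Chernoff estimate you use in step~1), applies Markov's inequality \emph{once} to conclude that with probability at least $3/4$ the total number of extreme pairs is at most $8e^{-2^7}n^2$, and then observes that any non-nice vertex accounts for more than $n/8$ extreme pairs, so the number of non-nice vertices is at most $64e^{-2^7}n \le e^{-100}n$. No second concentration step is needed.

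Your second step --- showing that a fixed $u$ is nice with probability $1-e^{-\Omega(n)}$ via Chernoff on the number of extreme $v$'s --- has a gap. After conditioning on $u$'s positions, the events ``$(u,v)$ extreme'' for different $v$ are \emph{not} independent: they all depend on the same $V$-partitions $V^1,\ldots,V^r$, and within each equitable partition $V^i$ the positions of distinct vertices $v,v'$ are (slightly negatively) correlated. Your closing remark about an $O(1/n)$ perturbation addresses the marginal distribution for a single $v$ but does not justify applying Chernoff across the $v$'s. This could plausibly be repaired (e.g.\ via a negative-association argument), but it is far easier to sidestep the issue entirely as the paper does with a single Markov bound on the total.

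A minor point: in your first step you apply (\ref{chernoffest}) directly to $s(u,v)-t(u,v)$, a sum of $\pm 1$ variables, whereas (\ref{chernoffest}) is stated for $[0,1]$-valued summands; the correct exponent is $2^7$ rather than $2^9$ (as in the paper), though this does not affect the conclusion.
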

\begin{proof}
Fix a pair $(u,v) \in U \times V$. The event $(u,v)$ is extreme is the same as
$|s(u,v)-t(u,v)|\alpha>1/4$, or equivalently that
$|s(u,v)-r/2|>\frac{1}{8\alpha}$. The number $s(u,v)$ is a sum of $r$
independent variables, with values $0$ or $1$ each with probability $1/2$, and
hence follows a binomial distribution with mean $r/2$. By Chernoff's bound
(\ref{chernoffest}), the probability that $|s(u,v)-r/2| > \frac{1}{8\alpha}$ is
less than $2e^{-2(1/(8\alpha))^2/r}=2e^{-2^{7}}$. Hence, by linearity of
expectation, the expected number of extreme pairs $(u,v) \in U \times V$ is
less than $2e^{-2^{7}}n^2$. Therefore, by Markov's inequality, the probability
that there are at least $8e^{-2^{7}}n^2$ extreme pairs $(u,v)$ is less than
$1/4$. Since any nice vertex is contained in at most $n/8$ extreme pairs, we
see that with probability at least $3/4$, all but at most $64 e^{-2^7} n \leq
e^{-100} n$ vertices in $U$ are nice.
\end{proof}

For $h \in [r]$, we let $s_h(u,v)$ denote the number of $i \in [r] \setminus
\{h\}$ for which there is $j \in \{0,1\}$ such that $u \in U^i_j$ and $v \in
V^i_j$, and $t_h(u,v)$ be the number of $i \in [r] \setminus \{h\}$ for which
there is $j \in \{0,1\}$ such that $u \in U^i_j$ and $v \in V^i_{1-j}$, so that
$s_h(u,v)+t_h(u,v)=r-1$. Let $W_h(u,v)=\frac{1}{2}+(s_h(u,v)-t_h(u,v))\alpha$.
As above, we define the weight $w_h(u,v)$ by  $w_h(u,v)=W_h(u,v)$ if $0 \leq
W_h(u,v) \leq 1$, $w_h(u,v)=0$ if $W_h(u,v) < 0$, and $w_h(u,v)=1$ if $W_h(u,v)
> 1$.

\begin{lemma}\label{nicevery}
Suppose $u \in U^h_j$ and we are given $|w_h(u,v)-1/2| \leq 1/2-\alpha$ for at
least $\frac{7}{8}n$ vertices $v \in V$, and we do not yet know the partition
$V=V^h_0 \cup V^h_1$. Then the probability that
$d_w(u,V^h_j)-d_w(u,V^h_{1-j}) \geq \alpha/2$ is at least $1-\frac{1}{4rn}$.
\end{lemma}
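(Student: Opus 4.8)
The plan is to condition on all the random partitions appearing in the construction \emph{except} the single partition $V=V^h_0\cup V^h_1$. After this conditioning every number $w_h(u',v')$ is fixed (these depend only on the coordinates $i\neq h$), the set $U^h_j$ containing $u$ is fixed, the hypothesis ``$|w_h(u,v)-1/2|\le 1/2-\alpha$ for at least $\tfrac78 n$ vertices $v$'' is deterministic, and $V^h_j$ is a uniformly random $(n/2)$-element subset of $V$. Put $\chi(v)=+1$ if $v\in V^h_j$ and $\chi(v)=-1$ if $v\in V^h_{1-j}$, so that $\chi$ is a uniformly random $\pm1$-vector on $V$ with exactly $n/2$ entries of each sign, and let $V^*=\{v\in V:|w_h(u,v)-1/2|\le 1/2-\alpha\}$, so $|V^*|\ge\tfrac78 n$. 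The first step is the algebraic identity $W(u,v)=W_h(u,v)+\alpha\chi(v)$: because $u\in U^h_j$, the coordinate $h$ adds $1$ to $s(u,v)$ exactly when $v\in V^h_j$ and adds $1$ to $t(u,v)$ exactly when $v\in V^h_{1-j}$. For $v\in V^*$ we have $0<w_h(u,v)<1$, hence $w_h(u,v)=W_h(u,v)\in[\alpha,1-\alpha]$, so adding $\pm\alpha$ stays inside $[0,1]$ and no truncation occurs: $w(u,v)=w_h(u,v)+\alpha\chi(v)$, and therefore $\chi(v)w(u,v)=\chi(v)w_h(u,v)+\alpha$ exactly.

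The hard part is the remaining vertices $v\notin V^*$: there may be up to $n/8$ of them, and there truncation onto $[0,1]$ genuinely intervenes, so the naive bound $|\chi(v)w(u,v)|\le 1$ would contribute an error of order a \emph{constant} to $d_w(u,V^h_j)-d_w(u,V^h_{1-j})$, which swamps the target value $\alpha/2$. The way around this is to use only that truncation onto $[0,1]$ is monotone: from $W(u,v)=W_h(u,v)+\alpha\chi(v)$ one gets $w(u,v)\ge w_h(u,v)$ when $\chi(v)=+1$ and $w(u,v)\le w_h(u,v)$ when $\chi(v)=-1$, so in every case $\chi(v)w(u,v)\ge\chi(v)w_h(u,v)$. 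Summing the exact identity over $v\in V^*$ and this one-sided inequality over $v\notin V^*$, and using $|V^h_j|=|V^h_{1-j}|=n/2$, I obtain
\[
d_w(u,V^h_j)-d_w(u,V^h_{1-j})=\frac2n\sum_{v\in V}\chi(v)w(u,v)\ \geq\ \frac2n\Big(\alpha|V^*|+\sum_{v\in V}\chi(v)w_h(u,v)\Big)\ \geq\ \frac{7\alpha}{4}+\frac2n\,Y,
\]
where $Y:=\sum_{v\in V}\chi(v)w_h(u,v)=2\sum_{v\in V^h_j}w_h(u,v)-\sum_{v\in V}w_h(u,v)$ has mean $0$ over the random choice of $V^h_j$.

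It then remains to prove the concentration bound $\pr{Y<-\tfrac{5\alpha n}{8}}\le\tfrac1{4rn}$, for on the complementary event the displayed inequality gives $d_w(u,V^h_j)-d_w(u,V^h_{1-j})\ge\tfrac{7\alpha}{4}-\tfrac{5\alpha}{4}=\tfrac\alpha2$. Here $Z:=\sum_{v\in V^h_j}w_h(u,v)$ is a sum of $n/2$ values in $[0,1]$ sampled without replacement from the fixed multiset $\{w_h(u,v)\}_{v\in V}$; such a sum is at least as concentrated as the corresponding sum of independent samples (Theorem 4 in Section 6 of \cite{Ho}, exactly as used for Lemma \ref{firstlemma2}), so the Hoeffding/Chernoff estimate (\ref{chernoffest}) applies and gives $\pr{Z-\E{Z}<-t}\le\exp(-2t^2/(n/2))$. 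Since $Y=2(Z-\E{Z})$, the event $Y<-\tfrac{5\alpha n}{8}$ is precisely $Z-\E{Z}<-\tfrac{5\alpha n}{16}$, so taking $t=\tfrac{5\alpha n}{16}$ gives failure probability at most $\exp(-25\alpha^2 n/64)$. Finally I would substitute $\alpha=2^{14}\epsilon$, $r=2^{-40}\epsilon^{-2}$, $n=2^{2^{-45}\epsilon^{-2}}$ and observe that $\alpha^2 n$ grows exponentially in $\epsilon^{-2}$ whereas $\log(4rn)$ grows only linearly in $\epsilon^{-2}$, so $\exp(-25\alpha^2 n/64)\le\tfrac1{4rn}$ with room to spare, which finishes the proof.
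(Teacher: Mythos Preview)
Your proof is correct and follows essentially the same approach as the paper's: condition on all partitions except $V=V^h_0\cup V^h_1$, separate the deterministic gain from coordinate $h$ from the mean-zero fluctuation $Y=\sum_v\chi(v)w_h(u,v)$, and bound $Y$ via Hoeffding for sampling without replacement. The only cosmetic difference is in handling the at most $n/8$ ``bad'' vertices: you invoke monotonicity of truncation to get $\chi(v)w(u,v)\ge\chi(v)w_h(u,v)$, whereas the paper uses the $1$-Lipschitz property of truncation to get $|w(u,v)-w_h(u,v)|\le\alpha$; your inequality is in fact slightly sharper, but the resulting constants are immaterial.
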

\begin{proof}
Consider the event $E$ that
$$\sum_{v \in V^h_{1-j}} w_h(u,v)-\sum_{v \in V^h_j} w_h(u,v) \geq \alpha
n/4.$$
Note that the expected value of the left hand side is $0$. Recall that a
hypergeometric distribution is at least as concentrated as the sum of
independent random variables with the same values (for a proof, see Section 6
of \cite{Ho}). By the Hoeffding-Azuma inequality (Theorem \ref{thm:azuma}), the
probability of event $E$ is at most
$$2\exp\left \{-\frac{(\alpha n/8)^2}{2n}\right \} = 2\exp \left
\{-2^{-7}\alpha^2n\right \} \leq \frac{1}{4rn},$$
where in the last inequality we use $0<\epsilon \leq 2^{-50}$,
$r=2^{-40}\epsilon^{-2}$, $n=2^{2^{-45}\epsilon^{-2}}$, and
$\alpha=2^{14}\epsilon$.

For a fixed $u$, if $|w_h(u,v) - \frac{1}{2}| \leq \frac{1}{2} - \alpha$, then
$w(u,v) = w_h(u,v) + \alpha$ if $v$ is in $V_j^h$ and $w(u,v) = w_h(u,v) -
\alpha$ if $v$ is in $V_{1-j}^h$. For all $v$, $w(u,v)$ is within $\alpha$ of
$w_h(u,v)$. Therefore, letting $w(u, S) = \sum_{s \in S} w(u, s)$, we see, since
$|w_h(u,v) - \frac{1}{2}| \leq \frac{1}{2} - \alpha$ for at least
$\frac{7}{8}n$ vertices of $v$, that
\[w(u, V_j^h) - w(u,V_{1-j}^h) \geq \frac{7}{8} \alpha n - \frac{1}{8}\alpha n
+ w_h(u, V^h_j) - w_h(u, V_{1-j}^h) \geq \frac{3}{4} \alpha n - \frac{1}{4}
\alpha n = \frac{\alpha}{2} n.\]
The result follows.
\end{proof}

Call a nice vertex $u \in U$ {\it very nice} if for each $h \in [r]$ and $j \in
\{0,1\}$ with $u \in U^h_j$, $$d(u,V^h_j)-d(u,V^h_{1-j}) \geq \alpha/2.$$

\begin{corollary}\label{cor34b}
With probability at least $3/4$, every nice vertex $u$ is very nice.
\end{corollary}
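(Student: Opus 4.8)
The plan is to deduce Corollary \ref{cor34b} from Lemma \ref{nicevery} by a union bound over all $rn$ pairs $(u,h) \in U \times [r]$, after first checking that niceness of $u$ supplies the hypothesis of that lemma for every $h$. For the latter, I would first note that since $s(u,v)+t(u,v)=r$ while $s_h(u,v)+t_h(u,v)=r-1$, the index $h$ contributes to exactly one of the two counts, so $(s(u,v)-t(u,v))-(s_h(u,v)-t_h(u,v)) \in \{+1,-1\}$ and hence $|W(u,v)-W_h(u,v)| = \alpha$. Now if $u$ is nice, at most $n/8$ pairs $(u,v)$ are extreme, so for at least $\tfrac78 n$ vertices $v$ we have $|W(u,v)-\tfrac12| \le \tfrac14$ and therefore $|W_h(u,v)-\tfrac12| \le \tfrac14+\alpha$. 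Since $\alpha = 2^{14}\epsilon \le 2^{-36} < \tfrac18$, this forces $W_h(u,v) \in (0,1)$, so $w_h(u,v) = W_h(u,v)$ and $|w_h(u,v)-\tfrac12| \le \tfrac14 + \alpha \le \tfrac12 - \alpha$. Thus for every $h$, a nice vertex $u$ satisfies $|w_h(u,v)-\tfrac12| \le \tfrac12-\alpha$ for at least $\tfrac78 n$ vertices $v$, which is precisely the hypothesis of Lemma \ref{nicevery}.

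Next I would set up the conditioning carefully. For a fixed pair $(u,h)$, let $\mathcal{F}_{u,h}$ be the $\sigma$-algebra generated by all of the $2r$ random partitions except the single partition $V = V^h_0 \cup V^h_1$. The surrogate condition ``$|w_h(u,v)-\tfrac12| \le \tfrac12-\alpha$ for at least $\tfrac78 n$ vertices $v$'' and the index $j$ with $u \in U^h_j$ are both $\mathcal{F}_{u,h}$-measurable, since neither involves $V^h$, and conditionally on $\mathcal{F}_{u,h}$ the partition $V = V^h_0 \cup V^h_1$ remains a uniformly random equipartition. Hence, on the $\mathcal{F}_{u,h}$-measurable event that the surrogate condition holds, Lemma \ref{nicevery} gives
\[
\pr{\, d_w(u,V^h_j) - d_w(u,V^h_{1-j}) < \alpha/2 \ \big|\ \mathcal{F}_{u,h} \,} \le \frac{1}{4rn},
\]
and taking expectations shows that for each of the $rn$ pairs $(u,h)$ the probability that the surrogate condition holds while $d_w(u,V^h_j) - d_w(u,V^h_{1-j}) < \alpha/2$ is at most $\tfrac{1}{4rn}$.

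Finally, if some nice vertex $u$ fails to be very nice then there is an $h$, with $j$ determined by $u \in U^h_j$, for which $d_w(u,V^h_j) - d_w(u,V^h_{1-j}) < \alpha/2$, and by the first step the surrogate condition holds for that $(u,h)$; by the union bound over the $rn$ pairs and the estimate above this occurs with probability at most $rn \cdot \tfrac{1}{4rn} = \tfrac14$, so with probability at least $3/4$ every nice vertex is very nice. The step I expect to require the most care is the conditioning in the second paragraph: the notion ``nice'' depends on the entire family of partitions (including $V^h$) and so cannot itself be placed inside the conditioning, which is why I route the argument through the purely $\mathcal{F}_{u,h}$-measurable surrogate condition that niceness implies and to which Lemma \ref{nicevery} applies verbatim.
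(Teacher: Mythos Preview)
Your proof is correct and follows essentially the same approach as the paper's: verify that niceness supplies the hypothesis of Lemma~\ref{nicevery} for every $h$, then union bound over the $rn$ pairs $(u,h)$. Your treatment is in fact more careful than the paper's terse version, particularly in making explicit the passage from $W$ to $W_h$ via $|W(u,v)-W_h(u,v)|=\alpha$ and in handling the conditioning by routing through the $\mathcal{F}_{u,h}$-measurable surrogate condition rather than niceness itself.
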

\begin{proof}
Given $u$ is nice, then for each $h \in [r]$, we must have $|W_h(u,v)-1/2| \leq
1/2-\alpha$ for all but at most $\frac{7}{8}n$ vertices $v \in V$. The
probability that there is a vertex which is nice but not very nice is by Lemma
\ref{nicevery} at most $rn \cdot \frac{1}{4rn} \leq 1/4$, which completes the
proof.
\end{proof}

From Lemma \ref{lem34a} and Corollary \ref{cor34b}, we have the following
corollary.

\begin{corollary}\label{cor12c}
With probability at least $1/2$, the graph $G$ has the following properties.
\begin{itemize}
\item The number of vertices in $U$ which are not nice is at most $e^{-100}n$.
\item Every nice vertex is very nice.
\end{itemize}
\end{corollary}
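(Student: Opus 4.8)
The plan is a direct union bound over the two events already isolated in Lemma \ref{lem34a} and Corollary \ref{cor34b}. Both statements refer to the same probability space, namely the choice of the $2r$ random equitable partitions $U=U^i_0\cup U^i_1$ and $V=V^i_0\cup V^i_1$ used to define $G$. Let $A$ be the event that all but at most $e^{-100}n$ vertices of $U$ are nice, and let $B$ be the event that every nice vertex of $U$ is very nice. Lemma \ref{lem34a} gives $\Pr[A]\ge 3/4$, and Corollary \ref{cor34b} gives $\Pr[B]\ge 3/4$.

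Next I would observe that the two bullet points in the statement of Corollary \ref{cor12c} are exactly the events $A$ and $B$, so what must be shown is simply $\Pr[A\cap B]\ge 1/2$. By the union bound, $\Pr[\overline{A\cap B}]\le \Pr[\overline{A}]+\Pr[\overline{B}]\le 1/4+1/4=1/2$, hence $\Pr[A\cap B]\ge 1/2$, as claimed. No independence between $A$ and $B$ is needed, since the union bound applies regardless of how the two events are correlated.

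There is no real obstacle here: the work has all been done in Lemma \ref{lem34a} and Corollary \ref{cor34b}, and the present statement is just the observation that two events, each of which holds with probability at least $3/4$, hold simultaneously with probability at least $1/2$. The only points worth stating explicitly are that the two conclusions are defined on a common probability space and that they coincide verbatim with the two displayed properties, so that the elementary inclusion–exclusion bound closes the argument.
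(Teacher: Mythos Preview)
Your proposal is correct and matches the paper's approach exactly: the paper does not even write out a proof, simply stating that the corollary follows from Lemma~\ref{lem34a} and Corollary~\ref{cor34b}, which is precisely the union-bound argument you spell out. There is nothing to add.
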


Consider the random bipartite graph $B=B(n,r)$ with vertex parts $U$ and $[r]$
where $i \in [r]$ is adjacent to $u \in U$ if $u \in U_0^i$. By Lemma
\ref{firstlemma1} with $\mu=1/4$, as $r \geq 32\log n$, we have the following
proposition.

\begin{corollary} \label{prop1n2}
With probability at least $3/4$, for each pair $u,u' \in U$, the number of $i$
for which $u$ and $u'$ both belong to $U_j^i$ for some $j \in \{0,1\}$ is less
than $\frac{3}{4}r$.
\end{corollary}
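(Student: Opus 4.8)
The plan is to obtain Corollary~\ref{prop1n2} as a direct consequence of the first conclusion of Lemma~\ref{firstlemma1}, applied to the random bipartite graph $B=B(n,r)$. First I would pin down the correspondence with the notation of Lemma~\ref{firstlemma1}, which concerns a graph $B(m,M)$ in which each vertex of the side $[m]$ has exactly $M/2$ neighbours in the side $[M]$, chosen uniformly and independently. Here each index $i\in[r]$ is adjacent precisely to the vertices of $U^i_0$, and since the partition $U=U^i_0\cup U^i_1$ is equitable we have $|U^i_0|=n/2$; moreover these choices are independent over $i\in[r]$ by construction of the graph $G$ in this section. Hence $B(n,r)$ is exactly $B(m,M)$ with $m=r$ and $M=n$, i.e. the side $[r]$ plays the role of $[m]$ and the side $U$ plays the role of $[M]$.

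Next I would verify the hypotheses of Lemma~\ref{firstlemma1} with $\mu=1/4$. Since $0<\epsilon\le 2^{-50}$, we have $2^{-45}\epsilon^{-2}\ge 2^{55}\ge 20$, so $n=2^{2^{-45}\epsilon^{-2}}\ge 2^{20}$ and $n$ is even (indeed a power of $2$); trivially $n\ge r$; and, with $2\mu^{-2}=32$, the requirement $m\ge 2\mu^{-2}\log M$ reads $r\ge 32\log n$, which holds because $r=2^{-40}\epsilon^{-2}=32\cdot 2^{-45}\epsilon^{-2}=32\log_2 n\ge 32\log n$. Lemma~\ref{firstlemma1} then gives that, with probability at least $1-n^{-2}\ge 3/4$, for every pair of distinct $u,u'\in U$ the number of $i\in[r]$ for which $u$ and $u'$ are either both neighbours of $i$ or both non-neighbours of $i$ is strictly less than $(\tfrac12+\mu)r=\tfrac34 r$.

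Finally I would translate this conclusion into the language of the partitions. For a fixed $i$, ``$u$ is a neighbour of $i$'' means $u\in U^i_0$ and ``$u$ is a non-neighbour of $i$'' means $u\in U^i_1$; thus ``$u$ and $u'$ are both neighbours of $i$, or both non-neighbours of $i$'' is precisely the statement that there is $j\in\{0,1\}$ with $u,u'\in U^i_j$. Substituting this into the bound from the previous paragraph yields exactly the assertion of Corollary~\ref{prop1n2}. There is no real obstacle here beyond parameter bookkeeping; the only point that requires care is recognising that the side of $B(n,r)$ with prescribed degree $n/2$ is $[r]$ (not $U$), so that $B(n,r)$ must be read as $B(m,M)$ with $m=r$, $M=n$, and checking that the numerical inequalities needed to invoke Lemma~\ref{firstlemma1} — in particular $n\ge 2^{20}$, $n$ even, $n\ge r$, and $r\ge 32\log n$ — all follow from $\epsilon\le 2^{-50}$ together with the definitions $n=2^{2^{-45}\epsilon^{-2}}$ and $r=2^{-40}\epsilon^{-2}$.
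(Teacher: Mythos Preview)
Your proposal is correct and follows exactly the paper's approach: the paper states just before the corollary that one applies Lemma~\ref{firstlemma1} with $\mu=1/4$ to the random bipartite graph $B=B(n,r)$, using $r\ge 32\log n$. You have simply filled in the parameter bookkeeping (identifying $m=r$, $M=n$, verifying $n\ge 2^{20}$, $n$ even, and $r\ge 32\log n$) and the translation between adjacency in $B$ and membership in the parts $U^i_j$, all of which is straightforward.
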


Hence, with probability at least $1/4$, the graph $G$ satisfies the properties
in Corollaries \ref{cor12c} and \ref{prop1n2}. Fix such a graph $G$.

\begin{theorem}\label{lowbo}
No partitions $P_1:U_1 \cup \ldots \cup U_k$ of $U$ and $P_2:V_1 \cup \ldots
\cup V_{k'}$ of $V$ with $k \leq n/2$ form a weak $\epsilon$-regular partition.
As $n/2 \geq 2^{2^{-46}\epsilon^{-2}}$, we therefore have $k_0(\epsilon) >
2^{2^{-46}\epsilon^{-2}}$ for $0 < \epsilon \leq 2^{-50}$.
\end{theorem}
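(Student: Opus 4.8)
The plan is to argue by contradiction: suppose $P_1:U=U_1\cup\cdots\cup U_k$ with $k\le n/2$ and some partition $P_2:V=V_1\cup\cdots\cup V_{k'}$ together form a weak $\epsilon$-regular partition of the fixed weighted graph $G$. For $u\in U$ let $i(u)$ be the index of the part of $P_1$ containing $u$, and for a set $B\subset V$ put $\rho_B(i)=\frac1{|B|}\sum_j d(U_i,V_j)\,|B\cap V_j|$. The first step is the elementary observation that, for a fixed set $B$, each of the two terms defining $f_{P_1,P_2}(A,B)$ is additive over the vertices of $A$, so that
\[
f_{P_1,P_2}(A,B)=|B|\sum_{u\in A}\bigl(d(u,B)-\rho_B(i(u))\bigr).
\]
Taking $A$ to be the set of $u$ for which $d(u,B)-\rho_B(i(u))$ is positive, and then the set where it is negative, weak $\epsilon$-regularity forces $\sum_{u\in U}\bigl|d(u,B)-\rho_B(i(u))\bigr|\le 2\epsilon n^2/|B|$. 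Applying this to $B=V^h_0$ and $B=V^h_1$ (each of size $n/2$) and combining via the triangle inequality gives, for every $h\in[r]$,
\[
\sum_{u\in U}\bigl|D_h(u)-\tau^h_{i(u)}\bigr|\le 8\epsilon n,\qquad
D_h(u):=d(u,V^h_0)-d(u,V^h_1),\quad \tau^h_i:=\rho_{V^h_0}(i)-\rho_{V^h_1}(i).
\]
Summing over $h$ and writing $S(u)=\sum_{h=1}^{r}\bigl|D_h(u)-\tau^h_{i(u)}\bigr|$ yields the upper bound $\sum_{u\in U}S(u)\le 8\epsilon rn$.

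The second step is a contradicting lower bound on $\sum_{u}S(u)$ coming from the structure of $G$. Since the fixed graph $G$ satisfies Corollaries \ref{cor12c} and \ref{prop1n2}, all but at most $e^{-100}n$ vertices of $U$ are very nice, so (as $k\le n/2$) at least $(1-e^{-100})n-n/2\ge n/4$ very nice vertices lie in a part of $P_1$ that contains a second very nice vertex. I claim that in each part of $P_1$ all but at most one of the very nice vertices $u$ satisfies $S(u)\ge \alpha r/8$; this yields $\sum_u S(u)\ge (n/4)(\alpha r/8)=\alpha rn/32$. To prove the claim, take two distinct very nice vertices $u,u'$ in the same part $a$, so $\tau^h_{i(u)}=\tau^h_{i(u')}=\tau^h_a$ for all $h$. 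By Corollary \ref{prop1n2} there are more than $r/4$ indices $h$ for which $u$ and $u'$ lie on opposite sides of the random partition $U=U^h_0\cup U^h_1$; for each such $h$, very niceness forces $D_h(u)$ and $D_h(u')$ to have opposite signs with $|D_h(u)|,|D_h(u')|\ge\alpha/2$, hence $|D_h(u)-D_h(u')|\ge\alpha$. Therefore
\[
S(u)+S(u')\ \ge\ \sum_{h=1}^{r}\bigl|D_h(u)-D_h(u')\bigr|\ \ge\ \tfrac14\alpha r,
\]
so $\max\bigl(S(u),S(u')\bigr)\ge\alpha r/8$, and two very nice vertices in the same part cannot both have $S<\alpha r/8$.

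Combining the two bounds gives $\alpha rn/32\le 8\epsilon rn$, i.e.\ $\alpha\le 256\epsilon$, contradicting $\alpha=2^{14}\epsilon$; hence no admissible $P_1,P_2$ exist and $k_0(\epsilon)>n/2$. Finally $n/2=2^{\,2^{-45}\epsilon^{-2}-1}\ge 2^{\,2^{-46}\epsilon^{-2}}$ for $0<\epsilon\le 2^{-50}$, which gives the stated bound $k_0(\epsilon)>2^{\,2^{-46}\epsilon^{-2}}$. I expect the only delicate point to be the bookkeeping in the second step — turning "few non-nice vertices" together with "$k\le n/2$ parts" and "very niceness makes the $U^h$-separation of two vertices visible in $D_h$" into the clean per-part statement — plus tracking the sign behaviour of $D_h$ on separated indices; everything else is a direct unwinding of the definitions, and the constants carry substantial slack.
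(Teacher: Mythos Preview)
Your proof is correct, and it takes a genuinely different route from the paper's.

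The paper argues as follows: for each nontrivial part $U_t$ it counts the indices $i$ for which both halves $U_t\cap U_0^i$ and $U_t\cap U_1^i$ are substantial (``useful'' indices), uses Corollary~\ref{prop1n2} to show there are at least $r/8$ such $i$ per part, and then pigeonholes to find a \emph{single} index $i$ that is useful for parts covering $\ge n/16$ vertices. For this $i$ it builds two sets $A_0,A_1$ with $|A_0\cap U_t|=|A_1\cap U_t|$ for every $t$ (so the ``expected'' term in $f_{P_1,P_2}$ agrees on them), consisting of very nice vertices on opposite sides of the $i$-th split, and derives a contradiction from $|e(A_0,V_j^i)-e(A_1,V_j^i)|\le 2\epsilon n^2$ versus the very-niceness lower bound.

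Your argument instead exploits the vertex-additivity of $f_{P_1,P_2}(\cdot,B)$ to get, for \emph{every} $h$, the $\ell^1$ bound $\sum_u|D_h(u)-\tau^h_{i(u)}|\le 8\epsilon n$, and then sums over $h$. The lower bound comes from pairing any two very nice vertices in the same part and using Corollary~\ref{prop1n2} plus very niceness to force $S(u)+S(u')\ge \alpha r/4$; hence at most one very nice vertex per part can have small $S$, and the remaining $\ge n/4$ vertices contribute $\alpha r/8$ each. This avoids both the pigeonhole to a single index and the delicate construction of balanced sets $A_0,A_1$; it is shorter and arguably more robust, at the cost of being less explicit about which subsets actually witness the failure of weak regularity. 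Both proofs use exactly the same two structural inputs (Corollaries~\ref{cor12c} and~\ref{prop1n2}) and land comfortably within the available numerical slack.
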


Theorem \ref{lowbo} gives a lower bound on the number $k_0(\epsilon)$ of parts
for the weak matrix regularity lemma (Lemma \ref{brlweight}) with approximation
$\epsilon$. Before we prove this theorem, we remark that it has no restriction
on the number of parts of partition $P_2$, and further shows that $P_1$ has to
be almost the finest partition (partition into singletons) to obtain a pair of
partitions which are weak $\epsilon$-regular.

\begin{proof} Suppose for contradiction that the partitions $P_1$ and $P_2$ are
weak $\epsilon$-regular. That is,
$|f_{P_1,P_2}(A,B)| \leq \epsilon n^2$ for all subsets $A \subset U$ and $B
\subset V$.

Fix for now $U_t$ with $|U_t| \geq 2$. Call the pair $(i,t) \in [r] \times [k]$
{\it useful} if $|U_t \cap U_j^i| \geq |U_t|/32$ for $j \in \{0,1\}$. Let
$M_{t}$ be the number of $i \in [r]$ for which the pair $(i,t)$ is useful.
The sum $$S_t=\sum_{i \in [r]}|U_t \cap U_0^i||U_t \cap U_{1}^i| \leq
r|U_t|^2/32+M_t|U_t|^2/4$$ is precisely the number of triples $u,u',i$ with
$u,u'$ distinct elements of $U_t$ and $i \in [r]$ for which $u$ and $u'$ are
not in the same set in the partition $U=U_0^i \cup U_1^i$. By Corollary
\ref{prop1n2}, the sum $S_t$ is at least $\frac{1}{4}r{|U_t| \choose 2} \geq
|U_t|^2 r/16$. Hence, $$r|U_t|^2/32+M_t|U_t|^2/4 \geq S_t \geq |U_t|^2 r /16.$$
We thus have $M_t \geq r/8$.

Since $M_t \geq r/8$ for each $t$ for which $|U_t| \geq 2$ and there are at
most $k$ parts in partition $P_1$ of order $1$, there is an $i$ for which
partition $i$ satisfies that at least $(n-k)/8 \geq n/16$ vertices $u \in U$
are in $U_t$ with the pair $(i,t)$ useful. Fix such an $i$. For each $t$ for
which $(i,t)$ is useful and all but at most $|U_t|/64$ vertices in $U_t$ are
nice, for $j \in \{0,1\}$, let $U_{t,j}$ be a subset of $U_t \cap U_j^i$ of
cardinality exactly $\lceil |U_t|/64 \rceil$, and $A_j$ denote the union of all
such $U_{t,j}$. Recall from Corollary \ref{cor12c} that there are at most
$e^{-100}n$ vertices in $U$ which are not nice. Hence, there are at most $64
\cdot e^{-100}n$ vertices in $U$ which belong to a $U_t$ for which the pair
$(i,t)$ is useful but there are at least $|U_t|/64$ vertices in $U_t$ which are
not nice. Thus, the number of vertices in $U$ which belong to a $U_t$ for which
$(i,t)$ is useful and there are at most $|U_t|/64$ vertices in $U_t$ which are
not nice is at least
$$\frac{n}{16}- 64e^{-100}n > \frac{n}{32}.$$
We thus have $|A_0|=|A_1| > \frac{n}{32}/64=2^{-11}n$.

Note that, by construction, we have for each $t \in [k]$, $\ell \in [k']$ and
$T \subset V$,
$$|A_0 \cap U_t||T \cap V_{\ell}|d(U_t,V_{\ell})=|A_1 \cap U_t||T \cap
V_{\ell}|d(U_t,V_{\ell}).$$
Thus, if the partitions $P_1,P_2$ form a weak $\epsilon$-regular partition, we
would have to have
\begin{equation}\label{comparej}|e(A_0,V_j^i)-e(A_1,V_j^i)| \leq 2\epsilon
n^2.\end{equation}
for $j \in \{0,1\}$. However, as each $u \in A_0$ is in $U_0^i$ and is very
nice, we have $$d(A_0,V_0^i)-d(A_0,V_1^i) \geq \alpha/2.$$ Since $|A_0| >
2^{-11}n$ and $|V_j^i|=n/2$ for $j \in \{0,1\}$, we have
$$e(A_0,V_0^i)-e(A_0,V_1^i) > 2^{-13}\alpha n^2.$$
Similarly, $$e(A_1,V_1^i)-e(A_1,V_0^i) > 2^{-13}\alpha n^2.$$
Adding the previous two inequalities, we have \begin{equation}\label{lhsneed}
e(A_0,V_0^i)-e(A_1,V_0^i)+e(A_1,V_1^i)-e(A_0,V_1^i) >2^{-12}\alpha
n^2.\end{equation} But, by (\ref{comparej}) for $j\in \{0,1\}$, the left hand
side of (\ref{lhsneed}) is at most $4\epsilon n^2$ in modulus, contradicting
the above inequality and $\alpha=2^{14}\epsilon$. This completes the proof.
\end{proof}

{\bf Remark:} While Theorem \ref{lowbo} provides for each $\epsilon$ only one
graph which requires at least $2^{\Omega(\epsilon^{-2})}$ parts in any weak
$\epsilon$-regular pair of partitions, it is a simple exercise to modify the
proof to show that all blow-ups of $G$ also satisfy this property, thus
obtaining an infinite family of such graphs. For a graph $G$ on $n$ vertices
and a positive integer $t$, the {\it blow-up} $G(t)$ of $G$ is the graph on
$nt$ vertices obtained by replacing each vertex $u$ by an independent set
$I_u$, and a vertex in $I_u$ is adjacent to a vertex in $I_v$  if and only if
$u$ and $v$ are adjacent.

\section{Concluding remarks}
\begin{itemize}
\item{\bf Weak regularity lemmas without irregular pairs}

While proving his famous theorem on arithmetic progressions in dense subsets of
the integers, Szemer\'edi \cite{Sz1} actually developed a regularity lemma
which is weaker than what is now commonly known as Szemer\'edi's regularity
lemma \cite{Sz}. The following version is a strengthening of the original
version, as it guarantees that all pairs, instead of all but an
$\epsilon$-fraction of pairs, under consideration are $\epsilon$-regular. The
key extra ingredient is an application of Lemma \ref{partfoura} to redistribute
the small fraction of vertices which are not in regular sets.

\begin{lemma}\label{strongoriginal}
For each $0<\epsilon<1/2$ there are integers $k=k(\epsilon)$ and
$K=K(\epsilon)$ such that the following holds. For every graph $G=(V,E)$, there
is an equitable partition $V=V_1 \cup \ldots \cup V_t$ into at most $k$ parts
such that for each $i$, $1 \leq i \leq t$, there is a partition $V=V_{i1} \cup
\ldots \cup V_{ij_i}$, with $j_i \leq K$,
such that for all $1 \leq i \leq t$ and $1 \leq j \leq j_i$ the pair
$(V_i,V_{ij})$ is $\epsilon$-regular. Furthermore,
$k(\epsilon)=2^{\epsilon^{-C}}$ and $K(\epsilon) = O(\epsilon^{-1})$, where $C$
is an absolute constant.
\end{lemma}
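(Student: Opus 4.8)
The plan is to fix one sufficiently fine equitable main partition, and then, separately for each of its parts, to witness regularity with it by a one-sided index-increment argument followed by a bucketing step, with Lemma~\ref{partfoura} used at the end to absorb the irregular remainder. For the main partition $V=V_1\cup\cdots\cup V_t$ I would apply Lemma~\ref{epsdeltaone} with a parameter $\gamma$ equal to a small power of $\epsilon$, obtaining parts each $\gamma$-regular with itself and of size at least $\delta(\gamma)|V|$, and then equalise the part sizes, redistributing the $O(\delta^{-1})$ leftover vertices among the classes via Lemma~\ref{partfoura}; this yields an equitable partition into $t\le k:=2^{\epsilon^{-C}}$ parts. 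The only features I actually use are that each $V_i$ is small, $|V_i|\le 2|V|/t$, and that $t$ is large (a fixed power of $\epsilon^{-1}$ already suffices, so the $2^{\epsilon^{-C}}$ bound here is not tight): largeness of $t$ guarantees that every block $V_{ij}$ produced below has size at least a constant multiple of $\epsilon|V|$, which dwarfs $|V_i|$, so that the ``diagonal'' edges of $V_i$ lying inside a block $V_{ij}$ change $d(V_i,V_{ij})$ by at most $|V_i|/|V_{ij}|\le\epsilon$ and can be ignored throughout.

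Fix $i$ and set $A=V_i$. First I would run the standard defect refinement with respect to $A$: starting from $\{A,V\setminus A\}$, repeatedly split any class $P$ for which $(A,P)$ is not $\gamma$-regular along a witness $P'\subseteq P$ with $|P'|\ge\gamma|P|$ and $|d(A,P')-d(A,P)|>\gamma$, stopping as soon as the union of the classes still not $\gamma$-regular with $A$ has size at most $\theta|V|$, where $\theta$ is another small power of $\epsilon$. Since the one-sided index $q_A(\mathcal P)=\sum_P\frac{|P|}{|V|}d(A,P)^2$ stays in $[0,1]$ and each split raises it by at least a constant times $\gamma^{3}\frac{|P|}{|V|}$, only $O(\gamma^{-3}\theta^{-1})$ rounds occur, so the resulting refinement $\mathcal Q$ has at most $2^{\mathrm{poly}(1/\epsilon)}$ classes. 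This exponential count is harmless, because the next step collapses $\mathcal Q$ to constantly many blocks: put all exceptional classes (total size $\le\theta|V|$) into a remainder set $R$, and for $1\le j\le\lceil\lambda^{-1}\rceil$, with $\lambda$ a small constant times $\epsilon$, let $V_{ij}$ be the union of all $\gamma$-regular classes $P$ of $\mathcal Q$ with $d(A,P)\in[(j-1)\lambda,j\lambda)$. A direct computation in the spirit of the proof of Lemma~\ref{sunioncyl} shows that each pair $(A,V_{ij})$ is $(2\lambda+O(\gamma/\lambda))$-regular: for large $A'\subseteq A$ and $B\subseteq V_{ij}$, the classes $P$ with $|B\cap P|\ge\gamma|P|$ satisfy $|d(A',B\cap P)-d(A,P)|\le\gamma$ by $\gamma$-regularity, the classes with $|B\cap P|<\gamma|P|$ cover at most a $\gamma|V_{ij}|/|B|\le\gamma/\lambda$ fraction of $B$, and all the $d(A,P)$ involved lie in an interval of length $\lambda$.

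It remains to absorb $R$. Since $\sum_j|V_{ij}|\ge(1-\theta)|V|$ and there are at most $\lceil\lambda^{-1}\rceil$ blocks, some block $V_{ij_0}$ has size at least $\lambda|V|/2$; put $R$ into $V_{ij_0}$ and apply Lemma~\ref{partfoura} with $\beta=|R|/|V_{ij_0}|\le 2\theta/\lambda$, which degrades the regularity of $(A,V_{ij_0})$ by only $\sqrt{\beta}+\beta$. Choosing $\gamma,\theta$ to be small powers of $\epsilon$ and $\lambda$ a small constant multiple of $\epsilon$, and running the whole argument with $\epsilon$ replaced by a small absolute-constant multiple of it, makes every accumulated error at most $\epsilon$; the partition $V=V_{i1}\cup\cdots\cup V_{ij_i}$ then has $j_i\le\lceil\lambda^{-1}\rceil=O(\epsilon^{-1})$ parts, each $\epsilon$-regular with $V_i$, giving $K(\epsilon)=O(\epsilon^{-1})$ and $k(\epsilon)=2^{\epsilon^{-C}}$. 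The classes of $\mathcal Q$ contained in $V_i$ are themselves distributed among the blocks $V_{ij}$ by their density with $A$, so the pairs $(V_i,V_{ij})$ for those blocks are covered by the same estimate together with the negligible-diagonal remark; hence \emph{all} pairs under consideration are $\epsilon$-regular, which is the strengthening over the original version.

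The step I expect to be the main obstacle is exactly this passage from the exponentially many classes of the defect refinement down to $O(\epsilon^{-1})$ blocks while losing only $O(\epsilon)$ in regularity: one must balance the three error sources — the bucket width $\lambda$, the small-intersection loss $\gamma/\lambda$ in the union-of-regular-pairs estimate, and the $\sqrt{\theta/\lambda}$ loss from the Lemma~\ref{partfoura} absorption of $R$ — so that all three are $O(\epsilon)$ while $\theta$ is still small enough that $R$ is genuinely absorbable, and, in parallel, to verify that the blocks meeting $V_i$ cause no trouble, which is precisely what forces the main partition to be taken fine (and is why $V_i$ need not be used as a regular-with-itself set at all, even though Lemma~\ref{epsdeltaone} supplies that property for free).
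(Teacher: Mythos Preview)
Your overall plan mirrors the paper's sketch closely: the paper also proposes to first obtain, for each fixed $A=V_i$, a partition of $V$ into single-exponentially many parts such that almost all of them are $\gamma$-regular with $A$, then to bucket these parts by density (this is exactly Lemma~\ref{alepsi}), and finally to use Lemma~\ref{partfoura} to absorb the small irregular remainder so that \emph{all} pairs become regular. Your bucketing and absorption steps are fine and match the paper.

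The genuine gap is in your one-sided defect refinement. You claim that whenever $(A,P)$ is not $\gamma$-regular one can find $P'\subseteq P$ with $|P'|\ge\gamma|P|$ and $|d(A,P')-d(A,P)|>\gamma$, and hence increment the potential $q_A(\mathcal P)=\sum_P\frac{|P|}{|V|}d(A,P)^2$. This is false. Take $A=A_1\cup A_2$ and $P=P_1\cup P_2$ with all four sets of equal size, put a complete bipartite graph between $A_1$ and $P_1$ and between $A_2$ and $P_2$, and nothing else across $A\times P$. Then every vertex of $P$ has exactly $|A|/2$ neighbours in $A$, so $d(A,P')=\tfrac12=d(A,P)$ for \emph{every} $P'\subseteq P$; yet $(A,P)$ is highly irregular (witness $A_1,P_1$). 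In this situation your potential $q_A$ cannot move at all, and the iteration never terminates.

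The standard fix, which is what the original Szemer\'edi argument (and implicitly the paper) uses, is to carry a \emph{two-sided} energy: maintain an auxiliary partition $\mathcal A$ of $A$ alongside $\mathcal P$, track $q(\mathcal A,\mathcal P)=\sum_{A_a,P}\frac{|A_a||P|}{|A||V|}d(A_a,P)^2$, and when $(A,P)$ is irregular via witnesses $A',P'$, refine \emph{both} $\mathcal A$ by $A'$ and $\mathcal P$ by $P'$. Then the energy gain restricted to $P$ is at least $\gamma^4|P|/|V|$ (by the usual defect Cauchy--Schwarz), and after $O(\gamma^{-4}\theta^{-1})$ rounds the irregular mass is below $\theta|V|$, with $|\mathcal P|\le 2^{O(\gamma^{-4}\theta^{-1})}$. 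The partition $\mathcal A$ is used only as a bookkeeping device and is discarded; only $\mathcal P$ is output. With this correction the rest of your argument (bucketing and absorption) goes through unchanged and yields the stated bounds.

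A secondary remark: invoking Lemma~\ref{epsdeltaone} for the main partition is harmless but overkill and, taken literally, gives $t\le\delta(\gamma)^{-1}=2^{\gamma^{-(20/\gamma)^4}}$, which is triple-exponential rather than $2^{\epsilon^{-C}}$. Since, as you correctly note, you only need $t$ polynomial in $\epsilon^{-1}$ to make the diagonal contribution negligible, you should simply take an arbitrary equitable partition into $\lceil C'\epsilon^{-1}\rceil$ parts.
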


Szemer\'edi \cite{Sz1} originally gave a triple exponential upper bound on the
number of parts in the original regularity lemma, whereas it is now known (see
\cite{RoSc}) that the correct bound is single exponential. Through iterative
applications, the original regularity lemma was used by Ruzsa and Szemer\'edi
\cite{RuSz} to resolve the $(6,3)$-problem, and by Szemer\'edi \cite{Sz73} to
establish the upper bound on the Ramsey-Tur\'an problem for $K_4$. It is a
relatively simple exercise to show that Szemer\'edi's original regularity lemma
implies the Frieze-Kannan weak regularity lemma, but with a bound that is one
exponential worse than the tight bound established in the previous section.
This can be accomplished by showing that the common refinement of the
partitions in the original regularity lemma satisfies the Frieze-Kannan weak
regularity lemma.

There are a number of notable properties of Lemma \ref{strongoriginal}. First,
all pairs $(V_i,V_{ij})$ under consideration in Lemma \ref{strongoriginal} are
regular. In contrast, Theorem \ref{exceptionalpairs} shows that we must allow
for many irregular pairs in Szemer\'edi's regularity lemma.  Second, the bounds
are much better than in Szemer\'edi's regularity lemma. The bounds on the
number of parts is only single-exponential, instead of the tower-type bound
which appears in the standard regularity lemma. Furthermore, each of the
partitions of $V$ have at most $K(\epsilon)=O(\epsilon^{-1})$ parts. Indeed,
this can be established by first proving any bound on $K(\epsilon)$, and then
using the following additive property of regularity to combine parts. Namely,
applying Lemma \ref{strongoriginal} with any bound on $K(\epsilon)$ and with
$\epsilon^2 /4$ in place of $\epsilon$, and, for each $i$, partitioning $V$
into $O(\epsilon^{-1})$ parts, each part consisting of the union of parts
$V_{ij}$ for which $d(V_i,V_{ij})$ lies in an interval of length at most
$\epsilon/2$, the following lemma shows that $V_i$ together with each part of
the new partition forms an  $\epsilon$-regular pair.

\begin{lemma}\label{alepsi}
Let $0<\epsilon<1$ and $\alpha=\epsilon^2/4$. Suppose $A,B_1,\ldots,B_r$ are
disjoint sets satisfying $(A,B_i)$ is $\alpha$-regular for $1 \leq i \leq r$
and $|d(A,B_i)-d(A,B_j)| \leq \epsilon/2$ for $1 \leq i,j \leq r$. Then,
letting $B=B_1 \cup \ldots \cup B_r$, the pair $(A,B)$ is $\epsilon$-regular.
\end{lemma}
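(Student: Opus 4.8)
The plan is to verify the definition of $\epsilon$-regularity (in the sense of this section: for all $A'\subset A$, $B'\subset B$ with $|A'|\ge\epsilon|A|$, $|B'|\ge\epsilon|B|$ one has $|d(A',B')-d(A,B)|\le\epsilon$) directly. So I would fix such $A'$ and $B'$, set $B'_i=B'\cap B_i$, and split the index set into \emph{good} indices, those with $|B'_i|\ge\alpha|B_i|$, and \emph{bad} ones. The first easy point is that the bad pieces together contain at most $\sum_i\alpha|B_i|=\alpha|B|=(\epsilon^2/4)|B|\le(\epsilon/4)|B'|$ vertices, so the weight $\mu:=\sum_{i\ \mathrm{bad}}|B'_i|/|B'|$ is at most $\epsilon/4$.

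Next I would record the elementary observation that, since all pairwise differences $|d(A,B_i)-d(A,B_j)|$ are at most $\epsilon/2$, the numbers $d(A,B_i)$ lie in a common interval of length $\le\epsilon/2$; because $d(A,B)=\sum_i\tfrac{|B_i|}{|B|}d(A,B_i)$ is a convex combination of them it lies in that interval as well, whence $|d(A,B_i)-d(A,B)|\le\epsilon/2$ for every $i$. Then, for each good $i$, I would invoke $\alpha$-regularity of the pair $(A,B_i)$ applied to the subsets $A'$ and $B'_i$ — legitimate because $|A'|\ge\epsilon|A|\ge\alpha|A|$ (using $\alpha=\epsilon^2/4<\epsilon$) and $|B'_i|\ge\alpha|B_i|$ — to get $|d(A',B'_i)-d(A,B_i)|\le\alpha$, and hence $|d(A',B'_i)-d(A,B)|\le\alpha+\epsilon/2$ for every good $i$.

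Finally I would write $d(A',B')=\sum_i\tfrac{|B'_i|}{|B'|}d(A',B'_i)$ as a convex combination (discarding empty pieces, which contribute nothing to either side), bound the good terms by $\alpha+\epsilon/2$ and the bad terms trivially by $|d(A',B'_i)-d(A,B)|\le 1$, and conclude
\[
|d(A',B')-d(A,B)|\ \le\ (1-\mu)\Bigl(\alpha+\tfrac{\epsilon}{2}\Bigr)+\mu\ \le\ \alpha+\tfrac{\epsilon}{2}+\tfrac{\epsilon}{4}\ =\ \tfrac{\epsilon^2}{4}+\tfrac{3\epsilon}{4}\ \le\ \epsilon,
\]
the last step using $\epsilon\le 1$. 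This gives that $(A,B)$ is $\epsilon$-regular.

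The argument is essentially routine; there is no real obstacle beyond bookkeeping. The one place that needs care is the parameter balance: the total error splits into the regularity slack $\alpha$ from each good pair, the spread $\epsilon/2$ of the densities $d(A,B_i)$, and the mass $\mu\le\epsilon/4$ lost to the small pieces, and these must add up to at most $\epsilon$ — which is exactly what dictates the choice $\alpha=\epsilon^2/4$ and the hypothesis $\epsilon<1$. A minor technical nuisance is simply remembering to discard indices with $B_i$ (or $B'_i$) empty, for which $d(A',B'_i)$ is undefined.
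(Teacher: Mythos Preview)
Your proof is correct and is exactly the standard argument one would give. The paper explicitly omits the proof of this lemma (``To save space, we omit the details of how to prove Lemmas \ref{strongoriginal} and \ref{alepsi}''), so there is nothing to compare against, but your decomposition into good and bad indices and the resulting error budget $\alpha+\epsilon/2+\epsilon/4\le\epsilon$ is precisely what the choice $\alpha=\epsilon^2/4$ is engineered for.
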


To save space, we omit the details of how to prove Lemmas \ref{strongoriginal}
and \ref{alepsi}.

Another interesting consequence of Lemma \ref{strongoriginal} is that it
implies that every graph on $n$ vertices has an $\epsilon$-regular pair in
which one part is of size $\Omega(\epsilon n)$ and the other part has size
$2^{-\epsilon^{-O(1)}}n$. It is well known (see \cite{KoSi}), that one can find
an $\epsilon$-regular pair in which both parts have size
$2^{-\epsilon^{-O(1)}}n$. In some applications, having one regular pair is
sufficient  (see, e.g., \cite{Ea}, \cite{Ha}, \cite{KoSi}), and one obtains
much better bounds than by applying Szemer\'edi's regularity lemma. In the
other direction, there are graphs (see Theorem 1.4 in \cite{PRR} for a tight
result) for which any $\epsilon$-regular pair has a part of size
$2^{-\epsilon^{-\Omega(1)}}n$. We can nevertheless guarantee that one part is
of size $\Omega(\epsilon n)$. It seems likely that having such a large part in
a regular pair could be useful in some applications.

By iterative application of Lemma \ref{strongoriginal}, one can also obtain a
version of the Duke-Lefmann-R\"odl lemma such that all cylinders in the
partition are $\epsilon$-regular. In fact, using Lemma \ref{epsdeltaone}, one
can further guarantee that all cylinders in the partition are strongly
$\epsilon$-regular, and the bound is still of constant-tower height.

\item{\bf A part irregular with no other parts}

In the proof of Theorem \ref{exceptionalpairs}, we found a graph $G$ such that for any partition into $k$ parts there are at least $\theta k$ parts $V_i$ for which there are at
least $\theta^{-1}\eta k$ pairs $(V_i,V_j)$ which are not
$(\epsilon,\delta)$-regular, where $0<\theta<1$ is a fixed constant. Is it
possible to improve this result so that all parts are in $\eta k$ irregular
pairs? The answer is no, as Lemma \ref{strongoriginal} implies that any graph
has an equitable partition into only $2^{\epsilon^{-O(1)}}$ parts in which one
part is $\epsilon$-regular with all the other parts.

\item {\bf A single regular subset}

It would be interesting to determine tight bounds for the size of the largest $\epsilon$-regular subset which can be found in every graph. In Lemma \ref{oneepsilonregularsubset}, we showed that every graph $G = (V, E)$ must contain an $\epsilon$-regular subset $U$ of size at least $2^{-\epsilon^{-(10/\epsilon)^4}} |V|$. On the other hand, a result of Peng, R\"odl and Ruci\'nski \cite{PRR} implies that there are graphs $G = (V, E)$ which contain no $\epsilon$-regular subset of size $\epsilon^{c\epsilon^{-1}} |V|$. We conjecture that the correct bound is single exponential, though the power may be polynomial in $\epsilon^{-1}$. 

\item {\bf Equitable partitions and not necessarily equitable partitions}

In the regularity lemmas considered in this paper, we often assume the
partitions we consider are equitable partitions. It is not difficult to see
that this assumption is non-essential and the bounds do not change much.
Indeed, consider for example the following variant of Szemer\'edi's regularity
lemma.

\begin{lemma}\label{szemvar}
For each $\epsilon,\delta,\eta>0$ there is a positive integer
$M=M(\epsilon,\delta,\eta)$ for which the following holds. Every graph
$G=(V,E)$ has a vertex partition $V=V_1 \cup \ldots \cup V_k$ with $k \leq M$
such that the sum of $|V_i||V_j|$ over all pairs $(V_i,V_j)$ which are not
$(\epsilon,\delta)$-regular is at most $\eta |V|^2$.
\end{lemma}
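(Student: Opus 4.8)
The plan is to deduce Lemma \ref{szemvar} from the equitable version of Szemer\'edi's regularity lemma stated in the introduction. The point is that an equitable partition into $k$ parts has every part of size between $\lfloor |V|/k\rfloor$ and $\lceil |V|/k\rceil$, so $|V_i||V_j|$ differs from $(|V|/k)^2$ only by a bounded rounding factor; consequently a bound on the \emph{number} of irregular pairs translates, after multiplying by roughly $(|V|/k)^2$, into a bound on the sum of $|V_i||V_j|$ over irregular pairs. Concretely, let $M_0=M(\epsilon,\delta,\eta/8)$ be the bound given by the equitable regularity lemma with fraction parameter $\eta/8$, and set $M=\lceil 8M_0\eta^{-1/2}\rceil$. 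Since the case $\eta\ge 1$ is trivial (even the one-part partition works, as off-diagonal pairs contribute nothing), we may assume $\eta\le 1$.

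First I would dispose of graphs with few vertices: if $n:=|V|\le M$, take $V_1\cup\dots\cup V_k$ to be the partition of $V$ into singletons. Every pair $(V_i,V_j)$ of one-element parts, including those with $i=j$, is $(\epsilon,\delta)$-regular, because the only subset of a singleton of relative size at least $\delta$ is the singleton itself, so the regularity condition holds with $\alpha=d(V_i,V_j)$. Hence the sum over irregular pairs is $0\le\eta n^2$, and there are at most $M$ parts.

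Now suppose $n>M$. Applying the equitable regularity lemma with parameters $\epsilon,\delta$ and fraction $\eta/8$ yields an equitable partition $V=V_1\cup\dots\cup V_k$ with $k\le M_0$ and at most $\tfrac{\eta}{8}\binom{k}{2}<\tfrac{\eta}{16}k^2$ pairs $(V_i,V_j)$ that fail to be $(\epsilon,\delta)$-regular. By equitability, $|V_i|\le n/k+1$ for every $i$. Since $n>M\ge 8M_0\eta^{-1/2}\ge 8k\eta^{-1/2}$, we have $1<\eta^{1/2}n/(8k)$, so $|V_i|<(n/k)(1+\eta^{1/2}/8)\le\sqrt2\,(n/k)$ (using $\eta\le 1$); thus $|V_i||V_j|\le 2(n/k)^2$ for all $i,j$. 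Summing over the at most $\tfrac{\eta}{16}k^2$ irregular pairs — the constant leaving ample room whether one counts ordered or unordered pairs — gives a total of at most $\tfrac{\eta}{8}k^2\cdot 2(n/k)^2=\tfrac{\eta}{4}n^2\le\eta n^2$, as required, and the partition has $k\le M$ parts.

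I do not expect any genuine obstacle here; the only points deserving a line of care are that singleton pairs are vacuously regular (needed for the small-graph case) and that the rounding slack of an equitable partition is harmless once $n/k$ exceeds a constant multiple of $\eta^{-1/2}$, which is exactly what the choice of $M$ secures. (Should one also wish to count the diagonal pairs $(V_i,V_i)$, it suffices to additionally require, as Szemer\'edi's lemma permits, that the partition have at least $8\eta^{-1}$ parts, so that $\sum_i|V_i|^2\le 2n^2/k$ is likewise negligible.) The identical reduction — pass to an equitable partition, bound the part sizes, and rescale the count — shows that dropping the equitability requirement in the other regularity lemmas considered in this paper affects the bounds by at most a polynomial-in-$\epsilon^{-1}$ factor.
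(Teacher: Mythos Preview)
Your proposal is correct and follows exactly the approach the paper indicates: the paper does not give a detailed proof of Lemma~\ref{szemvar} but simply observes that ``Szemer\'edi's regularity lemma clearly implies Lemma~\ref{szemvar}, as it further specifies that the partition is an equitable partition, and the condition that the sum of $|V_i||V_j|$ over all pairs $(V_i,V_j)$ which are not $(\epsilon,\delta)$-regular is at most $\eta |V|^2$ is then essentially the same as saying that the number of pairs $(V_i,V_j)$ which are not $(\epsilon,\delta)$-regular is at most $\eta k^2$.'' You have carefully fleshed out this one-line remark, including the small-$n$ case and the rounding slack, which the paper leaves implicit.
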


The key difference between this version of Szemer\'edi's regularity lemma and
the usual statement is that the parts of the partition are not necessarily of
equal order, and we measure the regularity of the partition by the sum of the
products of the sizes of the pairs of parts that are
$(\epsilon,\delta)$-regular. Szemer\'edi's regularity lemma clearly implies
Lemma \ref{szemvar}, as
it further specifies that the partition is an equitable partition, and the
condition that the sum of $|V_i||V_j|$ over all pairs $(V_i,V_j)$ which are not
$(\epsilon,\delta)$-regular is at most $\eta |V|^2$ is then essentially the
same as saying that the number of pairs $(V_i,V_j)$ which are not
$(\epsilon,\delta)$-regular is at most $\eta k^2$. To see that Lemma
\ref{szemvar}
implies Szemer\'edi's regularity lemma with similar bounds, first apply Lemma
\ref{szemvar}, and then randomly refine each part $V_i=V_{i0} \cup V_{i1} \cup
\ldots \cup V_{ij_i}$ into parts of size $m=\frac{1}{100}\alpha^2 |V|/M$, where
$\alpha=\min(\epsilon,\delta,\eta)$, except possibly $V_{i0}$, which can have
size less than $m$ as not necessarily each $V_i$ has cardinality a multiple of
$m$. The total number of remaining vertices, those in $V_0= \bigcup_{i=1}^k
V_{i0}$, is less than $km\leq \frac{1}{100}\alpha^2|V|$, as there are less than
$m$ remaining vertices from each of the $k$ parts $V_i$. Redistributing the
vertices of $V_0$ evenly among the parts of size $m$, we get a new equitable
partition where each part has size between $m$ and at most
$(1+\frac{1}{50}\alpha^2)m$. It is not hard to show using an additional
application of the Frieze-Kannan weak regularity lemma, that because we
randomly refined each part, almost surely for all pairs $(V_h,V_i)$ which are
$(\epsilon,\delta)$-regular, all pairs $(V_{ha},V_{ij})$  are
$(2\epsilon,2\delta)$-regular. That is, the regularity between pairs of parts
is almost surely inherited between large vertex subsets. It then easily follows
that the equitable partition is similar both in the number of parts and the
degree of regularity to the original partition from Lemma \ref{szemvar}.

Because of this equivalence, we get similar lower bounds for regularity lemmas
where the partitions are not necessarily equitable partitions. For example, for
Lemma \ref{szemvar},  for some absolute constants $\epsilon,\delta>0$, we get a
bound on $M(\epsilon,\delta,\eta)$ which is a tower of $2$s of height
$\Omega(\eta^{-1})$. Similarly, in Theorem \ref{stronglow} and Corollary
\ref{strongcor} giving lower bounds on the strong regularity lemma, the
assumption that $\mathcal{B}$ is an equitable partition is not needed.

Finally, as we have already noted in Section \ref{weakregsection}, it is much easier to show that for the Frieze-Kannan weak regularity lemma we do not need to assume that the partition is equitable. This is a simple consequence of the robustness of weak regularity under refinement.

\item {\bf Irregular pairs and half graphs}

A (generalized) half-graph has vertex set $A \cup B$ with $2n$ vertices
$A=\{a_1,\ldots,a_n\}$ and $B=\{b_1,\ldots,b_n\}$, in which $(a_i,b_j)$ is an
edge if and only if $i \leq j$ (but the edges within $A$ and $B$ could be
arbitrary). As mentioned in the introduction, any partition of a large
half-graph into a constant number of parts has some irregular pairs. Malliaris
and Shelah \cite{MSh} recently showed that for each fixed $k$, every graph on
$n$ vertices with no induced subgraph which is a half-graph on $2k$ vertices
has an $\epsilon$-regular partition with no irregular parts and the number of
parts is at most $\epsilon^{-c_k}$, where $c_k$ is single-exponential in $k$.
This shows that any construction forcing irregular pairs in the regularity
lemma, like that given in the proof of Theorem \ref{exceptionalpairs}, must
contain large half-graphs, of size double-logarithmic in the number of
vertices.

\item {\bf Distinct regular approximations}

The notion of $f$-regularity, which appears in the regular approximation lemma,
has since appeared elsewhere. Alon, Shapira, and Stav \cite{AlShSt} investigate
the question of determining if a graph $G=(V,E)$ can have distinct regular
partitions. Two equitable partitions $\mathcal{U}:U=U_1 \cup \ldots \cup U_k$
and $\mathcal{V}:V=V_1 \cup \ldots \cup V_k$ into $k$ parts are said to be {\it
$\epsilon$-isomorphic} if there is a permutation $\pi:[k] \rightarrow [k]$ such
that for all but at most $\epsilon{k \choose 2}$ pairs $1 \leq i<j \leq k$,
$|d(U_i,U_j)-d(V_{\pi(i)},V_{\pi(j)})| \leq \epsilon$. They prove that for some
$f(k)=\Theta\left(\frac{\log^{1/3} k}{k^{1/3}}\right)$ and infinitely many $k$,
and for every $n>n(k)$, there is a graph on $n$ vertices with two $f$-regular
partitions which are not $1/4$-isomorphic.
On the other hand,  they show that if $f(k) \leq \min(1/k^2,\epsilon/2)$, then
any two equitable partitions of $G$ into $k$ parts which  are each $f$-regular
must be $\epsilon$-isomorphic.

\item {\bf Multicolor and directed regularity and removal lemmas}

The proof of Szemer\'edi's regularity lemma has been extended to give
multicolor (see \cite{KoSi}) and directed (see \cite{AlSh04}) extensions. These
imply multicolor and directed generalizations of the graph removal lemma. As
discussed in \cite{Fo}, the new proof of the graph removal lemma with a
logarithmic tower height extends with a similar bound to these versions as
well. Axenovich and Martin \cite{AxMa} recently extended the strong regularity
lemma in a similar fashion to give multicolor and directed versions, in order
to establish extensions of the induced graph removal lemma.  Our proof of the
induced graph removal lemma with a tower-type bound similarly extends to give
multicolor and directed versions.

\item {\bf On proofs of regularity lemmas}

As noted by Gowers, the constructions in \cite{Go} not only show that the bound
in Szemer\'edi's regularity lemma is necessarily large, but that, in some
sense, the proof is necessary. Any proof must follow a long sequence of
successively finer partitions, each exponentially larger than the previous one.
While this notion is hard to make precise, it should be clear to anyone who has studied
the proof of the regularity lemma and the lower bound construction of Gowers.
Theorem \ref{exceptionalpairs} adds further weight to this conviction. Furthermore, the proof of Theorem \ref{stronglow} shows that any proof of the strong regularity lemma requires a long sequence of partitions, each of tower-type larger than the previous partition. That is, the
iterated use of Szemer\'edi's regularity lemma is required in any proof of the
strong regularity lemma.
\end{itemize}

\vspace{0.1cm} \noindent {\bf Acknowledgment.} \, We would like to thank Noga Alon for helpful comments.

\vspace{0.1cm} \noindent {\bf Note added.}\, After this paper was
written we learned that a variant of Theorem \ref{stronglow} was also proved,
independently and simultaneously, by Kalyanasundaram and Shapira. In the
situation of Corollary \ref{stroncor}, their theorem gives a lower bound of
wowzer-type in $\sqrt{\log \epsilon^{-1}}$ for the strong regularity lemma.

\end{document}